\theoremstyle{plain}
\newtheorem{lemma}{Lemma}[section]
\newtheorem{prop}[lemma]{Proposition}
\newtheorem{theorem}[lemma]{Theorem}
\newtheorem{cor}[lemma]{Corollary}
\newtheorem{definition}[lemma] {Definition}
\newtheorem{conj}[lemma]{Conjecture}
\newtheorem*{theorem*}{Theorem}
\newtheorem*{conj*}{Conjecture}
\newtheorem{question}{Question}
\tikzset{cdlabel/.style={above,sloped,
    execute at begin node=$\scriptstyle,execute at end node=$}}
\tikzset{algarrow/.style={->, thick}}
\tikzset{blgarrow/.style={->, thick}}
\tikzset{clgarrow/.style={->, thick}}
\tikzset{tensoralgarrow/.style={double, double equal sign distance, -implies}}
\tikzset{tensorblgarrow/.style={double, double equal sign distance, -implies}}
\tikzset{tensorclgarrow/.style={double, double equal sign distance, -implies}}
\tikzset{tensorelgarrow/.style={double, double equal sign distance, -implies}}
\tikzset{modarrow/.style={->, dashed}}
\tikzset{Amodar/.style={->, dashed}}
\tikzset{Dmodar/.style={->, dashed}}
\tikzset{DAmodar/.style={->, dashed}}
\newcommand{\lab}[1]{{\footnotesize$#1$}}
\newcommand{\scon}{\mathchoice
  {\mbox{\smaller$\#$}}
  {\mbox{\smaller$\#$}}
  {\mbox{\larger[-4]$\#$}}
  {\mbox{\larger[-5]$\#$}}
}
\newcommand{\si}{\mathchoice
  {\mbox{\smaller$\infty$}}
  {\mbox{\larger[-2]$\infty\mkern-.6mu$}}
  {\raisebox{.4pt}{\larger[-6]$\infty$}}
  {\raisebox{.4pt}{\larger[-7]$\infty$}}
}
\newcommand{\sge}{\mathchoice
  {\mbox{\smaller$\ge$}}
  {\mbox{\smaller$\ge$}}
  {\mbox{\larger[-6]$\ge$}}
  {\mbox{\larger[-7]$\ge$}}
}
\newcommand{\sg}{\mathchoice
  {\mbox{\smaller$>$}}
  {\mbox{\smaller$>$}}
  {\mbox{\larger[-6]$>$}}
  {\mbox{\larger[-7]$>$}}
}
\newcommand{\lzero}{\mathchoice
  {\mbox{\smaller$0$}}
  {\mbox{\smaller$0$}}
  {\raisebox{-.7pt}{\larger[-3]$0$}}
  {\raisebox{-.7pt}{\larger[-4]$0$}}
}
\newcommand{\gezero}{\mathchoice
  {\mbox{\smaller$0$}}
  {\mbox{\smaller$0$}}
  {\raisebox{-.8pt}{\larger[-3]$0$}}
  {\raisebox{-.8pt}{\larger[-4]$0$}}
}
\renewcommand{\P}{{{\mathbb P}}}
\newcommand{\Q}{{{\mathbb Q}}}
\newcommand{\R}{{{\mathbb R}}}
\newcommand{\Z}{{{\mathbb Z}}}
\newcommand{\FF}{{{\mathbb F}_p}}
\newcommand{\Qbar}{\overline{\Q}}
\newcommand{\Aa}{\mathbf{A}}
\newcommand{\Bb}{\mathbf{B}}
\newcommand{\Ff}{\mathbb{F}}
\newcommand{\aA}{\mathcal{A}}
\newcommand{\lL}{\mathcal{L}}
\newcommand{\zZ}{\mathcal{Z}}
\renewcommand{\mod}{{{\mathrm{mod\;}}}}
\DeclareMathOperator{\grm}{gr}
\newcommand{\coker}{{{\text {coker} \ }}}
\renewcommand{\ker}{{{\text {ker} \ }}}
\newcommand{\im}{{{\text {im} \ }}}
\newcommand{\Tors}{{{\text {Tors}}}}
\newcommand{\Span}{{{\text {Span} \ }}}
\newcommand{\spinc}{{{\mathrm{Spin}^c}}}
\newcommand{\spi}{\mathfrak{s}}
\newcommand{\gr}{{{\mathrm{gr} }}}
\renewcommand{\epsilon}{{\varepsilon}}
\newcommand{\hfhat}{{{\widehat{HF}}}}
\newcommand{\hfk}{\widehat{HFK}}
\newcommand{\cfk}{\widehat{CFK}}
\newcommand{\cfd}{\widehat{CFD}}
\newcommand{\bfx}{\mathbf{x}}
\newcommand{\bfy}{\mathbf{y}}
\newcommand{\Deltas}{\overline{\Delta}}
\newcommand{\black}{{{S_{\textsc{black}}}}}
\newcommand{\Ybar}{\overline{Y}}
\newcommand{\spibar}{\overline{\spi}}
\newcommand{\tspi}{\widetilde{\spi}}
\newcommand{\taubar}{\overline{\tau}}
\newcommand{\tc}{{{\tau^{\mathrm{c\mkern-1.5mu}}}}}
\newcommand{\dt}{{{\mathcal{D}^{\tau\!}}}}
\newcommand{\dtge}{{{\mathcal{D}_{\mkern-11.2mu\phantom{a}^{\sge \mkern.3mu\gezero}\!}^{\tau}}}}
\newcommand{\dtgz}{{{\mathcal{D}_{\mkern-11.2mu\phantom{a}^{\sg \mkern.3mu\lzero}\!}^{\tau}}}}
\title{Floer Simple Manifolds and L-Space Intervals}
\author{Jacob Rasmussen}
\address{Department of Pure Mathematics and Mathematical Statistics, University of Cambridge, UK}
\email{J.Rasmussen@dpmms.cam.ac.uk}
\thanks{JR was partially supported by EPSRC grant EP/M000648/1}
\author{Sarah Dean Rasmussen}
\address{Department of Pure Mathematics and Mathematical Statistics, University of Cambridge, UK}
\email{S.Rasmussen@dpmms.cam.ac.uk}
\thanks{SDR was supported by EPSRC grant EP/M000648/1}
\begin{document}
\begin{abstract}
An oriented three-manifold with torus boundary admits either
no L-space Dehn filling, a unique L-space filling, or an interval of L-space fillings.
In the latter case, which we call ``Floer simple,'' we construct an invariant 
which computes the interval of L-space filling slopes from the Turaev torsion
and a given slope from the interval's interior.  As applications, we give a new proof
of the classification of
Seifert fibered L-spaces over $S^2\!$,
and prove a special case of 
 a conjecture of Boyer and Clay \cite{BoyerClay}
about L-spaces formed by gluing three-manifolds along a torus.
\end{abstract}
\maketitle

\section{Introduction}

An oriented rational homology 3-sphere \(Y\) is called an L-space if the Heegaard Floer homology 
\(\hfhat(Y)\) satisfies \(\hfhat(Y,\spi) \simeq \Z\) for each \(\spinc\) structure \(\spi\) on \(Y\). Recent interest in the topological meaning of this condition has been stirred by a conjecture of Boyer, Gordon, and Watson \cite{BGW}, which states that a prime oriented three-manifold \(Y\) is an L-space if and only if \(\pi_1(Y)\) is non left-orderable. Subsequently, Boyer and Clay \cite{BoyerClay} studied a relative version of this problem for manifolds with toroidal boundary.

In this paper, we study the set of L-space fillings of a connected manifold \(Y\) with a single torus boundary component. If \(Y\) is such a manifold, we let 
$$ Sl(Y) = \{ \alpha \in H_1(\partial Y) \, | \, \alpha \ \text{is primitive} \}/ \pm 1$$
be the set of slopes on \(\partial Y\). \(Sl(Y)\) is a one-dimensional projective space defined over the rational numbers. If we fix a basis \( \langle \mu, \lambda \rangle \) for \(H_1(Y)\), we can identify \(Sl(Y)\) with \(\Qbar: = \Q \cup \{ \infty \}\) via the map \(a \mu + b \lambda \mapsto a/b\). We denote by \(Y(\alpha)\) the closed manifold obtained by Dehn filling \(Y\) with slope \(\alpha\), and let \(K_\alpha \subset Y(\alpha)\) be the core of the filling solid torus. 

\begin{definition} If \(Y\) is a compact connected oriented three-manifold with torus boundary, 
$$\lL(Y) = \{ \alpha \in Sl(Y) \, | \, Y(\alpha) \ \text{is an L-space}\}$$
 is the set of {\em L-space filling slopes} of \(Y\). 
\end{definition}

For the set \(\lL(Y)\) to be nonempty, we must have \(b_1(Y)=1\), which implies that \(Y\) is a rational homology \(S^1 \times D^2\). In this paper, we will restrict our attention to manifolds with multiple L-space fillings: that is, for which \(|\lL(Y)| >1\). Such manifolds can be easily characterized in terms of their Floer homology. Recall that a knot \(K\) in a rational homology sphere \(\overline{Y}\) is {\em Floer simple} \cite{HeddenLens} if the knot Floer homology \(\widehat{HFK}(K) \simeq \Z^{|H_1(Y)|}\). Equivalently, \(K\) is Floer simple if  \(\overline{Y}\) is an L-space and the spectral sequence from \(\widehat{HFK}(K)\) to \(\hfhat(\overline{Y})\) degenerates. 

\begin{definition} A compact oriented three-manifold \(Y\)  with torus boundary is {\em Floer simple} if it has some Dehn filling \(Y(\alpha)\) whose core \(K_\alpha\) is a Floer simple knot in \(Y(\alpha)\). 
\end{definition}

Then we have 
\begin{prop}
\label{Prop:MultiL}
\(|\lL(Y)|>1\) if and only if $Y\!$ is Floer simple. 
\end{prop}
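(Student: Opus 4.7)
The plan is to prove the two implications separately, using the \OS surgery exact triangle and the large surgery formula.

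For the $(\Leftarrow)$ direction, suppose $K_\alpha\subset Y(\alpha)$ is Floer simple, so $\hfk(K_\alpha)\simeq\Z^{|H_1(Y(\alpha))|}$. Parametrize $Sl(Y)$ so that $\alpha$ corresponds to the meridian of $K_\alpha$. The \OS large surgery formula computes $\hfhat(Y(\gamma))$ from the knot Floer complex of $K_\alpha$ for every slope $\gamma$ sufficiently far from $\alpha$; because this complex has minimal rank, so does $\hfhat(Y(\gamma))$, making each such $Y(\gamma)$ an L-space. Infinitely many $\gamma$ lie in this range, so $|\lL(Y)|>1$.

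For the $(\Rightarrow)$ direction, suppose $\alpha\ne\beta\in\lL(Y)$; I will show $K_\alpha$ is Floer simple in $\overline{Y}:=Y(\alpha)$. All Dehn fillings of $Y$ are surgeries on $K_\alpha$ in $\overline{Y}$, and for any Farey triad $\gamma_1,\gamma_2,\gamma_3\in Sl(Y)$ (pairwise distance $1$) the \OS surgery exact triangle gives the rank bound
$$\rank\hfhat(Y(\gamma_i))\le\rank\hfhat(Y(\gamma_j))+\rank\hfhat(Y(\gamma_k))$$
for each cyclic permutation. Since $b_1(Y)=1$, for the appropriate labelling of any such triad of non-longitudinal slopes one also has $|H_1(Y(\gamma_i))|=|H_1(Y(\gamma_j))|+|H_1(Y(\gamma_k))|$; combined with the rank bound, this shows that if two vertices of a triad are L-space slopes, so is the third. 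Walking along the Farey-graph geodesic from $\alpha$ to $\beta$ and iterating this step, every slope on the geodesic lies in $\lL(Y)$, so there is a slope $\gamma$ adjacent to $\alpha$ with $Y(\gamma)\in\lL(Y)$. Iterating the triangle argument through neighboring triads sharing an edge with $\alpha$, I obtain integer-framed L-space surgeries on $K_\alpha$ for all sufficiently large framings. A converse of the large surgery formula --- a mapping-cone argument reconstructing $\hfk(K_\alpha)$ from the ranks of these large integer surgeries --- then forces $\hfk(K_\alpha)$ to attain its minimal rank $|H_1(Y(\alpha))|$.

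The main obstacle is twofold: (i) verifying that the $|H_1|$ additive relation on Farey triads actually saturates the bound from the rank inequality at every inductive step, so that exactness genuinely forces the third filling to be an L-space, and (ii) establishing the converse of the large surgery formula for the rationally (not integrally) null-homologous knot $K_\alpha$, which requires a careful $\spinc$-decomposition of the mapping cone computing $\hfhat$ of integer surgeries.
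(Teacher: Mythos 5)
Your $(\Leftarrow)$ direction is essentially the argument the paper uses and is fine. The $(\Rightarrow)$ direction, however, has two genuine gaps. First, the Farey-triad step fails. The rank inequality from the exact triangle does hold cyclically, but the additivity $|H_1(Y(\gamma_i))|=|H_1(Y(\gamma_j))|+|H_1(Y(\gamma_k))|$ holds only for the labelling in which $\gamma_i=\gamma_j+\gamma_k$ is the mediant; so the combination ``two L-space vertices force the third'' is only valid when the unknown vertex is the mediant of the two known ones (this is Proposition 17 of \cite{BGW}, which the paper invokes in Lemma~\ref{Lem:Interval}). When the unknown slope is one of the summands, the inequality runs the wrong way, and the implication is simply false: for $Y$ the complement of $T(2,5)$, the triad $\{2,3,\infty\}$ has $3,\infty\in\lL(Y)$ but $2\notin\lL(Y)$, since the positive L-space slopes of $T(2,5)$ are exactly those $\ge 2g-1=3$. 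Consequently, knowing only that the two endpoints $\alpha,\beta$ of a Farey geodesic are L-space slopes gives no information about the interior vertices; your induction never starts, and you cannot conclude that a slope adjacent to $\alpha$, let alone all large integer framings on $K_\alpha$, lies in $\lL(Y)$. (Those slopes do end up in $\lL(Y)$, but that is part of what is being proved, not an available tool.)

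Second, even granting that all large integer surgeries on $K_\alpha$ are L-spaces, this does not force $\hfk(K_\alpha)\simeq\Z^{|H_1(Y(\alpha))|}$; it only forces each bent complex $A_{K_\alpha,\spi}$ to have homology $\Z$ (Lemma 6.7 of \cite{BoyerCebanu}), equivalently the positive/coherent chain structure of \cite{OSLens}. Floer simplicity is strictly stronger, and your stated goal is false for an arbitrary $\alpha\in\lL(Y)$: take $Y$ the trefoil complement with $\alpha=\infty$ and $\beta=7$, both in $\lL(Y)$; then $K_\alpha$ is the trefoil in $S^3$, whose $\hfk$ has rank $3>1$, so no ``converse of the large surgery formula'' can reconstruct a rank-one $\hfk$ from those L-space ranks. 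This is precisely why the paper does not prove Floer simplicity of the core of the given filling: it establishes the chain structure, deduces that all coefficients of $\tau(Y)$ are $0$ or $1$, and then uses $\chi(\hfk)\sim(1-[\mu])\tau(Y)$ (Proposition~\ref{Prop:Chi}) together with Turaev's theorem that the coefficients are $1$ above the Thurston norm (Proposition~\ref{Prop:AlexG}) to conclude (Corollary~\ref{Cor:LargeFS}) that the core of a filling along a slope of $\lL(Y)$ with $\phi$ exceeding $\|Y\|$ is Floer simple. Some such Euler-characteristic input is unavoidable; a pure rank count of large surgeries cannot supply it.
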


If \(K_\alpha \subset Y(\alpha)\) is Floer simple, then the Floer homology of any surgery on \(K_\alpha\) can be determined from \(\hfk(K_\alpha)\) using the Ozsv{\'a}th-Szab{\'o} mapping cone. The knot Floer homology, in turn, is determined by the Turaev torsion \(\tau(Y)\) via the relation 
$$\chi(\hfk(K_\alpha)) \sim (1-[\alpha]) \tau(Y)$$
established in Proposition~\ref{Prop:Chi}.
It follows that if \(Y\) is Floer simple, then the Floer homology of any Dehn filling of \(Y\) can be determined from the Turaev torsion together with a single \(\alpha \in \lL(Y)\). In particular, we can determine \(\lL(Y)\) from  this data, as described below.

Write \(H_1(Y) = \Z \oplus T\), where \(T\) is a torsion group, and let \(\phi:H_1(Y) \to \Z\) be the projection. Properly normalized,  \(\tau(Y)\) can be written as a sum 
$$\tau(Y) = \sum_{\substack{h \in H_1(Y) \\ \phi(h) \geq 0}} a_h [h],$$
where \(a_h = 1\) for all but finitely many \(h\in H_1(Y)\) with \(\phi(h)>0\), and \(a_0 \neq 0 \). For example, if \(H_1(Y) = \Z\), then
$$\tau(Y) = \frac{\Delta(Y)}{1-t} \in \Z[[t]],$$
where the Alexander polynomial \(\Delta(Y)\) is normalized to be an element of \(\Z[t]\) and we expand the denominator as a Laurent series in positive powers of \(t\). 

\begin{prop}
\label{Prop:TorsionCoeff}
When \(Y\) is Floer simple, every coefficient \(a_h\) of \(\tau(Y)\) is either \(0\) or \(1\). 
\end{prop}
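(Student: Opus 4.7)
The plan is to combine Proposition~\ref{Prop:Chi} with the Floer simple hypothesis, then invert $(1-[\alpha])$ in a suitable completion of $\Z[H_1(Y)]$. After possibly replacing $\alpha$ by $-\alpha$, I may assume $\phi([\alpha])>0$; the rational longitudinal slope is excluded from $\lL(Y)$ on first-Betti-number grounds, so every L-space filling slope satisfies $\phi([\alpha]) \neq 0$. Floer simplicity forces each summand $\hfk(K_\alpha,\spi_h)$ to have rank $0$ or $1$, so writing
\[
\chi(\hfk(K_\alpha)) \;=\; \sum_{h \in H_1(Y)} b_h\,[h],
\]
the coefficients $b_h$ lie in $\{-1,0,+1\}$ and have finite support.

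By Proposition~\ref{Prop:Chi} we have $(1-[\alpha])\tau(Y) = \chi(\hfk(K_\alpha))$ (any unit ambiguity being absorbed into the normalization of $\tau$), and inverting $(1-[\alpha])$ formally as $\sum_{k \ge 0}[k\alpha]$ in the $\phi$-adic completion yields
\[
a_h \;=\; \sum_{k \ge 0} b_{h-k\alpha}.
\]
Thus $a_h$ is a running partial sum of $b$-values along the column $S_t := \{h_0 + k\alpha : k \in \Z\} \subset H_1(Y)$, where $t$ is the $\spinc$ structure on $Y(\alpha)$ to which $\spi_h$ restricts. The conclusion $a_h \in \{0,1\}$ then reduces to the structural claim that within each column $S_t$, the nonzero $b$-values, read in the direction of $\alpha$, form an odd-length sequence $+1,-1,+1,\ldots,+1$ that alternates in sign and both begins and ends with $+1$; given this, the partial sums manifestly lie in $\{0,1\}$.

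The heart of the argument, and the main obstacle, is proving this alternating pattern. The underlying reason is that $Y(\alpha)$ is an L-space, so $\hfhat(Y(\alpha),t) \cong \Z$ for every $t$; this rank-$1$ group is computed from the restriction of $\hfk(K_\alpha)$ to $S_t$ via the \OS mapping cone, and Floer simplicity forces the internal cancellations of the cone to leave exactly one $\Z$ surviving. The only way this is compatible with each $\hfk(K_\alpha,\spi_h)$ having rank at most $1$ is for the Maslov gradings of the nonzero summands along $S_t$ to alternate in parity, with the extremal ones contributing $+1$ to $\chi$. This is the direct analogue, column by column, of the classical \OS theorem that an L-space knot in $S^3$ has Alexander polynomial of the form $\sum_{i=0}^{2n}(-1)^i t^{a_i}$, and I would expect the proof to be obtained by adapting that argument from the single column available in the $H_1(S^3_n(K)) = \Z/n$ setting to the general columns appearing here.
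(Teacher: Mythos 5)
Your proposal follows essentially the same route as the paper: Proposition~\ref{Prop:Chi} together with the expansion of \((1-[\mu])^{-1}\) reduces the statement to the claim that in each \(\mu\)-column the nonzero coefficients of \(\chi(\hfk(K_\alpha))\) form an alternating sequence beginning and ending in \(+1\) (the paper's ``coherent chains''), which the paper establishes exactly as you anticipate, by citing Boileau--Boyer--Cebanu--Walsh for \(A_{K,\spi}\simeq \Z\) and noting that the proof of Ozsv\'ath--Szab\'o's structure theorem for L-space knots carries over column by column. The only remark worth adding is that under the Floer simple hypothesis the step you call the main obstacle is easier than you suggest: the total rank of \(\hfk(K_\alpha)\) equals the number of columns while each column has rank at least one (its homology computes \(\hfhat\) of the L-space \(Y(\alpha)\)), so each column carries a single generator and the only content is that these generators all share the same \(\Z/2\)-grading, which follows from degeneration of the spectral sequence and \(\chi(\hfhat(Y(\alpha)))=|H_1(Y(\alpha))|\).
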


Let \(S[\tau(Y)] = \{h \in H_1(Y) \, | \, a_h\neq 0\}\) 
denote the {\em support} of \(\tau(Y)\), and let 
\(\iota:H_1(\partial Y) \to H_1(Y)\) be the map induced by inclusion. 
\begin{definition}
If \(Y\) is a Floer simple manifold, we define 
$$ \dt(Y) = \{x-y\,|\, x \notin S[\tau(Y)], y \in S[\tau(Y)],\phi(x)\geq \phi(y)\} 
\cap \mathrm{im} \, \iota \subset H_1(Y),$$
and write 
\(\dtgz(Y)\) for the subset of \(\dt(Y)\) consisting of those elements with  \(\phi(h)>0\). 
\end{definition}

Let \([l] \in Sl(Y)\) be the homological longitude ({\it i.e.} \(l\) is a primitive element of \(H_1(Y)\) such that \(\iota(l)\) is torsion.) We can now state our first main theorem:  

\begin{theorem}
\label{Thm:LY}
If \(Y\) is  Floer simple, then either
$\dtgz(Y) = \emptyset$ and \(\lL(Y) = Sl(Y) \setminus [l]\),
or $\dtgz(Y) \neq \emptyset$ and 
\(\lL(Y)\) is a closed  interval whose endpoints are consecutive elements of 
$\iota^{-1}(\dtgz(Y))$.
\end{theorem}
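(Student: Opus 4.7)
The strategy is to combine the Ozsv\'ath-Szab\'o rational surgery (mapping cone) formula with the tight control that Floer simplicity gives over $\hfk(K_\alpha)$. Fix $\alpha \in \lL(Y)$ with $K_\alpha$ Floer simple. Then $\hfk(K_\alpha)$ is free of rank $|H_1(Y(\alpha))|$ with each summand concentrated in a single bigrading, and Propositions~\ref{Prop:Chi} and~\ref{Prop:TorsionCoeff} identify this set of bigradings with the support $S[\tau(Y)]$, read modulo the class of $\alpha$. Thus the full knot Floer package for $K_\alpha$ is encoded by $S[\tau(Y)]$.

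Next, for any slope $\beta$, the mapping cone formula computes $\hfhat(Y(\beta),\mathfrak{s})$ as the homology of a cone built from the pieces $A_s = \hfk(K_\alpha,s)$ and their truncations, with structure maps $v_s$ and $h_s$ that, under the Floer simple hypothesis, are each either an isomorphism or zero on every rank-one summand. I expect to show that $Y(\beta)$ fails to be an L-space in a given $\mathfrak{s}$ precisely when the $\mathfrak{s}$-indexed sequence of Alexander levels exhibits a gap: some $x\notin S[\tau(Y)]$ lies above (in the $\phi$-direction) some $y\in S[\tau(Y)]$. Such a gap forces $\rank \hfhat(Y(\beta),\mathfrak{s})\geq 2$, and is indexed by the difference $x - y$.

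The differences $x-y$ that genuinely correspond to Spin$^c$-obstructions for $Y(\beta)$ are exactly those lying in $\mathrm{im}\,\iota$, and the positivity constraint $\phi(x)>\phi(y)$ singles out $\dtgz(Y)$ as the obstruction set governing the comparison of $\beta$ to $\alpha$. The non-L-space slopes are then the image in $Sl(Y)$ of $\iota^{-1}(\dtgz(Y))$: removing these from $Sl(Y)$ leaves a union of open intervals, and since $\alpha$ lies in one of them, that whole component is contained in $\lL(Y)$. A boundary check shows the endpoints themselves are L-space slopes, so the component is a closed interval bounded by consecutive elements of $\iota^{-1}(\dtgz(Y))$. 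If $\dtgz(Y)=\emptyset$, no obstruction exists and $\lL(Y)=Sl(Y)\setminus[l]$, with the longitude excluded because $Y([l])$ is not a rational homology sphere.

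\textbf{Main obstacle.} The delicate step is converting the local (per-Spin$^c$) mapping cone analysis into the global projective-geometric statement about intervals. Two points demand care: first, organizing the $v_s,h_s$ maps into a staircase whose total rank is minimal precisely when no $\dtgz$-pair is visible from $\beta$; and second, verifying that the extremal slopes are \emph{included} in $\lL(Y)$, making the interval closed. Both require showing that summing contributions across all $\mathfrak{s}$ picks up exactly those pairs $(x,y)$ whose difference lies in $\iota^{-1}(\dtgz(Y))$ and whose projection to $Sl(Y)$ separates $\beta$ from $\alpha$, which is the combinatorial heart of the argument.
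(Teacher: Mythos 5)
Your overall strategy---use Floer simplicity to make every piece of the mapping cone rank one with each $v_s,h_s$ either an isomorphism or zero, and then read off L-space fillings from ``gaps'' in $S[\tau(Y)]$---is the same engine that drives the paper's proof, where it is packaged as the proper-coloring criterion of Proposition~\ref{Prop:RedBlueBlack}. But as written the argument has a genuine gap, and one of its intermediate claims is false. The set of non-L-space slopes is \emph{not} ``the image in $Sl(Y)$ of $\iota^{-1}(\dtgz(Y))$'': by the statement of the theorem itself, $Sl(Y)\setminus\lL(Y)$ is an open interval, almost none of whose elements are lifts of elements of $\dtgz(Y)$, while the two lifts flanking $\alpha$ \emph{are} L-space slopes (the interval is closed). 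So ``delete the lifts and take the component containing $\alpha$'' does not produce $\lL(Y)$. What you actually need is the two-sided statement that $Y(\beta)$ fails to be an L-space exactly when some lift of some $\boldsymbol{\delta}\in\dtgz(Y)$ strictly separates $\beta$ from $\alpha$ in $Sl(Y)$, and that no obstruction arises otherwise (the equality case being what makes the endpoints L-space slopes). This is precisely the step you defer to as ``the combinatorial heart,'' so the proposal stops short of a proof at the decisive point.

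Concretely, the missing work is the translation from ``there exist $x\notin S[\tau(Y)]$, $y\in S[\tau(Y)]$ with $x-y\in \mathrm{im}\,\iota$ and $\phi(x-y)>0$'' into a condition on the particular slope $\beta$: for a fixed filling, such a pair only obstructs if the difference lies on the correct coset line for that filling, which is an arithmetic constraint, not a statement about $\phi$-degrees alone. In the paper this is handled by using Lemma~\ref{Lem:SpliceSurgery} to rewrite an arbitrary rational filling as a zero-framed integer surgery on $K_{\alpha}\# K_{-q/p}\subset Y(\alpha)\#L(q,-p)$, normalizing the auxiliary slope so that $\phi$ of it exceeds the degree of the torsion complement $\tc(Y)$ (possible by Lemma~\ref{Lem:Interval} and Proposition~\ref{Prop:AlexG}), computing the support of $\chi(\hfk)$ of the connected sum from $\tau(Y)$ via Proposition~\ref{Prop:Chi}, and then solving the resulting congruences modulo $pg$, where $g$ is the order of $\iota(l)$; the inequality in the representatives $b_{\pm}^{\boldsymbol{\delta}}$ that emerges is exactly the ``separation'' statement, and the fact that it is non-strict is what closes the interval. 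Any completion of your sketch needs an equivalent of this bookkeeping---in particular it must engage the torsion subgroup of $H_1(Y)$, which is why the endpoints are ``consecutive lifts'' in $\iota^{-1}(\dtgz(Y))$ rather than slopes determined by $\phi$-degrees of $S[\tau(Y)]$ alone, and it must justify, rather than assert, the boundary check that the extremal slopes are L-space fillings.
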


Given \(\tau(Y)\) and a Floer simple filling slope \(\alpha\) for \(Y\), it is thus straightforward to determine \(\lL(Y)\): the torsion determines the set $\dt(Y)$, and \(\lL(Y)\) is the smallest interval with endpoints in $\iota^{-1}(\dtgz(Y))$ which contains \(\alpha\) in its interior.

\subsection{Splicing}
Theorem~\ref{Thm:LY} can be used to address a problem raised by Boyer and Clay in \cite{BoyerClay}. Suppose that \(Y_1\) and \(Y_2\) are rational homology solid tori, and that \(\varphi:\partial Y_1 \to \partial Y_2\) is an orientation reversing diffeomorphism. The manifold \(Y_\varphi = Y_1 \cup_\varphi Y_2\) is said to obtained by {splicing} \(Y_1\) and \(Y_2\) together by \(\varphi\).

In \cite{BoyerClay}, Boyer and Clay studied how the presence of structure \((*)\) on Dehn fillings of the pieces \(Y_1\) and \(Y_2\) relates to the presence of structure \((*)\) on the splice $Y_\varphi$, where structure \((*)\) could be one of three things: 1)  a coorientable taut foliation; 2) a left-ordering on \(\pi_1(Y_\varphi)\); or 3) a nontrivial class in 
$HF^{red}(Y_\varphi)$ (as $HF^{red}$ vanishes on, and only on, L-spaces). When \(Y_1\) and \(Y_2\) are graph manifolds, they obtained very strong results in cases 1) and 2), in addition to less complete results in the third case. The analogy with the first two cases suggests the following conjecture, which is implicit in the work of Boyer and Clay and stated explicitly in certain cases by Hanselman \cite{Hanselman}.

\begin{conj}
\label{Conj:Splice}
Suppose that \(Y_1\) and \(Y_2\) as above are boundary incompressible, and let $\lL^{\circ}_i$ be the interior of $\lL(Y_i) \subset Sl(Y_i)$. Then \(Y_\varphi\) is an L-space if and only if \(\varphi_*(\lL^{\circ}_1) \cup \lL^{\circ}_2 = Sl(Y_2)\). 
\end{conj}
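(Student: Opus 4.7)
The plan is to attack Conjecture \ref{Conj:Splice}, at least in its main Floer simple case, by combining the combinatorial description of $\lL(Y_i)$ given in Theorem \ref{Thm:LY} with a gluing formula for $\hfhat(Y_\varphi)$. The first step is a reduction: by Proposition \ref{Prop:MultiL}, both $\lL^\circ_i$ can be nonempty only when both $Y_i$ are Floer simple, so I would treat that as the main case and handle separately the situation in which one piece admits at most one L-space filling. In the latter case boundary incompressibility forbids $Y_i$ from being a solid torus, and the known structure of $\cfd(Y_i)$ for non-Floer-simple pieces should supply extra generators that prevent $Y_\varphi$ from being an L-space, matching the right-hand side of the conjecture, which then fails vacuously.

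For the main case I would compute
\[ \hfhat(Y_\varphi) \;\cong\; \widehat{CFA}(Y_1) \boxtimes \cfd(Y_2), \]
using bordered Heegaard Floer invariants, or equivalently as the intersection pairing of the immersed multicurve invariants of the two pieces after applying $\varphi_*$. Propositions \ref{Prop:Chi} and \ref{Prop:TorsionCoeff} pin these invariants down from $\tau(Y_i)$ and a single slope $\alpha_i \in \lL^\circ_i$: the loops on each side are indexed by $S[\tau(Y_i)]$ and their corner set maps under $\iota$ onto $\dtgz(Y_i)$, which by Theorem \ref{Thm:LY} is exactly the endpoint set of $\lL(Y_i)$. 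This translates the L-space condition for $Y_\varphi$ into a minimal-intersection condition on the torus, phrased directly in terms of the two intervals of L-space slopes.

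The substantive step is then a rank count $\spinc$ structure by $\spinc$ structure, with $|H_1(Y_\varphi)|$ computed by Mayer-Vietoris and compared against the intersection count of the two multicurves. Each uncovered slope $\beta \in Sl(Y_2) \setminus (\varphi_*(\lL^\circ_1) \cup \lL^\circ_2)$ should produce one excess generator in some $\spinc$ summand, and the conjectured equivalence amounts to showing that these generators survive to $\hfhat(Y_\varphi)$. I expect the main obstacle to lie exactly here: one must rule out hidden differentials in the box tensor product that could cancel the excess generators from the two sides against one another. Boundary incompressibility should enter to guarantee that the bordered invariants are reduced, so that $\dim \hfhat(Y_\varphi)$ really equals the geometric intersection number of the two immersed curves rather than merely bounding it above, thereby promoting the easy direction (covering implies L-space, from a dimension count) into the sharper biconditional claimed by Boyer and Clay.
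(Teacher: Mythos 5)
The statement you are proving is a conjecture, not a theorem of the paper: the authors only establish the special case recorded as Theorem~\ref{Thm:Splice}, under the extra hypotheses that both $Y_i$ are Floer simple with $\dt\neq\emptyset$ and that $\varphi_*(\lL^{\circ}_1)\cap\lL^{\circ}_2\neq\emptyset$, and they do so by a route quite different from yours: they choose a slope $\mu_1$ in $\P^{-1}(\dot I_1\cap\varphi_{\P}^{-1}(\dot I_2))$, realize $Y_\varphi$ as a Dehn filling (zero-surgery) on the connected sum $K_{\mu_1}\#K_{\mu_2}$ inside the L-space $Y_1(\mu_1)\#Y_2(\mu_2)$, compute $\dt$ of the spliced complement from the multiplicativity of Turaev torsion, and then apply Theorem~\ref{thm: L-space interval in terms of beta/n} together with the coloring criterion of Proposition~\ref{Prop:RedBlueBlack}. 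This is exactly why the overlap hypothesis is needed in their argument, and why the full conjecture remains open in the paper.

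Your proposal has genuine gaps precisely at the points you flag as expectations. First, the reduction step is not available: when one of the pieces is not Floer simple (in particular when it has no L-space filling at all), the claim that the structure of $\cfd(Y_i)$ forces extra surviving generators in $\hfhat(Y_\varphi)$ is not known, and the paper explicitly singles this case out as ``considerably more difficult to address with current technology''; showing the right-hand side fails is easy (an interior $\lL^{\circ}_i$ can never cover a missing $Sl(Y_2)$), but you must then prove $Y_\varphi$ is not an L-space, which is the hard direction. Second, in the Floer simple case the crucial step --- that no differentials in $\widehat{CFA}(Y_1)\boxtimes\cfd(Y_2)$ cancel the ``excess'' generators, i.e.\ that the rank of the pairing equals a geometric intersection count --- is not a consequence of boundary incompressibility or of the invariants being reduced; it is exactly the content of Hanselman--Watson's ``simple loop type'' condition and of the subsequent identification of Floer simple with simple loop type in \cite{HRRW}, neither of which is established here (Proposition~\ref{Prop:FloerSimpleCFD} computes $\cfd$ of a Floer simple piece but does not by itself control the tensor product with an arbitrary second factor). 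Also, the identification of the ``corner set'' of the bordered invariant with $\dtgz(Y_i)$ and hence with the endpoints of $\lL(Y_i)$ is asserted rather than proved. As it stands, the proposal is a reasonable program (close in spirit to the Hanselman--Watson/\cite{HRRW} line of attack) but not a proof, and it does not reproduce the surgery-presentation argument the paper actually uses for its partial result.
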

In particular, the conjecture says that in order for \(Y_\varphi\) to be an L-space, both \(Y_1\) and \(Y_2\) must be Floer simple.  Our second main result is

\begin{theorem} 
\label{Thm:Splice}
Suppose that \(Y_1\) and \(Y_2\) as above are Floer simple and have \(\dt \neq \emptyset\), and that \(\varphi_*(\lL^{\circ}_1) \cup \lL^{\circ}_2 \neq \emptyset\). Then \(Y_\varphi\) is an L-space if and only if \(\varphi_*(\lL^{\circ}_1) \cup \lL^{\circ}_2  = Sl(Y_2)\). 
\end{theorem}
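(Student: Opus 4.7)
The plan is to combine Theorem~\ref{Thm:LY} with a gluing formula for $\hfhat$, translating the slope-covering condition $\varphi_*(\lL_1^\circ) \cup \lL_2^\circ = Sl(Y_2)$ into the rank of $\hfhat(Y_\varphi)$. By Theorem~\ref{Thm:LY} and $\dt(Y_i) \neq \emptyset$, each $\lL(Y_i)$ is a closed interval with nonempty interior and rational endpoints in $\iota^{-1}(\dtgz(Y_i))$. Two such open arcs can cover the circle $Sl(Y_2)$ (sitting inside $\mathbb{RP}^1$) only if they overlap---their complement arcs have rational endpoints, so any uncovered point would have to be rational, which would then violate the covering condition. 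Under the covering assumption, I would therefore select $\alpha \in \lL_1^\circ$ with $\varphi_*(\alpha) \in \lL_2^\circ$, making both $Y_1(\alpha)$ and $Y_2(\varphi_*(\alpha))$ L-spaces with Floer simple cores.

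\textbf{Gluing computation.} I would then view $Y_\varphi$ as obtained from $Y_2(\varphi_*(\alpha))$ by excising a tubular neighborhood of its core and regluing $Y_1$ via $\varphi$, and apply the Ozsv\'ath-Szab\'o mapping cone formula (or, equivalently, a bordered Heegaard Floer pairing) to compute $\hfhat(Y_\varphi)$. Floer simplicity of both cores, together with Propositions~\ref{Prop:Chi} and~\ref{Prop:TorsionCoeff}, makes every coefficient of $\hfk(K_{i,\cdot})$ explicit---each is $0$ or $1$, indexed by the supports $S[\tau(Y_i)]$---so $\mathrm{rank}\,\hfhat(Y_\varphi)$ reduces to a combinatorial count built from $S[\tau(Y_1)]$, $S[\tau(Y_2)]$, and $\varphi$.

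\textbf{Matching rank to covering.} Next I would compare this combinatorial count to $|H_1(Y_\varphi)|$. In the direction ``covering implies L-space,'' every candidate surplus generator in the mapping cone cancels, yielding $\mathrm{rank}\,\hfhat(Y_\varphi) = |H_1(Y_\varphi)|$. Conversely, supposing $Y_\varphi$ is an L-space and $\varphi_*(\lL_1^\circ) \cup \lL_2^\circ \neq \emptyset$, any uncovered $\gamma \in Sl(Y_2)$ would, by the endpoint characterization of $\lL(Y_i)$ via $\iota^{-1}(\dtgz(Y_i))$, contribute a non-cancelling pair to the mapping cone, so $\mathrm{rank}\,\hfhat(Y_\varphi) > |H_1(Y_\varphi)|$, contradicting the L-space hypothesis.

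\textbf{Main obstacle.} The heart of the argument lies in the final step: building the precise dictionary between uncovered slopes and surplus generators requires careful tracking of $\spinc$-structures on $Y_\varphi$, of the $H_1(\partial Y_i)$-action across the gluing torus, and of how the endpoints $\iota^{-1}(\dtgz(Y_i))$ enter the pairing complex. A secondary technicality is showing independence of the outcome from the auxiliary slope $\alpha$, which should follow from invariance of the rank formula under the natural $H_1$-action.
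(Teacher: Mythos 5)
Your overall strategy is the one the paper uses---realize the splice as surgery on a knot in an L-space and reduce everything to combinatorics of Turaev torsion supports via the mapping cone---but as written there are two genuine gaps. First, the surgery setup is not actually in place: ``excising a tubular neighborhood of the core of $Y_2(\varphi_*(\alpha))$ and regluing $Y_1$ via $\varphi$'' merely reconstructs $Y_\varphi$ and is not a surgery description to which the Ozsv\'ath--Szab\'o mapping cone applies. What is needed is Lemma~\ref{Lem:SpliceSurgery}: with $\mu_1=\alpha$ chosen in $\lL_1^\circ\cap\varphi_*^{-1}(\lL_2^\circ)$ (this is where the intersection hypothesis enters), $Y_\varphi$ is an integral surgery on the connected sum $K_{\mu_1}\# K_{\mu_2}\subset Y_1(\mu_1)\#Y_2(\mu_2)$, which is Floer simple in an L-space, so that Proposition~\ref{Prop:RedBlueBlack} and Theorem~\ref{Thm:LY} (in its surgery-label form) can be brought to bear on the connected-sum complement $Y$. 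Also, a small slip: $\dt(Y_i)\neq\emptyset$ does not give $\dtgz(Y_i)\neq\emptyset$; when $\dtgz=\emptyset$ the L-space set is $Sl(Y_i)\setminus[l_i]$, not a closed interval, and the paper's Section~6 statement treats this case separately.

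Second, and more seriously, the step you label the ``main obstacle'' is the theorem; it is not a technicality to be filled in later, and the heuristic ``each uncovered slope contributes a non-cancelling pair, each covered configuration cancels'' does not hold at the level of slopes in any direct way. In the paper this occupies essentially all of Section~6: one computes $\dt$ of the connected-sum complement (the four families $A_0,A_1,A_2,A_3$ built from $S[\tau(Y_1)]$, $S[\tau(Y_2)]$ and the gluing data), translates the L-space condition for the relevant filling into the arithmetic inequalities $({\textsc{l}}.i)$--$({\textsc{l}}.iii)$, translates the covering condition $\varphi_{\P}(\dot I_1)\cup\dot I_2=\P(H_1(\partial Y_2))$ into the inequalities $(\textsc{i}.i)$--$(\textsc{i}.iii)$, and then proves the two sets equivalent. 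The equivalence is delicate precisely because the L-space conditions quantify over all lifts $b\equiv b_1\ (\mathrm{mod}\ p_1g_1)$, $b\equiv b_2\ (\mathrm{mod}\ p_2g_2)$, while the covering conditions only see the extreme lifts of each $\boldsymbol{\delta}_i$; closing this gap uses the fact that the complement of $\dt(Y_i)$ in $\iota_i(m_i\Z_{\ge0}+l_i\Z)$ is closed under addition together with a Euclidean-algorithm style induction on pairs $(b_1,b_2)$. None of this $\spinc$/congruence bookkeeping is supplied or even sketched in your proposal, so as it stands the argument establishes neither implication.
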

Hanselman and Watson \cite{HanWat} have proved a  similar theorem using bordered Floer homology.  
The restriction that \(\varphi_*(\lL^{\circ}_1) \cap \lL^{\circ}_2 \neq \emptyset\) represents a limitation of our approach, rather than anything intrinsic to the problem. To be specific, Theorem~\ref{Thm:Splice} is proved by writing \(Y_\varphi\) as surgery on a connected sum of Floer simple knots. When  \(\varphi_*(\lL^{\circ}_1) \cap \lL^{\circ}_2 = \emptyset\), we have no convenient way of representing the splice as surgery on a knot in an L-space. In contrast, Hanselman and Watson's approach does not require this hypothesis, but does need a condition on the bordered Floer homology, which they call {\em simple loop type}. In a subsequent joint paper \cite{HRRW}, it is shown that the conditions of being Floer simple and being simple loop type  are equivalent thus enabling us to remove the hypothesis that  \(\varphi_*(\lL^{\circ}_1) \cap \lL^{\circ}_2 \neq \emptyset\). The proof of this fact relies on Proposition~\ref{Prop:FloerSimpleCFD} of the current paper, where
 we explicitly compute the bordered Floer homology \(\cfd(Y,\mu,\lambda)\) of a Floer simple manifold \(Y\) for an appropriate choice of \(\mu, \lambda \in H_1(\partial Y)\) parametrizing \(\partial Y\). 
 
We briefly discuss those aspects of Conjecture~\ref{Conj:Splice} which are not covered by Theorem~\ref{Thm:Splice} and its generalizations. As stated, the conjecture implies that a Floer simple manifold \(Y\) with \(\dt (Y)= \emptyset\) is boundary compressible. This is easily seen to be the case when \(H_1(Y) \simeq \Z\), or more generally, when \(Y\) is semi-primitive ({\it c.f.} Proposition~\ref{Prop:FST} below), but in general we have very little idea how to address this question. (Indeed, this seems like the weakest point of the conjecture.) The other situation which is not addressed  by Theorem~\ref{Thm:Splice} is the case where one or both of \(Y_1\) and \(Y_2\) is not Floer simple. 
 It seems plausible that bordered Floer homology could be used to prove the conjecture when  \(|\lL(Y_1)| = 1\) and \(|\lL(Y_2)|>1\), or when
 \(|\lL(Y_1)| = |\lL(Y_2)|=1\). In contrast, the case where one or both of the \(Y_i\) has {\em no} L-space fillings seems considerably more difficult to address with current technology. 

\subsection{Floer homology solid tori}
The class of Floer simple manifolds with \(\dtgz = \emptyset\) is of special interest. If \(Y\) is a rational homology
\(S^1 \times D^2\), we say that \(Y\) is {\em semi-primitive} if the torsion subgroup of \(Y\) is contained in the image of \(\iota\), and that \(Y\) has genus \(0\) if \(H_2(Y,\partial Y)\) is generated by a surface of genus \(0\). 

\begin{prop}
\label{Prop:FST}
If \(Y\) is semi-primitive, the following conditions are equivalent:
\begin{enumerate}
\item \(Y\) is Floer simple and \(\dtgz(Y) = \emptyset\).
\item \(Y\) is Floer simple and has genus \(0\). 
\item \(Y\) has genus \(0\) and has an L-space filling. 
\end{enumerate}
\end{prop}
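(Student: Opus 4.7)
The plan is to prove the cycle $(2) \Rightarrow (3) \Rightarrow (1) \Rightarrow (2)$. A useful preliminary, used throughout, is that under semi-primitivity $\mathrm{im}\,\iota = n\Z \oplus T$ for some $n \geq 1$ after an appropriate basis choice: since $Y$ is a rational homology solid torus, $\mathrm{im}\,\iota$ has rank $1$ in $H_1(Y) = \Z \oplus T$, and semi-primitivity forces the torsion subgroup of $\mathrm{im}\,\iota$ to equal $T$. The implication $(2) \Rightarrow (3)$ is then immediate from Proposition~\ref{Prop:MultiL}: if $Y$ is Floer simple, $|\lL(Y)| > 1 \geq 1$, so $\lL(Y) \neq \emptyset$.

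For $(3) \Rightarrow (1)$, assume $Y$ has genus $0$ and admits an L-space filling $Y(\alpha)$. I would first show that a planar generator of $H_2(Y, \partial Y)$, combined with semi-primitivity, forces the Alexander polynomial $\Delta(Y) = (1 - t_0)\tau(Y)$ (with $t_0$ a generator of $H_1(Y)/T$) to be concentrated in $\phi$-degree $0$, i.e.\ $\Delta(Y) \in \Z[T]$. Using this together with Proposition~\ref{Prop:Chi} and the L-space hypothesis on $Y(\alpha)$, I would show that $\mathrm{rank}(\hfk(K_\alpha)) = |H_1(Y(\alpha))| = \mathrm{rank}(\hfhat(Y(\alpha)))$, so $K_\alpha$ is Floer simple and hence so is $Y$. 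Proposition~\ref{Prop:TorsionCoeff} then forces each coefficient of $\Delta(Y)$ to lie in $\{0,1\}$; combined with the cofiniteness of the support of $\tau(Y)$ at positive $\phi$, this gives $\Delta(Y) = \sum_{t \in T}[t]$ and $S[\tau(Y)] = \{h : \phi(h) \geq 0\}$. Thus $\dtgz(Y) = \emptyset$, since any $x \notin S[\tau(Y)]$ has $\phi(x) < 0$, incompatible with $\phi(x) \geq \phi(y) \geq 0$ for $y \in S[\tau(Y)]$.

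For $(1) \Rightarrow (2)$, assume $Y$ is Floer simple with $\dtgz(Y) = \emptyset$. Unwinding the definition of $\dtgz$ gives upward-closure of $S[\tau(Y)]$ within each $\mathrm{im}\,\iota$-coset: if $y \in S[\tau(Y)]$ and $x \in y + \mathrm{im}\,\iota$ with $\phi(x) > \phi(y)$, then $x \in S[\tau(Y)]$. Combined with $0 \in S[\tau(Y)]$, the cofiniteness from Proposition~\ref{Prop:TorsionCoeff}, and the strong L-space-filling information $\lL(Y) = Sl(Y) \setminus [l]$ supplied by Theorem~\ref{Thm:LY} (fed back through Proposition~\ref{Prop:Chi} to constrain $\tau(Y)$ coset by coset), this pins down $\tau(Y)$ in exactly the form found in $(3) \Rightarrow (1)$, with $\Delta(Y) \in \Z[T]$. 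The remaining step is to convert this algebraic conclusion back into topology via the Alexander-norm/Thurston-norm inequality (McMullen), which is sharp in the Floer simple setting by Ozsv\'ath-Szab\'o's $\hfk$-detection of the Thurston norm applied to the Floer simple filling $K_\alpha$: this sharp equality forces the Thurston norm of the generator of $H_2(Y, \partial Y)$ to vanish, yielding a planar generator.

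The main obstacle is this last step of $(1) \Rightarrow (2)$, converting the algebraic statement ``$\Delta(Y) \in \Z[T]$'' into the topological statement ``$H_2(Y, \partial Y)$ admits a planar generator.'' The conversion rests on sharpness of the Alexander-norm inequality under Floer simplicity and on careful bookkeeping of boundary contributions on $\partial Y$. Together with the converse input needed in $(3) \Rightarrow (1)$ (genus $0$ $\Rightarrow$ $\Delta(Y) \in \Z[T]$ via the same norm-theoretic machinery), these form the technical backbone of the proof; everything else is combinatorics of $S[\tau(Y)]$ and elementary applications of Propositions~\ref{Prop:MultiL}, \ref{Prop:TorsionCoeff}, and \ref{Prop:Chi}.
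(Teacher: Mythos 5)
Your cycle $(2)\Rightarrow(3)\Rightarrow(1)\Rightarrow(2)$ has the same skeleton as the paper's argument, and $(2)\Rightarrow(3)$ is fine, but the two substantive implications have genuine gaps, and the torsion ``normal form'' you aim for is not the right one. In $(3)\Rightarrow(1)$ the real content is proving that $Y$ is Floer simple, and Euler-characteristic information cannot do that: Proposition~\ref{Prop:Chi} computes $\chi(\hfk(K_\alpha))$, which bounds the rank of $\hfk(K_\alpha)$ from \emph{below}, not above, and a knot in an L-space need not be Floer simple (the spectral sequence from $\hfk(K_\alpha)$ to $\hfhat(Y(\alpha))$ may have differentials). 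The paper supplies the missing mechanism via Proposition~\ref{Prop:LS1S2}: genus $0$ gives $Y(l)=Z\#(S^1\times S^2)$, the Ai--Peters twisted-coefficient surgery triangle gives $\hfhat(Y(l);\Lambda_\omega)=0$, so one L-space filling propagates to $\alpha+nl$ for all $n$, making $[l]$ a limit of L-space slopes; Floer simplicity and $\dtgz(Y)=\emptyset$ then follow from Theorem~\ref{Thm:LY}. Nothing in your outline substitutes for this step. Moreover, your intermediate claims are false: genus $0$ does not force $\Delta(Y)$ (or $\Deltas(Y)$) to be concentrated in $\phi$-degree $0$, and ``Floer simple with $\dtgz=\emptyset$'' does not force $S[\tau(Y)]=\{h:\phi(h)\ge 0\}$. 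The model examples $N_g=M(\emptyset;1/g,-1/g)$ satisfy all three conditions of the proposition, yet $\taubar(N_g)=(1-t^g)/(1-t)^2$, so $\Deltas$ has degree $g-1>0$, and $\sigma\notin S[\tau(N_g)]$ even though $\phi(\sigma)=0$. The correct conclusion is only the ``generalized solid torus'' condition $\deg\Deltas(Y)<g_Y$; then $\dtgz(Y)=\emptyset$ holds because every difference $x-y$ with $x\notin S[\tau(Y)]$, $y\in S[\tau(Y)]$, $\phi(x)\ge\phi(y)$ has $\phi(x-y)<g_Y$, while non-torsion elements of $\mathrm{im}\,\iota$ have $\phi\ge g_Y$ (Proposition~\ref{Prop:dtgz}) --- not because the support is all of $\{\phi\ge0\}$.

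The same wrong normal form derails your $(1)\Rightarrow(2)$: the Thurston norm of the generator does not vanish in general, so ``sharpness forces norm zero'' is not the right endgame. The paper's route (Proposition~\ref{Prop:LSemi}) is a genuine combinatorial argument on the coefficients of $\tau(Y)$, using semi-primitivity together with the product formula for $\tau(Y(l))$ (Lemma~\ref{Lem:RedAlex}), to pin down $\taubar(Y)\sim(1-t^{g_Y})/(1-t)^2$; this is not a formal consequence of Theorem~\ref{Thm:LY} plus Proposition~\ref{Prop:Chi}. With that in hand, Corollary~\ref{Cor:TNorm} gives $\|Y\|=\deg\Deltas(Y)-1=g_Y-2$, and since any embedded surface generating $H_2(Y,\partial Y)$ has at least $g_Y$ boundary components, a norm-minimizing generator must have genus $0$. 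Your instinct to use $\hfk$-detection of the Thurston norm is the right one --- that is exactly Corollary~\ref{Cor:TNorm} --- but it must be combined with the correct degree $g_Y-1$ for $\Deltas(Y)$ and the boundary-component count, rather than with the claim that the norm is zero.
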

For example, if 
 \(K\subset S^1 \times S^2\) has a lens space surgery, then the complement of \(K\) satisfies the conditions of the proposition. Such knots have been studied by Berge \cite{BergeTorus}, Gabai \cite{GabaiST}, Cebanu \cite{Cebanu}, and 
Buck, Baker and Leucona \cite{BBL}. Other examples of such manifolds are discussed in section~\ref{Subsec:FSTExamples}. 

 The conditions of Proposition~\ref{Prop:FST} are closely related to Watson's notion of a {\em Floer homology solid torus}. Suppose that \(Y\) is a rational homology \(S^1 \times D^2\) with homological longitude \(l\), and that 
 \(m \in H_1(\partial Y)\) satisfies \(m \cdot l =1\). 
\begin{definition} \cite{WatsonFST}
\(Y\) is a {\em Floer homology solid torus} if \(\widehat{CFD}(Y,m,l)\simeq \widehat{CFD}(Y,m+l, l)\).  
\end{definition}

\begin{prop}
\label{Prop:Fake=Floer}
If \(Y\) satisfies the conditions of Proposition~\ref{Prop:FST}, then it is a Floer homology solid torus. 
\end{prop}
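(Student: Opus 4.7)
The plan is to apply the explicit computation of $\cfd(Y,\mu,\lambda)$ provided by Proposition~\ref{Prop:FloerSimpleCFD} with the two parameterizations $(m,l)$ and $(m+l,l)$ and compare the results. In that formula, the generators of $\cfd$ are indexed by the support $S[\tau(Y)]$ of the (normalized) Turaev torsion (with $0$-$1$ coefficients, by Proposition~\ref{Prop:TorsionCoeff}), and the differentials split into two kinds: short ``meridional'' loops coming from the base support, and longer chains of arrows whose lengths are controlled by the elements of $\dtgz(Y)$.

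Under the hypothesis $\dtgz(Y) = \emptyset$, the long chains disappear, and $\cfd(Y,m,l)$ reduces to a direct sum of short loops. The semi-primitive hypothesis is essential here: since $T\subset \iota(H_1(\partial Y))$, every torsion class in $H_1(Y)$ is represented on the boundary, and $S[\tau(Y)]$ is a single $\phi$-level of $|T|$ classes. Each class contributes one copy of a ``standard solid-torus'' building block in $\cfd$ (with the appropriate idempotent shift), so $\cfd(Y,m,l)$ is a direct sum of $|T|$ parallel solid-torus summands indexed by $T$.

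To conclude, run the same computation with $(m+l,l)$ in place of $(m,l)$. The set $\dtgz(Y)$ is defined purely in terms of $\tau(Y)$ and $\phi$, neither of which changes under $m \mapsto m+l$, so it remains empty; the indexing torsion group $T$ likewise depends only on $l$. Thus the two computations produce direct sums of the same number of solid-torus summands, and on each summand the relabeling of torus-algebra arrows induced by $m\mapsto m+l$ is exactly the one produced by Dehn twisting along $l$, which extends over the model solid torus and hence acts on its $\cfd$ by an isomorphism. Assembling these isomorphisms summand by summand gives $\cfd(Y,m,l)\simeq\cfd(Y,m+l,l)$, as desired.

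The main obstacle is extracting from Proposition~\ref{Prop:FloerSimpleCFD} the precise shape of each summand when $\dtgz(Y)=\emptyset$ and matching the torus-algebra action with the standard solid-torus model: once that identification is in hand, the invariance under $m\mapsto m+l$ is a formal consequence of the extension of the longitudinal Dehn twist, but pinning down the identification requires careful bookkeeping with idempotents and arrows in the bordered formula, and is the step where semi-primitivity (which forces all generators to lie in the same $\phi$-level) does the real work.
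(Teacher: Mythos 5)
There is a genuine gap, and it sits exactly where you locate "the real work." Your central structural claim --- that semi-primitivity plus \(\dtgz(Y)=\emptyset\) forces the generators of \(\cfd(Y,m,l)\) to lie in a single \(\phi\)-level, indexed by \(T\), so that the module splits into \(|T|\) standard solid-torus summands --- is not established and is false as stated. First, \(S[\tau(Y)]\) is never a single \(\phi\)-level: by Proposition~\ref{Prop:AlexG} the torsion has coefficient \(1\) on every class with \(\phi(h)>\|Y\|\), so its support is infinite. Even the relevant finite object, \(V^0\simeq\hfk(K_\mu)\), has support spread over \(\phi\)-levels \(0,\ldots,\|Y\|+\phi(\mu)\) (for the choices used in Proposition~\ref{Prop:FloerSimpleCFD}, levels \(0\) through \(2g-2\)), and the \(D_1\), \(D_3\), \(D_{23}\) arrows join these generators into loops that are not, in general, direct sums of solid-torus loops. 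If they were, invariance under the longitudinal twist would be automatic because the twist extends over an honest solid torus; but Floer homology solid tori (e.g.\ \(N_g\)) are interesting precisely because the twist does \emph{not} extend over \(Y\), so any argument that reduces to "the twist extends over each model solid-torus summand" is carrying the entire burden in the unproven decomposition. Second, you cannot apply the explicit description of Proposition~\ref{Prop:FloerSimpleCFD} directly to the parametrizations \((m,l)\) and \((m+l,l)\): that description is derived only when both parametrizing curves are Floer simple filling slopes (interior of \(\lL(Y)\), with large \(\phi\)), whereas \(l\) is the homological longitude and \(Y(l)\) is not even a rational homology sphere. Knowing abstractly that \(\cfd\) is "determined by \(\tau(Y)\) and a Floer simple slope" for each parametrization does not by itself produce an isomorphism between the two answers, since the determining data sit differently relative to \((m,l)\) and \((m+l,l)\).

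For comparison, the paper first uses the FST hypotheses to show \(Y\) is a generalized solid torus, i.e.\ \(\deg\Deltas(Y)<g_Y\) (Proposition~\ref{Prop:LSemi}, a torsion computation --- this, rather than a one-level support statement, is what semi-primitivity buys), and then proves Proposition~\ref{Prop:FakeFloerFull} without further use of semi-primitivity. There the Floer-homology-solid-torus condition is converted, via change-of-basis bimodules, into the statement \(\cfd(Y,\mu,\lambda)\simeq\cfd(Y,\tau_l(\mu),\tau_l(\lambda))\) for a parametrization to which Proposition~\ref{Prop:FloerSimpleCFD} genuinely applies (\(\mu=m\), \(\lambda=l-Nm\), \(N\gg0\), using \(\|Y\|=g-2\)); the isomorphism is then written down explicitly on generators by \(x\mapsto x+\lfloor\phi(x)/g\rfloor\,l\), and one checks by hand that this bijection of supports intertwines the \(D_1\), \(D_3\), and \(D_{23}\) arrows. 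To repair your proposal you would need both the reduction to a computable parametrization and an explicit generator-level matching of this kind; the asserted splitting into solid-torus blocks is not available as a shortcut.
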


Manifolds with \(\dtgz(Y) = \emptyset\) play an important role in the  notion of  NLS detection introduced by Boyer and Clay in \cite{BoyerClay}. If \(Y\) is a rational homology \(S^1\times D^2\) and \(\alpha \in Sl(Y)\), \(\alpha\) is said to be {\em strongly NLS detected} if \(Y(\alpha)\) is not an L-space; \(\alpha\) is {\em NLS detected} if certain splicings of \(Y\) with a family of Floer homology solid tori are not L-spaces. (For the precise definition, see section~\ref{SubSec:NLS}). By Theorem~\ref{Thm:LY}, the set of strongly NLS detected slopes is either a single point, an open interval in \(Sl(Y)\), or all of \(Sl(Y)\). By combining Theorem~\ref{Thm:Splice} with some direct geometric computation, we can show

\begin{cor}
\label{Cor:NLS}
If \(Y\) is a rational homology \(S^1\times D^2\),  the set of NLS detected slopes in \(Sl(Y)\) is the closure of the set of strongly NLS detected slopes.  \end{cor}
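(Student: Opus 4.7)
The plan is to translate NLS-detection of a slope $\alpha\in Sl(Y)$ into a condition on $\lL(Y)$ via Theorem \ref{Thm:Splice}, showing that the NLS-detected set coincides with $Sl(Y)\setminus\lL^\circ(Y)$, which is precisely the closure of the strongly NLS-detected set $Sl(Y)\setminus\lL(Y)$.

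By the definition in section \ref{SubSec:NLS}, $\alpha$ is NLS-detected when there exists a Floer homology solid torus $Z$ (from a prescribed family) and an orientation-reversing $\varphi\colon\partial Y\to\partial Z$ identifying $\alpha$ with the rational longitude $[l_Z]$, such that $Y\cup_\varphi Z$ fails to be an L-space. Since the degenerate case $Z=S^1\times D^2$ recovers ordinary Dehn filling, the inclusion $Sl(Y)\setminus\lL(Y)\subseteq\{\text{NLS-detected}\}$ is immediate, and after taking closures the left-hand side becomes $Sl(Y)\setminus\lL^\circ(Y)$.

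For the reverse inclusion in the main case $|\lL(Y)|>1$, I would proceed as follows. By Proposition \ref{Prop:Fake=Floer}, each $Z$ in the NLS family is Floer simple with $\dtgz(Z)=\emptyset$, hence $\lL(Z)=\lL^\circ(Z)=Sl(Z)\setminus\{[l_Z]\}$. For any gluing $\varphi$ with $\varphi_*^{-1}([l_Z])=\alpha$, Theorem \ref{Thm:LY} guarantees that $\lL^\circ(Y)$ has at least two slopes, so $\varphi_*(\lL^\circ(Y))\cap\lL^\circ(Z)\neq\emptyset$, and the hypotheses of Theorem \ref{Thm:Splice} are met (with the $\dt(Z)\neq\emptyset$ requirement arranged by restricting to $Z$'s with nontrivial $H_1$-torsion). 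Applying Theorem \ref{Thm:Splice}, the splice $Y\cup_\varphi Z$ is an L-space iff $\varphi_*(\lL^\circ(Y))\cup(Sl(Z)\setminus\{[l_Z]\})=Sl(Z)$, iff $[l_Z]\in\varphi_*(\lL^\circ(Y))$, iff $\alpha\in\lL^\circ(Y)$. Thus $\alpha$ is NLS-detected precisely when $\alpha\notin\lL^\circ(Y)$, yielding the desired equality.

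The non-Floer-simple edge cases $|\lL(Y)|\leq 1$ require a short separate argument: $|\lL(Y)|=0$ is immediate, and when $|\lL(Y)|=1$ one must exhibit, for the unique L-space slope $\alpha_0$, a Floer homology solid torus $Z$ such that splicing at $\alpha_0$ is not an L-space via a direct mapping-cone computation, since Theorem \ref{Thm:Splice} does not apply with $Y$ non-Floer-simple. Verifying the $\dt(Z)\neq\emptyset$ condition for a sufficiently rich subfamily of NLS solid tori, and carrying out this $|\lL(Y)|=1$ splicing computation, are the main technical obstacles; the core Floer-simple argument is a clean application of Theorem \ref{Thm:Splice}.
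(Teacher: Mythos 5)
There is a genuine gap, and it sits exactly where the corollary has content beyond Theorem~\ref{Thm:Splice}: the slopes (and manifolds) to which Theorem~\ref{Thm:Splice} does not apply. Your Floer simple case is essentially the paper's easy direction — when \(\alpha\in\lL^\circ(Y)\), Theorem~\ref{Thm:Splice} (or rather its refined form in Section~\ref{s: L-space if and only if intervals cover.}, since you also need to handle \(\dt(Y)=\emptyset\), not just \(\dt(Z)=\emptyset\)) shows the splice is an L-space. But the substance of the corollary is the converse inclusion: every slope \emph{not} in \(\lL^\circ(Y)\) must be NLS detected, and this must hold in particular when \(|\lL(Y)|\le 1\), where \(Y\) is not Floer simple and no version of Theorem~\ref{Thm:Splice} is available. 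Your claim that the case \(|\lL(Y)|=0\) is ``immediate'' is false: what is needed there is that \emph{no} splice \(Y\cup_\varphi N_g\) is an L-space, which is precisely an instance of the hard direction of Conjecture~\ref{Conj:Splice} (the paper itself flags the ``no L-space fillings'' situation as beyond current splicing technology), and your \(|\lL(Y)|=1\) case is left as an admitted unproved ``technical obstacle.'' So the proposal does not prove the statement as written.

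The paper closes this gap by a mechanism absent from your proposal. First, because \(N_g\) is a Floer homology solid torus, \(\hfhat(Y_\varphi)\) is independent of the choice of \(\varphi\) with \(\varphi_*(\alpha)=l\); hence if one such splice were an L-space, the manifold \(Y'\) obtained by gluing \(\nu(\alpha)\) to \(\nu(l)\) (as in Lemma~\ref{Lem:SpliceSurgery}) would have infinitely many L-space fillings and so be Floer simple. Second, decomposing the sutured manifold \((Y',\gamma_\mu)\) along the product annulus separating \(Y\) from \(N_g\) gives \(SFH(Y',\gamma_\mu)\cong SFH(Y,\gamma_\alpha)\otimes SFH(N_g,\gamma_l)\), and a computation of the outer \(\spinc\) summands of generalized solid tori then forces \(SFH(Y,\gamma_\alpha)\cong \Z^{|H_1(Y(\alpha))|}\); that is, \(K_\alpha\) is Floer simple, so \(Y\) is Floer simple and \(\alpha\in\lL^\circ(Y)\). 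This single argument simultaneously handles your missing \(|\lL(Y)|\le 1\) cases and the boundary slopes of \(\lL(Y)\), and it is the ``direct geometric computation'' the introduction alludes to. (Two smaller points: \(S^1\times D^2\) is not a member of the Boyer--Clay detecting family \(\{N_g\}\), so your ``degenerate case'' justification of the inclusion of the strongly detected set needs replacing — in the paper it is a consequence of the same proposition; and passing from that inclusion to an inclusion of closures presupposes that the NLS detected set is closed, which is only known once the main equality is proved.)
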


\subsection{Seifert fibred spaces}

One of the key motivating examples for the conjecture of \cite{BGW} is the class of Seifert-fibred spaces. Indeed, building on work of
Ozsv{\'a}th, Szab{\'o}, Mati{\'c}, Naimi, Jankins, Neumann, Eisenbud, and Hirsch
\cite{OSGen, LiscaMatic, Naimi, JankinsNeumann, EHN},
Lisca and Stipsicz proved
\begin{theorem}
\label{Thm:LS}
\cite{LSIII} A Seifert fibred space over \(S^2\) is an L-space if and only it does not admit a coorientable taut foliation. 
\end{theorem}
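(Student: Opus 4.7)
The plan is to recover Theorem~\ref{Thm:LS} by using Theorem~\ref{Thm:LY} to compute the L-space filling interval of the Seifert fibered piece obtained by removing a regular fiber from $Y$, and then comparing the result with the Eisenbud--Hirsch--Neumann and Jankins--Neumann classification of Seifert fibered spaces over $S^2$ admitting a transverse (equivalently, coorientable taut) foliation. Throughout, we may use that an L-space admits no coorientable taut foliation \cite{OSGen}, so only the direction ``not an L-space $\Rightarrow$ coorientable taut foliation exists'' (equivalently, the EHN condition on the Seifert invariants implies $Y$ is an L-space) must be established.

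Let $Y$ be a Seifert fibered space over $S^2$ with Seifert invariants $(e_0; (\alpha_1, \beta_1), \ldots, (\alpha_n, \beta_n))$. Lens space and small cases ($n \leq 2$) are handled directly, so assume $n \geq 3$. Write $Y = Y^\circ(m)$, where $Y^\circ$ is the complement of a neighborhood of a regular fiber, Seifert fibered over $D^2$ with the same $n$ exceptional fibers, and $m\in Sl(Y^\circ)$ is the regular fiber slope. The first step is to verify that $Y^\circ$ is Floer simple: after fixing a convenient basis for $H_1(\partial Y^\circ)$, one can perform a surgery along the regular fiber slope shifted by an exceptional slope to realize $Y^\circ(\alpha)$ as a connected sum of lens spaces, hence an L-space, and then invoke Proposition~\ref{Prop:MultiL}.

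Next, I would compute the Turaev torsion $\tau(Y^\circ)$ explicitly. For a Seifert fibered space over a disk, the torsion factors as a product of terms, one per exceptional fiber, of the standard form arising from the Mayer--Vietoris computation of $H_1$. This gives an explicit combinatorial description of the support $S[\tau(Y^\circ)] \subset H_1(Y^\circ)$ as a finite union of sublattices indexed by the exceptional fibers. Applying Theorem~\ref{Thm:LY}, we read off $\lL(Y^\circ) \subset Sl(Y^\circ) = \overline{\Q}$ as the closed interval with endpoints determined by consecutive elements of $\iota^{-1}(\dtgz(Y^\circ))$; these endpoints have explicit expressions in terms of the $(\alpha_i, \beta_i)$.

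The final step is to show that the condition $m \in \lL(Y^\circ)$ is equivalent to the EHN/JN inequalities on $(e_0;(\alpha_i,\beta_i))$ characterizing Seifert fibered spaces over $S^2$ without a transverse foliation. Both conditions are combinatorial statements about the realizability of rational systems of inequalities built from the Seifert invariants, so this is an algebraic identification, not further Floer-theoretic input. Combining $m \in \lL(Y^\circ) \Leftrightarrow$ (EHN condition) with the converse already supplied by \cite{OSGen} yields Theorem~\ref{Thm:LS}. I expect the main obstacle to be the torsion computation together with the explicit extraction of the endpoints of $\lL(Y^\circ)$: careful bookkeeping in $H_1(Y^\circ)$ is required to show that the ``inner'' elements of $S[\tau(Y^\circ)]$ closest to the regular fiber slope correspond precisely to the extremal solutions of the EHN system. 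Once this matching is made, the remainder of the argument is mechanical.
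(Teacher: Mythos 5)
Your outline is essentially the route the paper takes in Section~\ref{s: Seifert fibered L-spaces}: write the Seifert fibred space as a Dehn filling of the regular-fiber complement $Y^\circ$ (a Seifert fibration over $D^2$), show $Y^\circ$ is Floer simple, compute $\tau(Y^\circ)$ by the product/gluing formula for torsion, apply Theorem~\ref{Thm:LY} (in its quantitative form) to extract explicit floor/ceiling inequalities for the L-space fillings, and observe that the resulting set is exactly complementary to the Jankins--Neumann--Naimi list \cite{JankinsNeumann, Naimi}, so the equivalence with coorientable taut foliations follows. So the overall strategy is sound and matches the paper; the ``careful bookkeeping'' you anticipate is precisely the content of the paper's computation of $\dtgz(Y^\circ)$ and of Theorem~\ref{thm: non L-space interval for seifert fibered spaces}.

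The one step that does not work as written is your justification of Floer simplicity. Proposition~\ref{Prop:MultiL} requires \emph{more than one} L-space filling, so exhibiting a single filling of $Y^\circ$ that is a connected sum of lens spaces (the fiber-slope filling) does not by itself let you invoke it. Moreover, to actually determine $\lL(Y^\circ)$ from Theorem~\ref{Thm:LY} you need not just Floer simplicity but a known L-space slope lying in the \emph{interior} of the interval (equivalently, one for which the quantities $b_+^{\boldsymbol{\delta}}$ are all nonzero), since an endpoint slope does not pin down which of the adjacent intervals is $\lL(Y^\circ)$. The paper supplies both at once by citing \cite{OSSF}: with all $r_i/s_i>0$, the filling $M(0;\frac{r_1}{s_1},\ldots,\frac{r_n}{s_n})$ and its nearby fillings are L-spaces, giving multiple L-space slopes and an interior anchor. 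Your idea could be repaired without \cite{OSSF} by showing the core of the fiber-slope filling is a connected sum of simple knots in $\#_i L(s_i,r_i)$, hence Floer simple (which gives Floer simplicity of $Y^\circ$ directly from the definition, and interiority can then be checked from the computed torsion via the $b_+^{\boldsymbol{\delta}}\neq 0$ criterion), but that requires an argument about the knot, not merely the observation that the filled manifold is an L-space. (Also, a small slip of phrasing: the direction you must establish is that \emph{failure} of the EHN/JN condition implies L-space; the condition itself characterizes the manifolds admitting transverse foliations, hence the non-L-spaces.)
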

In combination with a result of Boyer, Rolfsen, and Wiest \cite{BRW},
this also implies that a Seifert-fibred space over \(S^2\) has non left-orderable \(\pi_1\) if and only if it is an L-space. 
The set of Seifert fibred spaces over \(S^2\) which admit a coorientable taut foliation was explicitly described by Jankins and Neumann \cite{JankinsNeumann} and Naimi \cite{Naimi},
building on a result of Eisenbud, Hirsch, and Neumann \cite{EHN}.

Any Seifert-fibred space over \(S^2\) can be obtained by Dehn filling a Seifert fibred space over \(D^2\). It follows easily from work of Ozsv{\'a}th and Szab{\'o} \cite{OSSF} that any Seifert fibred space over \(D^2\) is Floer simple, so we can compute the set of L-space filling slopes using Theorem~\ref{Thm:LY}. The resulting description of the set of Seifert fibred spaces which are not L-spaces agrees with the Jankins-Neumann set, thus giving a new direct proof of Theorem~\ref{Thm:LS}. 
\subsection{Discussion}
We conclude with some questions about about Floer simple manifolds and their relation to the conjecture of Boyer, Gordon, and Watson. First, we recall the statement of the conjecture.

\begin{conj}
\label{Conj:BGW}
\cite{BGW} If \(Y\) is a oriented, closed, prime three-manifold, then \(Y\) is an L-space if and only if \(\pi_1(Y)\) is non left-orderable. 
\end{conj}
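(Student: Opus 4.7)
Conjecture~\ref{Conj:BGW} is open; the following is a strategy that leverages the gluing machinery of this paper rather than a complete proof.

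The idea is to use geometrization to split a closed prime rational homology sphere \(Y\) into three regimes --- Seifert fibered, toroidal with nontrivial JSJ decomposition, and closed hyperbolic --- and attack each regime separately. For Seifert fibered spaces over \(S^2\), the conjecture is already known: Theorem~\ref{Thm:LS} (reproved here using Theorem~\ref{Thm:LY}) combined with the left-orderability result of Boyer-Rolfsen-Wiest \cite{BRW} yields the equivalence directly.

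For the toroidal case, I would cut \(Y\) along essential tori into JSJ pieces with torus boundary and attempt to characterise both sides of the conjecture as slope-theoretic conditions on the gluing data. Theorem~\ref{Thm:Splice}, together with its extension in \cite{HRRW}, provides the L-space side: for Floer simple pieces, \(Y_\varphi\) is an L-space if and only if the interiors of \(\lL(Y_1)\) and \(\lL(Y_2)\) cover \(Sl(Y_2)\) under the gluing map. The parallel left-order step calls for a gluing principle of the form: \(\pi_1(Y_\varphi)\) is non left-orderable if and only if an analogous covering condition on non left-orderable filling slopes holds. Such a principle has been formulated and partially established by Boyer-Clay \cite{BoyerClay} in the graph manifold setting. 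Given both gluing results, the conjecture reduces to the piece-by-piece equality between L-space filling slopes and non left-orderable filling slopes on each JSJ component; for Seifert fibered pieces this equality is accessible through the same Lisca-Stipsicz--Jankins-Neumann--Naimi circle of ideas used in Theorem~\ref{Thm:LS}.

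The main obstacle is the hyperbolic case, in two guises. First, a closed hyperbolic manifold admits no decomposition along essential tori, so splicing techniques do not apply; neither side of the equivalence is straightforwardly computable from geometric data, and the case appears to demand genuinely new tools. Second, even within the JSJ framework, matching the L-space and left-order sides on a hyperbolic piece \(Y_i\) with torus boundary requires proving that \(\lL(Y_i)\) coincides with the set of non left-orderable filling slopes, which is itself an open problem of comparable depth. Consequently, the strategy above would settle Conjecture~\ref{Conj:BGW} for graph manifolds (modulo the Boyer-Clay left-order gluing conjecture), but the closed hyperbolic case remains the essential obstruction and the principal reason the conjecture is open.
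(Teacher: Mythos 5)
You have correctly recognized that this statement is a conjecture: the paper does not prove it, only records it (attributed to Boyer--Gordon--Watson) as motivation, so there is no proof in the paper to compare your proposal against. Your write-up is honest about this, and as a research plan it is reasonable: the Seifert fibered case over \(S^2\) does follow from Theorem~\ref{Thm:LS} together with \cite{BRW}, and the graph-manifold strategy you sketch (match the L-space gluing criterion of Theorem~\ref{Thm:Splice} and its extension in \cite{HRRW} against a parallel left-order gluing criterion of Boyer--Clay) is essentially the route taken in the subsequent literature for graph manifolds.

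That said, be clear that what you have written is not a proof, and the gaps are exactly where you locate them plus one more you should flag explicitly. First, the L-space gluing machinery of this paper applies only when both JSJ pieces are Floer simple; the paper itself points out that the cases \(|\lL(Y_i)|=1\) or \(\lL(Y_i)=\emptyset\) are not covered and appear hard with current technology, so even the toroidal reduction is incomplete without handling such pieces. Second, the ``piece-by-piece'' equality you invoke --- that \(\lL(Y_i)\) coincides with the set of slopes with non left-orderable fundamental group --- is precisely Question~1 of the paper and is open for hyperbolic JSJ pieces, so the toroidal case does not actually reduce to known results outside the graph-manifold setting. Third, as you say, the closed hyperbolic case admits no torus decomposition at all, so none of the techniques of this paper apply there. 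Consequently your proposal should be read as a correct assessment of the state of the art and a plausible program for graph manifolds, not as a proof of Conjecture~\ref{Conj:BGW}.
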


A potentially more tractable subset of this problem, raised by Boyer and Clay \cite{BoyerClay} is:

\begin{question} Suppose \(Y\) is Floer simple. Is \(\pi_1(Y(\alpha))\) non left-orderable equivalent to \(\alpha\) being an element of \(\lL(Y)\)? 
\end{question}

The characterization of \(\lL(Y)\) given in Theorem~\ref{Thm:LY} should make it possible to conduct more detailed tests of Conjecture~\ref{Conj:BGW}. Since there is already considerable experimental evidence in support of the conjecture, we should also consider what circumstances might explain a positive answer to Question 1. One possible explanation is that the condition of being Floer simple is correlated with some strong geometrical property, which in turn can be related to orderings of \(\pi_1\). 

\begin{question} Is there a geometric characterization of Floer simple manifolds which can be stated without reference to Floer homology?
\end{question}

More generally, we think that  Floer simple manifolds are a natural class of manifolds  whose geometrical properties should be investigated for their own sake. Some evidence in support of this idea is provided by the  frequency of Floer simple manifolds among geometrically simple 3-manifolds (as measured by the SnapPea census).
Proposition~\ref{Prop:MultiL} may lead readers familiar with the example of L-space knots in \(S^3\) to suspect that the class of Floer simple manifolds is relatively small, but this is not the case. Of the 59,068 rational homology \(S^1\times D^2\)'s in the SnapPy census of manifolds triangulated by at most 9 ideal tetrahedra, 
nearly \(20\%\) have multiple finite fillings, and are thus certifiably Floer simple. Moreover, more than two-thirds of the remaining manifolds have Turaev torsion compatible with their being Floer simple. It seems likely that many of these manifolds are Floer simple as well. (The authors thank Tom Brown for sharing these statistics with them.) For those who like other geometries, we note that every Seifert fibred rational homology \(S^1\times D^2\) is Floer simple. 

It would be interesting to know what happens to the density of Floer simple manifolds as the complexity increases. Perhaps the most basic question we could ask along these lines is 

\begin{question}
Are there infinitely many irreducible Floer simple manifolds with the same Turaev torsion? 
 \end{question}

\subsection{Organization}
The remainder of the paper is organized as follows. In section~\ref{Sec:HFK}, we review some facts about knot Floer homology and the Ozsv{\'a}th-Szab{\'o} mapping cone. These are used in section~\ref{Sec:FloerSimple} to prove Proposition~\ref{Prop:MultiL} and to give a characterization of when a given surgery on a Floer simple knot produces an L-space. In this section, we also explain how to compute the bordered Floer homology of a Floer simple manifold.
Theorem~\ref{Thm:LY} is proved in In Section~\ref{Sec:LY}. In  Section~\ref{s: Seifert fibered L-spaces}  we apply Theorem~\ref{Thm:LY} to Seifert fibred spaces, thus giving a new proof of Theorem~\ref{Thm:LS}. The proof of Theorem~\ref{Thm:Splice} is given in Section~\ref{s: L-space if and only if intervals cover.}. Finally, in Section~\ref{Sec:dtgz}, we discuss manifolds with \(\dtgz = \emptyset\). 

\vskip0.05in
\noindent{\bf Acknowledgements:} The authors  would like to thank Steve Boyer, Tom Brown, Adam Clay, Tom Gillespie, Jonathan Hanselman, Robert Lipshitz, Saul Schleimer, Faramarz Vafaee, and Liam Watson for helpful conversations. We also thank the organizers of the 9th William Rowan Hamilton conference in Dublin, which helped to get this project started.

\section{Knot Floer homology and the Ozsv{\'a}th-Szab{\'o} mapping cone}
\label{Sec:HFK}
In this section, we briefly recall some facts about knot Floer homology 
\cite{OSHFK1, jakethesis, OSInt}
which will be used in what follows. First, let us fix some notation. Throughout this section, we assume that  \(K \subset \Ybar\) is an oriented knot in a rational homology sphere. We let \(Y = \overline{Y} \setminus \nu(K)\) be its complement, and denote by \(\mu \in H_1(\partial Y)\) the class of its meridian.
Furthermore, we let \(T \subset H_1(Y)\) be the torsion subgroup, and  denote by  \(\phi:H_1(Y) \to \Z\) the projection from \(H_1(Y)\) to \(H_1(Y)/T\simeq \Z\), where the isomorphism is chosen so that \(\phi(\mu) > 0\). 

\subsection{Knot Floer homology}
 The knot Floer homology \(\hfk(K)\) is a finitely generated abelian group with an absolute \(\Z/2\) grading. 
 It decomposes as a direct sum \(\hfk(K) = \oplus \hfk(K,\spi)\), where \(\spi\) runs over the set \( {\mathrm{Spin}^c}(Y, \partial Y)\) of relative \({\mathrm{Spin}^c}\) structures on \((Y, \partial Y)\). \({\mathrm{Spin}^c}(Y,\partial Y)\) is an affine copy of \(H_1(Y)\) ({\em aka} \(H_1(Y)\) torsor); it has a free transitive action of \(H_1(Y)\). The group \(\hfk(K,\spi)\) is trivial for all but finitely many \(\spi \in {\mathrm{Spin}^c}(Y, \partial Y)\).
 
 Given \(\spi \in {\mathrm{Spin}^c}(Y, \partial Y)\), we consider the formal sum
$$\chi_\spi(\hfk(K)) :=  \sum_{h \in H_1(Y)} \chi(\hfk(K,\spi+h)) [h],$$
where \(\chi(\hfk(K,\spi))\) is defined using the absolute \(\Z/2\) grading. We view \(\chi_\spi(\hfk(K))\) as an element of the group ring \(\Z[H_1(Y)]\); it is known as the {\em graded Euler characteristic of \(\hfk(K)\)}. 
Clearly $$ \chi_{\spi'}(\hfk(K)) = [\spi-\spi'] \chi_\spi(\hfk(K)).$$
From now on, we will drop \(\spi\) from the notation and view \(\chi(\hfk(K))\) as an element of \(\Z[H_1(Y)]\), well defined up to global multiplication by elements of \(H_1(Y)\). We write \(x \sim y\) if \(x,y \in \Z[H_1(Y)]\) satisfy \(x = [h]y\) for some \(h \in H_1(Y)\).

For knots in \(S^3\), it is well-known that \(\chi(\hfk(K))\) is the Alexander polynomial of \(K\). More generally, we have 

\begin{prop}
\label{Prop:Chi}
$ \chi (\hfk(K)) \sim (1 - [\mu]) \tau(Y),$ where \(\tau(Y)\) is the Turaev torsion of \(Y\). 
\end{prop}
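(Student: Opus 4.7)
The plan is to compute both sides from a common combinatorial source and identify them via Turaev's formula for Reidemeister torsion. Start with a doubly-pointed Heegaard diagram $(\Sigma, \boldsymbol{\alpha}, \boldsymbol{\beta}, w, z)$ for $(\Ybar, K)$. The generators of $\cfk(K)$ are the points $\mathbf{x} \in \mathbb{T}_\alpha \cap \mathbb{T}_\beta$; each carries a relative Spin$^c$ grading $\spi(\mathbf{x}) \in \spinc(Y, \partial Y)$ and a sign $\varepsilon(\mathbf{x}) \in \{\pm 1\}$ coming from the absolute $\Z/2$-grading. Thus, after fixing a reference Spin$^c$ structure to identify $\spinc(Y,\partial Y)$ with $H_1(Y)$,
$$\chi(\hfk(K)) \;=\; \sum_{\mathbf{x} \in \mathbb{T}_\alpha \cap \mathbb{T}_\beta} \varepsilon(\mathbf{x})\,[\spi(\mathbf{x})] \;\in\; \Z[H_1(Y)],$$
well defined up to $\sim$.

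Puncturing $\Sigma$ at $w$ and $z$ reinterprets this as a sutured Heegaard diagram for the sutured manifold $(Y, \gamma_\mu)$, where $\gamma_\mu$ is a pair of oppositely oriented meridional sutures on $\partial Y$; the generator set and the relative Spin$^c$ grading are unchanged. Turaev's combinatorial formula for Reidemeister torsion — the same identification used by \Ozsvath and \Szabo to prove $\chi(\hfhat) \sim \tau$ for closed three-manifolds with $b_1 > 0$, and by Friedl--Juh\'asz--Rasmussen in the sutured setting — then identifies the signed sum above with the sutured torsion $\tau(Y, \gamma_\mu)$. Finally, the standard multiplicativity of Reidemeister torsion for the pair $(Y, R_-(\gamma_\mu))$, together with $\tau(R_-(\gamma_\mu)) \sim (1-[\mu])^{-1}$ (the torsion of an annulus representing $\mu$), yields
$$\tau(Y,\gamma_\mu) \;\sim\; (1 - [\mu])\,\tau(Y).$$
Chaining the three relations proves the proposition.

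The principal obstacle is sign and convention bookkeeping. One must check that Turaev's combinatorial signs agree with the signs $\varepsilon(\mathbf{x})$ coming from the absolute $\Z/2$-grading on $\cfk$; this is the same verification \Ozsvath and \Szabo carry out in the closed $b_1 > 0$ case, so it can be imported directly. A secondary point is that $K$ is only rationally null-homologous in $\Ybar$, so I would invoke Ozsv\'ath--Szab\'o's extension of knot Floer homology to this setting, which provides the refinement of the relative Spin$^c$ grading over the full $H_1(Y)$-torsor needed to make the identification above meaningful.
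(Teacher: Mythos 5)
Your argument is correct and takes essentially the same route as the paper: identify $\hfk(K)$ with $SFH(Y,\gamma_\mu)$ via Juh\'asz, invoke the Friedl--Juh\'asz--Rasmussen identification of $\chi(SFH)$ with a sutured torsion, and then relate that torsion to $(1-[\mu])\tau(Y)$. Your final multiplicativity computation using $\tau(R_-(\gamma_\mu)) \sim (1-[\mu])^{-1}$ is precisely the content of Lemma 6.3 of Friedl--Juh\'asz--Rasmussen, which the paper cites directly (noting the proof carries over from the link-in-$S^3$ case).
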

\begin{proof}
 \(\widehat{HFK}(K)\) can be identified with the sutured Floer homology \(SFH(Y, \gamma_\mu)\) \cite{Juhasz1}, where the suture  \(\gamma_\mu\)  consists of two parallel copies of \(\mu\).  The Euler characteristic of the sutured Floer homology can be described as an appropriately formulated torsion \cite{FJR}. When \(\partial Y\) is toroidal, this torsion can be expressed in terms of the Turaev torsion, as in  Lemma 6.3 of \cite{FJR}. (This lemma was stated for links in \(S^3\), but the proof carries through unchanged.)
\end{proof}

{\it A priori},   \(\tau(Y) \) is an element of the field \(Q(H_1(Y))\)  obtained by inverting all elements of \(\Z[H_1(Y)]\) which are not zero divisors. Choose any primitive \(\mu \in H_1(\partial Y)\) with \(\phi(\mu) \neq 0\); then \(1- [\mu]\) will not be a zero divisor in \(\Z[H_1(Y)]\). It follows from the proposition that   \(\tau(Y) \in \Z[H_1(Y)][(1-[\mu])^{-1}] \subset Q(H_1(Y))\).

Writing \(  (1-[\mu])^{-1}  = \sum_{i = 0} ^ \infty\,  [\mu]^i\)
allows us to embed
$\Z[H_1(Y)][(1-[\mu])^{-1}]$ in the Novikov ring
$$ \Lambda_\phi[H_1(Y)] = \Big\{ \sum_{h \in H_1(Y)} a_h [h] \ \Big| \ \#\{h \, | \, a_h \neq 0, \phi(h)<k\}<\infty \ \text{for all} \ k\Big\}.$$
We will view \(\tau(Y)\) as an element of \(\Lambda_\phi[H_1(Y)]\). By choosing  a  splitting \(H_1(Y) \simeq \Z \oplus T\), we can identify \(\Lambda_\phi[H_1(Y)] \) with the Laurent series ring \(\Z[t^{-1},t]] \otimes \Z[T]\),
which we shall later sometimes call the ``Laurent series group ring.''
  
 As an element of  the Novikov ring, \(\tau(Y)\) is  well-defined up to multiplication by elements of \(H_1(Y)\). We  shall always  normalize so that \(\tau(Y)\) has the form 
 $ \tau(Y) = \sum_h a_h [h]$, where \(a_h = 0 \) for all \(h\) with \(\phi(h) <0\), and 
 \(a_0 \neq 0 \).
 
 If \(H_1(Y) = \Z\),  it is well-known that  $ \tau(Y) \sim \Delta(Y)/(1-t)$, where \(\Delta(Y)\) is the Alexander polynomial of \(Y\). 
 More generally, if 
  \(\Phi: \Lambda_\phi[H_1(Y)] \to \Z[t^{-1},t]]\) is the map induced by the projection \(\phi:H_1(Y) \to \Z\), we define  
$$ \taubar(Y)= \Phi(\tau(Y)) \quad \text{and} \quad  \Deltas(Y) = (1-t) \taubar(Y).$$
 Note that in general, \(\Deltas(Y) \neq \Delta(Y)\); an interesting example to consider is the connected sum 
 \(Y = Z\mkern1.5mu\#\mkern1.5mu (S^1\mkern-2mu \times\mkern-2mu D^2)\), where \(b_1(Z) = 0\). This manifold 
 has \(\Delta(Y) = 0\), but \(\Deltas(Y) = |H_1(Z)|\). 
 
 If \(K\) is a knot in \(S^3\), it is well known that  \(\deg \Delta(t) \leq 2g(K)\), and \(\Delta(K)|_{t=1} = 1\). The following result is a simultaneous generalization of these two facts. 

\begin{prop}[\cite{Turaev} Lemma II.4.5.1 and Theorem II.4.2.1]
\label{Prop:AlexG} 
If \(\| Y\|\) is the Thurston norm of a generator of \(H_2(Y, \partial Y)\) and 
 \(\tau(Y)\) is normalized as above, then 
\(a_h = 1\) for all \(h\in H_1(Y)\) with \(\phi(h)> \|Y\| \). 
\end{prop}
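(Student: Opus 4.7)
The plan is to derive the proposition by assembling the two cited Turaev results with a short computation in the Novikov ring. First, I would rewrite the torsion as a product
$$\tau(Y) \;=\; \Delta_Y \cdot (1-[\mu])^{-1}, \qquad \Delta_Y = \sum_{h'} c_{h'}[h'] \in \Z[H_1(Y)],$$
and expand $(1-[\mu])^{-1} = \sum_{i \ge 0}[\mu]^i$ inside $\Lambda_\phi[H_1(Y)]$. Multiplying out identifies the coefficient of $[h]$ in $\tau(Y)$ as
$$a_h \;=\; \sum_{k \ge 0} c_{h - k\mu}.$$

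Next, I would invoke the two Turaev inputs. Theorem II.4.2.1 of \cite{Turaev} furnishes the degree bound: once the global $\pm H_1(Y)$-ambiguity of $\tau$ is pinned down by the normalization $a_0 \neq 0$, the support of $\Delta_Y$ is contained in the slab $\{h' : 0 \le \phi(h') \le \|Y\|\}$. Lemma II.4.5.1 of \cite{Turaev} supplies the complementary coefficient-sum statement: along each $\langle\mu\rangle$-coset in $H_1(Y)$, the total sum of the coefficients of $\Delta_Y$ equals $+1$, compatible with the same normalization.

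These two facts combine to give the proposition. For $\phi(h) > \|Y\|$, every term $c_{h-k\mu}$ with $k \le 0$ satisfies $\phi(h - k\mu) \ge \phi(h) > \|Y\|$, hence vanishes by the degree bound. Consequently, the one-sided sum $\sum_{k \ge 0} c_{h - k\mu}$ coincides with the full two-sided coset sum $\sum_{k \in \Z} c_{h - k\mu}$, which equals $1$ by Lemma II.4.5.1. Hence $a_h = 1$.

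I expect the main obstacle to be not the logical structure of the argument but the normalization bookkeeping. Turaev's torsion is intrinsically defined only modulo multiplication by elements of $\pm H_1(Y)$, and his two cited statements are formulated using specific sign, basing, and ordering conventions. One must verify that the conventions adopted here — $\mu \in H_1(\partial Y)$ primitive with $\phi(\mu) > 0$, the normalization $a_0 \ne 0$, and the projection $\phi$ to the free part of $H_1(Y)$ — align with Turaev's so as to yield simultaneously both the claimed support interval and the $+1$ (rather than $-1$) coset evaluation. Once this alignment is established, the proposition is immediate.
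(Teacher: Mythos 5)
The paper gives no argument for this proposition---it is imported verbatim from Turaev---so your reconstruction has to be judged on its own terms. Its skeleton is the natural one and is surely how the two cited results are meant to combine: with $\Delta_Y=(1-[\mu])\tau(Y)\in\Z[H_1(Y)]$ one has $a_h=\sum_{k\ge 0}c_{h-k\mu}$, and for $\phi(h)$ large this one-sided sum should agree with the full $\langle\mu\rangle$-coset sum of $\Delta_Y$, which equals $1$. That second input is fine (up to the sign/normalization bookkeeping you rightly flag): it amounts to the statement that the image of $\Delta_Y$ in $\Z[H_1(Y(\mu))]$ has all coefficients $1$, equivalently $\chi(\hfhat(Y(\mu),\spi))=1$ for every $\spinc$ structure on the rational homology sphere $Y(\mu)$, which is Turaev's augmentation statement.

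The gap is the degree bound you feed in. The support of $\Delta_Y$ is \emph{not} contained in $\{0\le\phi\le\|Y\|\}$: for the trefoil exterior with $\mu$ the meridian, $\Delta_Y=1-t+t^2$ while $\|Y\|=2g-1=1$. The same bound would contradict Corollary~\ref{Cor:TNorm} (which gives $\deg\overline{\Delta}(Y)=\|Y\|+1$ for boundary-incompressible Floer simple $Y$), and if it were true your argument would prove too much, namely $a_h=1$ already for $\phi(h)>\|Y\|-\phi(\mu)$, which fails for the trefoil since $a_1=0$. The correct statement carries a boundary correction: the $\phi$-breadth of $S[\Delta_Y]$ is at most $\|Y\|+\phi(\mu)$ (the ``torsion/Alexander norm $\le$ Thurston norm'' inequality in the $b_1=1$ case; equivalently, $S[\chi(\hfk(K_\mu))]$ lies in a slab of width $\|Y\|+\phi(\mu)$, cf.\ the norm-detection formula quoted in the proof of Proposition~\ref{Prop:FloerSimpleCFD}). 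Your proof survives this repair, but only because your vanishing claim is stronger than what is needed: it is false that all $c_{h-k\mu}$ with $k\le 0$ vanish when $\phi(h)>\|Y\|$ (trefoil again, $h=2$, $k=0$); what you actually need is that the terms absent from the one-sided sum, $c_{h+j\mu}$ with $j\ge 1$, vanish, and these have $\phi(h+j\mu)>\|Y\|+\phi(\mu)$, so the corrected bound suffices. With that substitution (and the sign pinned down, since the torsion is a priori only defined up to $\pm H_1(Y)$), the derivation is complete; as written, it rests on a false auxiliary statement attributed to Turaev.
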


More generally, it is known that \(\hfk(K)\) determines both the Thurston norm of \(Y\) and whether it is fibred \cite{YiNiThurstonNorm, YiNiFibred, Juhasz}. Since the knot Floer homology of a Floer simple knot is determined by its Euler characteristic, we have

\begin{cor}
\label{Cor:TNorm}
If \(Y\) is boundary incompressible and Floer simple,  \(\|Y\| = \deg \Deltas(Y) - 1\). If \(Y\) is also irreducible, 
then \(Y\) fibres over \(S^1\) if and only if \(\Deltas(Y)\) is monic. 
\end{cor}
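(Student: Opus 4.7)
The strategy is to combine the Floer simple hypothesis with two classical detection theorems for knot Floer homology: the Ozsv\'ath--Szab\'o/Ni/Juh\'asz theorem that $\hfk$ detects the Thurston norm of a Seifert-like surface in a knot complement, and the Ni--Ghiggini theorem that $\hfk$ detects fibredness in an irreducible ambient manifold. These theorems translate information about the polynomial $\Deltas(Y)$ (via Proposition~\ref{Prop:Chi}) into topology of $Y$.

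Fix a Floer simple filling slope $\alpha$ for $Y$, which exists by hypothesis, and let $n = \phi(\iota(\alpha))$; this is positive since an L-space filling slope is never the homological longitude. By Proposition~\ref{Prop:Chi} we have $\chi(\hfk(K_\alpha)) \sim (1-[\iota(\alpha)])\tau(Y)$, and by the Floer simple hypothesis together with Proposition~\ref{Prop:TorsionCoeff}, the rank of $\hfk(K_\alpha,\spi)$ in each relative $\spinc$ grading is $0$ or $1$, equal to the absolute value of the corresponding coefficient. Applying $\Phi$ yields the $\phi$-graded rank distribution
$$\Phi\bigl((1-[\iota(\alpha)])\tau(Y)\bigr) \;=\; (1-t^n)\taubar(Y) \;=\; \Deltas(Y)\cdot(1+t+\cdots+t^{n-1}).$$
In particular, the support of $\hfk(K_\alpha)$ in the Alexander grading has width $\deg\Deltas(Y) + n - 1$, and the rank in the top Alexander grade equals the leading coefficient of $\Deltas(Y)$.

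For the Thurston norm identity, Ni's theorem (applicable since $Y$ is boundary incompressible) says that $\|Y\|$ is recovered from the Alexander-graded spread of $\hfk(K_\alpha)$, corrected by the meridian class of $K_\alpha$. Applying this correction to our width $\deg\Deltas(Y) + n - 1$ produces $\|Y\| = \deg\Deltas(Y) - 1$. For the fibredness identity, assume further that $Y$ is irreducible and choose $\alpha$ so that $Y(\alpha)$ is also irreducible (possible outside finitely many slopes by Hatcher's theorem on exceptional Dehn fillings). The Ni--Ghiggini theorem asserts that $K_\alpha$ is fibred in $Y(\alpha)$ iff the top Alexander grade of $\hfk(K_\alpha)$ has rank $1$. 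As computed above, this rank equals the leading coefficient of $\Deltas(Y)$, so fibredness is equivalent to $\Deltas(Y)$ being monic. Since $K_\alpha$ is the core of the filling solid torus with complement $Y$, fibredness of $K_\alpha$ in $Y(\alpha)$ is equivalent to $Y$ fibring over $S^1$.

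The main obstacle is bookkeeping: one must carefully track the affine shift between the relative $\spinc$ grading on $\hfk(K_\alpha)$ and the integer $\phi$-grading on $H_1(Y)$, and verify that the meridian correction in Ni's Thurston norm formula strips off precisely the factor $(1+t+\cdots+t^{n-1})$ so that the width statement reduces to $\deg\Deltas(Y) - 1$. A related subtle point is checking that the topological hypotheses of Ni's and Ni--Ghiggini's detection theorems, which require conditions on the ambient closed manifold $Y(\alpha)$, can be ensured from the hypothesis of boundary incompressibility (resp.\ irreducibility) of $Y$ by an appropriate choice of Floer simple $\alpha$.
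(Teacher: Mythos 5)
Your argument is correct and is essentially the paper's own: the corollary is deduced there in one line from the facts that \(\hfk\) detects the Thurston norm and fibredness (Ni, Juh\'asz) together with the observation that a Floer simple knot's \(\hfk\) is determined by its Euler characteristic \((1-[\mu])\tau(Y)\), and your computation simply makes the width and top-coefficient bookkeeping explicit. One minor remark: Ni's fibredness criterion requires irreducibility of the knot \emph{complement}, which here is \(Y\) itself, so your extra step of choosing \(\alpha\) with \(Y(\alpha)\) irreducible is not actually needed.
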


\subsection{Differentials}
The knot Floer homology of \(K\) can be used to compute the Floer  homology of surgeries on \(K\). Before we explain how to do this, we must understand the relation between \(\hfk(K)\) and \(\hfhat(\Ybar)\). 

We begin by discussing \({\mathrm{Spin}^c}\) structures. There are maps \(i_v,i_h:{\mathrm{Spin}^c}(Y,\partial Y) \to   {\mathrm{Spin}^c}(\Ybar)\) which respect the action of \(H_1(Y)\), in the sense that \(i_v(\spi+a) = i_v(\spi) + i_*(a)\) and \(i_h(\spi+a) = i_h(\spi) + i_*(a)\)
 where \(i_*:H_1(Y) \to H_1(\Ybar)\) is the map induced by inclusion. Moreover, \(i_v(\spi) - i_h(\spi) = i_*(\lambda)\), where \(\lambda\) is a longitude of \(K\). We define an equivalence relation on \({\mathrm{Spin}^c}(Y,\partial Y)\) by declaring \(\spi_1 \sim \spi_2\) if \(i_v(\spi_1) = i_v(\spi_2)\). It is easy to see that this is the same as requiring that \(i_h(\spi_1) = i_h(\spi_2)\), and that the equivalence classes are orbits of \({\mathrm{Spin}^c}(Y,\partial Y)\) under the action of \(\mu\). 

Let \(\tspi\) be an equivalence class in \({\mathrm{Spin}^c}(Y,\partial Y)\). 
After we choose some auxiliary data (a doubly pointed Heegaard diagram for \(K\)), Heegaard Floer homology constructs for us a graded group 
$$\cfk(K,\tspi) = \bigoplus_{\spi\in \tspi} \cfk(K,\spi)$$
together with maps \(d_0,d_v,d_h:\cfk(K,\tspi) \to \cfk(K,\tspi)\), which are filtered with respect to the \({\mathrm{Spin}^c}\) grading in the following sense: if \(x \in \cfk(Y,\spi)\), then 
$d_0x \in \cfk(Y,\spi)$, $d_vx \in \oplus_{k<0} \cfk(Y,\spi + k \mu)$ and $d_hx \in \oplus_{k>0} \cfk(Y,\spi + k \mu).$ These differentials satisfy the relations \(d_0^2=(d_0+d_v)^2 = (d_0+d_h)^2 = 0\). Furthermore, we have
\begin{align*} H(\cfk(K,\spi),d_0) & = \hfk(K,\spi), \\
H(\cfk(K,\tspi),d_0+d_v) & = \hfk(\Ybar, i_v(\spi)), \\
H(\cfk(K,\tspi),d_0+d_h) & = \hfk(\Ybar, i_h(\spi)).
\end{align*}

The \({\mathrm{Spin}^c}\) grading provides a natural filtration on the latter two complexes, in the sense that 
\(\oplus_{k<n} \cfk(K,\spi+k \mu)\) is a subcomplex of \((\cfk(K,\tspi),d_0+d_v)\)
and \(\oplus_{k>n} \cfk(K,\spi+k \mu)\) is a subcomplex of \((\cfk(K,\tspi),d_0+d_h)\).
These filtrations give rise to spectral sequences whose \(E_1\) term is \(\hfk(K,\tspi)\). We denote by 
\(\tilde{d}_v, \tilde{d}_h\) the induced differentials on the \(E_1\) term, so that {\it e.g.} \(\cfk(K,d_0+d_v)\) is homotopy equivalent to \(\hfk(K,\tilde{d}_v)\). (Note that these are not the same as the \(d_1\) differentials in the spectral sequence.)

\begin{definition} For each \(\spi \in {\mathrm{Spin}^c}(Y)\), the {\em bent complex} is
$A_{K,\spi} = (\cfk(K,\tspi),d_{\spi})$, where for \(x \in \cfk(K,\spi+k \mu)\), 
$$d_\spi(x) = \begin{cases} d_0(x)+ d_v(x) \quad & k<0 \\ d_0(x)+d_v(x)+d_h(x) \quad & k=0 \\ d_0(x)+d_h(x) \quad & k>0 \end{cases}.
$$
\end{definition}
The bent complexes measure the 
Heegaard Floer homology 
of large integer surgery on \(K\): \(H(A_{K,\spi}) \simeq \hfhat(Y(N \mu + \lambda), i_n(\spi))\) for sufficiently large \(N\) and an appropriately chosen \({\mathrm{Spin}^c}\) structure \(i_N(\spi)\) on the filling. 

The existence of the  \({\mathrm{Spin}^c}\) filtration means there are chain maps
\begin{align*}
\pi_v&: A_{K,\spi} \to  (\cfk(K,\tspi),d_0+d_v) \\
\pi_h&: A_{K,\spi} \to  (\cfk(K,\tspi),d_0+d_h)
\end{align*}
given by 
$$ \pi_v(x) = \begin{cases} 0 \quad & k> 0 \\ x \quad & k\leq 0 \end{cases} \quad \quad 
 \pi_h(x) = \begin{cases} x \quad & k \geq 0 \\ 0 \quad & k  < 0 \end{cases} 
 $$
 for \(x \in \cfk(\spi+k\mu)\). 
\subsection{The Ozsv{\'a}th-Szab{\'o} mapping cone}
Let \(\lambda\) be a longitude for \(K\), so that \(\mu \cdot \lambda = 1\). 
The mapping cone of Ozsv{\'a}th and Szab{\'o}  \cite{OSInt} relates the Heegaard Floer homology of the filling \(Y(\lambda)\) to the knot Floer homology of \(K\). We recall its construction here. 

Since \(i_h(\spi-\lambda) = i_v(\spi)\), we have
$$H(\cfk(K,\spi - \lambda),d_0+d_h)\simeq \hfhat(Y,i_v(\spi)) \simeq H(\cfk(K,\spi),d_0+d_v).$$ 
This isomorphism is realized by a chain homotopy equivalence
$$j: (\cfk(K,\spi - \lambda),d_0+d_h) \to (\cfk(K,\spi),d_0+d_v).$$
(The map on homology induced by \(j\) is the canonical isomorphism of \cite{JT}, although we will not use this fact here.)

For \(\spi \in {\mathrm{Spin}^c}(Y,\partial Y)\), let \(B_{K,\spi} = (\cfk(K,\tspi),d_0+d_v)\). We form two chain complexes 

$$\mathbb{A}(K) = \bigoplus_{\spi \in {\mathrm{Spin}^c}(Y)} A_{K,\spi} \quad \text{and} \quad  \mathbb{B}(K) = \bigoplus_{\spi \in {\mathrm{Spin}^c}(Y)} B_{K,\spi}.$$
There is a chain map \(f_\lambda :\mathbb{A}(K) \to \mathbb{B}(K)\) given by  \(f =\pi_v + j \circ \pi_h\). (So if \(x \in A_{K,\spi}\), \(f_\lambda(x)\) is a sum of terms in \(B_{K,\spi}\) and \(B_{K,\spi+\lambda}\).) Let \(\mathbb{X}_\lambda(K)\) be the mapping cone of \(f_\lambda\). In \cite{OSInt} , Ozsv{\'a}th and Szab{\'o} prove  

\begin{theorem}
\label{Thm:MappingCone}
 \cite{OSInt}
\(\hfhat(Y(\lambda)) \simeq H_*(\mathbb{X}_\lambda(K))\).
\end{theorem}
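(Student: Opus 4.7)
The statement is the Ozsv\'ath--Szab\'o integer surgery formula, so the plan below follows the strategy of their original paper \cite{OSInt} adapted to the relative ${\mathrm{Spin}^c}$ setup used above.

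The plan is to start from the large surgery formula and build up the general case by an assembly of surgery exact triangles. Concretely, I would first establish (or invoke) the identification
\[
\hfhat(Y(N\mu+\lambda), i_N(\spi)) \;\simeq\; H(A_{K,\spi})
\]
for all sufficiently large $N$ and appropriately chosen ${\mathrm{Spin}^c}$ structure $i_N(\spi)$. This is the ``large surgery'' input and is proved via a doubly pointed Heegaard diagram for $K$ in which, for large $N$, the basepoints on the filling solid torus are so far apart that the domain counts defining $\widehat{d}$ on $\hfhat(Y(N\mu+\lambda))$ break cleanly into the pieces $d_0$, $d_v$, $d_h$; the bending in the definition of $A_{K,\spi}$ records exactly which pieces survive according to the sign of the relative ${\mathrm{Spin}^c}$ grading.

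Next I would set up, for each relative ${\mathrm{Spin}^c}$-class $\spi$ and large $N$, the Heegaard triple $(\alpha, \beta_0, \beta_1, \beta_2)$ where $\beta_0, \beta_1, \beta_2$ correspond to slopes $\infty$ (i.e.~$\mu$), $\lambda$ and $N\mu + \lambda$ on $\partial Y$. This yields a surgery exact triangle
\[
\cdots \to \hfhat(Y) \to \hfhat(Y(\lambda)) \to \hfhat(Y(N\mu+\lambda)) \to \cdots
\]
refined with respect to ${\mathrm{Spin}^c}$ structures. Summing over the ${\mathrm{Spin}^c}$ structures on $Y(N\mu+\lambda)$ that sit in a given orbit under $\mu$, I would identify the third term with $H_*(\mathbb{A}(K))$ via the large surgery formula, and the first term (taken as an appropriate direct sum, with truncation) with $H_*(\mathbb{B}(K))$, using that each $B_{K,\spi}$ computes $\hfhat(Y, i_v(\spi))$. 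The connecting map is then to be identified with the map induced by $f_\lambda = \pi_v + j\circ \pi_h$: the $\pi_v$-term comes from the triangle count that projects the bent complex to its vertical part, and the $j\circ \pi_h$-term comes from the same count applied to the horizontal part, with $j$ appearing because one reindexes ${\mathrm{Spin}^c}$ structures by $\lambda$ in passing from $i_h$ to $i_v$ using $i_v(\spi)=i_h(\spi-\lambda)$. Once these identifications are in place, the mapping cone of $f_\lambda$ computes $\hfhat(Y(\lambda))$ directly from the long exact sequence of the cone.

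The two technical points I expect to be the main obstacles are, first, \emph{truncation}: the complexes $\mathbb{A}(K)$ and $\mathbb{B}(K)$ are infinite direct sums indexed by ${\mathrm{Spin}^c}(Y)$, but for large $|\phi(\spi)|$ the bent complex $A_{K,\spi}$ reduces to either $B_{K,\spi}$ (via $\pi_v$ alone) or to a shift of $B_{K,\spi-\lambda}$ (via $\pi_h$ alone), so outside a finite window $f_\lambda$ is a quasi-isomorphism. One must carefully show that truncating outside this window induces a quasi-isomorphism on the cone, so that the infinite-dimensional $\mathbb{X}_\lambda(K)$ and the finite-dimensional truncated version agree; this is routine but is where normalization conventions on $\tau(Y)$ and Proposition~\ref{Prop:AlexG} become relevant in applications. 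Second, the \emph{identification of the connecting map} with $\pi_v + j\circ \pi_h$ requires a careful count of holomorphic triangles in the Heegaard triple, and in particular the proof that the horizontal contribution agrees with the canonical chain homotopy equivalence $j$ rather than some other isomorphism; this is the subtle point of the original argument and the step I would devote the most care to.
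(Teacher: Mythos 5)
First, a point of orientation: the paper does not prove this statement at all --- Theorem~\ref{Thm:MappingCone} is quoted from Ozsv{\'a}th and Szab{\'o} \cite{OSInt} (and, in the generality actually used here, knots in rational homology spheres, the relevant reference is really the rational surgeries paper \cite{OSRat}). So there is no in-paper argument to compare against; your proposal has to be measured against the original Ozsv{\'a}th--Szab{\'o} proof, whose broad strategy (large surgery formula, an exact sequence assembling large surgeries with copies of \(\hfhat(\Ybar)\), truncation, and identification of the connecting map with \(v+h\)) you have correctly identified.

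There is, however, a genuine gap at the central step. The ``surgery exact triangle'' you invoke for the slopes \(\mu\), \(\lambda\), \(N\mu+\lambda\) does not exist in that form: the exact triangle requires a triad, i.e.\ slopes at pairwise distance one, and \(\lambda \cdot (N\mu+\lambda) = \pm N\), so for large \(N\) these three fillings do not fit into a single triangle. Relatedly, even if one had such a triangle, its first term would be a single copy of \(\hfhat(Y(\mu))\), summed over absolute \({\mathrm{Spin}^c}\) structures, whereas \(\mathbb{B}(K)\) has one summand \(B_{K,\spi}\) for every \emph{relative} \({\mathrm{Spin}^c}\) structure --- a \(\Z\)-indexed family over each absolute one. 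Producing the correct multiplicity of \(B\)-summands (and proving exactness of the resulting long sequence) is precisely the hard content of the Ozsv{\'a}th--Szab{\'o} argument; they achieve it with a refined exact sequence using coefficients twisted by \(\Z[\Z/N]\) (equivalently, an iterated/stabilized version of the triad triangle), and your phrase ``taken as an appropriate direct sum, with truncation'' is standing in for this entire mechanism without supplying it. Your two flagged technical points (truncation, and identifying the horizontal component of the connecting map with \(j\circ\pi_h\) via triangle counts and the reindexing \(i_v(\spi)=i_h(\spi-\lambda)\)) are indeed the remaining delicate steps, but as written the skeleton they hang on --- the exact sequence itself --- is not yet correct, so the proposal would need the twisted-coefficient refinement before it could be completed.
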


We make some remarks on the construction. First, it is easy to see that the complex \(\mathbb{X}_\lambda(K)\) decomposes as a direct sum of complexes whose underlying groups are of the form
$$ \mathbb{X}_\lambda(K,\spi) = \bigoplus_{n \in \Z} A_{K,\spi + n \lambda} \oplus \bigoplus_{n \in \Z} B_{K,\spi + n \lambda}.$$
The summands are on one to one correspondence with elements of the quotient \(H_1(Y)/\langle \lambda \rangle \simeq H_1(Y(\lambda))\). The resulting decomposition on homology corresponds to the decomposition of \(\hfhat(Y(\lambda))\) by \({\mathrm{Spin}^c}\) structures. 

Second, if \(\FF\) is  the field of order \(p\), where \(p\) is a prime, then we can form the complex 
\(\mathbb{X}_\lambda(K;\FF) =  \mathbb{X}_\lambda(K)\otimes \FF\). It follows from the universal coefficient theorem that 
\(\hfhat(Y(\lambda);\FF) \simeq H_*(\mathbb{X}_\lambda(K;\FF))\).

Finally, it is often convenient to work with the homology of the complexes
 \(A_{K,\spi}\) and \(B_{K,\spi}\), rather than the complexes themselves. We can do this if we use field coefficients. Specifically, fix a field \(\mathbb{F}_p\), and let
 \(\Aa_{K,\spi} = H(A_{K,\spi} \otimes \mathbb{F}_p)\), \(\Aa(K) = \oplus \Aa_{K,\spi}\), \(\Bb_{K,\spi} = H(B_{K,\spi}\otimes\mathbb{F}_p)\), \(\Bb(K) = \oplus \Bb_{K,\spi}\). 
 Similarly, let \(v:\Aa_{K,\spi} \to \Bb_{K,\spi}\) be the map induced by \(\pi_v\), and \(h:\Aa_{K,\spi} \to \Bb_{K,\spi+\lambda}\) be the map induced by \(j \circ \pi_h\). Finally, let \(C_\lambda(K;\mathbb{F}_p)\) be the chain complex whose underlying group is \(\Aa(K) \oplus \Bb(K)\), with differential given by \(dx = v(x)+h(x)\) for \(x \in \Aa(K)\), \(dy = 0\) for \(y \in \Bb(K)\). 
 
 \begin{cor}
\( \hfhat(Y(\lambda);\mathbb{F}_p) \simeq H(C_\lambda(K;\mathbb{F}_p))\).
 \end{cor}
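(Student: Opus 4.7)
The plan is to deduce this corollary from Theorem~\ref{Thm:MappingCone} combined with the universal coefficient theorem and the fact that over a field every chain complex is quasi-isomorphic to its homology.  By Theorem~\ref{Thm:MappingCone} and UCT, we have
$$\hfhat(Y(\lambda); \mathbb{F}_p) \simeq H_*(\mathbb{X}_\lambda(K) \otimes \mathbb{F}_p) = H_*(\mathbb{X}_\lambda(K; \mathbb{F}_p)),$$
so it suffices to exhibit a quasi-isomorphism between $\mathbb{X}_\lambda(K;\mathbb{F}_p)$ and $C_\lambda(K;\mathbb{F}_p)$.

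First, for each $\spi$, I would pick chain homotopy equivalences
$\phi_{A,\spi}\colon A_{K,\spi}\otimes \mathbb{F}_p \to \aA_{K,\spi}$ and
$\phi_{B,\spi}\colon B_{K,\spi}\otimes \mathbb{F}_p \to \Bb_{K,\spi}$,
with the target groups viewed as complexes carrying zero differential.  Such equivalences exist because $\mathbb{F}_p$ is a field: splitting both $0\to Z\to C \to B \to 0$ (cycles inside $C$) and $0 \to B \to Z \to H \to 0$ yields a direct sum decomposition that identifies $C$ with $H$ plus an acyclic summand.  Summing over $\spi$ gives chain homotopy equivalences $\Phi_A\colon \mathbb{A}(K)\otimes \mathbb{F}_p \to \aA(K)$ and $\Phi_B\colon \mathbb{B}(K)\otimes \mathbb{F}_p \to \Bb(K)$, together with chosen homotopy inverses $\Psi_A, \Psi_B$.

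Second, I would consider the transported map $g = \Phi_B\circ f_\lambda \circ \Psi_A\colon \aA(K) \to \Bb(K)$.  Because the target has zero differential, $g$ is determined up to chain homotopy by its value, and this value agrees with the induced map on homology, which by definition of $v$ and $h$ is precisely $v + h$.  The standard fact that cones of chain-homotopic maps are chain isomorphic, combined with the fact that componentwise quasi-isomorphisms on the domain and codomain of a chain map induce a quasi-isomorphism of mapping cones, gives a chain of quasi-isomorphisms
$$\mathbb{X}_\lambda(K;\mathbb{F}_p) = \mathrm{Cone}(f_\lambda\otimes \mathbb{F}_p) \;\simeq\; \mathrm{Cone}(g) \;\simeq\; \mathrm{Cone}(v+h) = C_\lambda(K;\mathbb{F}_p),$$
and the corollary follows.

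There is no real obstacle here beyond bookkeeping: one needs to check that the formality equivalences can be chosen summand-by-summand (so that the resulting identifications respect the $\spinc$-decomposition which organizes the cone into $A$- and $B$-factors), and that the Ozsv\'ath--Szab\'o map $f_\lambda = \pi_v + j\circ \pi_h$ indeed induces $v+h$ on homology after this identification.  Both are immediate from the definitions of $\aA_{K,\spi}$, $\Bb_{K,\spi}$, $v$, and $h$ given just before the corollary.
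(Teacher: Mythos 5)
Your argument is correct, but it takes a different route from the paper. The paper's proof is the short one suggested by the mapping-cone structure: the short exact sequence $0 \to \mathbb{B}(K)\otimes\mathbb{F}_p \to \mathbb{X}_\lambda(K;\mathbb{F}_p) \to \mathbb{A}(K)\otimes\mathbb{F}_p \to 0$ gives a long exact sequence $\cdots \to \Bb(K) \to \hfhat(Y(\lambda);\mathbb{F}_p) \to \Aa(K) \to \Bb(K) \to \cdots$ whose connecting map is exactly $v+h$, and since exact sequences of $\mathbb{F}_p$-vector spaces split, $\hfhat(Y(\lambda);\mathbb{F}_p) \simeq \ker(v+h)\oplus \coker(v+h) = H(C_\lambda(K;\mathbb{F}_p))$. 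You instead use formality over a field: replace $\mathbb{A}(K)\otimes\mathbb{F}_p$ and $\mathbb{B}(K)\otimes\mathbb{F}_p$ by their homologies via chain homotopy equivalences, transport $f_\lambda$ to a map $g$ between the homology-level complexes, identify $g$ with $v+h$, and compare cones. This buys you a genuinely chain-level statement -- a zigzag of quasi-isomorphisms between $\mathbb{X}_\lambda(K;\mathbb{F}_p)$ and $C_\lambda(K;\mathbb{F}_p)$, not merely an abstract isomorphism of homology groups -- at the cost of the bookkeeping you mention (choosing the equivalences $\spinc$-summand by summand and so that they induce the canonical identifications on homology). Two small remarks: since $\aA(K)$ and $\Bb(K)$ carry zero differential, chain homotopic maps between them are literally equal, so once your formality equivalences are normalized to induce the identity on homology you get $g=v+h$ on the nose and the ``cones of homotopic maps'' step is not really needed; and the passage from $\mathrm{Cone}(f_\lambda\otimes\mathbb{F}_p)$ to $\mathrm{Cone}(g)$ is cleanest phrased as two strictly commuting squares (precompose with $\Psi_A$, postcompose with $\Phi_B$) followed by the five lemma, which is what your ``componentwise quasi-isomorphisms induce a quasi-isomorphism of cones'' amounts to. Either way the argument is sound and yields the corollary.
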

 
 \begin{proof}
 The short exact sequence $$0 \to \mathbb{B}(K)\otimes \mathbb{F}_p \to \mathbb{X}_\lambda(K;\mathbb{F}_p) \to \mathbb{A}(K)\otimes \mathbb{F}_p \to 0$$ gives a long exact sequence 
$$ \to \Bb(K) \to \hfhat(Y(\lambda);\mathbb{F}_p) \to \Aa(K) \to \Bb(K) \to $$
whose boundary map is given by \(v+h\). An exact sequence over a field splits, so we get the statement of the corollary.  
 \end{proof}

\subsection{Splicing and surgery} Suppose \(Y_1\) and \(Y_2\) are rational homology solid tori, and that \(\varphi:\partial Y_1 \to \partial Y_2\) is an orientation reversing diffeomorphism. The manifold \(Y_\varphi = Y_1 \cup_\varphi Y_2\) is obtained by {splicing} \(Y_1\) and \(Y_2\) together along \(\varphi\).  Choose a slope \(\mu_1 \in Sl(\partial Y_1)\), and let \(\mu_2 = \varphi_*(\mu)\) be its image in \(Sl(\partial Y_2)\). Let \(\Ybar_i = Y_i(\mu_i)\)  be the corresponding Dehn fillings, and let  \(K_i = K_{\mu_i}\)  be their cores. 

\begin{lemma}
\label{Lem:SpliceSurgery}
\(Y_\varphi\) can be obtained by integral surgery on \(K_1 \# K_2 \subset \Ybar_1 \# \Ybar_2\). 
\end{lemma}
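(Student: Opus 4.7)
The plan is to exhibit $Y_\varphi$ and $\Ybar_1 \# \Ybar_2$ as two different Dehn fillings of the common $3$-manifold $E := \Ybar_1 \# \Ybar_2 \setminus \nu(K_1 \# K_2)$, and then verify that the filling slope yielding $Y_\varphi$ is integral with respect to the meridian $\mu_{K_1 \# K_2}$.

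First I would identify $E$ explicitly. The connected-sum $2$-sphere $S \subset \Ybar_1 \# \Ybar_2$ meets $K_1 \# K_2$ transversely in two points, so $S \cap E$ is an essential annulus $A$ whose boundary consists of two parallel meridians of $K_1 \# K_2$. Cutting $E$ along $A$ gives one piece on each side of $S$: on the $\Ybar_1$ side it is $\Ybar_1 \setminus (B_1 \cup \nu(K_1 \setminus \alpha_1))$, where $B_1 \subset \Ybar_1$ is a ball meeting $K_1$ in an unknotted arc $\alpha_1$. Since $B_1 \cup \nu(K_1)$ is an enlarged solid-torus neighborhood of $K_1$, this piece is homeomorphic to $Y_1$, with $A$ corresponding under the identification to a sub-annulus of $\partial Y_1$ parallel to $\mu_1$. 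Symmetrically on the $\Ybar_2$ side, so $E \cong Y_1 \cup_A Y_2$.

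Next I would realize $Y_\varphi$ as a Dehn filling of $E$. Inside $Y_\varphi$, the splicing torus $T = \partial Y_1 = \partial Y_2$ decomposes as $T = A \cup A'$ into complementary annuli, with $A$ the $\mu_1$-parallel annulus matching Step 1 and $A'$ its complement. The collar $V' := \nu(A') \cong A' \times [-\epsilon, \epsilon] \subset Y_\varphi$ is a solid torus, and a direct check shows $Y_\varphi \setminus \mathrm{int}\, V' \cong (Y_1 \setminus V') \cup_A (Y_2 \setminus V') \cong Y_1 \cup_A Y_2 = E$. Hence $Y_\varphi$ is Dehn surgery on $K_1 \# K_2 \subset \Ybar_1 \# \Ybar_2$ along the meridian slope $m^*$ of $V'$.

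Finally I would verify integrality, that is, $m \cdot m^* = \pm 1$ on $\partial E$, where $m = \mu_{K_1 \# K_2}$. The core direction of $A'$, realized on $\partial V' = \partial E$ by a parallel curve on a face $A' \times \{\epsilon\}$, is represented by a $\mu_1$-parallel curve on $T$, which is a meridian of $K_1 \subset V_1 \subset \Ybar_1$; via an isotopy through $\nu(K_1 \# K_2)$ along the knot, this represents the same homology class as the meridian $m$ of $K_1 \# K_2$. Since $m^*$ is transverse to the core direction in the standard identification $\partial V' = S^1 \times \partial D^2$, the classes $[m]$ and $[m^*]$ form a basis of $H_1(\partial E) \cong \Z^2$, giving $m \cdot m^* = \pm 1$.

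The main obstacle is bookkeeping: the boundary torus $\partial E$ admits at least three natural parametrizations --- as $\partial \nu(K_1 \# K_2) \subset \Ybar_1 \# \Ybar_2$, as $\partial V' \subset Y_\varphi$, and as the gluing of two sub-annuli of $\partial Y_1$ and $\partial Y_2$ along $\partial A$ --- and the crux of the argument is matching the core direction of $A'$ with the meridian of $K_1 \# K_2$ under all of them.
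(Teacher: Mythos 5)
Your proposal is correct and is essentially the paper's argument viewed dually: both hinge on identifying the exterior of \(K_1 \# K_2\) with \(Y_1\) and \(Y_2\) glued along meridional annuli \(\nu(\mu_1)\sim\nu(\mu_2)\subset T\), so that the meridian of \(K_1\# K_2\) is the common core slope of the boundary annuli. The only difference is packaging: the paper goes from the knot exterior to \(Y_\varphi\) by attaching a 2-handle along an explicitly constructed longitude \(\lambda\) (then capping with \(B^3\)), which also yields the identification \(\lambda=(\lambda_1,-\lambda_2)\) used right after the lemma, whereas you excise the solid torus \(\nu(A')\) from \(Y_\varphi\) and check that the filling slope meets the meridian once.
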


This is well-known, but an understanding of the proof will be useful in what follows, so we sketch it here.
\begin{proof}
Let \(Y'\) be the complement of \(K_1\#K_2\). \(Y'\) is obtained by identifying an annulus \(\nu(\mu_1) \subset \partial Y_1\) with its image \(\nu(\mu_2)=\varphi(\nu(\mu_1)) \subset \partial Y_2\). (Throughout the proof, we use the same symbol to denote both a slope on the torus and a simple closed curve representing it.) Equivalently, \(Y'\) can be obtained by starting with the disjoint union of \(Y_1,Y_2\) and \(S^1 \times I \times I\) and identifying \(S^1 \times I \times 0\) with \(\nu(\mu_1)\) and \(S^1 \times I \times 1\) with \(\nu(\mu_2)\).  
In this model, \(\partial Y'\) is a union of four annuli:  \(\partial Y_1 -\nu(\mu_1)\), \(S^1 \times 0 \times I\), \(\partial Y_2 -\nu(\mu_2)\), and \(S^1 \times 1 \times I\). The meridian \(\mu\) of \(K_1 \# K_2\) is homotopic to both \(\mu_1\) and \(\mu_2\) (and to the core of each of the four annuli.) 

Let \(\lambda_1\) be a longitude for \(\mu_1\), so that \(\lambda_2=-\varphi(\lambda_1)\) is a longitude for \(\mu_2\). We may assume that \(\lambda_1 \cap \nu(\mu_1) = p \times I \subset S^1 \times I \simeq \nu(\mu_1)\), and similarly for \(\lambda _2\). Let \(\lambda_1'\) be the arc obtained by intersecting \(\lambda_1\) with \(\partial Y_1 - \nu(\mu_1)\), and similarly for \(\lambda_2'\). The union of the arcs \(\lambda_1', p \times 0 \times I, \lambda_2',\) and \(p \times  1\times I\) is a longitude \(\lambda\) for \(K_1\#K_2\). Attaching a 2-handle along \(\lambda\) is the same as attaching  \(I \times I \times I\) to \(Y'\), where the top and bottom edges \(I \times 1/2 \times 1\) and  \(I \times 1/2 \times 0\) are identified with \(\lambda_1'\) and \(\lambda_2'\), and the sides \(1 \times 1/2 \times I\) and \(0 \times 1/2 \times I\) are identified with the other arcs in \(\lambda\).  
The resulting manifold can be obtained by starting with \(Y_1,Y_2\) and \(\Sigma \times I\), where \(\Sigma\) is a regular neighborhood of the \(1\)-skeleton in \(T^2\) and identifying \(\Sigma \times 0\) with a  tubular neighborhood of \( \mu_1 \cup \lambda_1 \subset \partial Y_1\) and \(\Sigma \times 1\) with its image under \(\varphi\). Finally, filling in the spherical boundary component with \(B^3\) gives \(Y_1 \cup (T^2 \times I) \cup Y_2 = Y_\varphi\). 
\end{proof} 

From the proof, we see that \(H_1(Y') \simeq H_1(Y_1)\oplus H_1(Y_2)/R\), where \(R\) is the subgroup generated by \((\mu_1, \mu_2)\), and that under this isomorphism,  \(\lambda = (\lambda_1, \varphi_*(\lambda_1)) = (\lambda_1,-\lambda_2)\). 
\vskip0.05in
We make two remarks on the utility of this construction. First, it is quite flexible, in the sense that the choice of {\em any} meridian \(\mu_1\in Sl(\partial Y_1)\) gives a different way of realizing the spliced manifold as a surgery. This flexibility will be useful to us in what follows. 

Second, rational surgery on a knot \(K \subset \Ybar\)  amounts to splicing \(Y\) with \(S^1 \times D^2\). Suppose \(\langle \mu, \lambda\rangle\) is our usual basis for \(H_1(\partial Y)\), and that \(\langle m, l\rangle\) is the standard basis for \(H_1(\partial S^1 \times D^2)\) (so \(l = [\partial D^2]\)). If we glue \(\partial Y\) to \(\partial(S^1\times D^2)\) in such a way that \([\partial D^2]\) is identified with \(\alpha = p \mu + q \lambda \in H_1(\partial Y)\), then it is easy to see that 
\(\mu\) is identified with \(-q m + p^* l\), where \(pp^* \equiv 1\,  \mod q\). Applying the lemma, we see that \(Y(\alpha) \) is obtained by integer surgery on a knot \(K' = K \# K_{-q/p} \subset \Ybar \# L(q,-p^*) = \Ybar \# L(q,-p)\). 

The knot \(K_{-q/p}\) is the unique knot in \(L(q,-p)\) whose complement is \(S^1 \times D^2\). (In the notation of \cite{LSpaceSurgeries}, it is the simple knot \(K(q,-p,1)\)). It is Floer simple, with Euler characteristic
$$ \chi(\hfk(K(q,-p,1)) \sim \frac{t^q-1}{t-1}.$$

To use Lemma~\ref{Lem:SpliceSurgery} to compute the Floer homology of a splice, we need to know how the knot Floer homology behaves under connected sum. 
\begin{lemma} \cite{OSRat}
\(\hfk(K_1 \# K_2) \simeq \hfk(K_1) \otimes \hfk(K_2) \).
\end{lemma}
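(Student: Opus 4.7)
The plan is to construct a doubly-pointed Heegaard diagram for $K_1 \# K_2 \subset \overline{Y}_1 \# \overline{Y}_2$ by internally connect-summing Heegaard diagrams for $K_1$ and $K_2$, and then to show via a neck-stretching argument that the resulting knot Floer chain complex splits as a tensor product of the individual chain complexes.

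First I would choose doubly-pointed Heegaard diagrams $\mathcal{H}_i = (\Sigma_i, \alpha_i, \beta_i, w_i, z_i)$ representing $K_i \subset \overline{Y}_i$, and form the connect-sum diagram $\mathcal{H} = \mathcal{H}_1 \# \mathcal{H}_2$ by removing small open disks around $z_1 \in \Sigma_1$ and $w_2 \in \Sigma_2$ and gluing the resulting boundary circles by an annular tube, retaining $w_1$ and $z_2$ as the two basepoints. A direct check shows that $\mathcal{H}$ is a doubly-pointed Heegaard diagram for $K_1 \# K_2$, and that its intersection points $\mathbb{T}_\alpha \cap \mathbb{T}_\beta$ are naturally identified with pairs $(\mathbf{x}_1, \mathbf{x}_2)\in (\mathbb{T}_{\alpha_1} \cap \mathbb{T}_{\beta_1})\times (\mathbb{T}_{\alpha_2} \cap \mathbb{T}_{\beta_2})$. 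Hence, as a bigraded abelian group, $\cfk(K_1 \# K_2) \cong \cfk(K_1) \otimes \cfk(K_2)$, with the relative $\mathrm{Spin}^c$ (Alexander) grading and the $\mathbb{Z}/2$-grading additive across the factors.

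The main step is to show that the differential respects this decomposition, i.e.\ $\partial(\mathbf{x}_1 \otimes \mathbf{x}_2) = (\partial_1 \mathbf{x}_1) \otimes \mathbf{x}_2 + \mathbf{x}_1 \otimes (\partial_2 \mathbf{x}_2)$. This follows from a standard neck-stretching argument: equip $\mathcal{H}$ with a one-parameter family of almost complex structures in which the length of the connecting tube tends to infinity, and apply Gromov compactness. Any index-one disk contributing to the $\hfk$ differential has multiplicity zero at $w_1$ and $z_2$; in the $T \to \infty$ limit, such a disk degenerates into a matched pair of holomorphic disks on $\Sigma_1$ and $\Sigma_2$, and the vanishing multiplicity conditions together with the index-one constraint force one of the two factor disks to be constant. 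This yields the tensor-product differential and hence a chain-level isomorphism $\cfk(K_1 \# K_2) \cong \cfk(K_1) \otimes \cfk(K_2)$, from which the statement on homology follows by the Künneth theorem.

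The main obstacle is the neck-stretching / Gromov compactness analysis, where one must rule out limiting configurations in which a holomorphic curve crosses the connecting tube with nonzero homological multiplicity, or in which both factor disks are nonconstant (which would violate the index count). Once these degenerations are controlled and the chain-level tensor decomposition is established, the lemma is immediate; the potential Tor contribution in Künneth is absorbed either by working over a field, or by the standard interpretation of the identification in \cite{OSRat}.
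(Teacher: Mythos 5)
The paper offers no proof of this lemma: it is quoted from Ozsv\'ath--Szab\'o \cite{OSRat}, so there is no internal argument to compare with, and what you are proposing is a re-derivation of their K\"unneth theorem. Your setup is the standard one and is correct as far as it goes: connect-summing the two doubly pointed diagrams at $z_1$ and $w_2$ while keeping $w_1$ and $z_2$ does give a diagram for $K_1\# K_2$, and generators are pairs of generators, so $\cfk(K_1\#K_2)\cong\cfk(K_1)\otimes\cfk(K_2)$ as bigraded groups.

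The gap is in the key analytic step. The differential of $\cfk$ only requires $n_{w_1}(\phi)=n_{z_2}(\phi)=0$, and neither of these basepoints lies in the connecting tube: the tube sits inside the region that contained the discarded basepoints $z_1$ and $w_2$. So nothing forces an index-one disk to miss the neck. If $\phi$ crosses the neck with multiplicity $m$, its stretched limit is a matched pair $(\phi_1,\phi_2)$ with $n_{z_1}(\phi_1)=n_{w_2}(\phi_2)=m$, and the index relation is $\mu(\phi)=\mu(\phi_1)+\mu(\phi_2)-2m$; for $\mu(\phi)=1$ and $m\ge 1$ both factors can be nonconstant, the matched moduli space has expected dimension one, and rigid matched pairs glue back to honest curves for long necks, so these cross terms are not excluded by your index count and occur in general. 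What the geometry does give is that such terms shift the two Alexander gradings by $(-m,+m)$ while preserving their sum, i.e.\ a filtered deformation of the tensor-product complex rather than the tensor product itself. This is precisely why Ozsv\'ath--Szab\'o prove the K\"unneth theorem at the level of the filtered complex ($CFK^{\infty}$ over $\mathbb{Z}[U]$, where the neck multiplicity is absorbed into the $U$-power/filtration bookkeeping), obtain a filtered chain homotopy equivalence with the $\mathbb{Z}[U]$-tensor product, and only then pass to associated graded objects to get $\hfk(K_1\#K_2)\cong H_*(\cfk(K_1)\otimes\cfk(K_2))$, finishing with the algebraic K\"unneth theorem (the Tor term being irrelevant over a field, or when the factors are free, as for the Floer simple knots used in this paper). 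To make your proof correct you must either carry out this filtered version or supply a genuine argument handling the neck-crossing configurations; the assertion that vanishing multiplicities at $w_1,z_2$ together with the index-one condition force one factor disk to be constant is exactly the point that fails.
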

The isomorphism is well-behaved with respect to \({\mathrm{Spin}^c}\) structures, in the sense that 
$$ \chi(\hfk(K_1\#K_2)) \sim \chi(\hfk(K_1)) \chi(\hfk(K_2)).$$ It is also respects the differentials, in the sense that \(\cfk(K_1\# K_2,d_0+d_v)\) is homotopy equivalent to 
\(\cfk(K_1,d_0+d_v)\otimes \cfk(K_2,d_0+d_v)\), and similarly for \(d_h\). 

In \cite{OSRat}, Ozsv{\'a}th and Szab{\'o} combined the observations above with their mapping cone for integer surgeries to express the Floer homology of any rational surgery as a mapping cone.

\section{Floer Simple Manifolds}
\label{Sec:FloerSimple}

In this section we use Ozsv{\'a}th and Szab{\'o}'s mapping cone formula to prove Proposition~\ref{Prop:MultiL} and to derive some basic facts about Floer simple manifolds. For the most part, these are straightforward extensions of results in \cite{OSLens},\cite{LSpaceSurgeries}, and \cite{BoyerCebanu}. We conclude by explaining how to compute the bordered Floer homology of a Floer simple manifold \(Y\) in terms of \(\tau(Y)\) and a Floer simple filling slope \(\alpha\). 
Our notation and  assumptions are the same as in section~\ref{Sec:HFK}.

\subsection{Proof of Proposition~\ref{Prop:MultiL}}
Suppose that \(K \subset \Ybar\) is a knot in an \(L\)--space, and that some nontrivial  surgery on \(Y\) is also an L-space. 

\begin{definition}
\label{Def:Chains}
We say that \(\hfk(K,\tspi)\) is a {\em positive chain} if it is  generated by elements \(x_1,\ldots, x_{n}, y_1, \ldots, y_{n-1}\) and the induced differentials \(\tilde{d}_h\), \(\tilde{d}_v\) satisfy \(\tilde{d}_v(y_i)  = \pm x_{i+1}\),
 \(\tilde{d}_h(y_i) = \pm x_{i+1}\), and \(\tilde{d}_v(x_i) =\tilde{d}_h(x_i) = 0 \) for all \(i\). More generally, we say that \(\hfk(K)\) {\em consists of positive chains} if  \(\widehat{CFK}(K, \tspi)\) is a positive chain for each \(\spi \in {\mathrm{Spin}^c}(Y)\), and that \(\hfk(K)\) consists of {\em coherent chains} if either \(\hfk(K)\) or \(\hfk(-K)\) consists of positive chains, where \(-K \subset -\Ybar\) is the mirror knot. 
\end{definition}
Note that all the \(x_i\)'s in the definition must have the same relative \(\Z/2\) grading, which is opposite that of the \(y_i\)'s. Since there are more \(x_i\)'s than \(y_i\)'s, the \(x_i\) contribute to \(\chi(\hfk(K))\) with positive sign, while the \(y_i\)'s contribute with negative sign. 

Ozsv{\'a}th and Szab{\'o} proved in \cite{OSLens} that if \(K \subset S^3\) has an L-space surgery with positive slope, then \(\hfk(K)\) is a positive chain. The following generalization is an easy consequence of a result of  Boileau, Boyer, Cebanu, and Walsh:

\begin{lemma}  Suppose that \(K \subset \Ybar\) is a knot in an \(L\)--space, and that some
 surgery on \(K\) is also an \(L\)--space. Then \(\hfk(K)\) consists of coherent chains.  
\end{lemma}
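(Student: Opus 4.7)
The plan is to invoke the Ozsv\'ath--Szab\'o mapping cone formula together with the rigidity imposed by the twin L-space hypotheses on \(\Ybar\) and on the surgery \(Y(\alpha)\). The ``coherent'' language is the mechanism for accommodating the two possible relative signs of \(\alpha\) against the homological longitude, which is exactly the flexibility Boileau--Boyer--Cebanu--Walsh exploit.

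First, I would unpack the hypothesis that \(\Ybar\) is an L-space. The spectral sequence from \(\hfk(K)\) to \(\hfhat(\Ybar)\) forces \(K\) to be Floer simple, so for each equivalence class \(\tspi\) the rank of \(\hfk(K,\tspi)\) is odd and the induced differentials \(\tilde d_v, \tilde d_h\) on \(\hfk(K,\tspi)\) each collapse the homology to a single generator (matching the rank-one piece of \(\hfhat(\Ybar, i_v(\spi))\) resp.\ \(\hfhat(\Ybar, i_h(\spi))\)). So within each class I already know that \(\tilde d_v\) and \(\tilde d_h\) are both maximally non-degenerate.

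Second, I would feed this into the mapping cone \(\mathbb{X}_\alpha(K)\) for the L-space surgery at \(\alpha\). Writing \(\alpha\) in terms of \(\mu\) and a longitude and using \mbox{Lemma~2.4} to express \(Y(\alpha)\) as integer surgery on a connected sum with a simple knot, the condition that \(Y(\alpha)\) is an L-space bounds \(\dim \hfhat(Y(\alpha);\mathbb{F}_p)\) below by \(|H_1(Y(\alpha))|\), which is only saturated when the maps \(v\) and \(h\) from the \(\Aa_{K,\spi}\) to the \(\Bb_{K,\spi}\) cancel as efficiently as possible. Pushed through the \(\spi\mapsto \spi+\lambda\) shift built into \(v+h\), this extremality forces the generators of \(\hfk(K,\tspi)\) to line up in a staircase, with \(\tilde d_v\) and \(\tilde d_h\) alternately hitting the same \(x_{i+1}\) from opposite sides of a single \(y_i\). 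Up to the global orientation choice between \(K\subset\Ybar\) and \(-K\subset-\Ybar\) dictated by which side of the homological longitude contains \(\alpha\), this is precisely the positive chain condition of Definition~\ref{Def:Chains}, which is what coherent means.

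I expect the main obstacle to be the bookkeeping across multiple \(\spinc\)-equivalence classes, which distinguishes this setting from the \(S^3\) argument of \cite{OSLens} where there is only one class to consider. This is exactly the combinatorial task carried out by Boileau--Boyer--Cebanu--Walsh in their analysis of knots with multiple L-space surgeries, so the lemma follows by invoking their structural result and translating the conclusion into the coherent-chain terminology introduced above.
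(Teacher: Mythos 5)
Your opening step is false, and it matters. The hypothesis that \(\Ybar\) is an L-space does \emph{not} force \(K\) to be Floer simple: Floer simplicity requires the spectral sequence from \(\hfk(K)\) to \(\hfhat(\Ybar)\) to degenerate, and nothing in the hypotheses guarantees that. The right-handed trefoil in \(S^3\) already satisfies the lemma's hypotheses (both \(S^3\) and its \(+5\) surgery are L-spaces) while \(\hfk\) has rank \(3>1=|H_1(S^3)|\), so it is not Floer simple; its \(\hfk\) is a genuine chain of length \(n=2\). Note also the internal inconsistency: if your first claim were true, each \(\hfk(K,\tspi)\) would have rank one and would be a positive chain with \(n=1\), so the lemma would be immediate and the rest of your argument superfluous. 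What actually follows from \(\Ybar\) being an L-space is only that each \((\hfk(K,\tspi),\tilde d_v)\) and \((\hfk(K,\tspi),\tilde d_h)\) has rank-one homology; the content of the lemma is the much finer statement that the generators organize into a single alternating chain, and that is exactly where your sketch goes soft.

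The paper's proof gets this structure as follows: reduce rational surgery to integral surgery on \(K\#K_{-q/p}\subset \Ybar\#L(q,-p)\) (which you do mention), split into positive and negative definite surgery cobordisms (handled by replacing \(K\subset\Ybar\) with \(-K\subset-\Ybar\)), and in the positive case invoke Lemma 6.7 of \cite{BoyerCebanu} to conclude that every bent complex has \(H(A_{K,\spi})\simeq\Z\); with that rank-one input, the proof of Theorem 1.2 of \cite{OSLens} runs verbatim and produces the positive-chain structure of Definition~\ref{Def:Chains}. Your middle paragraph replaces the rank-one bent groups by the assertion that the L-space condition is ``only saturated when \(v\) and \(h\) cancel as efficiently as possible,'' which is not an argument: saturation of \(\dim\hfhat(Y(\alpha))=|H_1(Y(\alpha))|\) constrains the mapping cone as a whole, but extracting from it the chain shape of each \(\hfk(K,\tspi)\) is precisely the combinatorial content of the Ozsv\'ath--Szab\'o lens-space argument, which needs the bent groups (not just the \(B\)-groups) to be rank one. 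Your closing appeal to Boileau--Boyer--Cebanu--Walsh points at the right source, but what you need from them is this bent-group statement, not ``bookkeeping across \(\spinc\) classes''; as written, the step from two L-space fillings to the staircase is asserted rather than proved.
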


\begin{proof} 
A surgery on \(K\) is {\em positive} if the corresponding surgery cobordism is positive definite. 
Suppose that some positive integral surgery on \(K\) is an L-space. 
By Lemma 6.7 of \cite{BoyerCebanu}, the bent group \(A_{K,\spi} \simeq \Z\) for all \(\spi \in {\mathrm{Spin}^c}(Y, \partial Y)\). The proof of Theorem 1.2 of \cite{OSLens} carries over unchanged to show that 
\(\hfk(K, \tspi)\) is a positive chain.

Next, suppose that \(Y'\) is obtained  by negative integral surgery on \(K \subset \overline{Y}\), and that \(Y'\) is an L-space.  By reversing the orientation of the surgery cobordism, we see that \(-Y'\) is obtained by positive surgery on \(-K \subset - \overline{Y}\).  
\(-Y'\) is also an L-space, so \(\hfk(-K)\) consists of positive chains, and \(\hfk(K)\) consists of negative ones.

Finally, suppose that an L-space \(Y'\) is obtained by fractional surgery on \(K\). Then \(Y'\) is obtained by integral surgery on a knot of the form \(K \# K_{-q/p} \subset \Ybar \# L(q,-p)\), so \(\hfk(K \# K_{-q/p}) \simeq \hfk(K) \otimes \hfk(K_{-q/p})\) is composed of coherent chains. Since \(K_{-q/p}\) is Floer simple, it is easy to see that this occurs if and only if \(\hfk(K)\) is composed of coherent chains. 
\end{proof}

\begin{lemma}
If \(\hfk(K)\) consists of coherent chains, then 
$\displaystyle \tau(Y) = \sum_{h \in S[\tau(Y)]}[h].$
\end{lemma}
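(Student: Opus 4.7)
\vskip 0.05in
\noindent\textbf{Proof plan.} The strategy is to read off the coefficients of $\tau(Y)$ from $\chi(\hfk(K))$ by solving the equation $(1-[\mu])\tau(Y)\sim\chi(\hfk(K))$ of Proposition~\ref{Prop:Chi} inside the Novikov ring $\Lambda_\phi[H_1(Y)]$, using the fact that coherence pins down the shape of $\chi(\hfk(K))$ as a sum of staircase contributions. Without loss of generality I will assume $\hfk(K)$ consists of positive chains; the case where $\hfk(-K)$ does follows by applying the result to $-K\subset -\Ybar$ and invoking the symmetry of $\tau$ under orientation reversal.

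First, I fix an equivalence class $\tspi\in\spinc(Y,\partial Y)/\langle\mu\rangle$, whose $\mu$-orbit corresponds to a coset of $\langle\mu\rangle$ in $H_1(Y)$. By hypothesis, $\hfk(K,\tspi)$ is a staircase generated by $x_1,\dots,x_n$ at strictly decreasing $\mu$-heights $h_1^x>h_2^x>\dots>h_n^x$ and $y_1,\dots,y_{n-1}$ at interleaved heights $h_i^x>h_i^y>h_{i+1}^x$. Since the $x_i$'s all carry one $\Z/2$-parity and the $y_i$'s the opposite, the graded Euler characteristic in this class takes the form
\[
\chi(\hfk(K,\tspi))=\epsilon_\tspi\Bigl(\sum_{i=1}^n[s_i^x]-\sum_{i=1}^{n-1}[s_i^y]\Bigr)
\]
for some sign $\epsilon_\tspi\in\{\pm 1\}$ determined by the absolute $\Z/2$-grading.

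Next I would invert $1-[\mu]$ using the identity $\frac{[s]-[s-k\mu]}{1-[\mu]}=\sum_{j=1}^{k}[s-j\mu]$ for $k\ge 0$, and expand $[s_n^x]/(1-[\mu])=\sum_{j\ge 0}[s_n^x+j\mu]$ as an infinite ``top ray'' in the coset. Pairing $[s_i^x]$ with $[s_i^y]$ for $i=1,\dots,n-1$ and treating $[s_n^x]$ separately produces telescoping cancellations that yield
\[
\tau(Y)_\tspi=\epsilon_\tspi\sum_{h\in S_\tspi}[h],
\]
where $S_\tspi$ is the union of the step intervals $[h_i^x,h_{i-1}^y-1]$ for $i=2,\dots,n$ together with the ray of heights $[h_1^x,\infty)$. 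In particular, every coefficient of $\tau(Y)_\tspi$ equals $0$ or $\epsilon_\tspi$.

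Finally, I would appeal to Proposition~\ref{Prop:AlexG} to pin down the sign: the chosen normalization of $\tau(Y)$ forces $a_h=1$ for all but finitely many $h$ with $\phi(h)>0$, and the ray portion of $S_\tspi$ contributes precisely to all sufficiently large heights in each coset, which forces $\epsilon_\tspi=+1$. Summing over the finitely many nontrivial classes then yields $\tau(Y)=\sum_{h\in S[\tau(Y)]}[h]$. The main obstacle is the staircase bookkeeping in the Novikov expansion, together with verifying that the sign $\epsilon_\tspi$ is globally $+1$ across all $\langle\mu\rangle$-cosets; the latter is handled cleanly by Proposition~\ref{Prop:AlexG}, which is a general fact about Turaev torsion independent of the Floer simple hypothesis.
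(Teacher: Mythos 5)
Your argument is correct and is essentially the paper's own proof: both decompose $\chi(\hfk(K))$ into $\langle\mu\rangle$-cosets and multiply by $(1-[\mu])^{-1}=\sum_{i\geq 0}[\mu]^i$, so that in each coset the coefficients of $\tau(Y)$ are the partial sums of an alternating $\pm1$ sequence whose outermost terms are $+1$, hence all $0$ or $1$; your telescoping is just this partial-sum computation made explicit, and your description of the support (steps $[h_i^x,h_{i-1}^y-1]$ plus the ray above $h_1^x$) is what the paper's one-line assertion amounts to. The one genuine difference is how the overall sign is fixed: the paper appeals to the remark following Definition~\ref{Def:Chains} (the $x_i$'s outnumber the $y_i$'s, so they contribute with positive sign), whereas you leave a per-coset sign $\epsilon_\tspi$ and force $\epsilon_\tspi=+1$ via Proposition~\ref{Prop:AlexG}; this works because every $\langle\mu\rangle$-coset contains elements of arbitrarily large $\phi$ (as $\phi(\mu)>0$), and it has the mild advantage of not relying on absolute $\Z/2$-grading conventions, at the cost of invoking Turaev's general result. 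One small slip to repair: the quoted identity should be $\frac{[s]-[s-k\mu]}{1-[\mu]}=-\sum_{j=1}^{k}[s-j\mu]$, equivalently $\frac{[s-k\mu]-[s]}{1-[\mu]}=\sum_{j=1}^{k}[s-j\mu]$; with the correct sign each pair $[s_i^x]-[s_i^y]$ contributes \emph{minus} the interval $[h_i^y,h_i^x-1]$, and subtracting these disjoint intervals from the upward ray generated by the bottom generator $[s_n^x]$ yields exactly the set $S_\tspi$ you state, so the conclusion is unaffected.
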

\begin{proof}
We have
\begin{equation*}
\tau(Y)   \sim \frac{\chi(\hfk(K))}{(1-[\mu])} 
 = \left( \sum_{\spibar \in M}  \chi(\hfk(K,\tspi)) \spibar \right) \left( \sum_{i=0}^\infty [\mu]^i\right)
\end{equation*}
where \(M \subset {\mathrm{Spin}^c}(Y,\partial Y)\) is a set of coset representatives for the action of \(\langle \mu \rangle\) and 
 $$\chi(\hfk(K,\tspi)) = \sum_{j \in \Z} \chi(\hfk(K,\spibar+j \mu))[\mu]^j.$$
The hypothesis that \(\hfk(K)\) consists of coherent chains implies that the nonzero coefficients of \(\chi(\hfk(K,\tspi))\) alternate between \(+1\) and \(-1\), and that the outermost coefficients are \(+1\). It follows that the  coefficients of the product
$\chi(\hfk(K,\tspi))  \left( \sum_{i=0}^\infty [\mu]^i\right)$
are all either \(0\) or \(+1\), and hence that all the coefficients of \(\tau(Y)\) are either \(0\) or \(1\) as well. 
\end{proof}

\begin{cor}
\label{Cor:LargeFS}
Suppose \(\hfk(K)\) is composed of coherent chains, and that \(\phi(\mu)>\|Y\|\). Then \(K\) is Floer simple.
\end{cor}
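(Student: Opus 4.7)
The plan is to pin down the $\phi$-support of $\chi(\hfk(K))$ using the previous lemma together with Proposition~\ref{Prop:AlexG}, then squeeze the staircase of a positive chain into that support. After possibly replacing $K$ by its mirror (which does not change the rank of $\hfk$), I may assume $\hfk(K)$ is a direct sum of positive chains; then every coefficient $a_h$ of $\tau(Y)$ lies in $\{0,1\}$, with $a_h = 1$ whenever $\phi(h) > \|Y\|$ and $a_h = 0$ whenever $\phi(h) < 0$ by the normalization.

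Since $\chi(\hfk(K)) \sim (1-[\mu])\tau(Y)$, the coefficient of $[h]$ is $a_h - a_{h-\mu}$. This difference vanishes whenever $\phi(h) > \|Y\|+\phi(\mu)$ (then $\phi(h-\mu) > \|Y\|$, so $a_h = a_{h-\mu} = 1$) and whenever $\phi(h) < 0$, so the support of $\chi(\hfk(K))$ is confined to the slab $0 \leq \phi(h) \leq \|Y\|+\phi(\mu)$, of $\phi$-length $\|Y\|+\phi(\mu)$. Fix an equivalence class $\tspi \in \spinc(Y,\partial Y)/\langle\mu\rangle$: its $\phi$-values form an arithmetic progression with common difference $\phi(\mu)$, so at most $1+\lfloor(\|Y\|+\phi(\mu))/\phi(\mu)\rfloor = 2+\lfloor\|Y\|/\phi(\mu)\rfloor$ of them lie in an interval of length $\|Y\|+\phi(\mu)$. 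The hypothesis $\phi(\mu) > \|Y\|$ reduces this count to $2$.

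A positive chain $x_1,y_1,x_2,\ldots,y_{n-1},x_n$ representing $\hfk(K,\tspi)$ occupies $2n-1$ distinct $\mu$-levels, since $\tilde{d}_v$ strictly decreases and $\tilde{d}_h$ strictly increases the $\mu$-grading, and each generator contributes $\pm 1$ to $\chi(\hfk(K))$ at its level with no cancellation. Combined with the previous paragraph's count, this forces $2n-1 \leq 2$, hence $n \leq 1$. On the other hand, since $\Ybar$ is a rational homology sphere, $\hfhat(\Ybar,i_v(\spi)) \neq 0$ for every $\spi$, and as this group is the $\tilde{d}_v$-homology of $\hfk(K,\tspi)$ we must also have $n \geq 1$ for each $\tspi$. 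Therefore $n = 1$ throughout, $\hfk(K,\spi) \simeq \Z$ for every $\spi$, and $K$ is Floer simple.

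The only substantive step is the support bound in the second paragraph, since that is the unique place where the hypothesis $\phi(\mu) > \|Y\|$ enters; the rest is an arithmetic-progression count against the length-$(2n-1)$ staircase.
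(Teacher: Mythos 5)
Your argument is correct, and it reaches the conclusion by a slightly different mechanism than the paper's, though it uses the same three inputs (the lemma that coherent chains force the coefficients \(a_h\) of \(\tau(Y)\) to lie in \(\{0,1\}\), Proposition~\ref{Prop:AlexG}, and \(\chi(\hfk(K))\sim(1-[\mu])\tau(Y)\)). The paper finishes by showing directly that every coefficient \(a_h-a_{h-\mu}\) of \(\chi(\hfk(K))\) is \(0\) or \(1\): if \(\phi(h)<\phi(\mu)\) then \(a_{h-\mu}=0\), while if \(\phi(h)\ge\phi(\mu)>\|Y\|\) then \(a_h=1\) by Proposition~\ref{Prop:AlexG}; no negative coefficients means no \(y\)-generators, so each chain is a single \(\Z\). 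You instead use the two regimes only to confine the support of \(\chi\) (hence of \(\hfk\), since the generators of a chain sit at pairwise distinct \(\mu\)-levels by the monotonicity of \(\tilde d_v,\tilde d_h\) chained through the staircase) to the slab \(0\le\phi\le\|Y\|+\phi(\mu)\), and then count: a \(\langle\mu\rangle\)-orbit meets that slab in at most two points once \(\phi(\mu)>\|Y\|\), whereas a chain with \(n\ge2\) occupies \(2n-1\ge3\) levels. Your route makes explicit two things the paper leaves implicit (the width bound on the support, and the lower bound \(n\ge1\) coming from \(\hfhat(\Ybar,i_v(\spi))\neq0\)), at the mild cost of the mirroring reduction to positive chains, which the paper avoids by arguing with coherent chains throughout; both are equally rigorous, and your mirror step is legitimate since mirroring preserves \(\|Y\|\), the sign convention \(\phi(\mu)>0\), and Floer simplicity. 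One small wording fix: the conclusion \(n=1\) gives \(\hfk(K,\tspi)\simeq\Z\) for every equivalence class \(\tspi\) (exactly one relative \(\spinc\) structure in each \(\mu\)-orbit is occupied), not \(\hfk(K,\spi)\simeq\Z\) for every relative \(\spi\).
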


\begin{proof}
By hypothesis, \(\hfk(K)\) is composed of coherent chains, so to prove that \(K\) is Floer simple, it suffices to show that every monomial in \(\chi(\hfk(K))\) appears with a positive coefficient.  As usual, we normalize \(\tau(Y) = \sum_h a_h [h]\) so that \(a_h = 0 \) whenever \(\phi(h)<0\), and \(a_0\neq 0 \). We have 
$\chi(\hfk(K)) \sim (1-[\mu]) \tau(Y),$ so  the coefficient of \([h]\) in \(\chi(\hfk(K))\) is \(a_h - a_{h-\mu}\). Both terms in this difference are either \(0\) or \(1\). If \(\phi(\mu) > \phi(h)\), then \(a_{h-\mu}=0\), while if \(\phi(h)\geq \phi(\mu) > \|Y\|\), then \(a_h = 1\) by Proposition~\ref{Prop:AlexG}. In either case, we see that the coefficient of \([h]\) in \(\chi(\hfk(K))\) is either \(0\) or \(1\). 
\end{proof}

\begin{lemma}
\label{Lem:Interval}
If \(\hfk(Y)\) is composed of positive chains, there is an  interval in \(Sl(Y)\) whose left endpoint is \(\mu\) and which is contained in \(\lL(Y)\).
\end{lemma}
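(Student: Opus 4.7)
The plan is to apply the Ozsv\'ath-Szab\'o integer surgery mapping cone to the family of longitudes $\lambda_k = \lambda + k\mu$, each of which satisfies $\mu \cdot \lambda_k = 1$ and whose slopes $[\lambda_k]\in Sl(Y)\cong\Qbar$ accumulate at $[\mu]$ as $k \to \infty$. Exhibiting $Y(\lambda_k)$ as an L-space for all $k \geq N$ will then produce the desired interval in $\lL(Y)$ with $\mu$ as left endpoint.

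First I would use the positive chain hypothesis to pin down the structure of the bent complexes: for each $\spi\in\spinc(Y,\partial Y)$ and each prime $p$, the bent complex $A_{K,\spi}\otimes\FF$ has homology $\FF$ in a single degree. This is the direct analogue of Ozsv\'ath-Szab\'o's computation of bent complexes for L-space knots in $S^3$ \cite{OSLens}: a positive chain makes $A_{K,\spi}$ into a bounded staircase complex whose homology is rank one. Moreover, the induced maps $v\colon \Aa_{K,\spi}\to \Bb_{K,\spi}$ and $h\colon \Aa_{K,\spi}\to \Bb_{K,\spi+\lambda_k}$ are each either an isomorphism or zero on homology, with the dichotomy dictated by the position of $\spi$ in the $\mu$-direction relative to the support of $\widehat{HFK}(K,\tspi)$.

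Next I would decompose the mapping cone $\mathbb{X}_{\lambda_k}(K;\FF)$ into summands indexed by $\bar\spi\in H_1(Y)/\langle\lambda_k\rangle\cong H_1(Y(\lambda_k))$. Each summand is a ``ladder'' on $\bigoplus_n \Aa_{K,\bar\spi+n\lambda_k}$ and $\bigoplus_n \Bb_{K,\bar\spi+n\lambda_k}$, finitely supported because $\widehat{HFK}(K)$ is. Stepping by $\lambda_k = \lambda + k\mu$ shifts the $\mu$-grading by $k$ per rung, so for $k$ sufficiently large the iso/zero pattern on $v$ and $h$ is forced: in the top rungs $v$ is an iso and $h$ is zero, in the bottom rungs $v$ is zero and $h$ is an iso, with a single transition in between. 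A direct rank count then shows each ladder has total homology of rank one, so $\widehat{HF}(Y(\lambda_k);\FF)$ has rank $|H_1(Y(\lambda_k))|$ and $Y(\lambda_k)$ is an L-space.

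The main obstacle will be the final rank count in the ladder, namely checking that the single transition between ``$v$ iso, $h$ zero'' and ``$v$ zero, $h$ iso'' leaves exactly one surviving homology class per summand. The positive chain hypothesis is crucial here precisely because it fixes the direction of the staircase, and hence the sign of $k$ for which the ladder degenerates correctly; this one-sidedness is what produces a one-sided interval in $\lL(Y)$ with left endpoint $\mu$ rather than an interval extending on the opposite side.
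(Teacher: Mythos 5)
There is a genuine gap at the final step: showing that $Y(\lambda+k\mu)$ is an L-space for all $k\geq N$ only produces a discrete family of slopes accumulating at $[\mu]$, not an interval in $Sl(Y)$. Slopes such as $2\lambda+(2N+1)\mu$ lie between consecutive members of your family but are never of the form $\lambda+k\mu$, and nothing in your ladder analysis addresses them; the fact that L-space slopes fill out intervals is precisely the nontrivial content here (indeed it is the content of the theorems this lemma feeds into), so it cannot be assumed. Your closing sentence conflates ``a sequence of L-space slopes accumulating at $[\mu]$'' with ``an interval with left endpoint $\mu$.'' (You also never argue that $\mu$ itself lies in $\lL(Y)$, which is needed for it to be the closed left endpoint, though this does follow easily from the positive-chain hypothesis since $(\cfk(K,\tspi),d_0+d_v)$ then has rank-one homology in each class.)

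The paper's proof is much shorter and avoids the mapping-cone ladder entirely: positive chains force each bent complex to have homology $\Z$, the large-surgery formula then gives $N\mu+\lambda\in\lL(Y)$ for $N\gg 0$, and since $\mu\cdot(N\mu+\lambda)=1$ and $\mu\in\lL(Y)$, Proposition 17 of \cite{BGW} (the standard interpolation via the surgery exact triangle and $|H_1|$-additivity) yields the whole interval $[\mu,N\mu+\lambda]\subset\lL(Y)$. To repair your argument you must either quote such an interpolation result after establishing one large integer L-space surgery, or extend your iso/zero ``ladder'' analysis to arbitrary rational slopes in the interval (e.g.\ via the connected sum with a lens-space knot and the coloring criterion of Proposition~\ref{Prop:RedBlueBlack}); the latter is essentially machinery the paper develops only after this lemma, so the short route through \cite{BGW} is the natural fix.
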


\begin{proof}
Since \(\hfk(K)\) is composed of positive chains, the homology of each of its bent complexes is \(\Z\). Since the homology of the bent complexes computes  \(\hfhat(Y(N \mu + \lambda))\) for some \(N \gg 0\), we see that \(N \mu + \lambda \in \lL(Y)\). Since \(\mu \cdot (N \mu + \lambda) = 1\), Proposition 17 of \cite{BGW} shows that the entire interval \([\mu, N \mu + \lambda]\) is contained in  \(\lL(Y)\).  
\end{proof}

By considering mirrors, we see that if \(\hfk(K)\) is composed of negative chains, then \(\mu\) is the right endpoint of a closed interval in \(\lL(Y)\). It follows that if \(K\) is Floer simple, then it is an interior point of an interval in \(\lL(Y)\). Conversely, if \(\hfk(K)\) is composed of negative chains but is not Floer simple, then some bent group of \(K\) has rank \(>1\). This implies that \(Y(N \mu + \lambda)\) is not an L-space for \(N\gg 0\). 
Thus if \( \hfk(K)\) is composed of coherent chains but is not Floer simple,  \(\mu\) is in not in the interior of \(\lL(Y)\). 

\begin{proof}
(Of Proposition~\ref{Prop:MultiL})
If \(Y\) is Floer simple, then it has some filling \(Y(\alpha)\) for which \(K_\alpha\) is Floer simple. As we observed above, \(\alpha\) is  contained in the interior of an interval in \(\lL(Y)\), so clearly \(|\lL(Y)|>1\). Conversely, if \(\lL(Y)>1\), then \(\hfk(K)\) is composed of coherent chains, so \(\lL(Y)\) contains an interval. Now any interval in \(Sl(Y)\) contains elements \(\alpha\) with \(\phi(\alpha)\) arbitrarily large. (To see this, identify \(Sl(Y)\) with \(\Qbar\) using the canonical meridian and longitude. If \(\alpha \mapsto a/b\) under this identification, then \(\phi(\alpha) = k a\) for some fixed \(k>0\).) By Corollary~\ref{Cor:LargeFS}, \(K_\alpha \subset Y(\alpha)\) is Floer simple, so \(Y\) is Floer simple.  
\end{proof}

\subsection{Surgery on Floer simple knots}
We now suppose that \(K \subset \Ybar\) is Floer simple. We give a graphical criterion for determining whether a given integer surgery on \(K\) is an L-space. To do so, we consider the set \(\black= S[\hfk(K)] \subset {\mathrm{Spin}^c}(Y, \partial Y)\). Since \(K\) is Floer simple, \(\black\) is a set of coset representatives for the action of the subgroup \(\langle \mu \rangle \subset H_1(Y)\). In other words, every \(\spi \in {\mathrm{Spin}^c}(Y, \partial Y)\) can be written in a unique way as \(\spibar + n \mu\), where \(\spibar \in \black\) and \(n \in \Z\). We color \(\spi\) {black} if \(n=0\), {red} if \(n>0\), and blue if \(n<0\).

 Now suppose we do surgery along \(K\) with slope \(\lambda\), where \(\mu \cdot \lambda = 1\). We divide \({\mathrm{Spin}^c}(Y,\partial Y)\) into cosets for the action of \( \langle \lambda \rangle \). Each coset \(L\) is an affine copy of \(\Z\), so it has a natural ordering. Each element of \(L\) is colored either black,  red, or blue; elements which are sufficiently negative are all colored blue, and elements which are sufficiently positive are all colored red.   We say \(L\) is {\em properly colored} if no red element of \(L\) appears before a blue element. 

\begin{prop}
\label{Prop:RedBlueBlack}
\(Y(\lambda)\) is an L-space if and only if every coset for the action of \(\langle \lambda \rangle\) is properly colored. 
\end{prop}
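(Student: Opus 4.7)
The plan is to compute $\hfhat(Y(\lambda);\Ff_p)$ directly using the \OS mapping cone. By the corollary at the end of Section~\ref{Sec:HFK}, $\hfhat(Y(\lambda);\Ff_p) \simeq H_*(C_\lambda(K;\Ff_p))$, and this complex splits as a direct sum over $\langle\lambda\rangle$-cosets $L \subset \spinc(Y,\partial Y)$, which are in bijection with $\spinc(Y(\lambda))$. Thus $Y(\lambda)$ is an L-space if and only if $\dim_{\Ff_p} H_*(C_\lambda|_L;\Ff_p) = 1$ for every coset $L$ and every prime $p$, reducing the problem to characterizing when each summand has rank one.

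For Floer simple $K$, each $\Aa_{K,\spi}$ and each $\Bb_{K,\spi}$ is one dimensional over $\Ff_p$, with generator represented by the class of the $\hfk$-generator at the unique black representative $\spibar$ of the class $\tspi$. Writing $\spi = \spibar + k\mu$, this generator occupies position $-k$ of $\cfk(K,\tspi)$ relative to $\spi$. Since $\pi_v$ kills strictly positive positions, $\pi_h$ kills strictly negative ones, and $j$ is a chain homotopy equivalence, the induced map $v\colon\Aa_{K,\spi}\to\Bb_{K,\spi}$ is nonzero iff $k\ge 0$ (so $\spi$ is black or red), and $h\colon\Aa_{K,\spi}\to\Bb_{K,\spi+\lambda}$ is nonzero iff $k\le 0$ (so $\spi$ is black or blue). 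In this way the coloring controls every differential in $C_\lambda|_L$.

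Next I would encode $C_\lambda|_L$ as the cellular chain complex of a graph $G_L$, modulo one vertex. Take vertices $\{b_m : \spi_m \in L\}\cup\{\infty\}$ and, for each $n$, an edge $a_n$ joining $b_n$ to $b_{n+1}$ if $\spi_n$ is black, $b_n$ to $\infty$ if red, or $\infty$ to $b_{n+1}$ if blue. The complex $C_\lambda|_L$ is then the graph chain complex modulo the subcomplex $\Ff_p\cdot\infty$ in degree zero. Since $[\infty]$ is never a boundary in the graph complex (no finite sum of edge boundaries concentrates at $\infty$), the associated long exact sequence yields
\[
\dim H_*(C_\lambda|_L) \;=\; \dim H_0(G_L) + \dim H_1(G_L) - 1.
\]
Because $\black$ is finite while $L$ is infinite, each coset has a blue tail on one end and a red tail on the other; each tail is a star at $\infty$ and hence a tree, so after collapsing tails, $G_L$ is homotopy equivalent to a finite graph with equal numbers of vertices and edges. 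Thus $\chi(G_L)=0$, forcing $\dim H_0(G_L)=\dim H_1(G_L)$, and the rank-one condition becomes that $G_L$ is connected.

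The remaining work is the combinatorial claim that $G_L$ is connected if and only if $L$ is properly colored. If $L$ contains positions $r<s$ with $\spi_r$ red and $\spi_s$ blue, choosing such a pair that minimizes $s-r$ forces all intermediate positions to be black, so the vertices $b_{r+1},\ldots,b_s$ form either an isolated vertex (when $s=r+1$) or an isolated black chain, neither of which reaches $\infty$. Conversely, in a properly colored coset, any $b_m$ can be traced via the local black edges to a blue or red edge reaching $\infty$, and the walk is never blocked because no red precedes any blue. I expect this connectivity analysis, together with verifying that tail-collapse and the long exact sequence interact cleanly despite $G_L$ being infinite, to be the main technical obstacle; once in place, summing over cosets yields the proposition.
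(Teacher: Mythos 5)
Your proposal is correct, and its first half is the same as the paper's: reduce to the mapping cone over $\Ff_2$-, i.e.\ $\mathbb{F}_p$-coefficients, split $C_\lambda(K;\mathbb{F}_p)$ over $\langle\lambda\rangle$-cosets, and observe that for a Floer simple knot each $\Aa_{K,\spi},\Bb_{K,\spi}\cong\mathbb{F}_p$ with $v_\spi$ vanishing exactly on blue classes and $h_\spi$ exactly on red ones (the paper asserts this following Lemma 4.8 of \cite{LSpaceSurgeries}; your one-line justification via the position of $\spibar$ should really be run as the short filtration argument -- the positions $k>0$ form a subcomplex of $A_{K,\spi}$ and a quotient of $B_{K,\spi}$ whose $E_1$ pages vanish when $\spibar$ lies in positions $\le 0$, and symmetrically -- but this is the same level of detail at which the paper leaves it). Where you genuinely diverge is the endgame: the paper breaks each $C_L$ into zigzag summands at the red and blue dots and checks by cases that red--red and blue--blue pieces are acyclic while blue--red and red--blue pieces each contribute one class, whereas you package $C_L$ as the chain complex of a graph $G_L$ modulo the extra vertex $\infty$, prune the two leaf-star tails to a finite core with equally many vertices and edges (I checked: with $a_n$ blue for $n\le n_-$ and red for $n\ge n_+$ the core has $n_+-n_-+1$ edges and $n_+-n_-$ vertices $b_m$ plus $\infty$), so $b_0(G_L)=b_1(G_L)$ and $\dim H_*(C_L)=2\,b_0(G_L)-1$; thus the L-space condition becomes connectivity of $G_L$, and your minimal-gap argument in one direction and black-run argument in the other correctly show connectivity is equivalent to proper coloring. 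This buys a uniform count with no case analysis of summand types, at the cost of two bits of bookkeeping you should make explicit: first, $C_L$ agrees with a literal graph chain complex only after normalizing the nonzero scalars $v_\spi,h_\spi$, which is possible because the black edges form disjoint finite paths and every cycle of $G_L$ passes through the collapsed vertex $\infty$ (so no holonomy obstruction survives the quotient); second, the long exact sequence step uses that $[\infty]$ injects into $H_0(G_L)$ and that $b_0,b_1$ are finite, both of which hold since all but finitely many $b_m$ are leaves at $\infty$. With those two remarks added, your argument is complete and is a pleasant alternative to the paper's decomposition.
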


\begin{figure}
\begin{center}
\begin{tikzpicture}[scale=0.6]
\node at (0,0) (a1) {${ \bullet}$} ;
\node at (2,0) (a2) {$ \bullet$} ;
\node at (4,0) (a3) {${ \bullet}$} ;
\node at (6,0) (a4) {${ \bullet}$} ;
\node at (8,0) (a5) {${ \bullet}$} ;
\node at (10,0) (a6) {${ \bullet}$} ;
\node at (12,0) (a7) {${ \bullet}$} ;
\node at (14,0) (a8) {${ \bullet}$} ;
\node at (16,0) (a9) {${ \bullet}$} ;
\node at (18,0) (a10) {${ \bullet}$} ;
\node at (20,0) (a11) {${ \bullet}$} ;

\node[blue] at (0,2) (b1) {${ \star}$} ;
\node[blue]  at (2,2) (b2) {${ \star}$} ;
\node at (4,2) (b3) {${ \bullet}$} ;
\node[blue]  at (6,2) (b4) {${ \star}$} ;
\node[red] at (8,2) (b5) {${ \circ}$} ;
\node at (10,2) (b6) {${ \bullet}$} ;
\node[blue]  at (12,2) (b7) {${ \star}$} ;
\node[red] at (14,2) (b8) {${ \circ}$} ;
\node at (16,2) (b9) {${ \bullet}$} ;
\node[red] at (18,2) (b10) {${ \circ}$} ;
\node[red] at (20,2) (b11) {${ \circ}$} ;

\draw[->] (b1) [bend left=0] to node[above,sloped] {}  (a2)  ;
\draw[->] (b2) [bend left=0] to node[above,sloped] {}  (a3)  ;
\draw[->] (b3) [bend left=0] to node[above,sloped] {}  (a4)  ;
\draw[->] (b4) [bend left=0] to node[above,sloped] {}  (a5)  ;
\draw[->] (b6) [bend left=0] to node[above,sloped] {}  (a7)  ;
\draw[->] (b7) [bend left=0] to node[above,sloped] {}  (a8)  ;
\draw[->] (b9) [bend left=0] to node[above,sloped] {}  (a10)  ;
\draw[->] (b3) [bend left=0] to node[above,sloped] {}  (a3)  ;
\draw[->] (b5) [bend left=0] to node[above,sloped] {}  (a5)  ;
\draw[->] (b6) [bend left=0] to node[above,sloped] {}  (a6)  ;
\draw[->] (b8) [bend left=0] to node[above,sloped] {}  (a8)  ;
\draw[->] (b9) [bend left=0] to node[above,sloped] {}  (a9)  ;
\draw[->] (b10) [bend left=0] to node[above,sloped] {}  (a10)  ;
\draw[->] (b11) [bend left=0] to node[above,sloped] {}  (a11)  ;
\end{tikzpicture}
\end{center}
\caption{\label{Fig:Chains} Part of a  typical complex \(C_L\). Blue dots are shown by stars; red dots by hollow circles. Summands of each of the possible forms are visible.}
\end{figure}
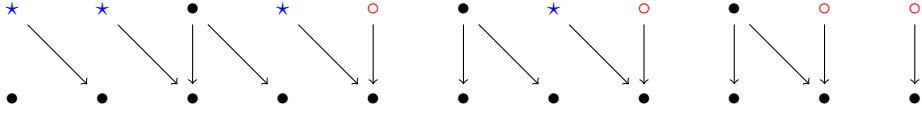

\begin{proof} The argument is the same as the proof of Lemma 4.8 in \cite{LSpaceSurgeries}; we sketch it briefly here. 
We fix a prime \(p\) and use the  mapping cone to compute 
\(\hfhat(Y(\lambda); \FF)\). 
The mapping cone \(C_\lambda(K)\) decomposes as a direct sum of chain complexes \(C_L\), one for each coset \(L\). Since \(K\) is Floer simple, the bent groups \(\Aa_{K,\spibar+n \lambda}\) appearing in one summand are all isomorphic to \(\FF\), as are the groups \(\Bb_{K,\spibar+ n \lambda}\). Let \(h_\spi,v_\spi\) be the restriction of the maps \(h,v\) to \(\Aa_{K,\spi}\). If \(\spi\) is colored red, the map \(v_\spi\) is an isomorphism and \(h_\spi=0\); if \(\spi\) is colored blue, the map \(h_\spi\) is an isomorphism and \(v_\spi=0\); and if \(\spi\) is colored black, both \(h_\spi\) and \(v_\spi\) are isomorphisms. 

The complex \(C_L\) takes the form shown in Figure~\ref{Fig:Chains}, where each colored dot in the top row represents \(\Aa_{K,\spibar+n \lambda} \simeq \FF\), each dot in the bottom row represents  \(\Bb_{K,\spibar+n \lambda} \simeq \FF\), and the arrows represent nonzero differentials. The chain of differentials breaks each time we encounter a red or blue dot, thus decomposing \(C_L\) into smaller summands. Summands corresponding to intervals in \(L\) whose endpoints are both red or both  blue are acyclic; summands whose left endpoint is blue and whose right endpoint is red have homology in even \(\Z/2\) homological degree, and 
summands whose left endpoint is red and whose right endpoint is blue have homology in odd \(\Z/2\) homological degree. 

It follows that \(\hfhat(Y(\lambda),\spibar) \simeq \FF\) if and only if \(L\) is properly colored, and hence that 
 \(Y(\lambda)\) is an \(\FF\) L-space if and only if every coset is properly colored. Finally, the statement of the proposition follows from the fact that  \(Y(\lambda)\) is an
  L-space if and only it is an \(\FF\) L-space for every prime \(p\). 
\end{proof}

\subsection{Bordered Floer homology of Floer simple manifolds}

In this section, we show that the bordered Floer homology \cite{LOT} of a Floer simple manifold \(Y\) is determined by the Turaev torsion of \(Y\) together with a slope in the interior of \(\lL(Y)\). We very briefly review some facts about bordered Floer homology; for more details see \cite{LOT, LOT2}. 

A bordered three-manifold is an oriented three-manifold \(Y\) equipped with a {\it parametrization} (that is, a minimal handle decomposition) of its boundary. We will restrict our attention to the case where \(\partial Y = T^2\), in which case a parametrization is specified  by a choice of two simple closed curves \(\mu,\lambda \in H_1(\partial Y)\) which satisfy \(\mu \cdot \lambda = 1\). 

The type \(D\) bordered Floer homology \(\cfd(Y,\mu,\lambda)\) is a differential graded module over a certain \(\Ff_2\)--algebra \(\aA(\zZ)\) associated to the torus. \(\aA(\zZ)\) is generated by elements \(\rho_1,\rho_2,\rho_3,\rho_{12},\rho_{23}\) and \(\rho_{123}\) corresponding to certain arcs on the boundary of the \(0\)-handle in the handle decomposition of \(\partial Y\), together with a pair of idempotents \(\iota_0, \iota_1\).  Following Chapter 11 of \cite{LOT}, we can think of the module structure as being specified by a pair of vector spaces \(V^0\), \(V^1\) over the field of two elements \(\Ff_2\), together with linear maps 
\begin{align*}
D_1,D_3,D_{123}&: V^0 \to V^1  \quad \quad &D_2: V^1 \to V^0 \\
D_{12}&:V^0 \to V^0  \quad \quad &D_{23}:V^1 \to V^1
\end{align*}
where \(\cfd(Y,\mu,\lambda) = \aA(\zZ) \otimes_{\Ff_2} (V^0 \oplus V^1)\) and for \(x \in V^0 \oplus V^1\), the differential is given by 
$\partial x= \sum \rho_I D_I(x).$

In writing the above, we have assumed that \(\cfd(Y,\mu,\lambda)\) has been reduced with respect to all provincial differentials, so that 
\begin{equation*}
V^0 \simeq SFH(Y, \gamma_\mu) \simeq \hfk(K_\mu) \quad V^1 \simeq \hfk(Y, \gamma_\lambda) \simeq HFK(K_\lambda)
\end{equation*}
where the suture \(\gamma_\mu\) is two parallel copies of \(\mu\), and similarly for \(\gamma_\lambda\). 

Petkova \cite{Ina} showed that the algebra \(\aA(\zZ)\) can be given an absolute \(\Z/2\) grading, and that \(\cfd(Y,\mu, \lambda)\) can be given a \(\Z/2\) grading compatible with it. Petkova's  grading depends on some auxiliary choices, but we can make some statements which are independent of these choices.

\begin{lemma}
\label{Lem:CFDMaslov}
The maps \(D_{12}\) and \( D_{23}\) preserve the homological \(\Z/2\) grading. If \(D_1\) has parity
\(i\) with respect to the \(\Z/2\) grading, then \(D_2\), \(D_3\) and \(D_{123}\) have parity \(1+i\), \(i\) and \(1+i\), respectively. \end{lemma}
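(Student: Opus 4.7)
The plan is to reduce the lemma to a parity count. First I would recall that, for any admissible choice of auxiliary data, Petkova's construction makes $\cfd(Y,\mu,\lambda)$ into a $\Z/2$--graded module over the $\Z/2$--graded algebra $\aA(\zZ)$, with the differential $\partial$ of odd degree. Writing $\partial x = \sum_I \rho_I D_I(x)$ and demanding that $\partial$ be odd translates into the pointwise constraint
$$|D_I| \equiv 1 + |\rho_I| \pmod 2$$
for every $I\in\{1,2,3,12,23,123\}$, where $|D_I|$ denotes the parity of $D_I$ as a linear map on $V^0\oplus V^1$ and $|\rho_I|$ is the parity of $\rho_I$ in $\aA(\zZ)$.

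Next I would compute the parities $|\rho_I|$. Since the $\Z/2$ grading on $\aA(\zZ)$ is multiplicative, the factorizations $\rho_{12}=\rho_1\rho_2$, $\rho_{23}=\rho_2\rho_3$, and $\rho_{123}=\rho_1\rho_2\rho_3$ give
$$|\rho_{12}|\equiv |\rho_1|+|\rho_2|,\qquad |\rho_{23}|\equiv |\rho_2|+|\rho_3|,\qquad |\rho_{123}|\equiv|\rho_1|+|\rho_2|+|\rho_3|\pmod 2.$$
From Petkova's explicit formulas for the torus-algebra grading, $|\rho_1|$ and $|\rho_3|$ share a common parity while $|\rho_2|$ is the opposite parity. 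Consequently $|\rho_{12}|$ and $|\rho_{23}|$ are both odd, and $|\rho_{123}|\equiv|\rho_2|$, which is opposite to $|\rho_1|=|\rho_3|$.

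Combining these two steps with the identity $|D_I|\equiv 1+|\rho_I|$ immediately gives $|D_{12}|\equiv|D_{23}|\equiv 0$, so $D_{12}$ and $D_{23}$ preserve the $\Z/2$ grading. The same identity forces $D_1$ and $D_3$ to share a common parity $i$, and $D_2$ and $D_{123}$ to have the opposite parity $1+i$, exactly as stated.

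The only real obstacle is justifying the parities claimed for the generators $\rho_i$; for this I would cite Petkova's construction directly rather than re-derive it, since once these parities are in hand the rest of the argument is a one-line bookkeeping check. A small subtlety is that the absolute parity depends on the auxiliary choices entering Petkova's grading, which is exactly why the lemma states the conclusion only relative to the unknown parity $i$ of $D_1$.
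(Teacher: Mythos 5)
Your proof is correct and follows essentially the same route as the paper: both arguments rest on the identity $|D_I|\equiv 1+|\rho_I| \pmod 2$ coming from $\grm \partial \bfx \equiv \grm \bfx +1$, together with the multiplicativity of Petkova's $\Z/2$ grading on $\aA(\zZ)$. The only difference is the direction of the bookkeeping: the paper quotes from Petkova that $\rho_{12}$ and $\rho_{23}$ (the chords joining two ends of the same $\alpha$-arc) have grading $1$ and then derives $\grm\rho_2\equiv\grm\rho_1+1$ and $\grm\rho_3\equiv\grm\rho_1$, whereas you quote the relative parities of $\rho_1,\rho_2,\rho_3$ and derive that $\rho_{12},\rho_{23}$ are odd --- equivalent inputs given multiplicativity.
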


\begin{proof} We first consider the absolute grading on \(\aA(\zZ)\). By definition, algebra generators corresponding to arcs joining two ends of the same \(\alpha\) arc have grading \(1\). (See definition 11 of \cite{Ina} and the equations just preceding it.) In our case, this says that \(\grm \rho_{12}\equiv \grm \rho_{23} \equiv 1\). From the relations \(\rho_{1} \cdot \rho_{23} = \rho_{123}\), \(\rho_1 \cdot \rho_2 = \rho_{12}\), and \(\rho_{2} \cdot \rho_3 = \rho_{23}\), we see that 
\(\grm \rho_{123}\equiv  \grm \rho_1 + 1\), \(\grm \rho_2 \equiv \grm \rho_1 + 1\), and \(\grm \rho_3 \equiv \grm \rho_2 + 1 \equiv \grm \rho_1\). 
The statement now follows from the fact that \(\grm \partial \bfx \equiv \grm \bfx + 1\). 
\end{proof}

We will also need to know how the \(D_I\)'s behave with respect to the \({\mathrm{Spin}^c}\) grading. Let us write
 \(V^0_\spi : = \hfk(K_\mu,\spi)\), so we have a decomposition \(V^0 \simeq \oplus_\spi V^0_\spi\), and similarly for \(V^1\), where the indexing sets in the sums are  \({\mathrm{Spin}^c}(Y,\gamma_\mu)\) and \({\mathrm{Spin}^c}(Y, \gamma_\lambda)\),  as defined in \cite{Juhasz1}. Elements of \({\mathrm{Spin}^c}(Y,\gamma_\mu)\) are represented by homology classes of nonvanishing vector fields on \(Y\) with fixed behavior on \(\partial Y\). (Recall that two nonvanishing vector fields are said to be homologous if they are homotopic on the complement of a ball in \(Y\).)  The sets 
  \({\mathrm{Spin}^c}(Y,\gamma_\mu)\) and \({\mathrm{Spin}^c}(Y,\gamma_\lambda)\) are in bijection, but not canonically so, since the boundary conditions are different. 

\begin{lemma} 
\label{Lem:CFDAlex}
There is a bijection 
\(j:{\mathrm{Spin}^c}(Y,\gamma_\mu) \to {\mathrm{Spin}^c}(Y, \gamma_\lambda)\) which respects the action of \(H_1(Y)\) and for which 
\begin{align*} D_1&:V^0_\spi \to V^1_{j(\spi)} & D_2&:V^1_{j(\spi)} \to V^0_{\spi-\lambda} & D_3&:V^0_\spi \to V^1_{j(\spi)+\lambda + \mu} \\
D_{12}&:V^0_\spi \to V^0_{\spi - \lambda} & D_{23}&:V^1_{j(\spi)} \to V^1_{j(\spi) +\mu} & D_{123}&:V^0_{\spi} \to V^1_{j(\spi) + \mu} 
\end{align*}
\end{lemma}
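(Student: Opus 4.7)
The plan is to combine the $\spinc$ refinement of $\cfd(Y,\mu,\lambda)$ with a direct analysis of how each Reeb chord on $\partial Y$ acts on $\spinc$ structures.

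First I would invoke that $\cfd(Y,\mu,\lambda)$ decomposes as a direct sum over $\spinc(Y)$ (the set of relative $\spinc$ structures with prescribed boundary behavior), and that this decomposition, upon restriction to the two idempotent summands, refines after reduction the identifications $V^0 \simeq SFH(Y,\gamma_\mu)$ and $V^1 \simeq SFH(Y,\gamma_\lambda)$ compatibly with the natural $\spinc(Y,\gamma_\mu)$- and $\spinc(Y,\gamma_\lambda)$-gradings. This is the standard $\spinc$-refinement of bordered Floer homology (Chapter~11 of \cite{LOT}) combined with Juh\'asz's identification from \cite{Juhasz1}; both indexing sets are $H_1(Y)$-torsors. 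I would then \emph{define} $j$ by choosing any nonzero chain-level contribution $\bfy$ to $D_1\bfx$ with $\bfx \in V^0_\spi$ and $\bfy \in V^1_{\spi'}$, setting $j(\spi) := \spi'$, and extending $H_1(Y)$-equivariantly. Such a pair $(\bfx,\bfy)$ exists because $\cfd(Y,\mu,\lambda)$ cannot split as a direct sum across the two idempotents for a manifold with connected, nonempty torus boundary. With this choice, $D_1$ has zero shift relative to $j$ by construction.

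It remains to compute the shifts for $D_2$ and $D_3$; the remaining entries follow by additivity. Each contribution to $D_I\bfx$ records a pseudo-holomorphic curve whose boundary asymptotic on $\partial Y$ traces the Reeb chord $\rho_I$, so the induced $\spinc$ shift equals the image under $\iota_*: H_1(\partial Y) \to H_1(Y)$ of the closed curve obtained by completing $\rho_I$ through the appropriate $\alpha$-arc segments on the parametrization. Applying this to the standard genus-$1$ pointed matched circle with $\alpha_1 \leftrightarrow \mu$ and $\alpha_2 \leftrightarrow \lambda$ gives shifts $0$, $-\lambda$, $\lambda + \mu$ for $\rho_1, \rho_2, \rho_3$. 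Since the algebra relations $\rho_1\rho_2 = \rho_{12}$, $\rho_2\rho_3 = \rho_{23}$, and $\rho_1\rho_{23} = \rho_{123}$ force the $\spinc$ shifts to be additive, we obtain $-\lambda, \mu, \mu$ for $\rho_{12}, \rho_{23}, \rho_{123}$, matching the statement.

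The main obstacle is the geometric computation in the last step: one must carefully track the pointed matched circle conventions to verify the individual shifts for $\rho_1, \rho_2, \rho_3$. Once these are established, both the $H_1(Y)$-equivariance of $j$ and the compound shifts for $D_{12}, D_{23}, D_{123}$ are formal consequences of the bordered algebra structure.
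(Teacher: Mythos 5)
There are two genuine gaps here. First, your definition of \(j\) does not get off the ground in general: you propose to pick a nonzero chain-level component of \(D_1\) and declare its \(\spinc\)-shift to be zero, but nothing guarantees \(D_1 \neq 0\). Your justification --- that \(\cfd(Y,\mu,\lambda)\) cannot split across the two idempotents --- does not imply \(D_1\neq 0\), since the idempotents can be connected entirely by \(D_2\), \(D_3\), or \(D_{123}\) (indeed, for suitable parametrizations of simple manifolds such as solid tori the only nonzero operations are loops of type \(D_{12}\) or \(D_{23}\) together with maps other than \(D_1\)). Even when \(D_1\neq 0\) on some \(\spinc\) component, extending \(j\) equivariantly from one nonzero component presupposes that \(D_1\) is homogeneous and that its shift is the \emph{same} element for every component on which it is nonzero; that uniformity is precisely the nontrivial content of the refined grading and cannot be assumed while it is being proved. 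The paper avoids this by defining \(j\) not from the differential but from the grading groupoid of Huang--Ramos \cite{HuangRamos}: \(j(\spi) = p(v_1^{-1})\cdot \spi\), which is a bijection and \(H_1(Y)\)-equivariant regardless of whether \(D_1\) vanishes.

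Second, the heart of your argument --- the shifts \(0\), \(-\lambda\), \(\lambda+\mu\) for \(\rho_1,\rho_2,\rho_3\) read off ``by completing the Reeb chord through the \(\alpha\)-arc segments'' --- is asserted rather than derived, and you yourself flag it as the main obstacle. Since \(\spinc(Y,\gamma_\mu)\) and \(\spinc(Y,\gamma_\lambda)\) are different \(H_1(Y)\)-torsors with no canonical identification, the individual shifts of \(\rho_2\) and \(\rho_3\) only make sense relative to \(j\), i.e.\ they are classes of concatenations such as \(v_3^{-1}\cdot v_1\), and pinning down their signs and the roles of \(\mu\) versus \(\lambda\) is exactly where the convention-dependent bookkeeping lives. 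The additivity step for \(\rho_{12},\rho_{23},\rho_{123}\) is fine, but without the base cases the lemma is not proved. The paper's route is different and sidesteps this: it shows the shift of each \(D_I\) relative to \(j\) is a \emph{universal} element of \(H_1(\partial Y)\), independent of \(Y\), because the relevant set of vector-field classes on \(\partial Y\times[0,1]\) is an affine copy of \(H_1(\partial Y)\), and then calibrates that universal constant against the known case recorded in Lemma~11.42 of \cite{LOT}. If you want to keep your direct geometric computation, you must actually carry out the pointed-matched-circle calculation (orientations, basepoint, which \(\alpha\)-arc is \(\mu\)) or else, like the paper, reduce to a known model case.
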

This is essentially Lemma 11.42 of \cite{LOT}, but stated so as to clarify the dependence on \(\mu\) and \(\lambda\). 

\begin{proof}
 Huang and Ramos \cite{HuangRamos} have constructed  a grading \(\gr\) on \(\cfd(Y,\mu,\lambda)\). This grading lives in  a set \(S(\mathcal{H})\) of homotopy classes of nonvanishing vector fields on \(Y\) which satisfy certain boundary conditions. To be specific, for each elementary idempotent \(\iota\) in the algeba \(\aA(\zZ)\), there is an associated vector field \(v_\iota\) on \(\partial Y\), and if \(v \in S(\mathcal{H})\), then \(v|_{\partial Y}\) should be equal to \(v_\iota\) for some elementary idempotent \(\iota\). 
 
 Similarly, Huang and Ramos consider the set \(G(\zZ)\) of homotopy classes of nonvanishing vector fields on \(\partial Y \times [0,1]\), subject to the constraint that \(v|_{\partial Y \times 0} = v_\iota\) and 
 \(v|_{\partial Y \times 1} = v_{\iota'}\) for some elementary idempotents \(\iota\) and \(\iota'\). They show that 
 \(G(\zZ)\) forms a groupoid under concatenation, and that it acts on the grading set \(S(\mathcal{H})\), again by concatenation. In section 2.3 of \cite{HuangRamos}, they construct explicit vector fields \(v_I\) on \(\partial Y \times [0,1]\) associated to each \(\rho_I\); the grading of \(\rho_I x\) is the vector field \(v_I \cdot \grm x\), where \(\cdot\) denotes the action by concatenation.

  The grading of \cite{HuangRamos} contains the  \({\mathrm{Spin}^c}\) grading, in the sense that if \(\mathbf{x}\) is a generator of \(\cfd(Y, \mu, \lambda)\), then its \({\mathrm{Spin}^c}\) grading is \(\spi(\mathbf{x}) = p(\grm \bfx)\), where \(p\) is the forgetful map which takes a homotopy class of vector fields to its homology class.  
By Theorem 1.3 of \cite{HuangRamos}, if \(\bfx \in \cfd(Y,\mu,\lambda)\),  \(\grm \partial \bfx = \lambda ^{-1} \cdot \grm \bfx\), where  \(\lambda\) is a vector field on \(\partial Y \times [0,1]\) which is supported in a ball. It follows that
 \(\spi(\partial \bfx) = \spi(\bfx)\), and hence that \(p(v_I)\cdot \spi(D_I \bfx)\) = \(\spi(\bfx)\). 
 
 If \(\spi \in {\mathrm{Spin}^c}(Y, \gamma_\mu)\), we define \(j (\spi) = p(v_1^{-1}) \cdot \spi\). By construction, 
 \(D_1:V^0_\spi \to V^1_{j(\spi)}\). The fact that \(G(\zZ)\) is a groupoid implies that \(j\) is a bijection; \(j\) is equivariant with respect to the action of \(H_1(Y)\) since we can arrange this action to take place in the interior of \(Y\), away from the region in which the concatenation takes place. Similarly, we see that 
 $$\spi(D_3 \bfx) = p(v_3^{-1}) \cdot \spi(\bfx) = p(v_3^{-1} \cdot v_1) \cdot j(\spi(\bfx)).$$
 
 The set of homology classes of nonvanishing vector fields on \(\partial Y \times [0,1]\) which restrict to \(v_{\iota_0}\) on one end  and \(v_{\iota_1}\) on the other is an affine copy of \(H_1(\partial Y \times [0,1]) \simeq H_1(\partial Y)\). Thus if \(I_1\) is the idempotent of the groupoid \(G(\zZ)\) corresponding to the idempotent \(\iota_1\), we must have
 \(p(v_3^{-1}\cdot v_1) = p(I_1) + \alpha\), for some \(\alpha \in H_1(\partial Y)\). It follows that 
 \(\spi(D_3(\bfx)) = j(\spi(\bfx))+\alpha\) for some universal element \(\alpha \in H_1(\partial Y)\) which does not depend on \(Y\) or \(\bfx\). Comparing with Lemma 11.42 of \cite{LOT}, we see that \(\alpha = \mu + \lambda\). Thus \(D_3:V^0_\spi \to V^1_{j(\spi)+  \lambda+\mu}\) as desired. The arguments for the other \(D_I\)'s are very similar. 
\end{proof}

\begin{figure}
\begin{center}
  \begin{tikzpicture}[y=54pt,x=1in]
    \node at (0,2) (p) {${\mathbf p}$} ;
    \node at (2,2) (q) {${\mathbf q}$} ;
    \node at (1,0) (r) {${\mathbf r}$} ;
    \node at (1,1.25) (label) {$\tau_m$};
    \draw[->] (p) to node[above,sloped] {\lab{\rho_1\otimes\rho_1+\rho_{123}\otimes\rho_{123}+\rho_3\otimes (\rho_{3},\rho_{23})}}  (q)  ;
    \draw[->] (p) [bend left=15] to node[above,sloped] {\lab{\rho_{123}\otimes\rho_{12}+
\rho_3\otimes (\rho_{3},\rho_{2})}} (r) ;
    \draw[->] (q) [bend left=15] to node[below,sloped] {\lab{\rho_{23}\otimes \rho_{2}}} (r) ;
    \draw[->] (q) [loop] to node[above] {\lab{\rho_{23}\otimes\rho_{23}}}
                 (q) ;
    \draw[->] (r) [bend left=15] to node[below,sloped] {\lab{\rho_2\otimes 1}} (p) ;
    \draw[->] (r) [bend left=15] to node[below,sloped] {\lab{1\otimes \rho_3}} (q) ;
  \end{tikzpicture}
  \begin{tikzpicture}[y=54pt,x=1in]
    \node at (0,2) (p) {${\mathbf q}$} ;
    \node at (2,2) (q) {${\mathbf p}$} ;
    \node at (1,0) (r) {${\mathbf s}$} ;
    \node at (1,1.25) (label) {$\tau_l^{-1}$};
    \draw[->] (p) [bend left=15] to node[above,sloped] {\lab{\rho_2\otimes 1}}  (r)  ;
    \draw[->] (q) [loop] to node[above] {\lab{\rho_{12}\otimes\rho_{12}}}
                (q) ;
    \draw[->] (q) [bend left=15] to node[below,sloped]  {\lab{1\otimes \rho_1}} (r) ;
    \draw[->] (q)  to node[above,sloped] {\lab{\rho_3\otimes\rho_3+\rho_{123}\otimes\rho_{123}+\rho_1\otimes(\rho_{12},\rho_1)}} (p) ;
    \draw[->] (r) [bend left=15] to node[below,sloped] {\lab{\rho_1\otimes (\rho_2,\rho_1)+\rho_{123}\otimes\rho_{23}}} (p) ;
    \draw[->] (r) [bend left=15] to node[above,sloped] {\lab{\rho_{12}\otimes\rho_2}} (q) ;
  \end{tikzpicture}
\end{center}
\caption{\label{Fig:Bimodules} Change of framing bimodules for the torus, taken from figure A.3 of \cite{LOT}.}
\end{figure}
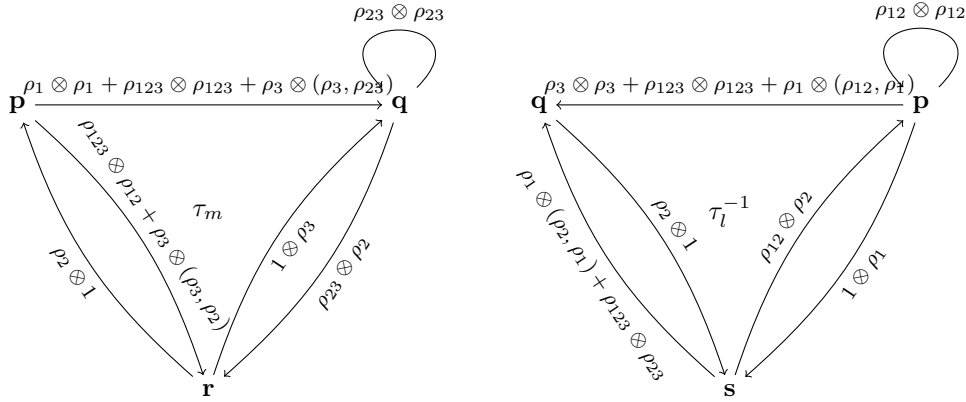

\begin{prop}
\label{Prop:FloerSimpleCFD}
Suppose that \(Y\) is Floer simple, that \(\alpha \in Sl(Y)\) is a Floer simple filling slope, and that \(\mu, \lambda \in H_1(\partial Y)\) satisify \(\mu \cdot \lambda = 1\). Then \(\cfd(Y, \mu, \lambda)\) is determined by 
\(\alpha\) and \(\tau(Y)\). 
\end{prop}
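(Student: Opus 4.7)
The plan is to split the computation into two steps: first determine $\cfd(Y,\alpha,\beta)$ for a convenient auxiliary parametrization whose first basis vector is the given Floer simple slope $\alpha$, then transport this to $\cfd(Y,\mu,\lambda)$ by tensoring with change-of-framing bimodules of the type depicted in Figure~\ref{Fig:Bimodules}. Because those bimodules depend only on the mapping class relating $(\alpha,\beta)$ and $(\mu,\lambda)$, and not on $Y$, the second step is purely formal, and it suffices to show that $\alpha$ and $\tau(Y)$ determine $\cfd(Y,\alpha,\beta)$.

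To describe the underlying vector space, choose $\beta$ with $\alpha\cdot\beta=1$ lying in the interior of $\lL(Y)$; this is possible because $\alpha$ itself is in the interior of $\lL(Y)$ (by Proposition~\ref{Prop:MultiL} and the discussion preceding it), so a slope of the form $\beta+n\alpha$ is Floer simple for all sufficiently large $n$. Both $K_\alpha$ and $K_\beta$ are then Floer simple, and $V^0\simeq\hfk(K_\alpha)$ and $V^1\simeq\hfk(K_\beta)$ decompose as direct sums of pieces $V^0_\spi, V^1_{\spi'}$ of rank zero or one over $\Ff_2$. The supports of these pieces are determined by Proposition~\ref{Prop:Chi}, which gives $\chi(\hfk(K_\alpha))\sim(1-[\alpha])\tau(Y)$ and $\chi(\hfk(K_\beta))\sim(1-[\beta])\tau(Y)$, and the $\spinc$-affine bijection $j$ of Lemma~\ref{Lem:CFDAlex} canonically identifies them; the choice of $\beta$ will disappear after the change-of-framing step.

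To recover the structure maps, observe that each component of each $D_I$ is a map between rank-one $\Ff_2$ summands, hence either zero or an isomorphism. The $\spinc$ grading of Lemma~\ref{Lem:CFDAlex} pins down the codomain of each potentially nonzero component, the parity statement of Lemma~\ref{Lem:CFDMaslov} restricts which components can occur in each degree, and the relation $\partial^2=0$ imposes quadratic compatibilities. The remaining information needed to decide which components are nonzero is exactly the coherent chain structure of $\hfk(K_\alpha)$: in the notation of Proposition~\ref{Prop:RedBlueBlack}, each $\spi\in\black$ sits at the apex of a staircase whose arrows are the $\pi_v$ and $\pi_h$ maps of the mapping cone, and these arrows correspond formally to the nonzero components of $D_1, D_2, D_3, D_{12}, D_{23}$, and $D_{123}$. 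Since $\hfk(K_\alpha)$ as a Floer simple chain complex is determined by $\black$, and $\black$ is determined by $\alpha$ and $\tau(Y)$, all the $D_I$'s are determined as well.

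The main obstacle is verifying that the combinatorial prescription above really specifies a unique reduced type-$D$ structure, with no residual extension ambiguity beyond what the $\spinc$ and parity data already control. I would close this gap by a pairing argument: Theorem~\ref{Thm:LY} and the coloring of Proposition~\ref{Prop:RedBlueBlack} show that $\hfhat(Y(\gamma))$ is already determined by $\alpha$ and $\tau(Y)$ for every filling slope $\gamma$, so any two candidate modules for $\cfd(Y,\alpha,\beta)$ would give the same pairing with every bordered solid torus. Given the rigidity of reduced bordered type-$D$ structures over the torus algebra, this forces them to be homotopy equivalent, completing the determination of $\cfd(Y,\mu,\lambda)$ in terms of $\alpha$ and $\tau(Y)$.
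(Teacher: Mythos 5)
Your overall skeleton---reduce to a convenient framing via change-of-framing bimodules, note that Floer simplicity makes every $V^0_\spi$ and $V^1_{\spi'}$ rank at most one over $\Ff_2$, and use the $\spinc$ and $\Z/2$ gradings to constrain the $D_I$'s---matches the paper's opening moves. But the heart of the proposition is deciding \emph{which} of the allowed components are actually nonzero, and there your argument has a genuine gap. The parity lemma only shows that either $D_2=D_{123}=0$ or $D_1=D_3=0$; deciding which alternative holds is done in the paper by tensoring with the Dehn-twist bimodules $\widehat{CFDA}(\tau_\mu)$ and $\widehat{CFDA}(\tau_\lambda^{-1})$ and comparing the provincial homology with $\hfk(K_{\mu+\lambda})$, which forces $D_1$ and $D_3$ to be injective. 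The paper also needs the specific framing $\phi(\mu)>\|Y\|$, $\lambda=\lambda_0-N\mu$ with $N\gg 0$, so that $D_{12}$ vanishes for $\spinc$-grading reasons and so that the occupied/unoccupied case analysis of Lemma~\ref{Lem:2Arrows} (driven by the support of $\tau(Y)$ and Proposition~\ref{Prop:AlexG}) shows each generator meets exactly two potential arrows; finally the identities $D_3\circ D_{012}+D_{23}\circ D_{01}+D_{123}\circ D_0=1$ and $D_1\circ D_{230}+D_{01}\circ D_{23}+D_{301}\circ D_2=1$ from Proposition 11.30 of \cite{LOT} force every $D_{23}$ arrow to be nonzero. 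None of this is supplied by your assertion that the mapping-cone arrows ``correspond formally'' to nonzero components of the $D_I$'s: that correspondence is essentially the content of the proposition, not an input to it, and $\partial^2=0$ plus gradings alone leave genuine ambiguity (for instance between the two parity alternatives, and about which $D_{23}$ components vanish).

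The fallback you propose---that since $\hfhat(Y(\gamma))$ is determined for every filling slope $\gamma$, any two candidate type-$D$ structures pairing identically with all bordered solid tori must be homotopy equivalent---is not a theorem, and you give no argument for the claimed ``rigidity.'' Pairing with a solid torus only records the homology of a Dehn filling; knowing these groups for all slopes is a priori much weaker than knowing $\cfd(Y,\mu,\lambda)$ up to homotopy equivalence, and no Yoneda-type argument applies when one tests only against solid tori and retains only ranks. Indeed, the assertion that the filling data determine $\cfd$ for a Floer simple manifold is essentially what Proposition~\ref{Prop:FloerSimpleCFD} claims, so invoking it at this point is circular. To repair the proof you need the explicit steps above (or a substitute for them): the Dehn-twist bimodule computation pinning down $D_2=D_{123}=0$ with $D_1,D_3$ injective, the choice of framing killing $D_{12}$, the valence-two count from the torsion support, and the unitality identities showing the remaining $D_{23}$ arrows are nonzero.
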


\begin{proof}
It suffices to show that \(\cfd(Y,\mu,\lambda)\) is determined for one particular choice of \(\mu\) and \(\lambda\), since the invariant of any other choice can then be determined using the change of basis bimodules in \cite{LOT2}. 

 We  choose \(\mu\) to be a slope in the interior of \(\lL(Y)\) such that \(\phi(\mu)>\|Y\|\),  and take \(\lambda = \lambda_0 - N \mu\), where \(\lambda_0\) is some class with \(\mu \cdot \lambda_0 = 1\), and \(N \gg 0\). (We will specify below  how large \(N\) needs to be.)

The knots \(K_\mu\), \(K_\lambda\) are Floer simple, so all the elements of \(V_0\) have the same \(\Z/2\) grading. Similarly, all elements of \(V_1\) have the same \(\Z/2\) grading.   By Lemma~\ref{Lem:CFDMaslov}, either \(D_2=D_{123}=0\) or \(D_1=D_3=0\). To see which of these two options hold, we consider the effect of a Dehn twist along \(\mu\). We have
$$ \cfd(Y, \mu, \lambda+\mu) = \widehat{CFDA}(\tau_\mu) \boxtimes \cfd(Y,\mu,\lambda)$$
where the change of framing bimodule \( \widehat{CFDA}(\tau_\mu)\) is shown in Figure~\ref{Fig:Bimodules}.

Writing \(\cfd(Y,\mu,\lambda+\mu) = W^0\oplus W^1\), we have \(W^1 = \mathbf{r}\boxtimes V^0 \oplus \mathbf{q} \boxtimes V^1\). Denote by \(D:W^1 \to W^1\) the contribution to \(\partial\) coming from provincial differentials; then  we have \(H(W^1,D)=\hfk(K_{\mu+\lambda})\). By choosing \(N\) sufficiently large, we can ensure that  \(\mu+ \lambda = \lambda_0 - (N-1) \mu\) is in the interior of \(\lL(Y)\). It follows that  \(\hfk(K_{\mu+\lambda})\) is Floer simple and has dimension equal to 
$|H_1(Y(\mu+\lambda))|=|H_1(Y_\lambda)| - |H_1(Y_\mu)| = \dim V_1 - \dim V_0.$ Referring to the figure, we see that the only contribution to  the provincial differential \(D\) comes from the arrow labeled \(1 \otimes \rho_3\). Thus the map \(\rho_3:V^0 \to V^1\) is an injection. Similarly, by considering 
$$\cfd(Y,\mu+\lambda, \lambda) = \widehat{CFDA}(\tau_\lambda^{-1}) \boxtimes \cfd(Y, \mu, \lambda)$$ we deduce that the map \(D_1:V_0 \to V_1\) is injective. Since \(D_1\) and \(D_3\) are nontrivial, we must have 
\(D_2=D_{123}=0\). 

Let \(\spi_{max} \in S[\hfk(K_\mu)]\) be {maximal}, in the sense that if \(\spi_{\max} + \alpha \in S[\hfk(K_\mu)]\) (where \(\alpha \in H_1(Y)\)), then \(\phi(\alpha) \leq 0\). 

\begin{lemma} \(j(\spi_{\max})\) is maximal in \(S[\hfk(K_\lambda)]\). 
\end{lemma}

\begin{proof}
It is well known \cite{YiNiThurstonNorm} that \(\hfk\) detects the Thurston norm, in the sense that if 
\(K \subset Y(\alpha)\), then 
$$ \max \{\phi(\spi-\spi') \, | \, \spi, \spi' \in S[\hfk(K)]\}  = \|Y\| + |\phi(\alpha))|.$$
Choose nonzero elements \(\bfx \in V^0_{\spi_{max}}\), \(\bfy \in V^0_{\spi_{min}}\), where 
\(\phi(\spi_{max} - \spi_{min}) = \|Y\| + \phi(\mu)\). Since \(D_1\) and \(D_3\) are injective, 
\(j(\spi_{max})\) and \(j(\spi_{min}) + \lambda + \mu\) are both in \(S[\hfk(K_\lambda)]\). 
We compute 
\begin{align*}
\phi(j(\spi_{max}) - (j(\spi_{min})+\mu+\lambda)) & = \|Y\| + |\phi(\lambda)| \\
& =\max \{\phi(\spi-\spi') \, | \, \spi, \spi' \in S[\hfk(K_\lambda)]\} .
\end{align*}
It follows that \(j(\spi_{max})\) must be maximal and \(j(\spi_{min}+\mu+\lambda)\) must be minimal. 
\end{proof}

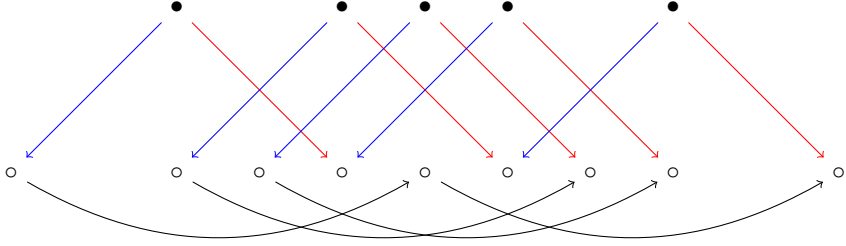
\begin{figure}
\begin{center}
\begin{tikzpicture}[scale=2.2]
\node at (0,0) (a1) {${\circ}$} ;
\node at (1,0) (a2) {${\circ}$} ;
\node at (1.5,0) (a3) {${\circ}$} ;
\node at (2,0) (a4) {${\circ}$} ;
\node at (2.5,0) (a5) {${\circ}$} ;
\node at (3,0) (a6) {${\circ}$} ;
\node at (3.5,0) (a7) {${\circ}$} ;
\node at (4,0) (a8) {${\circ}$} ;
\node at (5,0) (a9) {${\circ}$} ;

\node at (1,1) (b1) {${\bullet}$} ;

\node at (2,1) (b2) {${\bullet}$} ;
\node at (2.5,1) (b3) {${\bullet}$} ;
\node at (3,1) (b4) {${\bullet}$} ;

\node at (4,1) (b5) {${\bullet}$} ;
\draw[->,red] (b5) [bend left=0] to node[above,sloped] {}  (a9)  ;
\draw[->,red] (b4) [bend left=0] to node[above,sloped] {}  (a8)  ;
\draw[->,red] (b3) [bend left=0] to node[above,sloped] {}  (a7)  ;
\draw[->,red] (b2) [bend left=0] to node[above,sloped] {}  (a6)  ;
\draw[->,red] (b1) [bend left=0] to node[above,sloped] {}  (a4)  ;

\draw[->,blue] (b5) [bend left=0] to node[above,sloped] {}  (a6)  ;
\draw[->,blue] (b4) [bend left=0] to node[above,sloped] {}  (a4)  ;
\draw[->,blue] (b3) [bend left=0] to node[above,sloped] {}  (a3)  ;
\draw[->,blue] (b2) [bend left=0] to node[above,sloped] {}  (a2)  ;
\draw[->,blue] (b1) [bend left=0] to node[above,sloped] {}  (a1)  ;

\draw[->] (a1) [bend right=30] to node[above,sloped] {}  (a5)  ;
\draw[->] (a2) [bend right=30] to node[above,sloped] {}  (a7)  ;
\draw[->] (a3) [bend right=30] to node[above,sloped] {}  (a8)  ;
\draw[->] (a5) [bend right=30] to node[above,sloped] {}  (a9)  ;
\end{tikzpicture}
\end{center}
\caption{\label{Fig:CFD} Generators of \(\cfd(Y,5m-l,-9m+ 2l)\), where \(Y\) is the complement of the negative trefoil in \(S^3\). Dots in the top row represent generators of \(V_0\), dots in the bottom row generators of \(V_1\). The  horizontal position of each generator indicates  its \({\mathrm{Spin}^c}\) grading.  Potential components of the differential are shown by arrows: red (sloping right) for \(D_1\), blue (sloping left) for \(D_3\), and  black (the arcs) for \(D_{23}\).}
\end{figure}
We represent \(\cfd(Y, \mu, \lambda)\) by a directed graph like that shown in Figure~\ref{Fig:CFD}, with a vertex for each generator and an edge  for each  potential component of the differential; that is, for each pair of generators \(\bfx,\bfy\) whose \(\Z/2\) and \({\mathrm{Spin}^c}\) gradings are compatible with having \(D_I \bfx = \bfy\) for some \(D_I\), we draw an edge from \(\bfx\) to \(\bfy\) and label it with \(D_I\). 

\begin{lemma} 
\label{Lem:2Arrows}
Each vertex  of the graph associated to \(\cfd(Y,\mu, \lambda)\) has valence two.
\end{lemma}

\begin{proof}
First suppose that \(\bfx\) is a generator of \(V^0\). We have already seen that \(D_1\) and \(D_3\) are both injective, so \(\bfx\) is the starting point of one arrow labeled with \(D_1\) and one arrow labeled with \(D_3\). \(D_2=D_{123}=0\), so the only other possible arrows adjacent to \(\bfx\) are labeled by \(D_{12}\). Now \(D_{12}\) shifts the \({\mathrm{Spin}^c}\) grading by \(-\lambda\), and \(\phi(-\lambda) = N \phi(\mu) - \phi(\lambda_0)\). Let \(S =S[\hfk(K_\mu)]\). We choose \(N\) sufficiently large that 
$|\phi(\lambda)| > \max _{\spi \in S} \phi(\spi) - \min _{\spi \in S} \phi(\spi)$; then \(D_{12}\) vanishes for grading reasons. 

Next, if \(\bfx\) is a generator of \(V_1\), it can be a terminal point of an arrow labeled \(D_1\) or \(D_3\), and either an initial or a terminal point of a arrow labeled \(D_{23}\). We claim that \(\bfx\) is a terminal point of an arrow of type \(D_1\) if and only if it is not an initial point of an arrow of type \(D_{23}\).  To see this, consider \(\spi \in {\mathrm{Spin}^c}(Y, \gamma_\mu)\). We say \(\spi\) is {\em occupied} if \(\spi \in S[\hfk(K_\mu)]\), and {\em unoccupied} otherwise; similarly for \(j(\spi) \in {\mathrm{Spin}^c}(Y,\gamma_\lambda)\), but with \(K_\lambda\) in place of \(K_\mu\). The claim is equivalent to saying that if \(j(\spi)\) is occupied, then exactly one of \(\spi\) and \(j(\spi) + \mu\) is occupied. 

Write \(j(\spi) = j(\spi_{max}) - \alpha\) for \(\alpha \in H_1(Y)\). We consider the situation case by case, depending on the value of \(\phi(\alpha)\). 
\begin{enumerate}
\item \(\phi(\alpha) < 0 \). In this case \(j(\spi)\) is unoccupied, and there is nothing to check.
\item \(0 \leq \phi(\alpha) < \phi(\mu)\). In this region, \(\chi(\hfk(K_\mu))\) and \(\chi(\hfk(K_\lambda))\) are both given by  \(\tau(Y)\), so \(\spi\) is occupied if and only if \(j(\spi)\) is occupied. \(\phi(-\alpha + \mu)>0\), so \(j(\spi)+\mu\) is unoccupied. 
\item \(\phi(\mu) \leq \phi(\alpha) \leq \phi(\mu)+\|Y \| \). In this region \(j(\spi)\) is always occupied (see the argument for region 4) below), while \(\spi\) is occupied if and only if \(\spi+ \mu\) is not occupied. \(j(\spi) + \mu\) is in region 2), so \(\spi\) is occupied if and only if \(j(\spi)+\mu\) is not occupied. 
\item \(\phi(\mu)+\|Y\| < \phi(\alpha)<|\phi(\lambda)|\). In this region \(\spi\) is unoccupied, while \(\chi(\hfk(K_\lambda))\) is given by \(\tau(Y)\). By Proposition~\ref{Prop:AlexG},  both \(j(\spi)\) and \(j(\spi)+\mu\) are always occupied. 
\item \(| \phi(\lambda)|<\phi(\alpha)<|\phi(\lambda)|+\|Y\|\). In this region, \(\spi\) is unoccupied. Since \(\phi(\mu)>|Y|\),  \(j(\spi)+\mu\) is in region 4) and is always occupied. 
\item \(|\phi(\lambda)|+\|Y\| \leq \alpha\). In this region, \(j(\spi)\) is unoccupied.
\end{enumerate}

This proves the claim. A very similar argument shows that \(\bfx\) is a terminal point of an arrow of type \(D_3\) if and only if it is not the terminal point of an arrow of type \(D_{23}\). The statement of the lemma follows. 
\end{proof}

Since \(K_\mu\) and \(K_\lambda\) are Floer simple,  each arrow in the diagram corresponds to a map 
\(\Ff_2 \to \Ff_2\). To determine the corresponding component of the differential, it suffices to know whether or not this map is \(0\). We will show that every map corresponding to an arrow in the diagram is nonzero, thus completing the proof of Proposition~\ref{Prop:FloerSimpleCFD}. The maps \(D_1\) and \(D_3\) are injective, so any arrow labeled by \(D_1\) or \(D_3\) is nonzero. For the arrows labeled by \(D_{23}\), we argue as in the proof of Theorem 11.36 in \cite{LOT}. 
By Proposition 11.30 of \cite{LOT}, there are maps \(D_{012}, D_{01},D_0,D_{230},\) and \(D_{301}\) satisfying 
\begin{align*}
D_3 \circ D_{012}+D_{23}\circ D_{01} + D_{123}\circ D_0&  = 1_{V_1} \\
D_1\circ D_{230}+D_{01}\circ D_{23}+D_{301}\circ D_2 &= 1_{V_1}
\end{align*}
Since \(D_2=D_{123}=0\), it follows that if \(\bfx\) is not in the image of \(D_{3}\), it must be in the image of \(D_{23}\), and if \(\bfx\) is not in the image of \(D_1\), \(D_{23}(\bfx) \neq 0 \). Comparing with the proof of Lemma~\ref{Lem:2Arrows}, we see that every arrow in the diagram must correspond to a nonzero map. 
\end{proof}

\section{Intervals of L-space filling slopes}
\label{Sec:LY}

Now that the ``proper coloring'' condition of
Proposition~\ref{Prop:RedBlueBlack} is in place,
we are equipped to tackle the problem
of describing L-space intervals in terms of $\dt(Y)$
and a slope from the interior of the L-space interval.
We begin by establishing some conventions.

\smallskip
\subsection{Conventions for slopes and homology}
\label{ss: slope and basis conventions}
If $Y$ is a compact oriented three-manifold with torus boundary, then 
a {\it{slope}} of $Y$ is a nonseparating, oriented, simple closed curve in $\partial Y$.
Such objects correspond bijectively to primitive elements of $H_1(\partial Y)/\{\pm 1\}$,
or equivalently, to elements of  $\P(H_1(\partial Y))$.
Any choice of basis $(m,l)$ for $H_1(\partial Y)$ specifies
homogeneous coordinates $n m + n' l \mapsto  [n\!: n']$ on
$\P(H_1 (\partial Y))$,
to which we usually refer in terms of the affinization
\begin{align}
\label{eq: def of pi for thm 1}
H_1(\partial Y) \setminus \{0\} 
&\to \Q \cup \{\infty\},
\\
n m + n' l 
&\mapsto  n/n'.
\nonumber
\end{align}

Let  $\iota : H_1(\partial Y)  \to H_1(Y)$ be the map induced by inclusion. 
We fix a basis $(m,l)$ for $H_1(\partial  Y)$
such that \(l\) is a generator of \(\ker \iota\) and \(m \cdot l = 1\). 
The generator \(l\) is the {\em homological longitude} of \(Y\); it is well defined up to sign. 
In contrast, the choice of \(m\) is only well defined up to the addition of a multiple of \(l\). 
Consequently,  the numerator of $\pi(nm + n'l) = n/n'$ is
canonical (up to sign), but the denominator depends on the choice of $m$.

To Dehn fill $Y$ along a slope
$\mu = nm + n'l \in H_1(\partial Y)$, one attaches a 2-handle along
the simple closed curve associated to $\mu$, and then fills in the remaining $S^2$ boundary with a 3-ball.
The resulting manifold, which we denote by $Y(\mu)$ or $Y(n/n')$,
has homology $H_1(Y(\mu)) = H_1(Y)/(\iota(\mu))$,
which has order $|n|$ if $H_1(Y)$ is torsion free.

Any non-zero Dehn filling $Y(\mu_{\textsc{l}})$ produces a knot
$K_{\mu_{\textsc{l}}} := \mathrm{core}(Y(\mu_{\textsc{l}}) \setminus Y)
\subset Y(\mu_{\textsc{l}})$, on which one can now perform Dehn surgery.
 Whereas our conventionial 
choice of basis for Dehn {\it{filling}} slopes involves a canonical (up to sign)
{\it{longitude}} $l$, with $m$ (satisfying $m \cdot l = 1$)
only determined up to addition of copies of $l$,
the conventional basis for Dehn {\it{surgery}} involves a canonical
{\it{meridian}}, namely $\mu_{\textsc{l}}$, for the knot
$K_{\mu_{\textsc{l}}} \subset Y(\mu_{\textsc{l}})$,
with the longitude $\lambda_{\textsc{l}} \in H_1(\partial Y)$
(satisfying $\mu_{\textsc{l}} \cdot \lambda_{\textsc{l}} = 1$)
only determined up to the addition of copies of $\mu_{\textsc{l}}$.

Thus, for an arbitrary slope, say
\begin{equation}
\mu = n m + n' l = \alpha \mu_{\textsc{l}} + \beta \lambda_{\textsc{l}}
\in H_1(\partial Y),
\end{equation}
we could describe the Dehn filling $Y(\mu)$
as the $n/n'$-filling of $Y$ (with respect to the basis $(m, l)$),
or as the $\alpha/\beta$-surgery along the knot $K_{\mu_{\textsc{l}}}$
(with respect to the basis 
$(\mu_{\textsc{l}},  \lambda_{\textsc{l}})$).  Note that
each of these conventional descriptions involves
either a denominator or a numerator which is non-canonical.
To dodge this problem, we can instead
divide the canonical numerator of $n/n'$
by the canonical denominator of $\alpha/\beta$
to obtain $n/\beta$, with
\begin{equation}
\label{eq: beta n def}
\;\;\;\;n:= \mu \cdot l,
\;\;\;\;\;\;\;\;
\beta
:=
\mu_{\textsc{l}} \cdot \mu = pn'\!-\!qn\; (\mathrm{where}\; \mu_{\textsc{l}} = pm + ql),
\end{equation}
and with $|n| = |H_1(Y(\mu))|$ when $H_1(Y)$ is torsion free.
Note that $n/\beta$ is not a slope in the conventional sense,
since $\mu = n (\mu_{\textsc{l}}/p) + \beta (l/p)$,
with $\mu_{\textsc{l}}/p, l/p \notin H_1(\partial Y; \Z)$,
and the projective linear map
$\P(H_1(\partial Y)) \to \P(\Z^2)$, $[n: n'] \mapsto [n: \beta]$
is not surjective, having determinant $p$.
Still, since this map is injective, it is sufficient for
cataloguing slopes.  In fact, the reciprocal $\beta/n$
is more convenient for this purpose.  Given an initial filling
$Y(\mu_{\textsc{l}})$ on which we wish to perform surgery,
we call $(\mu_{\textsc{l}}\cdot \mu)/(\mu \cdot l) = \beta/n$
the {\it{surgery}} $\mu_{\textsc{l}}$-{\it{label}} (or just 
{\it{surgery label}})
of $\mu$.  Since
\begin{equation}
\label{eq: pq beta n identity}
\frac{n}{n'} = \frac{p}{q + \beta/n},
\end{equation}
the surgery $\mu_{\textsc{l}}$-label of $\mu$ quantifies the deviation of
the Dehn filling slope of $\mu$ from that of $\mu_{\textsc{l}}$,
with a surgery label of $\beta/n=0$ labeling the original slope $\mu_{\textsc{l}}$.

We also need conventions for $H_1(Y)$, relative to the map
$\iota: H_1(\partial Y) \to H_1(Y)$,
restricting to the case of $b_1(Y)=1$.
The Universal Coefficients Theorem implies
$\coker \iota \cong H^1(Y) \cong \Tors(H_1(Y))$.
Thus, setting 
$T: = \Tors(H_1(Y))$ and $T^{\partial} := \left< \iota(l) \right> = T \cap \iota(H_1(\partial Y))$,
we have
$\coker \iota  
\,=\,  H_1(Y) /\left( \left<\iota(m)\right> \oplus T^{\partial} \right) \cong T$, which implies
\begin{equation}
\left(H_1(Y)/T\right) / \iota(m) \cong T^{\partial} \cong \Z/g,
\end{equation}
where $g := |T^{\partial}|$.
In other words, any generator  $\bar{m}$ for $H_1(Y)/T$
will satisfy $\iota(m) \in \pm g\bar{m} + T$.  We shall always choose
$\bar{m}$ so that $\iota(m) \in +g\bar{m} + T$.

\subsection{Conventions for Turaev torsion and $\boldsymbol{\dt(Y)}$}
\label{ss: torsion conventions}

Recall our definition for $\dt(Y) \subset H_1(Y)$
as the finite set
\begin{equation}
\label{eq: first def of dt}
\dt(Y)
:= \{x-y|\, x \notin S[\tau(Y)], y \in S[\tau(Y)]\} 
 \cap \iota(m \Z_{\ge 0} + l \Z),
\end{equation}
where $\tau(Y)$ is the Turaev torsion of $Y$, 
which we always normalize so that 
\begin{equation}
\label{eq: torsion support 0 convention}
0 \in S[\tau(Y)],\;\;\;\;
\tau(Y) \in \Z[[t]][T],
\end{equation}
with $t:= [\bar{m}]$ for any generator $\bar{m}$
of $H_1(Y)/T \cong \Z$ satisfying
$\iota(m) \in \bar{m} \Z_{>0} + T$.

When $Y$ is Floer simple, we can also define the
{\it{torsion complement}},
\begin{equation}
\label{eq: torsion complement definition}
\tc(Y) 
\;:=\; \frac{1}{1-t} \sum_{h\in T}[h]
\;\;-\;\; \tau(Y),
\end{equation}
with the Floer simplicity of $Y$ guaranteeing that
\begin{equation}
S[\tc(Y)]  \;=\; \bar{m}\Z_{\ge 0} \oplus T \; \setminus\; S[\tau(Y)],
\end{equation}
so that $\dt(Y)$ admits the alternative definition
\begin{equation}
\dt(Y)
:= (S[\tc(Y)] - S[\tau(Y)])
 \cap \iota(m \Z_{\ge 0} + l \Z).
\end{equation}
We shall often want to restrict our attention to the non-torsion elements of $\dt(Y)$,
\begin{equation}
\dtgz(Y)
:= (S[\tc(Y)] - S[\tau(Y)])
 \cap \iota(m \Z_{> 0} + l \Z)\;\;=\;\; \dt(Y) \setminus T,
\end{equation}
When we wish to emphasize our inclusion of the torsion elements of $\dt(Y)$,
we shall write $\dtge(Y)$ for $\dt(Y)$.

Although we shall not need the following fact until the
proof of 
Theorem \ref{thm: torus gluing and complementary intervals for L-spaces}
in Section~\ref{s: L-space if and only if intervals cover.},
we lastly remark that
the complement of $\dt(Y)$ is a semigroup. 
\begin{prop}
\label{prop: The complement of D is additively closed.}
If $Y$ is Floer-simple, then the complement
$\Gamma(Y):=  \iota(m \Z_{\ge 0} + l \Z) \setminus \dt(Y)$
is closed under addition.
\end{prop}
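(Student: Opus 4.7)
The plan is to reformulate membership in $\Gamma(Y)$ as a translation-invariance condition on $S[\tau(Y)]$, after which closure under addition becomes a one-line chase. Specifically, I will show that for any $a \in \iota(m\Z_{\geq 0} + l\Z)$,
\[
a \in \Gamma(Y) \;\Longleftrightarrow\; a + S[\tau(Y)] \,\subseteq\, S[\tau(Y)].
\]

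To establish this equivalence, I would first unpack the alternative definition $\dt(Y) = (S[\tc(Y)] - S[\tau(Y)]) \cap \iota(m\Z_{\geq 0} + l\Z)$: saying $a \notin \dt(Y)$ is the same as saying $a + y \notin S[\tc(Y)]$ for every $y \in S[\tau(Y)]$.  Using the identity $S[\tc(Y)] = (\bar{m}\Z_{\geq 0} \oplus T) \setminus S[\tau(Y)]$ guaranteed by Floer simplicity, this last condition splits into two alternatives for each $y$: either $a + y \in S[\tau(Y)]$, or $a + y \notin \bar{m}\Z_{\geq 0} \oplus T$. The point is that the second alternative can never occur under our hypotheses. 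Indeed, $a \in \iota(m\Z_{\geq 0} + l\Z)$ maps into $\bar{m}\Z_{\geq 0} \oplus T$, so $\phi(a) \geq 0$; and the normalization convention $S[\tau(Y)] \subseteq \bar{m}\Z_{\geq 0} \oplus T$ gives $\phi(y) \geq 0$ for every $y \in S[\tau(Y)]$. Hence $\phi(a+y) \geq 0$, i.e., $a + y \in \bar{m}\Z_{\geq 0} \oplus T$, forcing the first alternative.

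Given this reformulation, the closure statement is immediate. Suppose $a, b \in \Gamma(Y)$. Since $\iota(m\Z_{\geq 0} + l\Z)$ is clearly additively closed, $a+b$ still lies in it. For any $y \in S[\tau(Y)]$, the hypothesis $b \in \Gamma(Y)$ yields $b + y \in S[\tau(Y)]$, and applying the hypothesis $a \in \Gamma(Y)$ to the element $b + y \in S[\tau(Y)]$ gives
\[
(a + b) + y \,=\, a + (b+y) \,\in\, S[\tau(Y)].
\]
Thus $(a+b) + S[\tau(Y)] \subseteq S[\tau(Y)]$, and the reformulation delivers $a + b \in \Gamma(Y)$.

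The argument is very short once one has the right viewpoint. The only subtle step is the reformulation itself: recognizing that the normalization $0 \in S[\tau(Y)] \subseteq \bar{m}\Z_{\geq 0} \oplus T$, together with the restriction $a \in \iota(m\Z_{\geq 0} + l\Z)$, kills the ``out-of-range'' alternative in the definition of $\dt(Y)$ and distills the condition $a \notin \dt(Y)$ into the clean statement that translation by $a$ preserves $S[\tau(Y)]$. This reformulation is also what makes the semigroup structure natural, since translation-invariance of a set is manifestly closed under composition.
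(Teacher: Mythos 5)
Your proof is correct. The paper establishes the proposition by a direct element chase: assuming $x,y\in\Gamma(Y)$ with $x+y\in\dt(Y)$, it picks a witness $z\in S[\tau(Y)]$ with $x+y+z\in S[\tc(Y)]$ and splits into the two cases $x+z\in S[\tc(Y)]$ (forcing $x\in\dt(Y)$) and $x+z\in S[\tau(Y)]$ (forcing $y\in\dt(Y)$), reaching a contradiction either way. Your argument uses exactly the same ingredients — the dichotomy $S[\tc(Y)]=(\bar m\Z_{\ge0}\oplus T)\setminus S[\tau(Y)]$ coming from Floer simplicity, together with $\phi\ge0$ on both $\iota(m\Z_{\ge0}+l\Z)$ and $S[\tau(Y)]$ — but repackages them into the characterization $\Gamma(Y)=\{a\,:\,a+S[\tau(Y)]\subseteq S[\tau(Y)]\}$, i.e.\ $\Gamma(Y)$ is the translation-stabilizer of $S[\tau(Y)]$, after which closure under addition is automatic. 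This buys two things over the paper's version: the semigroup structure becomes conceptually transparent (stabilizers compose), and you make explicit the step the paper leaves implicit, namely that $x+z$ has $\phi\ge 0$ and therefore must land in $S[\tau(Y)]\amalg S[\tc(Y)]$, which is what legitimizes the paper's two-case split. The paper's route is marginally shorter; yours is more structural and equally rigorous.
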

\begin{proof}
Suppose there exist $x, y \in \Gamma(Y)$ with
$x+ y \in \dt(Y)$.  
Since $x+ y \in \dt(Y)$, we know there exists 
$z \in S[\tau(Y)]$ for which $x+y + z \in S[\tc(Y)]$.
If $z+x \in S[\tc(Y)]$ then
$x = (x+ z) - z \in \dt(Y)$, a contradiction.
On the other hand, if
$s+x \in S[\tau(Y)]$,
then $y = (x+y+ z) - (x+z) \in \dt(Y)$, another contradiction.
Thus $x+y \notin \dt(Y)$.
\end{proof}
{{\noindent{In the case that $Y$ Floer simple is the complement of the
link of a complex planar singularity, $\Gamma(Y)$ coincides
with the semigroup associated to the Newton-Puiseux expansion.}}}

\subsection{Notation: Truncation and remainders}

Lastly, we need some basic arithmetic notation.
Henceforth in this paper,
we use the conventional truncations $\lfloor \cdot \rfloor , \lceil \cdot \rceil: \Q \rightarrow \Z$,
\begin{equation}
\lfloor r\rfloor := \max\{z \in \Z  \,|\, z \le r \},\;\;\;
\lceil r \rceil := \min\{z \in \Z \,|\, z \ge r \},
\end{equation}
and the less conventional notation
$[\cdot]_p : \Z \rightarrow \{0, \dots, |p|-1\}$ to select a representative modulo $p$, 
by projecting an integer to $\Z/|p|\Z$
and then selecting its preimage in $\{0, \dots, |p|-1\} \subset \Z$.
In terms of our truncation notation,
\begin{equation}
\label{eq: mod floor ceiling identities}
[a]_b = a - \left\lfloor \frac{a}{b} \right\rfloor \!b,\;\;\;
[-a]_b = -a + \left\lceil \frac{a}{b} \right\rceil \!b,\;\; \mathrm{when}\;b >0.
\end{equation}

\subsection{Restating Theorem \ref{Thm:LY} as 
Theorem \ref{thm:  L-space interval in terms of beta/n}}

We are now equipped to re--express Theorem \ref{Thm:LY} in a more practical form,
describing the L-space slope interval in terms of any given slope
from the interior of that interval,
using the ``surgery label'' description of slopes.
Since the interval of L-space surgery labels always excludes $\infty$---its
being the surgery label of the canonical longitude---we can always describe
the interval of L-space surgery labels in terms of its minimum and maximum in $\Q$.

That is, given an L-space slope
$\mu_{\textsc{l}} = pm + ql \in H_1(\partial Y)$
from the interior of the L-space interval,
Theorem \ref{Thm:LY} tells us that a Dehn filling $Y(\mu)$
is an L-space if and only if
\begin{equation}
\label{eq: pi mu between pi delta - and pi delta +}
\pi(\tilde{\boldsymbol{\delta}}_-) 
\le
\pi(\mu)
\le
\pi(\tilde{\boldsymbol{\delta}}_+) 
\;\;\;
\text{for all }
\boldsymbol{\delta} \in \dtgz(Y),
\end{equation}
where $\pi$ denotes the surgery $\mu_{\textsc{l}}$-label,
\begin{equation}
\pi : H_1(\partial Y) \setminus \{0\} \to \Q \cup \{\infty\},\;\;\;\;
\mu \mapsto \pi(\mu):=  (\mu_{\textsc{l}} \cdot \mu)/(\mu \cdot l),
\end{equation}
and where, for each $\boldsymbol{\delta}  \in \dtgz(Y)$,
the lifts 
$\tilde{\boldsymbol{\delta}}_-, \tilde{\boldsymbol{\delta}}_+ \in
\iota^{-1}(\boldsymbol{\delta})$, with 
$\pi(\tilde{\boldsymbol{\delta}}_-) < \pi(\tilde{\boldsymbol{\delta}}_+)$,
are the two lifts of $\boldsymbol{\delta}$ closest to $\mu_{\textsc{l}}$
with respect to $\pi$, assuming $\dtgz(Y)$ nonempty.

Since $\dtgz(Y) \subset \iota(H_1(\partial Y))$,
we can express any $\boldsymbol{\delta} \in \dtgz(Y)$
as $\boldsymbol{\delta} = \delta\iota(m) + \gamma\iota(l)$.
Any lift $\tilde{\boldsymbol{\delta}} \in \iota^{-1}(\boldsymbol{\delta})$
of $\boldsymbol{\delta}$
then takes the form $\tilde{\boldsymbol{\delta}} = \delta m + \tilde{\gamma}l$,
satisfying
$\pi(\tilde{\boldsymbol{\delta}}) = 
(\mu_{\textsc{l}} \cdot \tilde{\boldsymbol{\delta}})/\delta
 = (p\tilde{\gamma} - q\delta)/\delta$,
for some 
$\tilde{\gamma} \equiv \gamma\, (\mod g)$.
In other words, we have
\begin{equation}
\label{eq: surgery labels of all the lifts of delta}
\left\{ \pi(\tilde{\boldsymbol{\delta}}) 
\mkern-2mu\left|\mkern2mu
\iota(\tilde{\boldsymbol{\delta}}) = {\boldsymbol{\delta}}\mkern-1.5mu
\right.\right\} 
\,=\; \frac{[p\gamma - q\delta]_{pg} + pg\Z}{\delta}.
\end{equation}
Since $\pi(\mu_{\textsc{l}}) = 0$, the fact that
$\mu_{\textsc{l}}$ lies in the interior of the L-space interval
implies that $0$ fails to belong to the
above set, {\it{i.e.}}, that $[p\gamma - q\delta]_{pg} \neq 0$, for all
$\boldsymbol{\delta} \in \dtgz(Y)$.
The lifts
$\tilde{\boldsymbol{\delta}}_+$ and $\tilde{\boldsymbol{\delta}}_-$
then evidently satisfy
$\pi(\tilde{\boldsymbol{\delta}}_+) = [p\gamma - q\delta]_{pg} /\delta$
and
$\pi(\tilde{\boldsymbol{\delta}}_-) = ([p\gamma - q\delta]_{pg} - pg) /\delta$
for all $\boldsymbol{\delta} \in \dtgz$,
and we can rewrite 
Theorem \ref{Thm:LY} as follows.

\begin{theorem}
\label{thm:  L-space interval in terms of beta/n}
Suppose $Y$ is Floer simple, with $\dtgz(Y) \neq \emptyset$.
If $\mu_{\textsc{l}} = pm + ql \in H_1(\partial Y)$ is an
L-space slope for $Y$, satisfying
$b_+^{\boldsymbol{\delta}} 
:= [p\gamma - q\delta]_{pg} \neq 0$
for all 
$\boldsymbol{\delta} = \delta \iota(m) + \gamma\iota(l) \in \dtgz(Y)$,
then the Dehn filling $Y(\mu)$ is an L-space if and only if
\begin{equation}
\label{eq: thm ineqality for beta/n}
\frac{b_-^{\boldsymbol{\delta}}}{\delta}
\le
\frac{\mu_{\textsc{l}} \cdot \mu}{\mu \cdot l}
\le
\frac{b_+^{\boldsymbol{\delta}}}{\delta}
\;\;\;
\text{for all }
\boldsymbol{\delta} = \delta\iota(m) + \gamma\iota(l) \in \dtgz(Y),
\end{equation}
where $b_-^{\boldsymbol{\delta}}\mkern-1mu := 
b_+^{\boldsymbol{\delta}}\mkern-1mu - \mkern2mu pg$,
and where we call $(\mu_{\textsc{l}} \cdot \mu)/(\mu \cdot l)$ the
surgery $\mu_{\textsc{l}}$-label for $\mu$.
If $\dtgz(Y) = \emptyset$, then 
$Y(\mu)$ is an L-space if and only if $\mu \notin \left< l \right>$,
{\it{i.e.}}, when $\mu$ has finite surgery label.
\end{theorem}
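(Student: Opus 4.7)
The plan is to deduce this theorem from Theorem \ref{Thm:LY} by carrying out the algebraic identifications sketched in the paragraphs immediately preceding the statement. First I would verify formula \eqref{eq: surgery labels of all the lifts of delta}: given $\boldsymbol{\delta} = \delta \iota(m) + \gamma\iota(l) \in \dtgz(Y)$, any lift has the form $\tilde{\boldsymbol{\delta}} = \delta m + \tilde\gamma l$ with $\tilde\gamma \equiv \gamma \pmod{g}$ (since $\iota(l)$ has order $g$ in $T$), and a direct cross-product computation against $\mu_{\textsc{l}} = pm + ql$ (using $m \cdot l = 1$) yields $\pi(\tilde{\boldsymbol{\delta}}) = (p\tilde\gamma - q\delta)/\delta$. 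As $\tilde\gamma$ runs over $\gamma + g\Z$, the numerator runs over $(p\gamma - q\delta) + pg\Z$, giving exactly the set in \eqref{eq: surgery labels of all the lifts of delta}.

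The second step is to identify $\tilde{\boldsymbol{\delta}}_\pm$ explicitly. Because $\mu_{\textsc{l}}$ lies in the interior of $\lL(Y)$, Theorem \ref{Thm:LY} forbids $\pi(\mu_{\textsc{l}}) = 0$ from appearing in the set of surgery labels of preimages of $\boldsymbol{\delta}$, which is precisely the hypothesis $b_+^{\boldsymbol{\delta}} := [p\gamma - q\delta]_{pg} \neq 0$. With our convention that $[\cdot]_{pg}$ selects the representative in $\{0, \ldots, pg-1\}$, the value $b_+^{\boldsymbol{\delta}}/\delta$ is the smallest positive surgery label of any preimage of $\boldsymbol{\delta}$, and thus is realized by $\tilde{\boldsymbol{\delta}}_+$; analogously $(b_+^{\boldsymbol{\delta}} - pg)/\delta = b_-^{\boldsymbol{\delta}}/\delta$ is the largest negative one, realized by $\tilde{\boldsymbol{\delta}}_-$.

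The third step simply applies Theorem \ref{Thm:LY}. Since $\mu_{\textsc{l}}$ lies in the interior of $\lL(Y)$ and the endpoints of $\lL(Y)$ are the closest elements of $\iota^{-1}(\dtgz(Y))$ to $\mu_{\textsc{l}}$ on either side, the interval $\lL(Y)$ consists under $\pi$ of those $\pi(\mu)$ satisfying $\pi(\tilde{\boldsymbol{\delta}}_-) \le \pi(\mu) \le \pi(\tilde{\boldsymbol{\delta}}_+)$ for every $\boldsymbol{\delta} \in \dtgz(Y)$. Substituting the formulas from the previous step produces \eqref{eq: thm ineqality for beta/n}. The case $\dtgz(Y) = \emptyset$ is immediate from Theorem \ref{Thm:LY}, which asserts $\lL(Y) = Sl(Y) \setminus [l]$, and since $\pi(\mu) = \infty$ iff $\mu \cdot l = 0$ iff $\mu \in \langle l \rangle$, this translates to the stated condition that $\pi(\mu)$ be finite.

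The theorem is a pure algebraic repackaging of Theorem \ref{Thm:LY}, so there is no substantive obstacle; the only delicacy is sign-convention consistency. A different choice of representative range for $[\cdot]_{pg}$, or an opposite orientation convention for the intersection pairing, would swap the roles of $b_+^{\boldsymbol{\delta}}$ and $b_-^{\boldsymbol{\delta}}$, so I would be careful to track these choices throughout the computation.
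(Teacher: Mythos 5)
Your reduction is circular in the context of this paper. You propose to deduce the statement from Theorem~\ref{Thm:LY}, but Theorem~\ref{Thm:LY} has no independent proof anywhere in the paper: the organization section points to Section~\ref{Sec:LY} for its proof, and in that section the only argument given is (i) the lift/surgery-label translation showing the two formulations are equivalent --- which is exactly the computation you reproduce, cf.~\eqref{eq: surgery labels of all the lifts of delta} --- followed by (ii) a direct proof of the $\beta/n$ formulation. So your steps one through three are correct as algebra (they are essentially the paper's own preamble to the theorem), but they constitute only the easy equivalence; taking Theorem~\ref{Thm:LY} as an available input begs the question, and your proposal supplies no proof of either statement.

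What is missing is the substantive Floer-homological argument that the paper actually gives. After the normalizations of Proposition~\ref{prop: simpliftying assumptions for Theorem 1} (in particular $p,\beta>0$, $pg > \deg \tc(Y)$, and the coprimality conditions), the paper realizes $Y(\mu)$ as the $\lambda_{\!\scon}$-filling, i.e.\ as ``zero'' surgery on the connected sum $K_{\mu_1}\# K_{\mu_2}$ inside the L-space $Y(\mu_{\textsc{l}})\,\#\,L(\beta,\alpha^*)$; it computes the support $S_{\textsc{black}}$ of $\chi(\hfk)$ of this knot from the Turaev torsion using Proposition~\ref{Prop:Chi} and the connected-sum formula; and it then applies the proper-coloring criterion of Proposition~\ref{Prop:RedBlueBlack}, which comes from the Ozsv{\'a}th--Szab{\'o} mapping cone. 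The failure of proper coloring is shown to be equivalent to solvability of the arithmetic system \eqref{eq: bn equation}--\eqref{eq: bn' equation}, whose analysis forces $b \equiv p\gamma - q\delta \pmod{pg}$ and yields precisely the bounds $b_-^{\boldsymbol{\delta}}/\delta \le \beta/n \le b_+^{\boldsymbol{\delta}}/\delta$. To give a genuine proof you would either need to reproduce an argument of this kind, or first supply an independent proof of Theorem~\ref{Thm:LY}, neither of which your proposal does.
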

{\noindent{\bf{Remark.}} It is often more natural
to state the above result exclusively in terms of $\dt(Y)$.  That is,
if $Y$ is Floer simple and $\mu_{\textsc{l}}$ is an interior L-space
slope, then the L-space interval $\mathcal{L}(Y)$ is the smallest interval
containing $\mu_{\textsc{l}}$ with endpoints in $\iota^{-1}(\dt(Y))$.
This interval is open if its endpoints are equal, and closed otherwise.}
\vspace{.1cm}

Of course, one could
express the above criterion in any other basis.
To characterize L-space slopes in terms of conventional surgery coefficients,
for surgery along the knot core 
$K_{\mu_{\textsc{l}}} \subset Y(\mu_{\textsc{l}}) \setminus Y$
associated to a given interior
L-space slope $\mu_{\textsc{l}} = pm + ql$,
one must first choose a longitude, say
$\lambda_{\textsc{l}} := q^* m + p^* l$,
with $\mu_{\textsc{l}} \cdot \lambda_{\textsc{l}} = 1$
implying $pp^* - qq^* =1$.
Next, for each $\boldsymbol{\delta} \in \dtgz(Y)$, we express the lifts 
$\tilde{\boldsymbol{\delta}}_+, 
\tilde{\boldsymbol{\delta}}_- \in \iota^{-1}(\boldsymbol{\delta})$
flanking $\mu_{\textsc{l}}$ as
\begin{equation}
\tilde{\boldsymbol{\delta}}_+ 
= a^{\boldsymbol{\delta}}_+ \mu_{\textsc{l}}
+ b^{\boldsymbol{\delta}}_+ \lambda_{\textsc{l}},
\;\;\;\;\;
\tilde{\boldsymbol{\delta}}_- 
= a^{\boldsymbol{\delta}}_- \mu_{\textsc{l}}
+ b^{\boldsymbol{\delta}}_- \lambda_{\textsc{l}},
\end{equation}
with $b^{\boldsymbol{\delta}}_+$, $b^{\boldsymbol{\delta}}_-$
$a^{\boldsymbol{\delta}}_+$, and $a^{\boldsymbol{\delta}}_-$
satisfying
\begin{equation}
b^{\boldsymbol{\delta}}_+ := [p\gamma - q\delta]_{pg},\;\;\;\,
b^{\boldsymbol{\delta}}_- := b^{\boldsymbol{\delta}}_+ - pg,\;\;\;\,
\delta \;=\; a^{\boldsymbol{\delta}}_+ p + b^{\boldsymbol{\delta}}_+ q^*
= a^{\boldsymbol{\delta}}_- p + b^{\boldsymbol{\delta}}_- q^*.
\end{equation}
When $p >0$,
a straightforward calculation shows that
\begin{equation}
\frac{a^{\boldsymbol{\delta}}_-}{b^{\boldsymbol{\delta}}_-} 
\;<\;
-\frac{q^*\!}{p}
\;<\;
\frac{a^{\boldsymbol{\delta}}_+}{b^{\boldsymbol{\delta}}_+}\;\;\;\;
\text{for all}\;\;\boldsymbol{\delta} \in \dtgz(Y),
\end{equation}
and Theorem \ref{thm:  L-space interval in terms of beta/n}
takes the following form.

\begin{cor}
\label{cor: L-space criterion in surgery coefficients}
Suppose $Y$ is Floer simple, with $\dtgz(Y) \neq \emptyset$.
If $\mu_{\textsc{l}} = pm + ql$ with $p>0$ is an
L-space slope for $Y$, satisfying
$b_+^{\boldsymbol{\delta}} 
:= [p\gamma - q\delta]_{pg} \neq 0$
for all 
$\boldsymbol{\delta} = \delta \iota(m) + \gamma\iota(l) \in \dtgz(Y)$,
then for any longitude $\lambda_{\textsc{l}} = q^* m + p^* l$
(with $\mu_{\textsc{l}} \cdot \lambda_{\textsc{l}} = 1$),
the $\alpha/\beta$ surgery along 
$K_{\mu_{\textsc{l}}} \subset Y\mkern-2mu(\mu_{\textsc{l}})$---or
equivalently, the Dehn filling 
$Y\mkern-2mu(\mu)$ with 
$\mu:= \alpha \mu_{\textsc{l}} + \beta \lambda_{\textsc{l}}$---is an L-space
if and only if
\begin{equation}
\frac{\alpha}{\beta} \le \frac{a^{\boldsymbol{\delta}}_-}{b^{\boldsymbol{\delta}}_-}
\;\;\mathrm{or}\;\;
\frac{a^{\boldsymbol{\delta}}_+}{b^{\boldsymbol{\delta}}_+}
\le
\frac{\alpha}{\beta}
\;\;\;\;
\text{for all}\;\;\boldsymbol{\delta} \in \dtgz(Y),
\end{equation}
where $\iota(a^{\boldsymbol{\delta}}_+ \mu_{\textsc{l}}
+ b^{\boldsymbol{\delta}}_+ \lambda_{\textsc{l}})
= \iota(a^{\boldsymbol{\delta}}_- \mu_{\textsc{l}}
+ b^{\boldsymbol{\delta}}_- \lambda_{\textsc{l}})
= \boldsymbol{\delta}$, with
$b^{\boldsymbol{\delta}}_- := b^{\boldsymbol{\delta}}_+ -pg$,
for each $\boldsymbol{\delta} \in \dtgz(Y)$.
In such case,
the left hand inequality obtains when 
$\beta /n <0$, the right hand when $\beta/n > 0$,
and we regard both inequalities as vacuously true when $\beta/n = 0$,
where 
$n:= \mu \cdot l
= \alpha p + \beta q^*$.
If $\dtgz(Y) = \emptyset$, then $Y(\mu)$ is an L-space if and only if $n \neq 0$. 
\end{cor}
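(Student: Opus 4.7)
My plan is to deduce Corollary~\ref{cor: L-space criterion in surgery coefficients} from Theorem~\ref{thm:  L-space interval in terms of beta/n} by a linear change of basis from $(m,l)$ to $(\mu_{\textsc{l}}, \lambda_{\textsc{l}})$, recognizing that the surgery-label coordinate $\pi(\mu) = \beta/n$ used in the theorem and the conventional surgery coefficient $\alpha/\beta$ used in the corollary are related by a Möbius transformation.

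First I would record the change-of-basis identities. Writing $\mu = \alpha\mu_{\textsc{l}} + \beta\lambda_{\textsc{l}}$ and using $\mu_{\textsc{l}} \cdot \lambda_{\textsc{l}} = 1$ together with $pp^* - qq^* = 1$, one obtains $\mu_{\textsc{l}} \cdot \mu = \beta$ and $\mu \cdot l = \alpha p + \beta q^* = n$, so that
\[ \sigma \;:=\; \frac{\alpha}{\beta} \;=\; \frac{1 - q^*\pi}{p\,\pi}, \qquad \pi \;:=\; \frac{\beta}{n}. \]
The analogous computation applied to $\tilde{\boldsymbol{\delta}}_{\pm} = a_\pm^{\boldsymbol{\delta}} \mu_{\textsc{l}} + b_\pm^{\boldsymbol{\delta}} \lambda_{\textsc{l}}$ yields $\mu_{\textsc{l}} \cdot \tilde{\boldsymbol{\delta}}_{\pm} = b_\pm^{\boldsymbol{\delta}}$ and $\delta = a_\pm^{\boldsymbol{\delta}} p + b_\pm^{\boldsymbol{\delta}} q^*$, exactly as specified in the corollary; combining these with the theorem's definition $b_+^{\boldsymbol{\delta}} = [p\gamma - q\delta]_{pg}$ and with $b_-^{\boldsymbol{\delta}} = b_+^{\boldsymbol{\delta}} - pg$ verifies that $\tilde{\boldsymbol{\delta}}_{\pm}$ really are the two lifts of $\boldsymbol{\delta}$ flanking $\mu_{\textsc{l}}$ in $\pi$-coordinate.

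Next, I would push the theorem's inequality $b_-^{\boldsymbol{\delta}}/\delta \le \pi \le b_+^{\boldsymbol{\delta}}/\delta$ through the Möbius map above. The hypothesis $p > 0$, combined with $\delta > 0$ (from the definition of $\dtgz(Y)$) and $b_+^{\boldsymbol{\delta}} > 0 > b_-^{\boldsymbol{\delta}}$ (from the nonzero mod-$pg$ convention), ensures that the L-space interval in $\pi$-coordinates strictly brackets $\pi = 0$. Since $d\sigma/d\pi = -1/(p\pi^2) < 0$ on each branch, with $\sigma \to \pm\infty$ as $\pi \to 0^\pm$, the upper half $(0, b_+^{\boldsymbol{\delta}}/\delta]$ of the L-space interval maps to the ray $[a_+^{\boldsymbol{\delta}}/b_+^{\boldsymbol{\delta}}, +\infty)$ in $\sigma$-coordinates, while the lower half $[b_-^{\boldsymbol{\delta}}/\delta, 0)$ maps to $(-\infty, a_-^{\boldsymbol{\delta}}/b_-^{\boldsymbol{\delta}}]$. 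Intersecting over all $\boldsymbol{\delta} \in \dtgz(Y)$ then produces exactly the two-sided disjunction of the corollary: $\alpha/\beta \ge a_+^{\boldsymbol{\delta}}/b_+^{\boldsymbol{\delta}}$ when $\beta/n > 0$, and $\alpha/\beta \le a_-^{\boldsymbol{\delta}}/b_-^{\boldsymbol{\delta}}$ when $\beta/n < 0$.

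Finally, the degenerate cases are inherited directly. The value $\beta/n = 0$ forces $\beta = 0$ and hence $\mu = \mu_{\textsc{l}}$ by primitivity; since $\mu_{\textsc{l}}$ is an L-space slope by hypothesis, declaring both inequalities vacuously true is consistent. The clause for $\dtgz(Y) = \emptyset$ follows immediately from the corresponding case of Theorem~\ref{thm:  L-space interval in terms of beta/n}, together with the equivalence of ``finite surgery label $\beta/n$'' and $n \neq 0$. The entire argument is bookkeeping with Möbius transformations; the only step requiring genuine care is the sign analysis, where one must use $p > 0$ and $\delta > 0$ to pin down the orientation of the Möbius map, so that the two halves of the $\pi$-interval map to two exterior rays in $\sigma$-coordinates (producing the disjunction in the corollary) rather than to a single finite interval.
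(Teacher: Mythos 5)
Your proposal is correct and takes essentially the same route as the paper, which likewise deduces the corollary from the surgery-label theorem by writing the flanking lifts as $a^{\boldsymbol{\delta}}_{\pm}\mu_{\textsc{l}} + b^{\boldsymbol{\delta}}_{\pm}\lambda_{\textsc{l}}$ and re-expressing the inequality $b^{\boldsymbol{\delta}}_-/\delta \le \beta/n \le b^{\boldsymbol{\delta}}_+/\delta$ in terms of $\alpha/\beta$, using the separation $a^{\boldsymbol{\delta}}_-/b^{\boldsymbol{\delta}}_- < -q^*/p < a^{\boldsymbol{\delta}}_+/b^{\boldsymbol{\delta}}_+$ to match inequalities with the sign of $\beta/n$. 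Your M\"obius-transformation bookkeeping is simply a more explicit write-up of that same change-of-basis calculation.
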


One could also characterize L-space slopes in terms of the Dehn filling basis, $m, l$.
If we take $\mu_{\textsc{l}} = pm + ql$ to be an interior L-space slope
with $p > 0$, then
for any $\boldsymbol{\delta} = \delta \iota(m) + \gamma \iota(l) \in \dtgz(Y)$,
it follows from the two identities in 
(\ref{eq: mod floor ceiling identities}) that
\begin{equation}
[p\gamma-q\delta]_{pg}
= [-q\delta]_p \,+\, p\! \left[\gamma - 
{{\textstyle{\left\lceil \!\frac{q}{p}\delta\! \right\rceil}}}\right]_{{\mkern-2.5mu}g};
\;\;\;\;
-[q\delta - p\gamma]_{pg}
= -[q\delta]_p \,-\, 
p\! \left[{{\textstyle{\left\lfloor \!\frac{q}{p}\delta\! \right\rfloor}}}  
- \gamma\right]_{{\mkern-2.5mu}g},
\end{equation}
from which it follows that the lifts
$\tilde{\boldsymbol{\delta}}_+, 
\tilde{\boldsymbol{\delta}}_- \in \iota^{-1}(\boldsymbol{\delta})$
adjacent to $\mu_{\textsc{l}}$ take the form
\begin{equation}
\label{eq: defs of delta-+ in filling basis}
\tilde{\boldsymbol{\delta}}_+ =
\delta m +
\left(\left\lceil \!\frac{q}{p}\delta\! \right\rceil
+ \left[\gamma - \left\lceil \!\frac{q}{p}\delta\! \right\rceil \right]_g\right)\!l,
\;\;\;\;
\tilde{\boldsymbol{\delta}}_- =
\delta m +
\left(\left\lfloor \!\frac{q}{p}\delta \! \right\rfloor
- \left[\left\lfloor \!\frac{q}{p}\delta \! \right\rfloor - \gamma\right]_g \right)\!l,
\end{equation}
As expected, these are the lifts of $\boldsymbol{\delta}$
with Dehn filling slope closest to $p/q$
(regardless of whether $p > 0$), and
Theorem
\ref{thm:  L-space interval in terms of beta/n}
takes the following form.

\begin{cor}
\label{cor: L-space criterion in Dehn filling basis}
Suppose $Y$ is Floer simple, with $\dtgz(Y) \neq \emptyset$.
If $\mu_{\textsc{l}} = pm + ql$ is an
L-space slope for $Y$, satisfying
$p\gamma - q\delta \not\equiv 0 \,(\mod pg)$
for all 
$\boldsymbol{\delta} = \delta \iota(m) + \gamma\iota(l) \in \dtgz(Y)$,
then $\mu = nm + n'l$ is an L-space slope for $Y$ 
if and only if
$\frac{n}{n'} \in I^{\boldsymbol{\delta}}$ for all
$\boldsymbol{\delta} \in \dtgz(Y)$, where, for each
$\boldsymbol{\delta} \in \dtgz(Y)$,
$I^{\boldsymbol{\delta}}$ is the closed interval
in $\Q \cup \{\infty\}$ which exludes 0 and has endpoints
\begin{equation}
\label{eq: endpoints for dehn filling l space corollary}
\frac{\delta}
{\left\lceil \!\frac{q}{p}\delta\! \right\rceil
+ \left[\gamma - \left\lceil \!\frac{q}{p}\delta\! \right\rceil \right]_g},
\;\;\;\;
\frac{\delta}
{\left\lfloor \!\frac{q}{p}\delta \! \right\rfloor
- \left[\left\lfloor \!\frac{q}{p}\delta \! \right\rfloor - \gamma\right]_g}.
\end{equation}
If $\dtgz(Y) = \emptyset$, then 
$Y(\mu)$ is an L-space if and only if $n \neq 0$. 
\end{cor}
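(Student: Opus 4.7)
The plan is to deduce the corollary from Theorem~\ref{thm:  L-space interval in terms of beta/n} by translating the surgery-label inequality (\ref{eq: thm ineqality for beta/n}) into Dehn filling slopes via a M\"obius change of coordinates.  Combining (\ref{eq: pq beta n identity}) with the definitions in (\ref{eq: beta n def}), the first step is to note that the surgery $\mu_{\textsc{l}}$-label $s = \pi(\mu) = (\mu_{\textsc{l}}\!\cdot\!\mu)/(\mu\!\cdot\!l)$ of a slope $\mu = nm + n'l$ and its Dehn filling slope $n/n'$ are related by
\[
\frac{n}{n'} \;=\; \frac{p}{q + s},
\]
a homeomorphism of $\P^1(\Q) = \Q \cup \{\infty\}$ sending $s = 0$ (the label of $\mu_{\textsc{l}}$) to $p/q$ and $s = \infty$ (the label of $l$) to $0$.

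Next I would apply this transformation to the endpoints $b_\pm^{\boldsymbol{\delta}}/\delta$ appearing in Theorem~\ref{thm:  L-space interval in terms of beta/n}.  Using the identity $b_\pm^{\boldsymbol{\delta}} = p\tilde{\gamma}_\pm - q\delta$ for the lifts $\tilde{\boldsymbol{\delta}}_\pm = \delta m + \tilde{\gamma}_\pm l$, one computes
\[
\frac{p}{q + b_\pm^{\boldsymbol{\delta}}/\delta} \;=\; \frac{p\delta}{q\delta + b_\pm^{\boldsymbol{\delta}}} \;=\; \frac{\delta}{\tilde{\gamma}_\pm},
\]
and the explicit form of $\tilde{\gamma}_\pm$ furnished by (\ref{eq: defs of delta-+ in filling basis})---already derived in the lead-up to the corollary via the modular identities (\ref{eq: mod floor ceiling identities})---is exactly what is displayed in (\ref{eq: endpoints for dehn filling l space corollary}).

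The remaining step is to identify which of the two arcs of $\P^1(\Q)$ bounded by $\delta/\tilde{\gamma}_+$ and $\delta/\tilde{\gamma}_-$ is the image of the surgery-label interval $[b_-^{\boldsymbol{\delta}}/\delta,\, b_+^{\boldsymbol{\delta}}/\delta]$.  Because this source interval is a bounded subset of $\Q$, it excludes $s = \infty$, so its image excludes $0 = p/(q + \infty)$.  Since the two endpoints are distinct ($\tilde{\gamma}_+ - \tilde{\gamma}_- = g > 0$ and $\delta > 0$) and both nonzero, the image is unambiguously the closed arc with the claimed endpoints excluding $0$---precisely the definition of $I^{\boldsymbol{\delta}}$.

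The $\dtgz(Y) = \emptyset$ case follows immediately from Theorem~\ref{thm:  L-space interval in terms of beta/n}: the condition that $\mu$ have finite surgery label is equivalent to $\mu \cdot l \neq 0$, i.e., $n \neq 0$.  The one subtlety to watch is that the M\"obius transformation is orientation-reversing when $p > 0$, but since (\ref{eq: defs of delta-+ in filling basis}) has already been shown to identify $\tilde{\boldsymbol{\delta}}_\pm$ as the lifts with Dehn filling slope closest to $p/q$ regardless of the sign of $p$, no additional casework is needed; I expect this sign bookkeeping to be the only real obstacle, and a very mild one.
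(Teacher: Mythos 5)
Your proposal is correct and takes essentially the same route as the paper: the corollary is read off from Theorem \ref{thm:  L-space interval in terms of beta/n} via the coordinate change $n/n' = p/(q+s)$ between surgery labels and Dehn filling slopes, with the interval endpoints identified as the slopes $\delta/\tilde{\gamma}_{\pm}$ of the two lifts of $\boldsymbol{\delta}$ adjacent to $\mu_{\textsc{l}}$, whose explicit $(m,l)$-coordinates are exactly the paper's computation via the identities (\ref{eq: mod floor ceiling identities}) and (\ref{eq: defs of delta-+ in filling basis}). Your explicit justification of the ``excludes $0$'' clause (the bounded surgery-label interval omits $\infty$, so its image under the M\"obius map omits $0$) merely spells out what the paper leaves implicit in observing that these lifts are the ones with Dehn filling slope closest to $p/q$.
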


{\noindent{\bf{Example.}}
Suppose $K \subset S^3$ is a Floer simple knot of positive genus $g(K)$,
with Alexander polynomial $\Delta(K)$.
Then $Y := S^3 \setminus \nu(K)$ is Floer simple, and since $K \subset S^3$
Floer simple implies $\deg \Delta(K) = 2g(K)$, the hypothesis $g(K)>0$ implies
$\dtgz(Y) \neq \emptyset$.
Since $H_1(Y)$ is torsion free,
the endpoints of $I^{\boldsymbol{\delta}}$
reduce to 
$\delta/ \mkern-3mu\left\lceil \mkern-1.5mu \frac{q}{p}\mkern-1.5mu \right\rceil$
and
$\delta/ \mkern-3mu\left\lfloor \mkern-1.5mu \frac{q}{p}\mkern-1.5mu \right\rfloor$
for each 
$\boldsymbol{\delta} = \delta \iota(m)  \in \dtgz(Y)$.
We already know that the infinity filling $Y(1m+0l) = S^3$ is an L-space.
Thus
(if necessary replacing $K$ with its mirror and 
using $-\frac{n}{n^{\prime}\!}$ for $\frac{n}{n^{\prime}\!}$ in
(\ref{eq: example l space interval for k in s3})),
we know that $Y(pm + 1l)$ is an L-space for any $p>0$ sufficiently large.
Taking $p > \max_{\boldsymbol{\delta}\in\dtgz(Y)}\mkern-2mu\delta$
then makes the endpoints of each $I^{\boldsymbol{\delta}}$
become
$\delta/ \mkern-3mu\left\lceil \mkern-1.5mu \frac{1}{p}\mkern-1.5mu \right\rceil = \delta$
and
$\delta/ \mkern-3mu\left\lfloor \mkern-1.5mu \frac{1}{p}\mkern-1.5mu \right\rfloor
= +\infty$, and we recover the
well known result that for $n' \neq 0$, $Y(nm + n'l)$ is an 
L-space if and only if
\begin{equation}
\label{eq: example l space interval for k in s3}
\frac{n}{n'} \ge \max_{\delta \iota(m) \in\dtgz(Y)}\delta =
\deg \tc(Y)
= (\deg \Delta(K)) - 1 = 2g(K) - 1.
\end{equation}
}

\subsection{Set-up for proof of Theorem
\ref{thm:  L-space interval in terms of beta/n}}

We begin by making some simplifying assumptions,
without loss of generality.
\begin{prop}
\label{prop: simpliftying assumptions for Theorem 1}
Suppose that $Y$ is Floer simple, 
that $\mu_{\textsc{l}} = pm + ql$ is an L-space slope,
and that we wish to determine if $\mu = nm + n'l$
is an L-space slope for $Y$.
For purposes of proving Theorem
\ref{thm:  L-space interval in terms of beta/n},
we may assume, without loss of generality, that
$p, \beta >0$, $n \neq 0$, $pg > \deg_{[\bar{m}]} \!\tc(Y)$, 
and $\gcd(p, q) = \gcd(pg, \beta) = 1$, 
where $\beta:= \mu_{\textsc{l}} \cdot \mu$ and
$g := \left|\left< \iota(l) \right> \right|$,
with $\iota : H_1(\partial Y ) \to H_1(Y)$
the map induced on homology by inclusion.
\end{prop}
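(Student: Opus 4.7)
The strategy exploits the fact that whether \(Y(\mu)\) is an L-space depends only on the slope \(\mu \in Sl(Y)\), not on the choice of auxiliary interior L-space slope \(\mu_{\textsc{l}}\) nor on the sign of the representative chosen for \(\mu\) in \(H_1(\partial Y)\). Accordingly, each of the listed conditions can be arranged by a suitable sign choice and/or by replacing \(\mu_{\textsc{l}}\) with another interior L-space slope. First I would handle the easy conditions: \(\gcd(p,q)=1\) is immediate from the primitivity of \(\mu_{\textsc{l}}\); after possibly replacing \(\mu_{\textsc{l}}\) by \(-\mu_{\textsc{l}}\), one may take \(p\ge 0\), and if \(p=0\) then \(\mu_{\textsc{l}}=\pm l\) would force \(\iota(\mu_{\textsc{l}})\) to be torsion, making \(H_1(Y(\mu_{\textsc{l}}))\) have positive rank and contradicting the L-space hypothesis, so \(p>0\). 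The same torsion argument rules out \(n=0\), since then \(\mu=\pm l\) would make \(Y(\mu)\) fail to be a rational homology sphere, trivializing the conclusion; and for \(\beta=\mu_{\textsc{l}}\cdot\mu\), if \(\beta=0\) then \(\mu=\pm\mu_{\textsc{l}}\) is itself an L-space slope, while if \(\beta\neq 0\) I can replace \(\mu\) by \(-\mu\) to arrange \(\beta>0\).

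The substantive work is to simultaneously achieve \(pg>\deg_{[\bar m]}\!\tc(Y)\) and \(\gcd(pg,\beta)=1\), for which I exploit the density of interior L-space slopes. By Theorem~\ref{Thm:LY}, the interior \(\mathcal{L}^\circ(Y)\subset Sl(Y)\cong \Q\cup\{\infty\}\) is a nonempty open subinterval, so it contains rationals \(p'/q'\) densely; in particular, a standard Dirichlet--CRT argument shows that for any prescribed modulus \(Q\) and any nonempty union of coprime residue classes mod \(Q\), there exist primitive pairs \((p',q')\in\Z_{>0}\!\times\!\Z\) with \(p'/q'\in \mathcal{L}^\circ(Y)\), with \(p'\) arbitrarily large, and with \((p'\!\mod Q, q' \!\mod Q)\) in the prescribed class. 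Now, since \(\mu=nm+n'l\) is primitive (so \(\gcd(n,n')=1\)) and \(\gcd(p',q')=1\), the identity \(\beta'=p'n'-q'n\) yields \(\gcd(p',\beta')=\gcd(p',n)\), whence \(\gcd(p'g,\beta')=1\) is equivalent to the two open congruence conditions (a)~\(r\nmid p'\) for every prime \(r\mid n\), and (b)~\(p'n'\not\equiv q'n\pmod r\) for every prime \(r\mid g\) with \(r\nmid n\). These cut out a nonempty union of residue classes modulo \(\mathrm{rad}(gn)\).

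Applying the density statement to this finite set of open congruence conditions together with the size constraint \(p'g>\deg_{[\bar m]}\!\tc(Y)\) produces an interior L-space slope \(\mu_{\textsc{l}}'=p'm+q'l\) satisfying all of the WLOG conditions simultaneously. Since being an L-space is a property of \(Y(\mu)\) that is independent of which interior reference slope \(\mu_{\textsc{l}}\) is chosen, proving Theorem~\ref{thm:  L-space interval in terms of beta/n} for the pair \((\mu_{\textsc{l}}',\mu)\) suffices. The main obstacle is essentially quantitative: packaging the density/CRT argument efficiently. A conceptual point that is not part of the proposition but will need to be addressed in the proof of Theorem~\ref{thm:  L-space interval in terms of beta/n} itself is that the inequality criterion of the theorem must be checked to be invariant under replacement of \(\mu_{\textsc{l}}\) by another interior L-space slope; this follows from the \(\mathrm{SL}_2(\Z)\) change-of-basis relating the two surgery labels, but is logically separate from the reduction carried out here.
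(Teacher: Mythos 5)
Your reduction follows essentially the same route as the paper: dispose of the degenerate cases $\beta=0$, $n=0$, $p=0$ directly, normalize signs by replacing $\mu_{\textsc{l}}$ or $\mu$ with their negatives, and then trade $\mu_{\textsc{l}}$ for a nearby L-space slope $\mu'_{\textsc{l}}$ with $p'$ large and the required coprimality with $\beta'$. Your arithmetic is sound: $\beta'=p'n'-q'n$ with $\gcd(p',q')=\gcd(n,n')=1$ gives $\gcd(p',\beta')=\gcd(p',n)$, and the CRT bookkeeping you sketch is essentially what the paper carries out explicitly in the analogous Proposition~\ref{prop: judiciously chosen def} for the splicing theorem; the paper's own proof of the present proposition simply asserts the approximation in one line.

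There is, however, one genuine logical problem: you justify the density of interior L-space slopes near $\mu_{\textsc{l}}$ by citing Theorem~\ref{Thm:LY}. That theorem is precisely what is being proved here --- it is restated as Theorem~\ref{thm:  L-space interval in terms of beta/n}, and the present proposition is introduced solely to serve that proof --- so the citation is circular. What you actually need is much weaker: only that $\lL(Y)$ contains a nondegenerate interval with $\mu_{\textsc{l}}$ as an endpoint (or interior point), so that L-space slopes with prescribed congruences and arbitrarily large $p'$ exist arbitrarily close to $\mu_{\textsc{l}}$. This is available from Section~\ref{Sec:FloerSimple}, proved independently of Theorem~\ref{Thm:LY}: the lemma that $\hfk$ of the core of any L-space filling of a Floer simple manifold consists of coherent chains, combined with Lemma~\ref{Lem:Interval}; this is exactly what the paper invokes. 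With that substitution your argument is correct. Your closing caveat --- that one must still check that the criterion of Theorem~\ref{thm:  L-space interval in terms of beta/n} for the original $\mu_{\textsc{l}}$ follows from the criterion for the replacement slope $\mu'_{\textsc{l}}$ --- is a fair point which the paper also leaves implicit.
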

\begin{proof}
Theorem 
\ref{thm:  L-space interval in terms of beta/n}
already correctly characterizes the
cases of $\beta = 0$, corresponding to the Dehn filling $Y(\mu_{\textsc{l}})$,
which we already know to be an L-space, and $n=0$,
for which the filling $Y(l)$ is not a rational homology sphere, hence not an L-space.
Likewise, we know that any L-space slope $\mu_{\textsc{l}} = pm + ql$ must have $p \neq 0$.
Since we are free to replace $\mu_{\textsc{l}}$ with
$-\mu_{\textsc{l}}$ or
$\mu$ with $-\mu$, we may take $p, \beta > 0$ without loss.
Lastly, by Lemma~\ref{Lem:Interval}, we can approximate
$\mu_{\textsc{l}}$ with a primitive 
L-space slope
$\mu'_{\textsc{l}} = p' m + q' l$ (with $q' \neq 0$)
such that $p'g > \deg_{[\bar{m}]} \!\tc(Y)$ and
$\gcd(p'g, \beta') = 1$,
where $\beta' := \mu'_{\textsc{l}} \cdot \mu$.
\end{proof}

We henceforth consider the assumptions of 
Proposition \ref{prop: simpliftying assumptions for Theorem 1} to hold.
Given such initial data, we have a primary tool from
Heegaard Floer homology to determine whether $\mu$ is
an L-space slope for the Floer simple manifold $Y$: namely, 
Proposition~\ref{Prop:RedBlueBlack}.
To exploit this proposition, we must exhibit
$Y(\mu)$ as zero surgery on an L-space,
given the L-space slope $\mu_{\textsc{l}}$ for $Y$.
Fortunately, a standard such construction exists,
whereby we first express $Y(\mu)$ as some $\alpha/\beta$-surgery
on $Y(\mu_{\textsc{l}})$, and then reexpress this as a zero surgery
on a connected summed knot inside $Y(\mu_{\textsc{l}}) \# L(\beta, \alpha^*)$,
for some $\alpha^* \equiv -\alpha^{-1} \, (\mod \beta)$.

\subsection{$\boldsymbol{Y}\mkern-1mu(\boldsymbol{\mu})$ as zero surgery on an L-space}
To describe this construction more explicitly,
we first let $K_{\textsc{u}} \subset S^3$ denote the unknot, and
take $(m_1, l_1)$
and $(m_2, l_2)$ as respective bases
for $H_1(\partial Y)$ and $H_1(\partial (S^3\setminus K_{\textsc{u}}))$,
such that $m_1 \cdot l_1 = m_2 \cdot l_2 = 1$,
with 
$l_1$ generating $\iota_1^{-1}(T_1)$, where $T_1 := \Tors(H_1(Y)$,
and with $l_2$ generating 
$\ker \iota_2$,
where
$\iota_1 : H_1(\partial Y) \to H_1(Y)$
and 
$\iota_2 : H_1(\partial (S^3\setminus K_{\textsc{u}})) \to H_1(S^3\setminus K_{\textsc{u}})$
are the maps induced on homology by inclusion.
Write
\begin{equation}
\mu := n m_1 + n' l_1,\;\;\;\;
\mu_1:= \mu_{\textsc{l}} = pm_1 + ql_1,\;\;\;\;
\mu_2 := \beta m_2 + \alpha^* l_2
\end{equation}
for our test slope $\mu$ and given L-space slope $\mu_1 = \mu_{\textsc{l}}$,
and for $\mu_2$ constructed to produce the desired lens space
$(S^3 \setminus K_{\textsc{u}})(\mu_2) = L(\alpha^*,\beta)$, with 
$\beta := \mu_{\textsc{l}} \cdot \mu$ and 
$\alpha^* \!:=[-n^{-1}p]_{\beta}$.
Setting $q^* \!:= [-q^{-1}]_p$, 
write $\alpha$, $p^*$, and $\beta^*$ for the (integer) solutions
to the respective equations
$n = \alpha p + \beta q^*$, $p p^* - q q^* = 1$,  and 
$\beta  \beta^* - \alpha^* \alpha = 1$, so that
\begin{equation}
\lambda_1 := q^* m_1 + p^* l_1,\;\;\;\; \lambda_2 := \alpha m_2 + \beta^* l_2
\end{equation}
serve as longitudes, 
satisfying $\mu_1 \cdot \lambda_1 = \mu_2 \cdot \lambda_2 =  1$.
Note that this makes $\mu = \alpha \mu_1 + \beta \lambda_1$.

Let $Y_{\!\scon}$ denote the connected sum knot complement
\begin{equation}
Y_{\!\scon} := Y\mkern-2mu(\mu_1) \mkern1mu \# \mkern1mu 
(S^3 \mkern-1.5mu\setminus\mkern-1.5mu K_{\textsc{u}})(\mu_2)\; \setminus\;
K_{\mu_1} \# K_{\mu_2},
\end{equation}
where $K_{\mu_1}  \subset Y\mkern-2mu(\mu_1)$ and 
$K_{\mu_2} \subset (S^3 \setminus K_{\textsc{u}})(\mu_2) = L(\beta, \alpha^*)$
are the knot cores associated to the respective fillings by $\mu_1$ and $\mu_2$.
If we write $\iota : H_1(\partial Y_{\!\scon}) \to H_1(Y_{\!\scon})$,
$f_1 : H_1(Y) \to H_1(Y_{\!\scon})$, and $f_2: H_1(S^3 \setminus K_{\textsc{u}}) \to H_1(Y_{\!\scon})$
for the maps induced on homology by the corresponding inclusions,
then $f_1 \oplus f_2$ descends to the isomorphism,
\begin{equation}
\label{eq: thm 1 f1 f2 iso whole}
\left(H_1(Y) \oplus H_1(S^3 \setminus K_{\textsc{u}})\right) / 
(\iota_1(\mu_1) \sim \iota_2(\mu_2))
\to H_1(Y_{\!\scon}),
\end{equation}
which, since $H_1(S^3 \setminus K_{\textsc{u}})$ is torsion free,
restricts to the isomorphism,
\begin{equation}
\label{eq: thm 1 f1 f2 iso boundary image}
\left(\iota_1(H_1(\partial Y)) \oplus \iota_2(H_1(\partial(S^3 \setminus K_{\textsc{u}})))\right)
/ 
(\iota_1(\mu_1) \sim \iota_2(\mu_2))
\to \iota(H_1(\partial Y_{\!\scon})).
\end{equation}

For the knot $K_{\mu_1} \# K_{\mu_2}$
with meridian $\mu_{\!\scon}$,
we can splice the longitudes 
$\lambda_1$ and $\lambda_2$ together to form a longitude of class
$\lambda_{\!\scon} \in \iota^{-1}(f_1(\lambda_1) + f_2(\lambda_2))
\subset \iota(H_1(\partial Y_{\!\scon}))$.
The Dehn filling $Y_{\!\scon}(\lambda_{\!\scon})$
then has homology elements satisfying
\begin{equation}
f_1\iota_1(\mu_1)
= f_2\iota_2(\mu_2)
= {\textstyle{\frac{\beta}{\alpha}}} f_2\iota_2(\lambda_2)
= -{\textstyle{\frac{\beta}{\alpha}}} f_1\iota_1(\lambda_1),
\end{equation}
implying that 
$f_1\iota_1(\mu) = f_1\iota_1(\alpha \mu_1 + \beta \lambda_1) = 0$ in 
$H_1(Y_{\!\scon}(\lambda_{\!\scon}))$.
Since, in addition, we know that $Y_{\!\scon}$ is homeomorphic to $Y$,
it follows that $Y(\mu) = Y_{\!\scon}(\lambda_{\!\scon})$,
and this is zero surgery on the L-space 
$Y(\mu_1) \# L(\alpha^*, \beta) = Y\mkern-2mu(\mu_{\!\scon})$.

Since $\gcd(pg, \beta) = 1$
and $H_1(S^3 \setminus U)$ is torsion free,
it follows from the isomorphisms 
(\ref{eq: thm 1 f1 f2 iso whole}) and
(\ref{eq: thm 1 f1 f2 iso boundary image}) that
$f_1$ restricts to isomorphisms $T_1 \stackrel{\sim}{\to} T$ and 
$T^{\partial}_1 \stackrel{\sim}{\to} T^{\partial}$,
where $T_1 := \Tors(H_1(Y))$,  $T :=  \Tors(H_1(Y_{\!\scon}))$,
$T_1^{\partial} := T_1 \cap \iota_1(H_1(\partial Y))$, and
$T^{\partial} :=  T \cap \iota(H_1(\partial Y_{\!\scon}))$.
It also follows that we can choose $l \in \iota^{-1}(T^{\partial})$
and $m \in H_1(\partial Y_{\!\scon})$
with $m \cdot l = 1$ such that  
$f_1$ and $f_2$ satisfy
\begin{align}
\label{eq: defs of f1 and f2}
f_1: \iota_1(m_1) \mapsto \beta \iota(m),
\,\;\;\;&\;\;\;
f_2: \iota_2(m_2) \mapsto p \iota(m) + q \xi \iota(l),
    \\ \nonumber
f_1: \iota_1(l_1) \mapsto \beta \xi \iota(l),\;\;\;
\;\;\;&\;\;\;
\hphantom{f_2\iota_2(m_2) = p \iota(m) + q \xi \iota(l),}
\end{align}
on the images of $\iota_1$ and $\iota_2$,
for some $\xi \in \Z/g$,
with $g := \left|T^{\partial}\right| = \left|T^{\partial}_1\right|$.  We then have
\begin{align}
\label{eq:  defs of mu connect and lambda connect}
\iota(\mu_{\scon}) 
&= f_1\iota_1(\mu_1) = f_2\iota_2(\mu_2)
= \beta p \iota(m) + \beta q \xi \iota(l),
       \\
\iota(\lambda_{\scon}) 
&= f_1\iota_1(\lambda_1) +  f_2\iota_2(\lambda_2)
= n \iota(m) + n' \xi \iota(l),
\nonumber
\end{align}
where we used the facts that
\begin{equation}
n = \alpha p + \beta q^*,\;\;\;\; n' = \alpha q + \beta p^*.
\end{equation}
The condition that $\mu_{\scon} \!\cdot\! \lambda_{\scon} = 1$ determines $\xi$,
which we shall not need.

\subsection{Applying the ``coloring condition'' of Proposition~\ref{Prop:RedBlueBlack}}

Since this section uses the Euler characteristic of knot Floer homology,
which we express in terms of the Turaev torsion, regarded as an element
of the Laurent series group ring of homology,
we briefly introduce generators
$\bar{m}$, $\bar{m}_1$, and $\bar{m}_2$ for $H_1(Y_{\!\scon})/T$, $H_1(Y)/T_1$, and 
$H_1(S^3 \setminus K_{\textsc{u}})$, respectively, with signs chosen so that
\begin{equation}
\iota(m) \in +g\bar{m} + T,
\;\;\;\;
\iota(m_1) \in +g\bar{m}_1 + T_1,
\;\;\;\;
\iota(m_2) = \bar{m}_2.
\end{equation}
For notational brevity, we also set
\begin{equation}
t:= [\bar{m}],
\;\;\;\;
t_1:= [\bar{m}_1],
\;\;\;\;
t_2:= [\bar{m}_2],
\end{equation}
where $[\cdot]$ indicates inclusion into the Laurent series group ring
of the relevant homology group.

In order to use Proposition~\ref{Prop:RedBlueBlack}, we need the support of the Euler characteristic of the
 knot Floer homology of $K_{\scon} \subset Y_{\!\scon}(\lambda_{\scon})$.
Since $\widehat{HFK}$ tensors on connected sums,
its Euler characteristic $\chi^{\widehat{\textsc{hfk}}}$
turns tensor product into multiplication, and the support
function $S[\cdot]$ on (Laurent series) group rings converts this
multiplication of polynomials into addition of sets, yielding
\begin{equation}
S\!\left[\chi^{\widehat{\textsc{hfk}}}(Y_{\!\scon}(\lambda_{\scon}), K_{\scon})\right]
= f_1 S\!\left[\chi^{\widehat{\textsc{hfk}}}(Y({\mu_1}), K_{\mu_1})\right]
+ f_2 S\!\left[\chi^{\widehat{\textsc{hfk}}}((S^3 \setminus K_{\textsc{u}})(\mu_2), K_{\mu_2})\right].
\end{equation}
Proposition~\ref{Prop:Chi} tells us that
\begin{align}
S\mkern-3.8mu\left[\chi^{\widehat{\textsc{hfk}}}
(Y\mkern-2mu({\mu_1}\mkern-1mu), K_{\mkern-1mu\mu_1})\mkern-2mu\right]
&\!=
S\!\left[(1 - [\iota_1(\mu_1)]) \cdot
\left((1-t_1)^{-1}{\textstyle{\sum_{h \in T_1}\![h]}} \,-\,\tc(Y) \right) \right]
           \\ \nonumber
&\!=
S\!\left[ \frac{1-t_1^{pg}}{1-t_1}{\textstyle{\sum_{h \in T_1}\![h]}}
- \tc(Y) \;+\;  [\iota_1(\mu_1)]\tc(Y)
\right]
           \\ \nonumber
&\!=
\left(S[\tau(Y)] \cap \mkern-1.5mu\left( \{0, \ldots, pg-1\}\bar{m}_1 + T_1\right) \right)
\; \amalg \; \left(S[\tc(Y)] + \iota_1\mkern-1mu(\mu_1\mkern-.5mu)) \right),
\end{align}
where $\tau(Y) \in \Z[t^{-1}, t]][T] \supset \Z[H_1(Y)]$ is the Turaev torsion,
$\tc(Y)$ is the torsion complement as defined in
(\ref{eq: torsion complement definition}), and 
we used our simplifying assumption that
$\deg_{t_1} \!\tc(Y) < pg$.  Similarly, we have
\begin{align}
S\!\left[\chi^{\widehat{\textsc{hfk}}}((S^3 \setminus K_{\textsc{u}})(\mu_2), K_{\mu_2})\right]
&= S\!\left[(1 - [(\iota_2(\mu_2)])\cdot\tau(S^3 \setminus K_{\textsc{u}})\right]
           \\ \nonumber
&= S[(1-t_2^\beta)/(1-t_2)]
           \\ \nonumber
&= \{0, \ldots, \beta - 1\} \iota_2(m_2).
\end{align}

Thus, if we set
\begin{align}
A_0 &:= f_1\!\left(S[\tau(Y)] \cap \left( \{0, \ldots, pg-1\} \bar{m}_1 + T_1\right) \right) 
\;+\; \{0, \ldots, \beta - 1\} f_2\iota_2(m_2),
           \\ \nonumber
A_1 &:= f_1S[\tc(Y)] + \iota(\mu_{\scon}) 
\;+\; \{0, \ldots, \beta - 1\} f_2\iota_2(m_2),
\end{align}
then in the language of 
Proposition~\ref{Prop:RedBlueBlack}, we have
\begin{align}
S_{\textsc{black}}
\;:=\; S\!\left[\chi^{\widehat{\textsc{hfk}}}(Y_{\!\scon}(\lambda_{\scon}), K_{\scon})\right]
\;=\; A_0 \amalg A_1, \hphantom{1212345}
           \\ \nonumber
S_{\textsc{red}} := S_{\textsc{black}}  + \iota(\mu_{\scon})\Z_{>0},
\;\;\;\;\;\;\;\;\;\;
S_{\textsc{blue}} := S_{\textsc{black}}  - \iota(\mu_{\scon})\Z_{>0}.
\end{align}
Using the fact that $\iota(\mu_{\scon}) = \beta f_2\iota_2(m_2)$, one can easily verify that
\begin{align}
(S_{\textsc{blue}} \mkern-2mu -\mkern-2mu S_{\textsc{red}})\cap (\bar{m}\Z_{> 0}\!+\!T)
&= \left((A_1 - \iota(\mu_{\scon})) - (A_0 + \iota(\mu_{\scon}))\right)
\cap (\bar{m}\Z_{> 0}\!+\!T)
           \\ \nonumber
&=\left( f_1(S[\tc(Y)] \mkern-2mu-\mkern-2mu S[\tau(Y)]) \mkern-2mu-\mkern-2mu f_2\iota_2(m_2)\Z_{>0} \right) \mkern-1.5mu\cap\mkern-1.5mu (\bar{m}\Z_{> 0}\!+\!T).
\end{align}
Proposition~\ref{Prop:RedBlueBlack} then implies $Y_{\!\scon}(\lambda_{\!\scon})$ is an L-space if and only if
\begin{equation}
\label{eq: test set for l-space}
\iota(\lambda_{\scon}) \Z  
\mkern2mu \cap \mkern1mu
(S_{\textsc{blue}}  - S_{\textsc{red}})
\mkern1mu \cap \mkern1mu
(\bar{m}\Z_{> 0}+T) = \emptyset.
\end{equation}

Suppose the above set is nonempty, hence contains some element
$b\iota(\lambda_{\scon})$ such that
\begin{equation}
\label{eq: intersection of l space test sets}
b\iota(\lambda_{\scon}) = f_1(h_{\mathrm{c}} - h)
- k f_2\iota_2(m_2) \;\in \bar{m}\Z_{> 0}+T
\end{equation}
with $b\in \Z_{\neq 0}$, $k \in \Z_{>0}$, $h_{\mathrm{c}} \in S[\tc(Y)]$,
and $h \in S[\tau(Y)]$.
Since $b\iota(\lambda_{\scon}),  k f_2\iota_2(m_2) \in \iota(H_1(Y_{\scon}))$,
we know that
$f_1(h_{\mathrm{c}} - h) \in \iota(H_1(Y_{\scon}))$, implying
$h_{\mathrm{c}} - h \in \iota(H_1(Y))$.
Moreover, since $b\iota(\lambda_{\scon}) \in \bar{m}\Z_{> 0}+T$,
we know that $h_{\mathrm{c}} - h \in \bar{m}_1\Z_{> 0}+T_1$.  In other words,
\begin{equation}
h_{\mathrm{c}} - h \in (S[\tc(Y)] - S[\tau(Y)]) \cap \iota_1(m_1 \Z_{>0} + l_1 \Z) 
=: \dtgz(Y). 
\end{equation}

Writing 
$h_{\mathrm{c}} - h = \delta \iota(m_1) + \gamma \iota(l_1) \in \dtgz(Y)$
and evaulating 
$f_1$, $f_2$, and $\iota(\lambda_{\scon})$ as expressed in
(\ref{eq: defs of f1 and f2}) and
(\ref{eq:  defs of mu connect and lambda connect}), we
transform (\ref{eq: intersection of l space test sets}) into
\begin{equation}
\label{eq: intersection of l space test sets, in terms of delta}
(bn)\iota(m) + (bn')\xi \iota(l) = (\beta \delta - kp ) \iota(m) + 
(\beta \gamma  - k q)\xi \iota(l),
\end{equation}
which, since $n m_1 + n' l_1 = \alpha \mu_1 + \beta \lambda_1=
(\alpha p + \beta q^*) m_1 + (\alpha q + \beta p^*)l_1$,
yields the two equations
\begin{align}
\label{eq: bn equation}
b(\alpha p + \beta q^*)
&= \beta \delta - kp > 0,
       \\
\label{eq: bn' equation}
b(\alpha q + \beta p^*)
&\equiv  \beta \gamma  - k q \;(\mod g).
\end{align}
One can use the identity $p p^* - q q^* = 1$ to solve the above two equations 
simultaneously for $b$,
obtaining $b \equiv p \gamma - q \delta\; (\mod g)$.
Moreover, taking the first equation modulo $p$ implies
$b \equiv -q\delta\; (\mod p)$.
Thus any solution in $b$ to
(\ref{eq: intersection of l space test sets, in terms of delta}) must satisfy
$b \equiv p \gamma - q \delta \; (\mod pg)$.

\subsection{Completing the proof of Theorem 
\ref{thm:  L-space interval in terms of beta/n}}

For each 
$\boldsymbol{\delta} = \delta \iota(m_1) + \gamma \iota(l_1)  \in \dtgz(Y)$,
set
$b_-^{\boldsymbol{\delta}} := [p\gamma - q\delta]_{pg}-pg$ and 
$b_+^{\boldsymbol{\delta}} := [p\gamma - q\delta]_{pg}$.
Note that our earlier assumption of $pg > \deg_{t_1}\tc(Y)$ ensures that
$b_+^{\boldsymbol{\delta}} \neq 0$ and $|b_-^{\boldsymbol{\delta}}| < pg$.
We want to show that
$Y_{\!\scon}(\lambda_{\mu_{\scon}})$ is an L-space---or in other words,
that (\ref{eq: bn equation}) and (\ref{eq: bn' equation}) have no solution
$(b,k) \in \Z \times \Z_{>0}$
for any $\delta \iota(m_1) + \gamma \iota(l_1)  \in \dtgz(Y)$---if and only if
\begin{equation}
\frac{b_-^{\boldsymbol{\delta}}}{\delta}
\le
\frac{\beta}{n}
\le
\frac{b_+^{\boldsymbol{\delta}}}{\delta}
\;\;\;
\text{for all }
\boldsymbol{\delta} = \delta \iota(m_1) + \gamma \iota(l_1)   \in \dtgz(Y).
\end{equation}

First, consider the case in which $n>0$.
Suppose there exists 
$\boldsymbol{\delta} = \delta \iota(m_1) + \gamma \iota(l_1)  \in \dtgz(Y)$ for which
$\beta/n > b_+^{\boldsymbol{\delta}} /\delta$.  Since $n, \delta >0$, this implies 
$0< b_+^{\boldsymbol{\delta}} n < \beta\delta$.
Thus, since $b_+^{\boldsymbol{\delta}} n \equiv 
(-q\delta)(\beta q^*) \equiv \beta \delta\; (\mod p)$,
 there exists $k_0 \in \Z_{>0}$ such that
$b_+^{\boldsymbol{\delta}} n = \beta \delta - k_0p >0$.
Thus $(b_+^{\boldsymbol{\delta}}, k_0)$ provides a solution for $(b, k)$ in
(\ref{eq: bn equation}), which, together with the relation 
$b_+^{\boldsymbol{\delta}} \equiv p\gamma - q\delta \,(\mod g)$, implies
(\ref{eq: bn' equation}) also holds for $(b,k) = (b_+^{\boldsymbol{\delta}}, k_0)$,
and so $Y_{\!\scon}(\lambda_{\scon})$ is not an L-space.

Conversely, suppose that $Y_{\!\scon}(\lambda_{\scon})$ is not an L-space.
Then there exist
$\boldsymbol{\delta} = \delta \iota(m_1) + \gamma \iota(l_1)  \in \dtgz(Y)$ and 
$(b,k) \in \Z \times \Z_{>0}$ for which 
(\ref{eq: bn equation}) and (\ref{eq: bn' equation}) hold.
In particular, (\ref{eq: bn equation}) implies $bn < \beta \delta$
and $b > 0$,
while (\ref{eq: bn equation}) and (\ref{eq: bn' equation}) together
imply $b \equiv b_+^{\boldsymbol{\delta}} (\mod pg)$,
requiring $b \ge  b_+^{\boldsymbol{\delta}}$.
Thus $\beta \delta > bn \ge  b_+^{\boldsymbol{\delta}} n$, implying 
$\beta/n >b_+^{\boldsymbol{\delta}}/\delta$.

The argument for the case of $n < 0$ is nearly identical,
but with a few signs and inequalities reversed,
and this completes the proof of the theorem.   \qed

\bigskip
\section{Seifert Fibered L-spaces}
\label{s: Seifert fibered L-spaces}

To illustrate the usage of our new L-space interval tool $\dt$,
in this section we exploit
Theorem \ref{thm:  L-space interval in terms of beta/n}
to offer a simple alternative proof of a known result: namely,
the classification of Seifert fibered spaces over $S^2$ which are L-spaces.
We restrict to the $S^2$ case because it is the most interesting one,
as no higher genus Seifert
fibered spaces are L-spaces, and all oriented Seifert fibered spaces
over $\R\P^2$ are L-spaces \cite{BGW}.

\subsection{Seifert fibered L-spaces, a history}
\label{ss: foliations on SFS, a history}
Up until now, the classification of Seifert fibered L-spaces
has relied, at least in one direction, 
on the classification of oriented Seifert fibered spaces $M$ over $S^2$
admitting transverse foliations,
a problem which dates back at least to 
1981, when Eisenbud, Hirsch, and Neumann \cite{EHN}
re-expressed this foliations problem in terms of
a criterion on representations of $\pi_1(M)$
in $\widetilde{\mathrm{Homeo}_+}S^1\!$, the universal cover of the
group of orientation-preserving homeomorphisms of $S^1\mkern-4.5mu$.

A few years later, Jankins and Neumann \cite{JankinsNeumann} 
reformulated the criterion of
\cite{EHN} in terms of Poincar{\'e}'s ``rotation number'' invariant
on $\widetilde{\mathrm{Homeo}_+}S^1\!$, a development which, along with
the correct conjecture that this criterion is met in 
$\widetilde{\mathrm{Homeo}_+}S^1\!$ if and only if it
is met in a smooth Lie subgroup thereof, allowed them to
write down an explicit characterization of Seifert fibered manifolds
over $S^2$ admitting transverse foliations.
With the exception of one special case, they also showed
that this list was complete.
It took more than a decade before Naimi \cite{Naimi}
resolved this outstanding case using dynamical methods,
and more than a decade after that before Calegari and Walker \cite{ziggurat}
generalized Naimi's methods to provide a proof of the
Jankins-Neumann classification that did not appeal
to smooth Lie subgroups.

In the late 1990's,
Eliashberg and Thurston \cite{EliashbergThurston} proved that
one can associate a weakly symplectically fillable contact structure
to any $C^2$ cooriented taut foliation on a closed three-manifold---a result
which Kazez and Roberts \cite{KazezRobertsCzero}, 
and independently Bowden \cite{Bowden}, 
have recently extended
to $C^0$ foliations.  Since Ozsv{\'a}th and Szab{\'o} have \cite{OSGen}
shown that this contact structure gives rise to a nontrivial class in
Heegaard Floer homology, this proves that L-spaces do not
admit co-oriented taut foliations.

In the converse direction,
Lisca and Mati{\'c} \cite{LiscaMatic}
proved that a Seifert fibered manifold $M$ over $S^2$
admits contact structures in each orientation
which are transverse to the fibration if and only if
$M$ belongs to the explicit set characterized by Jankins and Neumann.
Lisca and Stipsicz then showed \cite{LSIII} that
if there is an orientation on a Seifert fibered manifold $M$ over $S^2$
for which no positive contact structure is transverse to the fibration,
then $M$ is an L-space.

Since our own answer matches that of Jankins and Neumann,
one could take the non-L-space/transverse-foliation
equivalence for Seifert fibered manifolds over $S^2$ as a corollary
of Theorem \ref{thm: non L-space interval for seifert fibered spaces} below.
As for our L-space classification itself, however, the proof no longer requires
foliations, dynamical methods, or even
(after the proof of Theorem \ref{thm:  L-space interval in terms of beta/n})
contact or symplectic geometry.
It only uses
ordinary homology
and one computation
of Turaev torsion from a homology presentation.

\subsection{Conventions and bases}
\label{ss: sfs construction and conventions}

To construct a Seifert-fibered space with $n$ exceptional fibers over $S^2$,
we start with the trivial circle fibration $S^1 \times S^2$,
and remove $n+1$ solid tori,
$S^1 \times D^2_i$, $i \in \{0, \dots, n\}$,
yielding a trivial circle fibration over the $n+1$--punctured sphere,
\begin{equation}
\hat{Y} := S^1 \times (S^2 \setminus {\textstyle{\coprod_{i=0}^n D^2_i}}),\;\;\;\;\;
\partial \hat{Y} = {\textstyle{\coprod_{i=0}^n \partial_i \hat{Y}}},
\end{equation}
where $\partial_i \hat{Y}$ denotes the $i^{\mathrm{th}}$ toroidal boundary component,
$\partial_i \hat{Y} := - \partial(S^1 \times D^2_i)$.

Next, we choose presentations for $H_1(\hat{Y})$ and $H_1(\partial_i \hat{Y})$
in terms of the regular fiber class $f \in H_1(\hat{Y})$ and classes horizontal to this fiber.
For each $i \in \{0, \ldots, n\}$, we take
$(\tilde{f}_i, -\tilde{h}_i)$ as a reverse-oriented basis for $H_1(\partial_i \hat{Y})$.
Here, $\tilde{h}_i \in H_1(\partial_i \hat{Y})$ denotes
the meridian of the excised solid torus $S^1 \times D^2_i$,
and if we write
$\hat{\iota}_i : H_1(\partial_i \hat{Y}) \to H_1(\hat{Y})$
for the map induced by inclusion, 
then $\tilde{f}_i \in \hat{\iota}_i^{-1}(f)$ 
denotes the lift of $f$ satisfying 
$(\tilde{f}_i \cdot \tilde{h}_i)|_{\partial_i \hat{Y}} = 1$.
Setting each $h_i := \hat{\iota}_i(\tilde{h}_i) \in H_1(\hat{Y})$,
we note that there must be a relation among the $h_i$,
since the $n+1$-punctured sphere is the same as the $n$-punctured disk, with
first betti number $n$.  In fact, since any one of the $h_i$ can be regarded as
the class of minus the boundary of this disk, with the remaining $h_i$ summing
to a class equal to the boundary of the disk, we have $\sum_{i=0}^n h_i = 0$,
so that $H_1(\hat{Y})$ has presentation
\begin{equation}
H_1(\hat{Y}) = \left< f, h_0, \ldots, h_n \mkern-1mu
\left|\; {\textstyle{\sum_{i=0}^n h_i = 0}}\right. \right>.
\end{equation}

To specify a Seifert fibered space,
one simply lists the Dehn filling slopes,
in terms of the basis $(\tilde{f}_i, -\tilde{h}_i)$
for each $H_1(\partial_i \hat{Y})$,
of the $n+1$ toroidal boundary components of $\hat{Y}$,
conventionally filling $\partial_0 Y$ with an integer slope
and the remaining $\partial_i Y$ with noninteger slopes.
That is, for any $e_0, r_1, \ldots, r_n \in \Z$ and $s_1, \ldots, s_n \in \Z_{\neq 0}$
with each $\frac{r_i}{s_i} \notin \Z$,
the Seifert fibered space 
$M(e_0; {\textstyle{\frac{r_1}{s_{{\mkern-1mu}1}}, \ldots, \frac{r_n}{s_n}}})$
denotes the Dehn filling of $\hat{Y}$ along the slopes
\begin{align}
\mu_0
&:= e_0 \tilde{f}_0 - \tilde{h}_ 0,
     \\ \nonumber
\mu_i
&:= r_i \tilde{f}_i - s_i \tilde{h}_i, \;\; i\in \{1,\ldots, n\}.
\end{align}
The resulting manifold has first homology
\begin{equation}
\label{eq: presentation of H1 for seifert fibered}
H_1\!\left(M(e_0; {\textstyle{\frac{r_1}{s_{{\mkern-1mu}1}}, \ldots, \frac{r_n}{s_n}}}) \right)
=
\left<
f, h_0, \ldots, h_n \left|\;
\textstyle{\sum_{i=0}^n h_i} = \hat{\iota}_0(\mu_0) = \ldots = \hat{\iota}_n(\mu_n) = 0 \right.\right>.
\end{equation}
Note that for any $(z_0, \ldots, z_n) \in \Z^{n+1}$ satisfying $\sum_{i=0}^n z_i=0$, the
change of basis $h_i \mapsto h_i + z_i f$, $i \in \{0, \ldots, n\}$, yields the reparameterization
\begin{equation}
\label{eq: thm 2 trans 1}
M(e_0; {\textstyle{\frac{r_1}{s_{{\mkern-1mu}1}}, \ldots, \frac{r_n}{s_n}}}) \mapsto
M(e_0+ z_0; {\textstyle{\frac{r_1}{s_{{\mkern-1mu}1}}+z_1, \ldots, \frac{r_n}{s_n}+z_n}}).
\end{equation}
In addition, 
$M(e_0; {\textstyle{\frac{r_1}{s_{{\mkern-1mu}1}}, \ldots, \frac{r_n}{s_n}}})$
admits an orientation reversing homeomorphism,
\begin{equation}
\label{eq: thm 2 orientation reversing homeo}
-M(e_0; {\textstyle{\frac{r_1}{s_{{\mkern-1mu}1}}, \ldots, \frac{r_n}{s_n}}}) =
M(-e_0; {\textstyle{-\frac{r_1}{s_{{\mkern-1mu}1}}, \ldots, -\frac{r_n}{s_n}}}).
\end{equation}

\subsection{Statement of L-space classification}

We are now able to state our result.
\begin{theorem}
\label{thm: non L-space interval for seifert fibered spaces}
If  $M(e_0; {\textstyle{\frac{r_1}{s_{{\mkern-1mu}1}}, \ldots, \frac{r_n}{s_n}}})$
denotes a Seifert fibered space over $S^2$ with $n\!>\!0$ exceptional fibers,
then $M(e_0; {\textstyle{\frac{r_1}{s_{{\mkern-1mu}1}}, \ldots, \frac{r_n}{s_n}}})$
is {\em{not}} an L-space if and only if
$e_0 \mkern-1.2mu+\mkern-1.2mu 
\sum_{i=1}^n\mkern-4.5mu \frac{r_i}{s_i} \mkern-2mu=\mkern-1.2mu 0$ or
\begin{equation}
\label{eq: inequality for sfs l-space theorem}
-e_0 + \min_{0<x< s}\! 
-\frac{1}{x}\!\left(\mkern-.8mu -1 + \sum_{i=1}^n \!\left\lceil\! 
\frac{r_i x}{s_i}\! \right\rceil\right)
\;<\; 0 \;<\;
-e_0 
+
\max_{0<x< s}\!
-\frac{1}{x}\!\left(\mkern-.8mu 1+ \sum_{i=1}^n \!\left\lfloor\! 
\frac{r_i x}{s_i}\!\right\rfloor\right),
\end{equation}
where $s$ denotes the least common positive multiple of $s_1, \ldots, s_n$.
\end{theorem}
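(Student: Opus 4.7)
The plan is to realize $M := M(e_0; r_1/s_1, \ldots, r_n/s_n)$ as a Dehn filling $Y(\mu_0)$ of a Floer-simple manifold and to apply Theorem~\ref{thm:  L-space interval in terms of beta/n}. Let $Y$ be the Seifert fibered space over $D^2$ obtained from $\hat Y$ by filling $\partial_i \hat Y$ along $\mu_i = r_i \tilde f_i - s_i \tilde h_i$ for $i = 1, \ldots, n$, so that $\partial Y = \partial_0 \hat Y$ and $M = Y(\mu_0)$ with $\mu_0 = e_0 \tilde f_0 - \tilde h_0$.  As stated in the introduction, work of Ozsv{\'a}th-Szab{\'o} \cite{OSSF} implies that every Seifert fibered space over $D^2$ is Floer simple, so Theorem~\ref{thm:  L-space interval in terms of beta/n} applies.

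From the presentation $H_1(Y) = \langle f, h_1, \ldots, h_n \mid s_i h_i = r_i f\rangle$ (with $h_0 = -\sum_{i\ge 1} h_i$), a direct calculation shows that $\mu_0 \cdot l$, where $l$ is the homological longitude of $\partial Y$, vanishes if and only if $e_0 + \sum_i r_i/s_i = 0$.  In this case $M$ has positive first Betti number and so cannot be an L-space, establishing the first alternative.  Assume henceforth $e_0 + \sum_i r_i/s_i \neq 0$, so that $M$ is a rational homology sphere.

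The next step is to compute $\tau(Y)$ from the $H_1$-presentation and to identify $\dtgz(Y)$.  A standard Fox-calculus-type torsion computation gives $\tau(Y) \in \Z[[t]][T]$ (with $t = [\bar m]$ for a lift $\bar m$ of the regular fiber $f$, and $T := \Tors H_1(Y)$) in the usual Seifert-fibered product-over-fibers form.  Comparing the supports $S[\tau(Y)]$ and $S[\tc(Y)]$, I expect $\dtgz(Y)$ to be naturally indexed by $x \in \{1, \ldots, s-1\}$: each such $x$ contributes a single $\boldsymbol\delta_x = x\iota(\bar m) + \gamma(x)\iota(l) \in \dtgz(Y)$, where the torsion residue $\gamma(x) \in \Z/g$ is governed by the jump pattern of $\tau(Y)$ around the $t^x$-term in a manner controlled by the sums $\sum_i\lceil r_ix/s_i\rceil$ and $\sum_i\lfloor r_ix/s_i\rfloor$.

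Finally, I apply Corollary~\ref{cor: L-space criterion in Dehn filling basis} to $\mu_0$, using any interior L-space slope $\mu_{\textsc{l}}$ for $Y$ (which exists, since $\mathcal L(Y)$ has non-empty interior whenever $\dtgz(Y)$ is non-empty).  For each $\boldsymbol\delta_x$, the adjacent lifts $\tilde{\boldsymbol\delta}_{x,\pm}$ given by formula~\eqref{eq: defs of delta-+ in filling basis}, after the ceiling/floor terms from $\mu_{\textsc{l}}$ recombine with $\gamma(x)$ via the identities~\eqref{eq: mod floor ceiling identities}, should reduce to $x\tilde f_0 + (-1 + \sum_i\lceil r_ix/s_i\rceil)(-\tilde h_0)$ and $x\tilde f_0 + (1 + \sum_i\lfloor r_ix/s_i\rfloor)(-\tilde h_0)$, independent of the choice of $\mu_{\textsc{l}}$.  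Expressing $\mu_0$ in the same basis and rearranging the resulting L-space condition for every $x$ then yields precisely the negation of the strict inequality~\eqref{eq: inequality for sfs l-space theorem}.  The principal obstacle is this last step: one must verify that the torsion residues $\gamma(x)$ combine with the auxiliary parameters $(p,q)$ of $\mu_{\textsc{l}}$ to produce the clean, $\mu_{\textsc{l}}$-independent ceiling and floor sums, reflecting the intrinsic nature of the endpoints of $\mathcal L(Y)$.
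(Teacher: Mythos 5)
Your outline follows the same route as the paper (realize $M$ as a Dehn filling of the regular-fiber complement $Y$, compute $\tau(Y)$, extract $\dtgz(Y)$, and feed it into Theorem~\ref{thm:  L-space interval in terms of beta/n}), but it stops short of the content at exactly the decisive points, and two of your structural claims are off. First, ``apply Corollary~\ref{cor: L-space criterion in Dehn filling basis} using \emph{any} interior L-space slope $\mu_{\textsc{l}}$'' is not a usable step: the theorem determines $\mathcal{L}(Y)$ as the interval around a slope you must actually \emph{exhibit}, since $\dt(Y)$ alone does not tell you which pair of consecutive lifts bounds $\mathcal{L}(Y)$. The paper first normalizes $0<r_i<s_i$ and then invokes Ozsv\'ath--Szab\'o to get the concrete anchor $\mu_{\textsc{l}}=-\tilde h_0$ (with $\lambda_{\textsc{l}}=\tilde f_0$), i.e.\ the L-space $Y(-\tilde h_0)=M(0;r_1/s_1,\ldots,r_n/s_n)$; it is this specific choice, not an abstract interior slope, that makes the ceiling/floor sums appear and the answer manifestly basis-independent. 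Your appeal to OSSF only for ``Floer simple'' does not supply this anchor.

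Second, your description of $\dtgz(Y)$ is not correct as stated: computing $\tau(Y)$ by the torus-gluing product formula, one finds $S[\tau(Y)]$ is additively closed, so $\dt(Y)=S[\tc(Y)]\cap\iota_0(H_1(\partial Y))$, and this set is parameterized by \emph{pairs} $(j,x)$ with $j\in\{1,\ldots,n-1\}$, $x\in\{1,\ldots,s-1\}$, namely $\boldsymbol\delta^j_x=x\,\iota_0(\mu_{\textsc{l}})+\bigl(-j-\sum_i\lfloor r_ix/s_i\rfloor\bigr)\iota_0(\lambda_{\textsc{l}})$, not by a single element per $x$ (for $n\ge 3$ there are several). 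It is true that only $j=1$ matters for the extremal inequalities, but that is a point requiring proof, as are the remaining steps you compress into ``should reduce to'': the verification that $b^{j\pm}_x$ really are the adjacent lifts (this uses $pg>\deg\tc(Y)$, i.e.\ $\delta^j_x<p$), the case analysis in the sign of $e_0$ and of $\mu_0\cdot l$, the reparameterization $s-x\mapsto x$, the separate treatment of $\dtgz(Y)=\emptyset$, and the removal at the end of the normalization $0<r_i<s_i$, $\gcd(r_i,s_i)=1$. Since you yourself flag the central verification as an unresolved ``obstacle,'' what you have is a plausible plan matching the paper's strategy, not a proof.
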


{\noindent{{\bf{Remark.}} If we take each $s_i > 0$,
then inequality (\ref{eq: inequality for sfs l-space theorem})
is equivalent to the condition that}}
\begin{equation}
\label{eq: orbifold Euler characteristic version of theorem}
\min_{0<x< s}\, \frac{1}{x} \!\left( 1 \,-\, \sum_{i = 1}^{n}\!\frac{[-r_i x]_{s_i}}{s_i} \right)
\;<\;
e_0 + \sum_{i = 1}^{n}\!\frac{r_i}{s_i}
\;<\;
\max_{0<x<s}\, \frac{1}{x} \!\left(-1 + \sum_{i = 1}^{n}\!\frac{[r_i x]_{s_i}}{s_i} \right).
\end{equation}
The middle expression,
$e_0 + \sum_{i=1}^n\!\frac{r_i}{s_i}$, is the orbifold Euler characteristic.
If $e_0 + \sum_{i=1}^n\!\frac{r_i}{s_i} = 0$, then
(\ref{eq: orbifold Euler characteristic version of theorem})
fails to hold when $n \le 2$, in which case all three expressions are equal.

\smallskip
Theorem
\ref{thm: non L-space interval for seifert fibered spaces}
makes it easy to deduce the L-space filling slope interval
for any regular-fiber complement in a Seifert fibered space.
That is, for any $j \in \{1, \ldots, n\}$,
the above theorem implies that 
$M(e_0; {\textstyle{\frac{r_1}{s_{{\mkern-1mu}1}}, \ldots, \frac{r_n}{s_n}}})$
is an L-space if and only if 
\begin{equation}
\label{eq: seifert fibered before removing ceilings and floors}
-e_0x - \left(\mkern-.8mu -1 + \sum_{i\neq j} \!\left\lceil\! 
\frac{r_i x}{s_i}\! \right\rceil \right)
\ge
\left\lceil\! \frac{r_j x}{s_j}\! \right\rceil
\;\;\;\;\mathrm{or}\;\;\;\;
-\mkern-3.1mu e_0 x 
-\left( \mkern-.8mu 1 + \sum_{i\neq j} \!\left\lfloor\! 
\frac{r_i x}{s_i}\!\right\rfloor \right)
\le 
\left\lfloor\! \frac{r_j x}{s_j}\! \right\rfloor
\end{equation}
for all $x \in \{1, \ldots, s-1\}$.
Since the above expressions are integers,
(\ref{eq: seifert fibered before removing ceilings and floors})
holds if and only if
\begin{equation}
-e_0x - \left(\mkern-.8mu -1 + \sum_{i\neq j} \!\left\lceil\! 
\frac{r_i x}{s_i}\! \right\rceil \right)
\ge
\frac{r_j x}{s_j}
\;\;\;\;\;\;\,\mathrm{or}\;\;\;\;\;\,
-\mkern-3.2mu e_0 x 
-\left(\mkern-.8mu 1 + \sum_{i\neq j} \!\left\lfloor\! 
\frac{r_i x}{s_i}\!\right\rfloor \right)
\le 
\frac{r_j x}{s_j}.
\end{equation}
Dividing both sides by $x$ then gives the following result.

\begin{cor}
If  $M(e_0; {\textstyle{\frac{r_1}{s_{{\mkern-1mu}1}}, \ldots, \frac{r_n}{s_n}}})$,
with each $s_i >0$,
denotes a Seifert fibered space over $S^2$ with $n\!>\!1$
exceptional fibers,
then for any $j \in \{1, \ldots, n\}$, 
$M(e_0; {\textstyle{\frac{r_1}{s_{{\mkern-1mu}1}}, \ldots, \frac{r_n}{s_n}}})$
is an L-space if and only if
$e_0 \mkern-1.2mu+\mkern-1.2mu 
\sum_{i=1}^n\mkern-4.5mu \frac{r_i}{s_i} \mkern-2mu \neq \mkern-1.2mu 0$ and
\begin{equation}
\frac{r_j}{s_j}
\le
-e_0 + \min_{0<x< s}\! 
-\frac{1}{x}\!\left(\mkern-1mu -1 + \sum_{i\neq j} \!\left\lceil\! 
\frac{r_i x}{s_i}\! \right\rceil \right)
\,\;\;\mathrm{or}\;\;\,
-\mkern-1.8mu e_0 
+
\max_{0<x< s}\!
-\frac{1}{x}\!\left(\mkern-1mu 1 + \sum_{i\neq j} \!\left\lfloor\! 
\frac{r_i x}{s_i}\!\right\rfloor \right)
\ge \frac{r_j}{s_j},
\end{equation}
where $s$ is the least common multiple of those $s_i$ with $i \neq j$.
\end{cor}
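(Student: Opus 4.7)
The plan is to deduce the corollary from Theorem~\ref{thm: non L-space interval for seifert fibered spaces} by an elementary rearrangement that peels the $j$-th fractional parameter off each summation, combined with one integrality step.  First, I would negate the ``not L-space'' criterion: $M = M(e_0; r_1/s_1,\ldots, r_n/s_n)$ is an L-space iff $e_0 + \sum_{i=1}^n r_i/s_i \neq 0$ and the double inequality in (\ref{eq: inequality for sfs l-space theorem}) fails.  This failure splits as ``the $\min_{0<x<s}$ expression is $\ge 0$, or the $\max_{0<x<s}$ expression is $\le 0$,'' which unfolds to the disjunction of two universal statements over $x \in \{1,\ldots, s-1\}$:
\begin{equation*}
\mathrm{(I)}\ -e_0 x \ge -1 + {\textstyle \sum}_{i=1}^n \lceil r_i x / s_i\rceil
\quad \text{or}\quad
\mathrm{(II)}\ -e_0 x \le 1 + {\textstyle \sum}_{i=1}^n \lfloor r_i x / s_i\rfloor,
\end{equation*}
with each inequality demanded to hold for every such $x$.

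Second, I would isolate the $j$-th summand on each side.  From (I), moving the $j$-th ceiling to the left yields $\lceil r_j x/s_j\rceil \le -e_0 x + 1 - \sum_{i \ne j} \lceil r_i x/s_i\rceil$, whose right-hand side is an integer; invoking the identity ``$\lceil \alpha\rceil \le k$ iff $\alpha \le k$ when $k \in \mathbb{Z}$'' lets me discard the ceiling on the left, and dividing through by $x > 0$ gives the equivalent condition $r_j/s_j \le -e_0 - \tfrac{1}{x}\bigl(-1 + \sum_{i\ne j} \lceil r_i x/s_i\rceil\bigr)$.  Demanding this for every $x$ is equivalent to demanding the inequality against the minimum of the right-hand side over $x$, which is precisely the first disjunct in the statement.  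An analogous treatment of (II), using the floor identity ``$\lfloor \alpha\rfloor \ge k$ iff $\alpha \ge k$ for $k \in \mathbb{Z}$'' to strip off $\lfloor r_j x/s_j\rfloor$ and turning the universal quantifier into a maximum, yields the second disjunct.

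I do not anticipate any serious technical obstacle.  The one point requiring some attention is to keep the order of logical operations straight: the L-space condition coming out of Theorem~\ref{thm: non L-space interval for seifert fibered spaces} is the strong disjunction ``(for all $x$: (I)) or (for all $x$: (II)),'' rather than the strictly weaker ``for all $x$: (I) or (II),'' and the integrality trick must be applied to each universal statement separately before collapsing the $x$-quantifier into a $\min$ or $\max$.  Once this order of operations is respected, the remainder is routine integer arithmetic, and the edge-case hypothesis $e_0 + \sum r_i/s_i \neq 0$ is simply inherited unchanged from the theorem.
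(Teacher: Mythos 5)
Your proposal is correct and is essentially the paper's own derivation of this corollary: negate Theorem \ref{thm: non L-space interval for seifert fibered spaces}, keep the disjunction of the two universally quantified inequalities (the strong form, not the weaker pointwise disjunction, as you rightly emphasize), peel the $j$-th summand off each sum, use integrality of the remaining side to drop $\lceil r_j x/s_j\rceil$ and $\lfloor r_j x/s_j\rfloor$, and divide by $x>0$ to convert the quantifier over $x$ into the stated $\min$/$\max$. The one point you leave implicit---exactly as the paper does---is that the theorem quantifies over $0<x<\mathrm{lcm}(s_1,\dots,s_n)$ while the corollary's $\min$/$\max$ runs over $0<x<s$ with $s=\mathrm{lcm}\{s_i : i\neq j\}$, so strictly speaking one should add a word on why the two ranges impose the same condition (the integer expressions $\mp e_0x\pm 1\mp\sum_{i\neq j}\lceil\text{resp.}\lfloor r_ix/s_i\rfloor\rceil\mp r_jx/s_j$ are quasi-periodic under $x\mapsto x+s$ with drift $\mp s\,(e_0+\sum_i r_i/s_i)$, and the reflection $x\mapsto s-x$ exchanges the two inequalities, which settles both signs of the orbifold Euler number).
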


\subsection{Set-up for proof of Theorem
\ref{thm: non L-space interval for seifert fibered spaces}:
Dehn filling a Floer simple manifold}
We begin by expressing
$M(e_0; {\textstyle{\frac{r_1}{s_{{\mkern-1mu}1}}, \ldots, \frac{r_n}{s_n}}})$
as the Dehn filling of a Floer simple manifold $Y$.
For now, we demand that $0 < r_i < s_i$ and $\gcd(r_i, s_i) = 1$
for each $i \in \{1, \ldots, n\}$.
Let $Y$ denote the regular-fiber complement
\begin{equation}
Y := M(0; {\textstyle{\frac{r_1}{s_{{\mkern-1mu}1}}, \ldots, \frac{r_n}{s_n}}}) 
\setminus (S^1 \times D^2_0),
\end{equation}
so that
$Y\mkern-1.5mu(\mu_0) 
= M(e_0; {\textstyle{\frac{r_1}{s_{{\mkern-1mu}1}}, \ldots, \frac{r_n}{s_n}}}) $.  
Regarding $Y$ as a partial Dehn filling of $\hat{Y}$, we have
\begin{equation}
\label{eq: seif fibered minus Y0, presentation of H1(Y)}
H_1(Y)
= \left< f, h_0, \ldots, h_n \left|\;
\textstyle{\sum_{i=0}^n h_i} = \hat{\iota}_1(\mu_1) = \ldots = \hat{\iota}_n(\mu_n) = 0 \right.\right>.
\end{equation}
Writing $\iota_0 : H_1(\partial Y) \to H_1(Y)$ for the map
induced by inclusion,
and identifying
$\tilde{h}_0$ and $\tilde{f}_0$ with their respective images
under the canonical isomorphism
$H_1(\partial_0 \hat{Y}) \to H_1(\partial Y)$,
we again have $\iota_0(\tilde{h}_0) = h_0$
and $\iota_0(\tilde{f}_0) = f$, but in the sense of the above presentation for $H_1(Y)$.

Define
\begin{equation}
S_{\mathrm{gcd}} := 
\gcd\!\left(\frac{{\textstyle{\prod_{i=1}^n \!s_i}}}{s_1}, \ldots, 
\frac{{\textstyle{\prod_{i=1}^n \!s_i}}}{s_n} \right),\;\;\;
s := \frac{{\textstyle{\prod_{i=1}^n \!s_i}}}{S_{\gcd}},
\end{equation}
noting that this makes $s$ the least common multiple of $s_1, \ldots, s_n$.
Note that if we set
\begin{equation}
\label{eq: defs of l, p, q* for seifert fibered}
l: = p \tilde{f}_0 + q^* \tilde{h}_0,\;\;\;\mathrm{with}\;\;\;
p:= \sum_{i=1}^n\frac{r_i}{s_i}\frac{s}{g},\;\;\;\;
q^* := \frac{s}{g},\;\;\;\;
g:= \gcd \!\left( \sum_{i=1}^n \!\frac{r_i}{s_i} s,\, s\right),
\end{equation}
then $l$ is primitive in $H_1(\partial Y)$.  
In addition, since $h_0 = -\sum_{i=1}^n h_i$, we have
\begin{equation}
0
\;=\;
\sum_{i=1}^n\! \frac{s}{s_i}\hat{\iota}_i(\mu_i) 
\;=\;
\sum_{i=1}^n\!\frac{r_i}{s_i}s f \,+\,  s h_0
\;=\; g\iota_0(l).
\end{equation}
Thus $\iota_0(l) \in H_1(Y)$ is torsion, and so $l$ is also a canonical longitude.
Moreover, since $g\iota_0(l) = \sum_{i=1}^n \frac{s}{s_i}\hat{\iota}_i(\mu_i) = 0$
is a primitive linear combination of the relations in the presentation of $H_1(Y)$
in (\ref{eq: seif fibered minus Y0, presentation of H1(Y)}),
we have $g = |\!\left<\iota_0(l)\right>\!|$.
Choosing any $m \in H_1(\partial Y)$ satisfying $m \cdot l = 1$,
and writing $m = -q \tilde{f}_0 - p^* \tilde{h}_0$, allows one to solve for
$\tilde{f}_0$ and $\tilde{h}_0$
in terms of $m$ and $l$.

Now, since all $\frac{r_i}{s_i} > 0$ by assumption, we know from 
Ozsv{\'a}th and Szab{\'o} in ~\cite{OSSF} that
$Y(-h_0) = M(0; {\textstyle{\frac{r_1}{s_{{\mkern-1mu}1}}, \ldots, \frac{r_n}{s_n}}})$
is an L-space, so we may take $\mu_{\textsc{l}} := -\tilde{h}_0$ as our given
L-space filling slope, and choose $\lambda_{\textsc{l}}=  \tilde{f}_0$ for its longitude,
with $\mu_{\textsc{l}} \mkern-1mu \cdot \mkern-1mu \lambda_{\textsc{l}}
=  -\tilde{h}_0 \mkern-1mu \cdot \mkern-1mu \tilde{f}_0 = 1$.  We then have
\begin{equation}
\mu_{\textsc{l}} 
:= -\tilde{h}_0
= p m + q l,
\;\;\;\;\;\;
\lambda_{\textsc{l}} 
:= \tilde{f}_0
= q^* m + p^* l,
\end{equation}
with $p$ and $q^*$ as in
(\ref{eq: defs of l, p, q* for seifert fibered}),
and with $q$ and $p^*$ solving the diophantine equation $pp^* - q q^* = 1$.

\subsection{Computation of $\boldsymbol{\dt}(\boldsymbol{Y})$}
To compute $\dt(Y)$, we need the Turaev torsion, $\tau(Y)$.
Recall that $Y$ is a union along torus boundaries of trivial circle fibrations,
\begin{equation}
Y = S^1 \mkern-.7mu\times\mkern-.7mu (S^2 \setminus {\textstyle{\coprod_{i=0}^n D_i^2}}) 
\;\mkern1.5mu \cup\;\mkern1.5mu
S^1_1 \mkern-.7mu\times\mkern-.7mu D_1^2
\mkern1.5mu\;\cup\;\mkern1.5mu \ldots \mkern1.5mu\;\cup\;\mkern1.5mu
S^1_n \mkern-.7mu\times\mkern-.7mu D_n^2.
\end{equation}
The leftmost $S^1$ above, corresponding to the regular fiber in $\hat{Y}$, has class
$\hat{\iota}_0(\lambda_{\textsc{l}}) = f \in H_1(\hat{Y})$.
Similarly, for $i \in \{1, \ldots, n\}$,
each $S^1_i$ above has class $\hat{\iota}_i(\lambda_i) \in H_1(\hat{Y})$,
where $\lambda_i$ is any longitude satisfying 
$\left.(\mu_i \cdot \lambda_i)\right|_{\partial_i\mkern-1.5mu\hat{Y}} = 1$.
Since each $\hat{\iota}_i(\mu_i) = 0$, each class $\hat{\iota}_i(\lambda_i)$ is independent of the
choice of $\lambda_i$.  The Turaev torsion then obeys a product rule for
unions along torus boundaries \cite{Turaev}, yielding
\begin{align}
\tau(Y) 
&:= (1 - [\hat{\iota}_0(\lambda_{\textsc{l}})])^{-\chi(S^2 \setminus \coprod_{i=0}^n D_i^2)}
{\textstyle{\prod_{i=1}^n (1-[\hat{\iota}_i(\lambda_i)])^{-\chi(D_i^2)}}}
     \\ \nonumber
&:= (1 - [\hat{\iota}_0(\lambda_{\textsc{l}})])^{n-1}
{\textstyle{\prod_{i=1}^n (1-[\hat{\iota}_i(\lambda_i)])^{-1}}},
\end{align}
where $[\cdot]$ denotes inclusion of $H_1(\hat{Y})$
into the Laurent series group ring for $H_1(\hat{Y})$.

These $\hat{\iota}_i(\lambda_i)$ bear simple relationships to $\iota_0(\mu_{\textsc{l}})$ and
$\iota_0(\lambda_{\textsc{l}})$.  That is, we claim that
\begin{equation} 
\label{eq: mu and lambda in terms of lamba i}
\iota_0(\mu_{\textsc{l}})
= {\textstyle{\sum_{i=1}^n r_i \hat{\iota}_i(\lambda_i)}},
\;\;\mathrm{and}\;\;
\iota_0(\lambda_{\textsc{l}}) = s_i \hat{\iota}_i(\lambda_i)
\;\;\mathrm{for}\;\mathrm{each}\;\; i \in \{1, \ldots, n\}.
\end{equation}
To see this, note that since
each $\mu_i = r_i \tilde{f}_i - s_i \tilde{h}_i$,
with
$(\tilde{f}_i \cdot \tilde{h}_i)|_{\partial_i\mkern-1.5mu\hat{Y}}
= \left.(\mu_i \cdot \lambda_i)\right|_{\partial_i\mkern-1.5mu\hat{Y}} = 1$,
we know there exist $r^*_i, s^*_i \in \Z$ such that
\begin{equation}
\hat{\iota}_i(\lambda_i) = s_i^* f + r_i^* h_i,\;\;\; r_i r_i^* + s_i s_i^* = 1,
\end{equation}
implying that
\begin{align}
r_i \hat{\iota}_i(\lambda_i)
= s_i^* (r_i f) + r_i r_i^* h_i 
&= s_i^* (s_i h_i) + r_i r_i^* h_i 
= h_i,
        \\
s_i \hat{\iota}_i(\lambda_i)
= s_i s_i^* f + r_i^* (s_i h_i) 
&= s_i s_i^* f \,+\, r_i^* (r_i f) 
\;\mkern.6mu= f 
= \iota_0(\lambda_{\textsc{l}}).
\end{align}
Thus, since $\iota_0(\mu_{\textsc{l}}) = -h_0 = {\textstyle{\sum_{i=1}^n h_i}}$,
(\ref{eq: mu and lambda in terms of lamba i}) holds in $H_1(Y)$.

Since $\iota_0(\lambda_{\textsc{l}}) = s_i \hat{\iota}_i(\lambda_i)$ for each
$i \in \{1, \ldots, n\}$, we may rewrite $\tau(Y)$ as
\begin{equation}
\tau(Y) = 
\frac{1}{1 - [\iota_0(\lambda_{\textsc{l}})]}
\prod_{i=1}^n \frac{1 - [\hat{\iota}_i (\lambda_i)]^{s_i}\!}{1 - [\hat{\iota}_i(\lambda_i)]\hphantom{^{s_i}\!}},
\end{equation}
which has support
\begin{equation}
\label{eq: tt support for SFS}
S[\tau(Y)] = 
\{\iota_0(\lambda_{\textsc{l}}) \Z_{\ge 0} \} +
\left\{ \left.{\textstyle{\sum_{i=1}^ny_i\hat{\iota}_i(\lambda_i)}}
\mkern.7mu\right|\mkern.6mu
y_i \in \{0, \ldots, s_i - 1\}
\right\}.
\end{equation}
Since $Y$ has multiple L-space fillings, it is Floer simple, and
so each element of $H_1(Y)$ has coefficient 0 or 1 in $\tau(Y)$,
and the torsion complement $\tc(Y)$ has support
\begin{equation}
\label{eq: shows that stc - st = stc}
S[\tc(Y)] 
=
\left\{ \left.\mkern-.5mu- \mkern-1.5muj\iota_0(\lambda_{\textsc{l}}) \mkern-2.3mu
+\mkern-3.6mu  {\textstyle{\sum_{i=1}^n}} y_i\hat{\iota}_i(\lambda_i) 
\mkern.7mu\right|\mkern.6mu
j \!\in\! \{1, \ldots, n\!-\!1\},\mkern1mu
y_i \!\in \!\{0, \ldots, s_i \!-\! 1\}  \right\}
\cap
H_1\mkern-1.3mu(Y\mkern-.3mu)_{\mkern-1.7mu\ge \mkern.2mu0},
\end{equation}
where $H_1(Y)_{\ge 0} := \{w \in H_1(Y)| \phi(w) \ge 0\}$
for any homomorphism $\phi : H_1(Y) \to \Z$ satifying $\phi(\iota_0(m)) > 0$.

Since $s_i \hat{\iota}_i(\lambda_i) = \iota_0(\lambda_{\textsc{l}})$ for each
$i \in \{1, \ldots, n\}$, it follows from 
(\ref{eq: tt support for SFS}) that $S[\tau(Y)]$
is additively closed, which, in  turn, implies that
\begin{equation}
(S[\tc(Y)] - S[\tau(Y)]) \cap 
H_1(Y)_{\ge 0}
= S[\tc(Y)],
\end{equation}
so that $\dt(Y)$ is the intersection
$\dt(Y)
= S[\tc(Y)] \cap \iota_0(H_1(\partial Y))$.
By (\ref{eq: mu and lambda in terms of lamba i}), we know that
\begin{align}
\iota_0(H_1(\partial Y))
&= \Span \!\!\left\{ \iota_0(\mu_{\textsc{l}}) , \iota_0(\lambda_{\textsc{l}}) \right\}
        \\ \nonumber
&= \Span \!\!\left\{  \iota_0(\mu_{\textsc{l}})
={\textstyle{\sum_{i=1}^n r_i \hat{\iota}_i(\lambda_i)}},\;\,
\iota_0(\lambda_{\textsc{l}}) 
= s_1 \hat{\iota}_1(\lambda_1) 
= \ldots 
= s_n \hat{\iota}_n(\lambda_n)\right\}.
\end{align}
Now, for any $j \in \{1,\ldots, n-1\}$ and
$(y_1, \ldots, y_n) \in \prod_{i=1}^n \{0, \ldots, s_i - 1\}$, we have
\begin{equation}
- j\iota_0(\lambda_{\textsc{l}})
+ {\textstyle{\sum_{i=1}^n y_i\hat{\iota}_i(\lambda_i)}}
 = 
{\textstyle{\sum_{i=1}^n (y_i + z_is_i)\hat{\iota}_i(\lambda_i)}}
\mkern1mu-\mkern1mu
(j + {\textstyle{\sum_{i=1}^n \mkern-1.5mu z_i}})\iota_0(\lambda_{\textsc{l}})
\end{equation}
for any $(z_1, \ldots, z_n) \in \Z^n$.  Thus,
$- j\iota_0(\lambda_{\textsc{l}})  + \sum_{i=1}^n\mkern-1.5mu y_i\hat{\iota}_i(\lambda_i)
\in \iota_0(H_1(\partial Y))$ if and only if
there exist $(z_1, \ldots, z_n) \in \Z^n$ and $x \in \Z$ for which
\begin{equation}
(y_1 + z_1 s_1 ,\ldots, y_n + z_n s_n )
= (r_1 x, \ldots, r_n x).
\end{equation}
In such case, we have
$y_i = [r_i x]_{s_i}$
and
$z_i = \mkern-2.5mu\left\lfloor\! \frac{r_i x}{s_i}\! \right\rfloor$
for each $i \in \{1, \ldots, n\}$.

We can therefore parameterize
$\dt(Y) = S[\tc(Y)]\cap \iota_0(H_1(\partial Y))$ as
\begin{align}
\dt(Y)
= \left\{ \boldsymbol{\delta}^j_x\mkern1.5mu|\; 
j \in \{1, \ldots, n-1\},\; x \in \{1, \ldots, s-1\},\;
\delta^j_x  \ge 0 \right\},\;\,\mathrm{with}\;\;\;\;\;\;\;\;\;\;\;\;\;\;\;
    \\ \nonumber
\boldsymbol{\delta}^j_x := 
a^{j-}_x \iota_0(\mu_{\textsc{l}}) + b^{j-}_x \iota_0(\lambda_{\textsc{l}}),
\;\;\;\;\;\;\;
a^{j-}_x := x,
\;\;\;\;\;
b^{j-}_x := - j - \sum_{i=1}^n \!\left\lfloor\! \frac{r_i x}{s_i}\! \right\rfloor,
\;\;\;
    \\ \nonumber
\delta^j_x 
:= 
a^{j-}_x p + b^{j-}_x q^*
= \frac{s}{g}\!\left(\mkern-1mu -j + \sum_{i=1}^n\frac{[r_ix]_{s_i}}{s_i}\right),
\;\;\;\;\;\;\;\;\;\;\;\;\;\;\;\;\;\;\;\;\;\;\;\;\;\;
\;\;\;\;\;\;\;\;\;\;\;\;\;\;\mkern3.1mu
\end{align}
where $\delta^j_x := \tilde{\boldsymbol{\delta}}^j_x \!\cdot\mkern-1mu l$ for any
$\tilde{\boldsymbol{\delta}}^j_x \in \iota_0^{-1}\mkern-2.5mu(\boldsymbol{\delta}^j_x)$.
Since $\boldsymbol{\delta}^j_x$ is invariant under the action $x \mapsto x + s$,
it suffices to choose a fundamental domain of length $s$ for $x \in \Z$.
The above expression for $\dt(Y)$ uses the fundamental domain
$x \in \{0, \ldots, s-1\}$, but excludes $0$, since $\delta^j_0 < 0$
for all $j \in \{1, \ldots, n-1\}$.

\subsection{Application of
Theorem 
\ref{thm:  L-space interval in terms of beta/n}/Corollary
\ref{cor: L-space criterion in surgery coefficients}}
This particular choice of 
fundamental domain ensures that for all $\boldsymbol{\delta}_x^j \in \dtgz(Y)$,
we have
$b_x^{j-} = b_-^{\boldsymbol{\delta}^j_x}$ and
$a_x^{j-} = a_-^{\boldsymbol{\delta}^j_x}$ 
in the sense of 
Corollary \ref{cor: L-space criterion in surgery coefficients}.
That is,
for all $j \in \{1, \ldots, n-1\}$ and 
$x\in \{1, \ldots, s-1\}$ with $\delta^j_x > 0$,
we have
\begin{equation}
0 
\;\,<\;\,
-b^{j-}_x
\,\;=
\frac{a^{j-}_x p}{q^*} - \frac{\delta^j_x}{q^*} 
=
\sum_{i=1}^n \frac{r_ix}{s_i} - \frac{g}{s}\delta^j_x
\,<\, 
\sum_{i=1}^n \frac{r_is}{s_i} - 0
=\;\,\mkern.5mu
pg.
\end{equation}
This makes
$a_x^{j-} \mu_{\textsc{l}} + b_x^{j-} \lambda_{\textsc{l}} \in 
\iota_0^{-1}\mkern-1mu(\boldsymbol{\delta}^j_x)$
one of the two lifts of $\boldsymbol{\delta}^j_x$ closest to $\mu_{\textsc{l}}$
in $\P(H_1(\partial Y))$, and the closest lift of $\boldsymbol{\delta}^j_x$
on the other side of $\mu_{\textsc{l}}$ is
$a_x^{j+}\mkern-1.8mu \mu_{\textsc{l}} \mkern.9mu +
\mkern1.1mu b_x^{j+} \mkern-1.2mu\lambda_{\textsc{l}} \in 
\iota_0^{-1}\mkern-1mu(\boldsymbol{\delta}^j_x)$,
where
\begin{align}
a_x^{j+}
&:=\, a_+^{\boldsymbol{\delta}^j_x} 
\,=\, a_x^{j-} - q^{\mkern-1mu*}\mkern-2mug
\,=\, -(s-x),
   \\ \nonumber
b_x^{j+} 
&:=\, b_+^{\boldsymbol{\delta}^j_x} 
\mkern1.5mu\,=\, b_x^{j-} + pg
\mkern7.3mu\,=\,
-j + \sum_{i=1}^n\left\lceil \frac{r_i(s-x)}{s_i} \right\rceil.
\end{align}

To use Corollary \ref{cor: L-space criterion in surgery coefficients} on
$M(e_0; {\textstyle{\frac{r_1}{s_{{\mkern-1mu}1}}, \ldots, \frac{r_n}{s_n}}}) 
= Y\mkern-2mu(\mu_0)$, we shall also want the
$(\mu_{\textsc{l}}, \lambda_{\textsc{l}})$-surgery coefficients
for $\mu_0$, and the value of $\mu_0 \cdot l$.
Since $\mu_0 = e_0 \tilde{f}_0 - \tilde{h}_0$ and $l = p\tilde{f}_0 + q^*\tilde{h}_0$, 
with $\mu_{\textsc{l}} = -\tilde{h}_0$, $\lambda_{\textsc{l}} = \tilde{f}_0$,
$p = \frac{s}{g}\sum_{i=1}^n\!\frac{r_i}{s_i}$, and $q^* = \frac{s}{g}$, we have
\begin{align}
\mu_0 
&= \alpha \mu_{\textsc{l}} + \beta \lambda_{\textsc{l}},
\;\;\;\;\;\;\;\;\;\;\;\;
\;\;\;\;\;\;\;\;\;\;\;\;
\;\;\;\;\;\;\;\;\;\;\;\;
\alpha:= 1,\;\;\;\;
\beta:= e_0,
    \\  \nonumber
\mu_0 \cdot l
&= e_0 q^* + p = \frac{s}{g}\!\left(e_0 + \sum_{i=1}^n\frac{r_i}{s_i}\right).
\end{align}
Since $Y\mkern-2mu(\mu_0)$ is never an L-space when 
$\mu_0 \cdot l = 0$, and since the case of $e_0 + \sum_{i=1}^n\frac{r_ix}{s_i} =0$
is treated separately in the theorem statement, we henceforth restrict to the
case of $\mu_0 \cdot l \neq 0$.

Suppose that $\dtgz(Y) \neq \emptyset$.
In this case,
Corollary \ref{cor: L-space criterion in surgery coefficients}
tells us that
$M(e_0; {\textstyle{\frac{r_1}{s_{{\mkern-1mu}1}}, \ldots, \frac{r_n}{s_n}}}) 
= Y\mkern-2mu(\mu_0)$
is an L-space if and only if
\begin{equation}
\label{eq: L-space condition for all SF spaces with each 0 < r_i/s_i < 0}
\frac{\alpha}{\beta}:=
\frac{1}{e_0} 
\;\le\;
\frac{x}{b^{j-}_x}
=: \frac{a^{j-}_x}{b^{j-}_x}
\;\;\;\;\;\;\mathrm{or}\;\;\;\;\;\;
\frac{a^{j+}_x}{b^{j+}_x} :=
\frac{-(s-x)}{b^{j+}_x}
\;\le\;
\frac{1}{e_0}
=:\frac{\alpha}{\beta}
\end{equation}
for all $j\in \{1, \ldots, n-1\}$
and $x \in \{1, \ldots, s-1\}$ with $\delta^j_x > 0$,
and moreover
the left-hand (respectively right-hand) inequality
obtains only if $\beta/(\mu_0 \mkern-1.5mu\cdot\mkern-1.5mu l) <0$ 
(respectively $\beta/(\mu_0 \mkern-1.5mu\cdot\mkern-1.5mu l) > 0$).

Further suppose that $\beta = e_0 < 0$.
Then 
$M(e_0; {\textstyle{\frac{r_1}{s_{{\mkern-1mu}1}}, \ldots, \frac{r_n}{s_n}}})$
is an L-space if and only if
\begin{equation}
\label{eq: sf l-space ineqs but interdependent and with deltax > 0}
\begin{cases}
0 \ge -e_0 + b_x^{j-}/x 
\,\;\;\;\;\;\;\;\;\mkern3mu
\text{for all}\;j\;\text{and}\;x\;\text{with}\;\delta^j_x > 0
&\;\;\;\;\;
\mathrm{if}\;\mu_0 \mkern-1.5mu\cdot\mkern-1.5mu l > 0
            \\
0 \le
-e_0 - b^{j+}_x/(s\!-\!x)
\,\;
\text{for all}\;j\;\text{and}\;x\;\text{with}\;\delta^j_x > 0
&\;\;\;\;\;
\mathrm{if}\;\mu_0 \mkern-1.5mu\cdot\mkern-1.5mu l < 0
            \\
\text{never (case already excluded)}
&\;\;\;\;\;
\mathrm{if}\;\mu_0 \mkern-1.5mu\cdot\mkern-1.5mu l = 0
\end{cases}.
\end{equation}
Note that for all
$j \in \{1, \ldots, n-1\}$ and
$x \in \{1, \ldots, s-1\}$,
$\delta^j_x = a^{j\pm}_x p + b^{j\pm}_x q^*$ implies
\begin{equation}
b^{j-}_x/x = \delta^j_x/(q^*x) - p/q^*,
\;\;\;\;\;
-b^{j+}_x/(s\mkern-2mu -\mkern-2mu x) 
= -\delta^j_x/(q^*(s\mkern-2mu -\mkern-2mu x)) - p/q^*.
\end{equation}
Thus $b_x^{j-}/x$ is never maximized
and $-b_x^{j+}/(s-x)$ is never minimized when
$\delta_x \le 0$,
so we can remove the
$\delta^j_x > 0$ conditions from
(\ref{eq: sf l-space ineqs but interdependent and with deltax > 0}).
Moreover, $b_x^{j-}/x$ is never maximized
and $-b_x^{j+}/(s-x)$ is never minimized when
$j > 1$, so it suffices to fix $j=1$.
Reparameterizing the second case of  
(\ref{eq: sf l-space ineqs but interdependent and with deltax > 0})
by $s-x \mapsto x$ 
then transforms (\ref{eq: sf l-space ineqs but interdependent and with deltax > 0})
into the condition
\begin{equation}
\label{eq: primitive version of thm statement for sf L-spaces}
0 \le
-e_0 + \min_{0 < x < s} 
-\frac{1}{x}\!
\left(\mkern-.8mu-1+\sum_{i=1}^n \!\left\lceil\! \frac{r_i x}{s_i}\! \right\rceil \right)
\;\;\;\mathrm{or}\;\;\;
-e_0 + \max_{0 < x < s}
-\frac{1}{x}\!
\left(\mkern-.8mu 1+ \sum_{i=1}^n \!\left\lfloor\! \frac{r_i x}{s_i}\! \right\rfloor\right)
\le 0,
\end{equation}
which is the negation of the theorem statement's inequality for non-L-spaces.

When $e_0 \ge 0$,
$M(e_0; {\textstyle{\frac{r_1}{s_{{\mkern-1mu}1}}, \ldots, \frac{r_n}{s_n}}})$
is always an L-space, since $e_0 = 0$ corresponds to our initial L-space
$Y\mkern-1.5mu(\mu_{\textsc{l}})$, and since when $e_0 < 0$,
the right-hand inequality in 
(\ref{eq: L-space condition for all SF spaces with each 0 < r_i/s_i < 0})
holds for all $j \in \{1, \ldots, n-1\}$ and $x \in \{1, \ldots, s-1\}$.
Accordingly,
when $e \ge 0$,
(\ref{eq: primitive version of thm statement for sf L-spaces})
always holds (via its right-hand inequality).

Lastly, suppose that
$\dtgz(Y) = \emptyset$.
Since we have excluded the case of $\mu_0 \cdot l = 0$,
this implies that 
$Y(\mu_0) = M(e_0; {\textstyle{\frac{r_1}{s_{{\mkern-1mu}1}}, \ldots, \frac{r_n}{s_n}}})$
is an L-space, so we must show that
(\ref{eq: primitive version of thm statement for sf L-spaces}) holds.
To see this, first note that the negation of 
(\ref{eq: primitive version of thm statement for sf L-spaces})
is equivalent to the inequality
\begin{equation}
\label{eq: second use of orbifold euler characteristic version}
\min_{0<x< s}\, \frac{1}{x} \!\left( 1 \,-\, \sum_{i = 1}^{n}\!\frac{[-r_i x]_{s_i}}{s_i} \right)
\;<\;
e_0 + \sum_{i = 1}^{n}\!\frac{r_i}{s_i}
\;<\;
\max_{0<x<s}\, \frac{1}{x} \!\left(-1 + \sum_{i = 1}^{n}\!\frac{[r_i x]_{s_i}}{s_i} \right).
\end{equation}
Since $\dtgz(Y) = \emptyset$ implies $\delta^j_x \le 0$ for all 
$j \in \{1, \ldots, n-1\}$ and $x \in \{1, \ldots, s-1\}$, we have
\begin{equation}
1 - \sum_{i = 1}^{n}\!\frac{[-r_i x]_{s_i}}{s_i}
= -\delta^{j=1}_{s-x} \ge 0,
\;\;\;\;\;\;\;
-1 + \sum_{i = 1}^{n}\!\frac{[r_i x]_{s_i}}{s_i} = \delta^{j=1}_x \le 0
\end{equation}
for all $x \in \{1, \ldots, s-1\}$.
Thus
(\ref{eq: second use of orbifold euler characteristic version})
fails and
(\ref{eq: primitive version of thm statement for sf L-spaces})
holds.

We have finished showing that, when
$0 < r_i < s_i$ and $\gcd(r_i, s_i) = 1$ for each $i \in \{1, \ldots, n\}$,
$M(e_0; {\textstyle{\frac{r_1}{s_{{\mkern-1mu}1}}, \ldots, \frac{r_n}{s_n}}})$
is an L-space if and only if
$e_0 + \sum_{i=1}^n\frac{r_i}{s_i} \neq 0$ and
(\ref{eq: primitive version of thm statement for sf L-spaces})
holds.  Moreover, since
(\ref{eq: primitive version of thm statement for sf L-spaces})
is invariant under any map $\frac{r_i}{s_i} \mapsto \frac{d r_i}{d s_i}$
with $d \in \Z_{\neq 0}$,
or under any reparameterization of the type in
(\ref{eq: thm 2 trans 1}), we can remove our initial restrictions that
$0 < r_i < s_i$ and $\gcd(r_i, s_i) = 1$,
completing the proof of the theorem.


\smallskip
\section{Gluings along torus boundaries}
\label{s: L-space if and only if intervals cover.}

The introduction to Section \ref{s: Seifert fibered L-spaces}
discusses how, for Seifert fibered spaces over $S^2$ (although the same is true
for all Seifert fibered spaces \cite{BGW, Gabai}), the property of admitting a 
cooriented taut foliation is equivalent to the property of not being an L-space.

\subsection{Equivalent properties for Seifert fibered spaces}
In fact, this pair of equivalent properties belongs to a larger list.
\begin{theorem}[\cite{EHN, OSGen, LSIII, BRW}]
Suppose $M$ is a Seifert fibered space over $S^2$.
Then the following are equivalent:
\begin{enumerate}
\item[(1)]
$M$ admits a cooriented taut foliation.
\item[(2.$\rho$)]
There exists a homomorphism
$\rho : \pi_1(M) \to \mathrm{Homeo}_+\R$ with non-trivial image.
\item[(2.LO)]
The fundamental group $\pi_1(M)$ admits a left ordering.
\item[(3)]
$M$ is not an L-space.
\end{enumerate}
\end{theorem}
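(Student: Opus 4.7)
The plan is to close the cycle of implications by combining the new L-space classification of Theorem \ref{thm: non L-space interval for seifert fibered spaces} with several classical results. The implication (1) $\Rightarrow$ (3) uses no Seifert structure: a cooriented taut $C^0$ foliation on any closed oriented 3-manifold produces a weakly symplectically fillable contact structure by Eliashberg-Thurston (as extended to $C^0$ by Kazez-Roberts and Bowden), and such a contact structure gives a non-trivial class in Heegaard Floer homology by \cite{OSGen}, obstructing $M$ from being an L-space.

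For the converse (3) $\Rightarrow$ (1), I would match the non-L-space characterization of Theorem \ref{thm: non L-space interval for seifert fibered spaces} with the Jankins-Neumann-Naimi classification of Seifert fibered spaces over $S^2$ admitting transverse cooriented taut foliations. Both are given as explicit inequalities on the Seifert invariants $(e_0; r_1/s_1, \ldots, r_n/s_n)$, so the identification reduces to a combinatorial verification; since a transverse taut foliation is in particular a cooriented taut foliation, this yields (1).

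The equivalence (1) $\Leftrightarrow$ (2.$\rho$) is essentially the Eisenbud-Hirsch-Neumann criterion \cite{EHN}: for a Seifert fibered space $M$ over $S^2$, a transverse foliation exists precisely when $\pi_1(M)$ admits a non-trivial representation into $\widetilde{\mathrm{Homeo}_+ S^1}$ satisfying prescribed rotation-number constraints, and this group embeds into $\mathrm{Homeo}_+\R$; conversely, any non-trivial representation to $\mathrm{Homeo}_+\R$ restricted to the cyclic subgroup generated by a regular fiber supplies the required rotation data. Finally, (2.$\rho$) $\Leftrightarrow$ (2.LO) follows from Boyer-Rolfsen-Wiest \cite{BRW}: any countable left-orderable group embeds in $\mathrm{Homeo}_+\R$, and for fundamental groups of Seifert fibered spaces the existence of a non-trivial representation into $\mathrm{Homeo}_+\R$ can be promoted to a genuine left-ordering via the techniques of \cite{BRW}.

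The principal obstacle is the combinatorial matching in (3) $\Rightarrow$ (1): the non-L-space inequality (\ref{eq: inequality for sfs l-space theorem}) and the Jankins-Neumann-Naimi inequality must be unpacked into a common normal form, for instance in terms of the orbifold Euler characteristic $e_0 + \sum_i r_i/s_i$ together with the truncated sums $\sum_i \lceil r_i x/s_i \rceil$ and $\sum_i \lfloor r_i x/s_i \rfloor$, and then checked term-by-term to cut out the same subset of Seifert invariants. Once this verification is carried out, the remainder of the argument is essentially a bookkeeping of the cited theorems.
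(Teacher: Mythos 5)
Your proposal is correct and takes essentially the same approach as the paper's own proof, which likewise assembles (1) $\Rightarrow$ (3) from Eliashberg--Thurston/Kazez--Roberts/Bowden plus \cite{OSGen}, (1) $\Leftrightarrow$ (2.$\rho$) from \cite{EHN}, and (2.$\rho$) $\Leftrightarrow$ (2.LO) from \cite{BRW}. For (3) $\Rightarrow$ (1) you choose the route the paper explicitly offers as an alternative—matching Theorem \ref{thm: non L-space interval for seifert fibered spaces} against the Jankins--Neumann--Naimi classification—rather than citing Lisca--Mati{\'c} and Lisca--Stipsicz directly, but both options appear in the paper's summary, so this is not a substantive deviation.
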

\begin{proof}[Summary of Proof]
Our idiosyncratic numbering owes to a result of
Boyer, Rolfsen, and Wiest \cite{BRW}, which implies that
$(2.\rho) = (2.\mathrm{LO})$ for (a superset of) all closed, prime, oriented three-manifolds.
We also have $(1) \Rightarrow (3)$ for all closed oriented three-manifolds, as shown by
Ozsv{\'a}th and Szab{\'o}  in the case of $C^2$ foliations \cite{OSGen},
a result recently extended to $C^0$ foliations 
by Kazez and Roberts \cite{KazezRobertsCzero}, 
and independently by Bowden \cite{Bowden}.

More is known for Seifert fibered spaces.
For Seifert fibrations over $S^2$, we have $(1) = (2)$ as a corollary of a result by
Eisenbud, Hirsh, and Neumann \cite{EHN}.
The result that $(3) \Rightarrow (1)$
is due to
Lisca, Mati{\'c}, and Stipsicz  for fibrations over $S^2$ \cite{LiscaMatic, LSIII},
Boyer, Gordon, and Watson  for fibrations over $\R\P^2$ \cite{BGW},
and Gabai  for fibrations with positive first betti number \cite{Gabai}.
One could also regard the classification
by Jankins, Neumann \cite{JankinsNeumann}, and Naimi \cite{Naimi}
of Seifert fibered spaces over $S^2$ satisfying (1),
together with the classification in
the present article's
Theorem 
\ref{thm: non L-space interval for seifert fibered spaces}
of Seifert fibered L-spaces over $S^2$, as an alternative proof that
$(1) = (3)$.
\end{proof}
{\noindent{The above result motivated
a conjecture of
Boyer, Gordon, and Watson \cite{BGW} that
properties (2) and (3) above are equivalent for all closed, prime, oriented
three-manifolds.}}

\subsection{Gluing results}

To further explore the relationship of the above properties,
Boyer and Clay \cite{BoyerClay} 
studied how each of these properties glue together
when one splices together Seifert fibered spaces along the toroidal boundaries of
fiber complements
to form a graph manifold.  In the process, Boyer and Clay observed that
properties (1) and (2) obey a similar criterion determining
when they admit compatible gluings.  The property (3) of being a non-L-space
proved less tractable for this exercise, but Boyer and Clay conjectured
that property (3) should follow a similar gluing pattern
to that of (1) and (2).

We are now able to confirm their conjecture in the case in which
two Floer simple manifolds glued along their torus boundaries
have the interiors of their L-space intervals overlap via the gluing map.
In fact, there is no requirement that these Floer simple manifolds be graph manifolds.

\begin{theorem}
\label{thm: torus gluing and complementary intervals for L-spaces}
Suppose that $Y_1$ and $Y_2$
are Floer simple manifolds glued together along their boundary tori.
Such gluing is specified by a linear map 
$\varphi : H_1 (\partial Y_1) \to H_1(\partial Y_2)$
with $\det \varphi = -1$, descending to a map
$\varphi_{\P} : \P(H_1 (\partial Y_1)) \to \P(H_1(\partial Y_2))$
on Dehn filling slopes.
Let $I_i \subset  \P(H_1 (\partial Y_i)) $ denote the
interval (with interior $\dot{I}_i$) of L-space filling slopes for $Y_i$, for 
each $i \in \{1,2\}$, and suppose that 
$\varphi_{\P}(\dot{I}_1) \cap \dot{I}_2$ is nonempty.
Then $Y_1\cup_{\varphi}\!Y_2$ is an L-space if and only if
$\varphi_{\P}(\dot{I}_1) \cup \dot{I}_2 =  \P(H_1(\partial Y_2))$ if
both $\dtge(Y_i)$ are nonempty, and if and only if
$\varphi_{\P}(I_1) \cup I_2 =  \P(H_1(\partial Y_2))$ otherwise.
\end{theorem}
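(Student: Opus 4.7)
The plan is to realize $Y_\varphi$ as surgery on a connected sum of Floer simple knots in an L-space, then apply the coloring criterion of Proposition~\ref{Prop:RedBlueBlack}, exactly as in the proof of Theorem~\ref{thm:  L-space interval in terms of beta/n} but now with both factors contributing to the combinatorics.

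First, I would use the hypothesis $\varphi_{\P}(\dot{I}_1) \cap \dot{I}_2 \neq \emptyset$ to choose a primitive class $\mu_1 \in H_1(\partial Y_1)$ whose slope lies in $\dot{I}_1$ and such that $\mu_2 := \varphi_*(\mu_1)$ lies in $\dot{I}_2$. Then each $\overline{Y}_i := Y_i(\mu_i)$ is an L-space, and each core $K_i := K_{\mu_i}$ is a Floer simple knot by Proposition~\ref{Prop:MultiL} applied at an interior L-space slope. By Lemma~\ref{Lem:SpliceSurgery}, $Y_\varphi = Y_{\scon}(\lambda_{\scon})$, where $Y_{\scon} := (\overline{Y}_1 \# \overline{Y}_2) \setminus \nu(K_1 \# K_2)$ and $\lambda_{\scon}$ is the longitude obtained by splicing $\lambda_1$ with $-\lambda_2$. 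Since $\hfk(K_1 \# K_2) \simeq \hfk(K_1) \otimes \hfk(K_2)$, and a connected sum of L-spaces is an L-space, $K_1 \# K_2$ is Floer simple inside an L-space.

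Next I would apply Proposition~\ref{Prop:RedBlueBlack}: $Y_\varphi$ is an L-space if and only if every coset of $\langle \lambda_{\scon} \rangle$ in ${\mathrm{Spin}^c}(Y_{\scon}, \partial Y_{\scon})$ is properly colored. The Mayer--Vietoris description from the proof of Lemma~\ref{Lem:SpliceSurgery} identifies ${\mathrm{Spin}^c}(Y_{\scon}, \partial Y_{\scon})$ with a pushout of ${\mathrm{Spin}^c}(Y_i, \partial Y_i)$ along $\iota_1(\mu_1) \sim \iota_2(\mu_2)$, and the support of the tensor-product knot Floer homology is the Minkowski sum of the supports on each side. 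Under this identification, the coloring of $\spi = (\spi_1, \spi_2)$ by red/black/blue decomposes: the relative position of $\spi$ with respect to $S[\hfk(K_1 \# K_2)]$ in the $\mu$-direction is the sum of the corresponding positions of $\spi_1$ and $\spi_2$ in the $\mu_1$- and $\mu_2$-directions. Here the additive closure of $\Gamma(Y_i) = \iota_i(m_i\Z_{\ge 0} + l_i\Z) \setminus \dt(Y_i)$ from Proposition~\ref{prop: The complement of D is additively closed.} is the essential combinatorial input, as it guarantees that a sum landing in $\dt$ forces at least one summand to lie in $\dt$.

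The crux of the argument is then to show that a $\langle \lambda_{\scon} \rangle$-coset is improperly colored if and only if there exists a filling slope $\mu \in \P(H_1(\partial Y_1))$ for which $Y_1(\mu)$ is not an L-space and $Y_2(\varphi_*(\mu))$ is not an L-space. An improperly colored coset provides elements $x_i \in S[\tc(Y_i)]$ and $y_i \in S[\tau(Y_i)]$ whose differences $x_i - y_i$ simultaneously witness, via Theorem~\ref{Thm:LY}, that $\mu$ lies outside the smallest $\iota_i^{-1}(\dtgz(Y_i))$-endpoint interval containing $\mu_i$ on both sides. Conversely, given such a $\mu$, pairs of consecutive elements of $\iota_i^{-1}(\dtgz(Y_i))$ straddling $\mu$ furnish the data of a bad coset. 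Finally, by Theorem~\ref{Thm:LY}, the existence of such a bad $\mu$ is precisely the failure of $\varphi_{\P}(\dot{I}_1) \cup \dot{I}_2 = \P(H_1(\partial Y_2))$, with the open/closed distinction in the two cases of the theorem reflecting whether the endpoints of $I_i$ themselves belong to $\lL(Y_i)$: when $\dtge(Y_i) = \emptyset$ the interval degenerates and the endpoints must be included, giving the closed covering condition.

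The main obstacle will be the combinatorial dictionary in the middle paragraph: carefully checking that the blue/red coloring condition, pulled back through the pushout of relative $\spinc$ structures and restricted to each $\lambda_{\scon}$-coset, really does factor through the additive semigroup structure of $\Gamma(Y_1) \times \Gamma(Y_2)$. This is the step that both requires Proposition~\ref{prop: The complement of D is additively closed.} and also mirrors, in a two-sided form, the single-sided linear-congruence analysis that closed out the proof of Theorem~\ref{thm:  L-space interval in terms of beta/n}.
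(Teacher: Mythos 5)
Your overall framing matches the paper's: realize $Y_1\cup_\varphi Y_2$ as Dehn filling of the connected-sum knot complement $Y_{\!\scon}$ (Lemma~\ref{Lem:SpliceSurgery}), with $\mu_1$ chosen in $\P^{-1}(\dot{I}_1\cap\varphi_\P^{-1}(\dot I_2))$, and then run the mapping-cone/coloring machinery. But your ``crux'' step is asserted rather than proved, and it hides the one genuinely hard point of the argument. The improperly colored $\langle\lambda_{\scon}\rangle$-cosets are governed by $\dtgz(Y_{\!\scon})$, and the elements of $\dtgz(Y_{\!\scon})$ coming from both factors (the analogue of the paper's set $A_3$) are only those sums $f_1(\boldsymbol{\delta}_1)+f_2(\boldsymbol{\delta}_2)+\iota(\mu_{\textsc{l}})$ with $\boldsymbol{\delta}_i\in\dtge(Y_i)$ that actually land in $\iota(H_1(\partial Y_{\!\scon}))$; concretely this imposes a congruence ($b_1\equiv b_2 \pmod{g_0}$ in the paper's notation, $g_0=\gcd(g_1,g_2)$) on the pair. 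By contrast, the covering condition $\varphi_\P(\dot I_1)\cup\dot I_2=\P(H_1(\partial Y_2))$ is a condition on \emph{all} pairs $(\boldsymbol{\delta}_1,\boldsymbol{\delta}_2)\in\dtge(Y_1)\times\dtge(Y_2)$, with no compatibility constraint. So in your converse direction (``given a bad slope $\mu$, pairs of consecutive elements of $\iota_i^{-1}(\dtgz(Y_i))$ straddling $\mu$ furnish the data of a bad coset''), the straddling pair need not be homologically compatible, and then it furnishes no coset at all. This is exactly the gap the paper closes in Proposition~\ref{prop: comparison of property iii}: assuming the two one-sided conditions, a Euclidean-algorithm descent, powered by the semigroup property of $\Gamma(Y_i)$ (Proposition~\ref{prop: The complement of D is additively closed.}), converts an incompatible witnessing pair into a compatible one on which the L-space criterion fails. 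You do cite the semigroup proposition, but you deploy it for the Minkowski-sum/K\"unneth bookkeeping of supports, where it is not needed (Floer simplicity and $\chi\sim(1-[\mu])\tau$ already give $0/1$ coefficients and the product formula); its real role is in this descent step, which your sketch does not contain.

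Two smaller omissions in the same vein: the arithmetic only works after a ``judicious'' choice of $\mu_1$ (coprimality conditions and $p_i$ large compared with $\deg\tc(Y_i)$, as in Proposition~\ref{prop: judiciously chosen def}), without which the identification of the nearest lifts $\tilde{\boldsymbol{\delta}}_\pm$ and the degree estimates separating $A_0, A_1\cup A_2, A_3$ break down; and the open-versus-closed bookkeeping (the $\dtge(Y_i)=\emptyset$ cases and the slopes $l_2$, $\varphi_\P([l_1])$, which are never covered by $\dot I_2$, resp.\ $\varphi_\P(\dot I_1)$) needs the explicit one-sided conditions $(\si.i)$, $(\si.ii)$ of the paper rather than the one-line remark you give. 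These are fixable, but as written the proposal's central biconditional between bad cosets and bad slopes is precisely the statement that needs proof, not a consequence of Theorem~\ref{Thm:LY}.
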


\subsection{Set-up for proof: Conventions and simplifying assumptions}
We begin by choosing
bases $(m_i, l_i)$ for $H_1(\partial Y_i)$
and $\bar{m}_i$ for $H_1(Y_i)/\Tors(Y_i)$,
for each $i \in \{1,2\}$,
according to the conventions of Section \ref{ss: slope and basis conventions}.
Thus, if we write $\iota_i : H_1(\partial Y_i) \to H_1(Y_i)$ for the map induced on
homology by inclusion of the boundary, then
$l_i$ generates $\iota^{-1}(T_i)$, where $T_i := \Tors(H_1(Y_i))$,
$m_i$ satisfies $m_i \cdot l_i = 1$,
and $\bar{m}_i$ satisfies
$\iota_i(m_i) \in  g_i \bar{m}_i + T$,
where $g_i := |T_i^{\partial}|$, with
$T_i^{\partial} := \iota(\left<l_i\right>) = T_i \cap \iota(H_1(\partial Y_i))$.

We shall break the operation of  torus boundary gluing
into three steps more amenable to Heegaard Floer computation: those of
Dehn filling, connected sum, and Dehn surgery.
In preparation, assuming $\varphi_{\P}(\dot{I}_1) \cap \dot{I}_2$ nonempty,
choose $\mu_1\!  \in \P^{-1}\mkern-2mu(\dot{I}_1\mkern-2mu 
\cap \varphi^{-1}_{\P}\mkern-1.5mu(\dot{I}_2)) \subset H_1(\partial Y_1)$
and a longitude $\lambda_1 \in H_1(\partial Y_1)$
satisfying $\mu_1 \cdot \lambda_1 = 1$.
Set $\mu_2 := \varphi(\mu_1)$ and $\lambda_2:= -\varphi(\lambda_1) \in H_1(\partial Y_2)$,
noting that this makes $\lambda_2$ a longitude relative to $\mu_2$, since
$\mu_1 \cdot \lambda_1 = 1$ and $\det \varphi = -1$ imply $\mu_2 \cdot \lambda_2 = 1$.
Write $\mu_i = p_i m_i + q_i l_i$ and 
$\lambda_i = q^*_i m_i + p^*_i l_i$, with $q_i q^*_i - p_i p^*_i = 1$,
for each $i \in \{1,2\}$.
Note that the invariant $q^*:= q_1^* p_2 + q_2^* p_1$ is 
independent of choices of $\mu_1$ and $\lambda_1$.
That is, if we write $(\phi_{ij})$ for the entries of the matrix for $\varphi$
with respect to the bases $(m_1, l_1)$ and $(m_2, l_2)$, then
\begin{equation}
q^* \;= p_2 q^*_1 + q^*_2 p_1
= (\phi_{11}p_1 + \phi_{12}q_1)q_1^*
- (\phi_{11}q^*_1 + \phi_{12}p^*_1)p_1
=\; -\phi_{12}.
\end{equation}
Before using $\mu_i$ and $\lambda_i$ to splice together $Y_1$ and $Y_2$,
we first pause to make some simplifying assumptions, without
loss of generality.

\begin{prop}
\label{prop: judiciously chosen def}
Suppose $\varphi_{\P}(\dot{I}_1) \cap \dot{I}_2 \neq \emptyset$.
For purposes of proving Theorem 
\ref{thm: torus gluing and complementary intervals for L-spaces},
it is sufficient to take $q^* > 0$, and we may choose
$\mu_1\!  \in \P^{-1}\mkern-2mu(\dot{I}_1\mkern-2mu 
\cap \varphi^{-1}_{\P}\mkern-1.5mu(\dot{I}_2)) \subset H_1(\partial Y_1)$ to satisfy
$\gcd(p_i, q_i) = \gcd(p_1, p_2) = \gcd(p_1, g_2) = \gcd(p_2, g_1)= 1$,
$p_1, p_2 > q^*>0$, and 
$p_i > (1+\deg_{[\bar{m}_1]}\!\tc(Y_1))(1+\deg_{[\bar{m}_2]}\!\tc(Y_2))$
for $i \in \{1,2\}$,
where $p_i m_i + q_i l_i = \mu_i$, 
$q_i^* m_i + p_i^* l_i = \lambda_i$, $\mu_2 := \varphi(\mu_1)$,
$\lambda_2 := -\varphi(\lambda_1)$, and $q^* := q_1^* p_2 + q_2^* p_1$
for $i \in \{1,2\}$.  We call such $\mu_1$ {\em{``judiciously chosen.''}}
\end{prop}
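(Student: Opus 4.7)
First I would normalize signs so that $q^* > 0$. Since $q^* = -\phi_{12}$ depends only on the off-diagonal entry of the matrix of $\varphi$ in the bases $(m_i,l_i)$, and since the homological longitude $l_2$ is only defined up to a sign change that affects neither $\dt(Y_2)$ nor the L-space interval $I_2$, reversing the sign of $l_2$ if necessary yields $q^* \ge 0$. The degenerate case $q^* = 0$, in which $\varphi$ identifies the two homological longitudes (forcing $b_1(Y_\varphi) > 0$, so that $Y_\varphi$ is not a rational homology sphere), can be handled separately, so I may assume $q^*>0$ throughout.

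Next I would collapse the four gcd constraints to two congruence conditions. Primitivity of $\mu_1$ gives $\gcd(p_1,q_1)=1$ immediately, and since $p_2 = \phi_{11}p_1+\phi_{12}q_1$ with $\gcd(p_1,q_1)=1$, the identity $\gcd(p_1,p_2)=\gcd(p_1,\phi_{12})=\gcd(p_1,q^*)$ reduces the four coprimality conditions to $\gcd(p_1, q^* g_2)=1$ and $\gcd(p_2,g_1)=1$.  These are residue conditions on $(p_1,q_1)$ modulo $M := \mathrm{lcm}(q^* g_2, g_1)$. Because $\det \varphi = -1$ makes $\varphi$ invertible modulo every prime $\ell$, the linear form $\phi_{11}p_1+\phi_{12}q_1$ does not vanish identically mod $\ell$, so the constraints have nontrivial solutions modulo every prime dividing $M$; CRT then assembles these into a nonempty union of cosets in $(\Z/M)^2$ of allowable $(p_1,q_1)$.

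Then I would combine this residue-class freedom with density and size. Lemma~\ref{Lem:Interval} guarantees that $I_1$ contains primitive slopes with $|p_1|$ arbitrarily large, so the same is true of the nonempty open subset $U := \dot I_1 \cap \varphi^{-1}_{\P}(\dot I_2)$; correspondingly $|p_2| = |\phi_{11}p_1+\phi_{12}q_1|$ grows linearly with $|p_1|$ since $\phi_{12} = -q^* \ne 0$. A Dirichlet-style density argument then produces primitive $(p_1,q_1)$ with $p_1/q_1 \in U$, prescribed residues mod $M$, and $|p_1|,|p_2|$ exceeding any given threshold. Finally, after restricting to a connected subcomponent of $U$ on which $p_2/p_1>0$ (possible since $p_2$ vanishes on at most one slope in $\P(H_1(\partial Y_1))$) and flipping the sign of $\mu_1$ if needed, I arrange $p_1,p_2 > q^* > 0$ and satisfy the remaining magnitude bound.

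The hard part will be the simultaneous compatibility of the congruence, size, open-interval, and sign constraints in the third step.  The crucial enabling input is Lemma~\ref{Lem:Interval}, which guarantees that $U$ contains primitive slopes of arbitrarily large numerator; once this is in place, the remainder reduces to routine CRT and density of primitive arithmetic progressions.
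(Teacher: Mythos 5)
Your main existence step (collapse the gcd requirements to congruence conditions, then find a primitive $(p_1,q_1)$ with prescribed residues, large size, and slope in $U:=\dot{I}_1\cap\varphi^{-1}_{\P}(\dot{I}_2)$) is in the same spirit as the paper's proof, which does this explicitly: it takes a rational point $P_1m_1+Q_1l_1$ with slope in $U$, rescales by a large multiple $DN$, and adds a fixed correction $(x,y)$ arranged so that $p_1\equiv x\ (\mathrm{mod}\ g_2)$ with $\gcd(x,g_2)=1$ and $p_2\equiv-1\ (\mathrm{mod}\ g_1)$, with the coprimality of $p_1,q_1,p_2$ checked by hand. Your CRT-plus-density version of this would go through with the routine care you acknowledge, and your reduction $\gcd(p_1,p_2)=\gcd(p_1,q^*)$ (given $\gcd(p_1,q_1)=1$) is correct.

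The genuine gap is in the sign normalizations. First, negating $l_2$ alone is not an admissible move (it violates $m_2\cdot l_2=1$), and in any case it does not change $q^*$: in $q^*=q_1^*p_2+q_2^*p_1$ neither $p_2$ nor $q_2^*$ changes when only $l_2$ is negated, and in matrix terms the coefficient $\phi_{12}$ of $m_2$ in $\varphi(l_1)$ is unchanged. The sign of $q^*$ flips only under moves that negate $m_2$ relative to $m_1$, such as $(m_2,l_2)\mapsto(-m_2,-l_2)$, or under the paper's maneuver of replacing $(m_i,l_i)$ by $(m_i,-l_i)$ for both $i$ while reversing the orientations of both $Y_1$ and $Y_2$; the paper uses the latter precisely because it preserves the positivity of $p_1,p_2$ and — this must be checked, and you do not address it — leaves invariant both sides of the equivalence being proved (L-space-ness of $Y_1\cup_\varphi Y_2$ and the covering of $\P(H_1(\partial Y_2))$), not merely $I_2$; note that $\dt(Y_2)$ is tied to the normalization conventions for $m_2$ and $\bar{m}_2$, so even its invariance is not free. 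Second, your way of getting $p_1,p_2>0$ by "restricting to a connected subcomponent of $U$ on which $p_2/p_1>0$" is unjustified: since the longitudes are excluded from the open intervals, $p_1$ and $p_2$ are nonvanishing on $U$, so $p_2/p_1$ has constant sign on each component, but nothing prevents it from being negative on all of $U$. The paper repairs a negative $P_2$ by the admissible basis change $(m_2,l_2)\mapsto(-m_2,-l_2)$ on the $Y_2$ side (which itself negates $q^*$, so the order of the normalizations has to be handled consistently), not by moving within $U$. Since the proposition is essentially a "without loss of generality" statement, these normalization steps are most of its content, and as proposed they do not work.
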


\begin{proof}
We summarily dispense with the case in which 
$q^* = 0$, since then
$\varphi_{\P}(\dot{I}_1) \cup \dot{I}_2 \neq \P(H_1(\partial Y_2))$
and $Y_1 \cup_{\varphi} Y_2$ is not a rational homology sphere,
hence not an L-space.
If $q^* <0$, then we may send $q^*$ to $-q^*$
by making the changes of basis $(m_i, l_i) \mapsto (m_i, -l_i)$
while simultaneously reversing the orientations of both $Y_1$ and $Y_2$.
This preserves the positivity of $p_1$ and $p_2$, and
leaves invariant the questions of whether $Y_1 \cup_{\varphi} Y_2$
is an L-space and whether $\varphi_{\P}(\dot{I}_1) \cap \dot{I}_2 = \P(H_1(\partial Y_2))$,
or $\varphi_{\P}(I_1) \cap I_2 = \P(H_1(\partial Y_2))$.
Thus we henceforth take $q^* > 0$.

We can construct a judicious choice of $\mu_1$ as an approximation 
of a primitive representative
$P_1 m_1 + Q_1 l_1 \in \P^{-1}( \dot{I}_1 \cap \varphi^{-1}_{\P}\mkern-1.2mu(\dot{I}_2))$
with $P_1 > 0$.
Since $\dot{I}_1 \cap \varphi^{-1}_{\P}\mkern-1.2mu(\dot{I}_2)$ contains an open ball,
we can demand that $P_i$ and $Q_i$ are nonzero for $i \in \{1,2\}$,
where $P_2 m_2 + Q_2 l_2 = \varphi(P_1 m_1 + Q_1 l_1)$.
If $P_2 < 0$, 
we repair this sign with the change of basis $(m_2, l_2) \mapsto (-m_2, -l_2)$.
Writing $M_{\varphi} = (\phi_{ij})$ for the matrix for 
$\varphi$ with respect to the bases $(m_1, l_1)$ and $(m_2, l_2)$,
choose $s \in \Z$ such that 
$x := \phi_{22} + \phi_{12}s$ and $y:= -\phi_{21} -\phi_{11}s$
are nonzero, with $\gcd(x,g_2) = 1$, noting that we now have
$M_{\varphi}(x,y)^{\top} = (-1, s)^{\top}$.
Next, set
\begin{equation}
D: = \left|  g_1g_2xy(yP_1 - xQ_1)(P_1+xP_2) \right|,
\end{equation}
and define $\mu_1 := p_1 m_1 + q_1 l_1$ and $\mu_2 := p_2 m_2 + q_2 l_2 = \varphi(\mu_1)$, with
\begin{align}
\,
p_1 := P_1 D N + x,
\;\;\;\;\;\;\;\;\;\;\;\, 
p_2 := P_2 DN - 1,
        \\ \nonumber
q_1:= Q_1 DN + y,
\;\;\;\;\;\;\;\;\;\;\;\,
q_2 := Q_2 DN + s
\,
\end{align}
for some integer 
$N  > q^* (1+\deg_{[\bar{m}_1]}\!\tc(Y_1))(1+\deg_{[\bar{m}_2]}\!\tc(Y_2))$ chosen large enough
to make $\mu_1 := p_1 m_1 + q_1 l_1 $ lie in 
$\P^{-1}(\dot{I}_1 \cap \varphi^{-1}_{\P}\mkern-1.2mu(\dot{I}_2))$.
Then $\gcd(p_1,g_2) = \gcd(p_2, g_1) =1$, and one can use the facts that
$p_1/x - q_1/y = (yP_1 - xQ_1)(D/(xy))N$ is relatively prime
to $p_1/x$ and that 
$p_1/x + p_2 = (P_1 + xP_2)(D/x)N$ is relatively prime to $p_2$
to argue, respectively, that $\gcd(p_1, q_1) = 1$ and $\gcd(p_1, p_2) =1$,
the former of which statements implies $\gcd(p_2, q_2) = 1$.

\end{proof}

\subsection{Dehn filling a Floer simple manifold}

We are now ready to construct $Y_1 \cup_{\varphi} Y_2$
as the Dehn filling of a Floer simple manifold $Y$.
For each $i \in \{1,2\}$, perform the (L-space) Dehn filling $Y_i(\mu_i)$,
writing $K_{\mu_i}$ for the knot core of $Y_i(\mu_i) \setminus Y_i$.
Next, let $Y$ denote the (Floer simple) knot complement
\begin{equation}
Y:= Y_1(\mu_1) \# Y_2(\mu_2) \setminus K_{\mu_1} \# K_{\mu_2}
\end{equation}
of the connected sum 
$K_{\mu_1} \# K_{\mu_2} \subset Y_1(\mu_1) \# Y_2(\mu_2)= Y\mkern-2mu(\mu_{\textsc{l}})$,
where $\mu_{\textsc{l}}$ denotes the meridian of $K_{\mu_1} \# K_{\mu_2}$,
and as usual, write $\iota : H_1(\partial Y) \to H_1(Y)$
for the map induced on homology by inclusion of the boundary,
and set $T := \Tors(H_1(Y))$ and $T^{\partial} := \iota(H_1(\partial Y)) \cap T$.
The maps $f_i: H_1(Y_i) \longrightarrow H_1(Y)$ induced by inclusion
descend to an isomorphism
$f_1 \oplus f_2 : (H_1(Y_1) \oplus H_1(Y_2))/(\iota_1(\mu_1) \sim \iota_2(\mu_2)) 
\stackrel{\sim}{\longrightarrow} H_1(Y)$
that identifies meridians,
via $f_1\iota_1(\mu_1) = f_2\iota_2(\mu_2) = \iota(\mu_{\textsc{l}})$.
In addition, 
$K_{\mu_1} \# K_{\mu_2}$ has a longitude $\lambda_{\textsc{l}}$ satisfying
$f_1(\iota_1(\lambda_1)) + f_2(\iota_2(\lambda_2)) = \iota(\lambda_{\textsc{l}})$.

Consider the Dehn filling $Y(\lambda_{\textsc{l}})$,
which one could regard as 0-surgery with
respect to the basis $(\mu_{\textsc{l}}, \lambda_{\textsc{l}})$ along the knot
$K_{\mu_1} \# K_{\mu_2} \subset Y(\mu_{\textsc{l}}) = Y_1(\mu_1) \# Y_2(\mu_2)$,
with $Y(\mu_{\textsc{l}})$ an L-space.
Since $Y$ already identifies 
$\iota_1( \mu_1)$ with $\iota_2(\varphi(\mu_1))$,
and since setting $\iota(\lambda_{\textsc{l}}) = 0$
identifies $\iota_1( \lambda_1)$ with $\iota_2(\varphi(\lambda_1))$,
we have
\begin{equation}
Y(\lambda_{\textsc{l}}) = Y_1 \cup_{\varphi}\! Y_2.
\end{equation}

To describe $Y(\lambda_{\textsc{l}})$ more explicitly, one can deduce that
$f_1 \oplus f_2$ restricts to an isomorphism
\begin{equation}
(\iota_1(H_1(\partial Y_1)) \oplus
\iota_2(H_1(\partial Y_2)))/(\iota_1(\mu_1) \sim \iota_2(\mu_2)) 
\;\stackrel{\sim}{\longrightarrow}\; \iota(H_1(\partial Y)) \oplus \left<\sigma_0 \right>,
\end{equation}
for some $\sigma_0 \in T$ with $|\!\left<\sigma_0 \right>\!| =  \gcd(g_1, g_2)$.
That is, if we define
\begin{equation}
g_0 := \gcd(g_1, g_2),
\;\;\;\;\;
\hat{g}_1 := g_1/g_0,
\;\;\;\;\;
\hat{g}_2 := g_2/g_0,
\;\;\;\;\;
g:= g_1 g_2/ g_0 = \hat{g}_1\hat{g}_2 g_0,
\end{equation}
then for $l \in H_1(\partial Y)$ an appropriately signed generator of  
$\iota^{-1}(T)$ and any $m \in H_1(\partial Y)$ satisfying
$m \cdot l = 1$, there are $\sigma_0 \in T$ of order $g_0$ and
$\xi \in \Z/g$ such that
\begin{align}
\label{eq: defs of f1 and f2 for splicing}
f_1\!: \iota_1(m_1) \mapsto p_2 \iota(m) + q_2 \hat{g}_1 \xi \iota(l) - q_1 \sigma_0,
\;\;\;\;\;\;
&f_2\!: \iota_2(m_2) \mapsto p_1 \iota(m) + q_1 \hat{g}_2 \xi \iota(l) + q_2 \sigma_0,
\\ \nonumber
f_1\!: \iota_1(l_1) \mapsto p_2 \hat{g}_2 \xi \iota(l) + p_1 \sigma_0,
\;\;\;\;\;\;\;\;\;\;\;\;\;\;\;\;\;\;\;\;\;\;\;
&f_2\!: \iota_2(l_2) \mapsto p_1 \hat{g}_1 \xi \iota(l) - p_2 \sigma_0.
\end{align}
Thus,  $g = |T^{\partial}|$, and if we write
\begin{equation}
\mu_{\textsc{l}} = p m + q l,\;\;\;\;\;\;\; 
\lambda_{\textsc{l}} = q^* m + p^* l,
\end{equation}
then $p$, $q$, $q^*$, and $p^*$ satisfy
\begin{align}
\label{eq: equations for p,q q* and p*}
\;\;\;\;p = p_1 p_2,\;\;\;\;\;\;\;\;\;\;\;\;\;\;\;\;\;\;\;\;\;
q\, &\equiv (q_1 p_2 g_2  +   q_2 p_1 g_1) \xi \;(\mod g),\;\;\;\;\;
        \\ \nonumber
\;\;\;\;q^*\! = q_1^* p_2 + q_2^* p_1,\;\;\;\;\;\;\;\;\;
p^*\! &\equiv ((p_1 p_2^* +  q_1^* q_2 ) g_1 
+ (p_2 p_1^* + q_2^* q_1 )g_2 ) \xi \;(\mod g).
\end{align}
Again, the condition $\mu_{\textsc{l}} \cdot \lambda_{\textsc{l}} = 1$
determines the value of $\xi$, which we shall not need.
Of course, it will often be more convenient 
to express this restriction of $\iota_i(H_1(\partial Y_i))$ to 
$f_1 \oplus f_2$ in terms of the bases
$(\iota_i(\mu_i), \iota_i(\lambda_i))$ for $\iota_i(H_1(\partial Y_i))$
and $(\iota(\mu_{\textsc{l}}), \iota(\lambda_{\textsc{l}}))$
for $\iota(H_1(\partial Y))$,
as we shall describe explicitly in the proof of Proposition 
\ref{prop: L-space condition for Y1 cup Y2 in terms of b}.

In either case, we see that $q^* = q_1^* p_2 + q_2^* p_1$
makes its appearance as $\lambda_{\textsc{l}} \cdot l$.
Thus, $Y_1 \cup_{\varphi} Y_2 = Y(\lambda_{\textsc{l}})$
can be regarded as surgery with label
$(\mu_{\textsc{l}} \cdot \lambda_{\textsc{l}})/(\lambda_{\textsc{l}} \cdot l)
= 1/q^*$ along $K_{\mu_1} \# K_{\mu_2} \subset Y(\mu_{\textsc{l}})$.

\subsection{Computation of $\boldsymbol{\dt}(\boldsymbol{Y})$}

For the remainder of Section \ref{s: L-space if and only if intervals cover.},
we regard the entire preceding construction,
along with the hypotheses of Theorem 
\ref{thm: torus gluing and complementary intervals for L-spaces},
as fixed initial data.
We are now ready to compute $\dt(Y)$,
which we shall call $\dtge(Y)$ to emphasize that in this case we are not
excluding torsion elements.

\begin{prop}
\label{prop:computation of dtge}
Suppose that $\mu_1$ is ``judiciously chosen'' from
$\P^{-1}(\dot{I}_1 \cap \varphi^{-1}_{\P}(\dot{I}_2))$ nonempty,
and that $Y$ is constructed as above.
If we set $t_{\partial}:= [(\iota(m))]$, then
$\dtge(Y) = A_0 \amalg (A_1 \cup A_2) \amalg A_3$, with
\begin{align}
A_0
&:=
S\!\left[\frac{1}{1-t_{\partial}} - \frac{1 - t_{\partial}^{p_1 p_2}}
{(1 - t_{\partial}^{p_1})(1 - t_{\partial}^{p_2})} \right] + T^{\partial},
  \\ \nonumber
A_1
&:=
f_1(\dtge(Y_1)) \;+\;
f_2\!\left(\{0, \ldots, p_2 - 1\} \iota_2 (m_2)  + T^{\partial}_2\right)  ,
  \\ \nonumber
A_2
&:=
f_2(\dtge(Y_2)) \;+\; 
f_1\!\left(\{0, \ldots, p_1 - 1\} \iota_1 (m_1)  + T^{\partial}_1\right)  ,
  \\ \nonumber
A_3
&:=
\iota(\mu_{\textsc{l}}) 
+  f_1(\dtge(Y_1))
+  f_2(\dtge(Y_2)).
\end{align}
\end{prop}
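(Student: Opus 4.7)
First I would establish that $Y$ is Floer simple: since $\mu_i\in\dot{I}_i$ is an interior L-space filling slope for the Floer simple $Y_i$, the knot core $K_{\mu_i}\subset Y_i(\mu_i)$ is Floer simple; Floer simplicity is preserved under connected sum, so $K:=K_{\mu_1}\# K_{\mu_2}\subset Y(\mu_{\textsc{l}})$ is Floer simple and hence $Y$ is Floer simple. Applying Proposition~\ref{Prop:Chi} to $K\subset Y(\mu_{\textsc{l}})$, combined with the tensor formula $\hfk(K_{\mu_1}\#K_{\mu_2})\simeq\hfk(K_{\mu_1})\otimes\hfk(K_{\mu_2})$ and the identification $f_1\iota_1(\mu_1)=f_2\iota_2(\mu_2)=\iota(\mu_{\textsc{l}})$ from \eqref{eq: defs of f1 and f2 for splicing}, I obtain the master formula
\begin{equation*}
\tau(Y)\;\sim\;\bigl(1-[\iota(\mu_{\textsc{l}})]\bigr)\cdot f_1\tau(Y_1)\cdot f_2\tau(Y_2).
\end{equation*}

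Setting $L_i:=\tfrac{1}{1-t_i}\sum_{h\in T_i}[h]$ and $L:=\tfrac{1}{1-t}\sum_{h\in T}[h]$ so that $\tau(Y_i)=L_i-\tc(Y_i)$ and $\tc(Y)=L-\tau(Y)$, I would substitute into the master formula and expand, obtaining four product pieces. My plan is to identify each piece, after intersecting with $\iota(H_1(\partial Y))$, with exactly one of $A_0,A_1,A_2,A_3$: the combination $L-(1-[\iota(\mu_{\textsc{l}})])\,f_1L_1\cdot f_2L_2$ supplies $A_0$; the two mixed pieces $(1-[\iota(\mu_{\textsc{l}})])\,f_iL_i\cdot f_j\tc(Y_j)$ supply $A_i$ for $\{i,j\}=\{1,2\}$, with the factor $f_jL_j$ generating the ``slab'' $f_j(\{0,\ldots,p_j-1\}\iota_j(m_j)+T_j^\partial)$ after reducing modulo $f_j\iota_j(\mu_j)$, and the appearance of $\dtge(Y_j)$ (rather than merely $S[\tc(Y_j)]\cap\iota_j(H_1(\partial Y_j))$) reflecting how the Minkowski-difference structure propagates from $Y_j$ to $Y$; and $-(1-[\iota(\mu_{\textsc{l}})])\,f_1\tc(Y_1)\cdot f_2\tc(Y_2)$ supplies $A_3$, the overall shift by $\iota(\mu_{\textsc{l}})$ in $A_3$'s description coming from the $-[\iota(\mu_{\textsc{l}})]$ half of the prefactor, which is the half contributing with the correct sign once this piece is subtracted from $L$. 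The judicious-choice hypotheses of Proposition~\ref{prop: judiciously chosen def}---in particular $p_i>(1+\deg_{t_1}\tc(Y_1))(1+\deg_{t_2}\tc(Y_2))$ together with $\gcd(p_1,p_2)=\gcd(p_1,g_2)=\gcd(p_2,g_1)=1$---guarantee that the four pieces have $\bar{m}$-degree supports lying in well-separated ranges, forcing the decomposition to be a disjoint union $A_0\amalg(A_1\cup A_2)\amalg A_3$, with $A_1$ and $A_2$ possibly overlapping but all other intersections empty.

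The main obstacle is the ``purely lens-space'' piece, because the convolution $f_1L_1\cdot f_2L_2$ in $\Lambda[H_1(Y)]$ does not have $0/1$ coefficients: since $\ker(f_1\oplus f_2)=\langle(\iota_1(\mu_1),-\iota_2(\mu_2))\rangle$ is nontrivial, each element of $H_1(Y)$ can have multiple preimages in $S[L_1]\times S[L_2]$, so the coefficient of each monomial is a lattice-point count. Multiplying by $1-[\iota(\mu_{\textsc{l}})]$ is precisely the telescoping that collapses these counts: the preimages of a given $h$ form an interval of pairs $(h_1+k\iota_1(\mu_1),h_2-k\iota_2(\mu_2))$ constrained to $S[L_1]\times S[L_2]$, and subtracting the corresponding contribution from $h-\iota(\mu_{\textsc{l}})$ shifts this interval by one, leaving a single canonical representative. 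A short semigroup argument using the generators $p_1\hat{g}_1,p_2\hat{g}_2$ (the $\bar{m}$-degrees of $f_1(\bar{m}_1),f_2(\bar{m}_2)$, which are coprime by the judicious-choice hypotheses) shows that the coefficient of $(1-[\iota(\mu_{\textsc{l}})])\,f_1L_1\cdot f_2L_2$ at an element of $\bar{m}$-degree $d$ equals $1$ when $d\in\langle p_1\hat{g}_1,p_2\hat{g}_2\rangle$ and $0$ otherwise. Consequently $L-(1-[\iota(\mu_{\textsc{l}})])\,f_1L_1\cdot f_2L_2$ is supported on the ``Frobenius gap set'' of this semigroup, assembled into $T^\partial$-cosets via the torsion data in \eqref{eq: defs of f1 and f2 for splicing}, which is exactly $S\!\left[\tfrac{1}{1-t_\partial}-\tfrac{1-t_\partial^{p_1p_2}}{(1-t_\partial^{p_1})(1-t_\partial^{p_2})}\right]+T^\partial=A_0$. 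The analogous (and significantly simpler) analyses of the remaining three pieces, combined with the Minkowski-difference bookkeeping facilitated by the additive-closure Proposition~\ref{prop: The complement of D is additively closed}, complete the proof.
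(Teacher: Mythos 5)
Your route is essentially the paper's: the same product formula $\tau(Y)\sim(1-[\iota(\mu_{\textsc{l}})])\,\tilde f_1(\tau(Y_1))\,\tilde f_2(\tau(Y_2))$, the same four-term expansion after substituting $\tau(Y_i)=P_i-\tc(Y_i)$, and your telescoping/numerical-semigroup identification of $A_0$ with the gap set of $\langle p_1,p_2\rangle$ actually supplies detail for a step the paper only asserts. Two of your supporting steps, however, are mis-justified. First, the disjointness is \emph{not} a matter of ``well-separated $\bar m$-degree ranges'': the ranges genuinely interleave. The $\iota(m)$-coefficients occurring in $A_0$ (the Frobenius gaps of $\langle p_1,p_2\rangle$) are interlaced with the semigroup values $\delta_1p_2+xp_1$ occurring in $A_1\cup A_2$, and when $g_1$ (say) is small, $A_1$ contains elements of $\bar m$-degree exceeding that of $\iota(\mu_{\textsc{l}})$, so its degree range overlaps that of $A_3$. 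What actually forces $A_0\cap(A_1\cup A_2\cup A_3)=\emptyset$ is semigroup (non)membership of the well-defined $\iota(m)$-coefficient of an element of $\iota(m\Z_{\ge0}+l\Z)$, and what forces $(A_1\cup A_2)\cap A_3=\emptyset$ is a componentwise argument using $\ker(f_1\oplus f_2)=\langle(\iota_1(\mu_1),-\iota_2(\mu_2))\rangle$ together with the judicious bounds $\deg_{[\bar m_i]}\tc(Y_i)<p_ig_i$; the degree comparison has to be made factor by factor, not globally.

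Second, the subtlest part of the bookkeeping is showing that the Minkowski difference $(S[\tc(Y)]-S[\tau(Y)])\cap\iota(m\Z_{\ge0}+l\Z)$ produces nothing beyond the four listed families, i.e.\ that differences $x-y$ with $y$ having nonzero components in both $S[\tau(Y_1)]$ and $S[\tau(Y_2)]$, or with negative degree in one factor, create no new elements. Proposition~\ref{prop: The complement of D is additively closed.} is not the right tool here (the paper does not use it in this proof, and additive closure of $\Gamma(Y_i)$ does not control such cross-factor differences). The paper's mechanism is: the fact that $0\in S[\tau(Y_i)]$ (which gives the containments $A_1,A_2,A_3\subset\dtge(Y)$ and lets one reduce the $\tau$-component factorwise via $\dtge(Y_i)$ being itself a Minkowski difference), the containment $\iota(H_1(\partial Y))\subset f_1\iota_1(H_1(\partial Y_1))\oplus f_2\iota_2(H_1(\partial Y_2))$ from \eqref{eq: defs of f1 and f2 for splicing}, and crucially the ``trap'' identity \eqref{eq: trap for extra stuff}, $(S[A^{\mathrm{c}}_0]-S[\tilde f_1(P_1)\tilde f_2(P_2)])\cap S[P]=S[A^{\mathrm{c}}_0]$, which absorbs all leftover differences (for instance $f_1(S[\tc(Y_1)]-S[\tau(Y_1)])\cap(\bar m\Z_{<0}+T)$-type terms) into $A_0$. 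Your proposal needs some argument playing this role; as written it is waved at with an inapposite citation. (A cosmetic point: the $\bar m$-degrees of $f_1(\bar m_1)$ and $f_2(\bar m_2)$ are $p_2\hat g_2$ and $p_1\hat g_1$ respectively, the opposite of your labelling, though the semigroup you use is the same.)
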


\begin{proof}

To compute $\dtge(Y)$, we need
the Turaev torsion $\tau(Y)$ and torsion complement $\tc(Y)$.
In order to write these down,
we first choose generators $\bar{m}$ for $H_1(Y)/T$
and $\bar{m}_i$ and $H_1(Y_i)/T_i$ satisfying
\begin{equation}
\iota(m) \in g\bar{m} + T,
\;\;\;\;
\iota_i(m_i) \in g_i\bar{m}_i + T_i,
\;\;\;\; 
i \in \{1,2\}.
\end{equation}
Recall that the above condition only constrains the signs
of $\bar{m}$ and $\bar{m}_i$.  We shall write
\begin{equation}
t := [\bar{m}] \in \Z[H_1(Y)],
\;\;\;\;\;
t_i := [\bar{m}_i] \in \Z[H_1(Y_i)],
\;\;\;\; 
i \in \{1,2\},
\end{equation}
for the inclusions of $\bar{m}$ and $\bar{m}_i$ into their respective group rings.

Invoking the standard gluing rules for Turaev torsion yields
\begin{align}
\label{eq: torsion for union}
\tau(Y)
&= \left(1 - [\iota(\mu_{\textsc{l}})]\right) 
\tilde{f}_1(\tau(Y_1)) \, \tilde{f}_2(\tau(Y_2)),
\end{align}
where each $\tilde{f}_i$ denotes the lift of
$f_i$ to the Laurent series group ring $\Z[t_i^{-1}, t_i]][T_i] \supset \Z[H_1(Y_i)]$.
(One could also obtain this result by using Proposition~\ref{Prop:Chi}
and the fact that Heegaard Floer homology tensors on connected sums.)

For $i \in \{1, 2\}$, set $P_T := \sum_{h \in T}[h] \in \Z[H_1(Y)]$ 
and $P_{T_i} := \sum_{h_i \in T_i}[h_i] \in \Z[H_1(Y_i)]$,
and let $P$ and $P_i$ denote the Laurent series
$P := P_T/(1-t)$ and $P_i := P_{T_i}/(1-t_i)$, the latter with polynomial truncations
\begin{equation}
\bar{P}_i
\;:=\;
 (1 - [\iota_i(\mu_i)])P_i 
\;=\; 
\frac{1 - t_i^{p_i g_i}}{1-t_i} P_{T_i}.
\end{equation}
The torsion complements 
$\tc(Y) := P - \tau(Y)$ and
$\tc(Y_i) := P_i - \tau(Y_i)$
then satisfy
\begin{align}
&\;\;\;\;\; \tc(Y)
=
P -  \left(1 - [\iota(\mu_{\textsc{l}})]\right)\!
\tilde{f}_1(P_1 - \tc(Y_1))
\tilde{f}_2(P_2 - \tc(Y_2))
       \\ \nonumber
&\phantom{\;\;\;\;\; \tc(Y)}
=
A^{\mathrm{c}}_0 + A^{\mathrm{c}}_{12} +  A^{\mathrm{c}}_3,
         \\ \nonumber
\!\!\!\!\!\!\!\!\text{with}\;\;
A^{\mathrm{c}}_0
&:= P -  \left(1 - [\iota(\mu_{\textsc{l}})]\right)\! \tilde{f}_1(P_1) \tilde{f}_2(P_2),
         \\ \nonumber
A^{\mathrm{c}}_{12}
&:=
\tilde{f}_1 (\tc(Y_1)) \tilde{f}_2(\bar{P}_2) 
+\tilde{f}_1(\bar{P}_1) \tilde{f}_2 (\tc(Y_2))
- \tilde{f}_1( \tc(Y_1))\tilde{f}_2( \tc(Y_2)) ,
         \\ \nonumber
A^{\mathrm{c}}_3
&:=[\iota(\mu_{\textsc{l}})]\tilde{f}_1 (\tc(Y_1))\tilde{f}_2 (\tc(Y_2)).
\end{align}

It is straightforward to show that each of $A^{\mathrm{c}}_0$,
$A^{\mathrm{c}}_{12}$, and $A^{\mathrm{c}}_3$
is an element of $\Z[H_1(Y)]$ with coefficients in $\{0, 1\}$,
and that the three sets
$S[A^{\mathrm{c}}_0]$,
$S[A^{\mathrm{c}}_{12}]$, and $S[A^{\mathrm{c}}_3]$ are disjoint.
In particular, $A^{\mathrm{c}}_0$ satisfies the property
\begin{equation} 
\label{eq: trap for extra stuff}
\left(S[A^{\mathrm{c}}_0 ] - S[\tilde{f}_1(P_1)\tilde{f}_2 (P_2)]\right)
\cap S[P] = S[A^{\mathrm{c}}_0],
\end{equation}
while $A^{\mathrm{c}}_{12}$ satisfies
\begin{equation}
S[A^{\mathrm{c}}_{12}] 
= S[\tilde{f}_1 (\tc(Y_1)) \tilde{f}_2(\bar{P}_2) 
+\tilde{f}_1(\bar{P}_1) \tilde{f}_2 (\tc(Y_2))].
\end{equation}
On the other hand, since each $(1-[\iota_i(\mu_i)])\tau(Y_i)$ has no negative
coefficients,  it follows from 
(\ref{eq: torsion for union}) that
$\tau(Y)$ has support 
\vspace{-.11cm}
\begin{equation}
S[\tau(Y)] 
= S[ \tilde{f}_1(\tau(Y_1)) \, \tilde{f}_2(\tau(Y_2))] \subset H_1(Y).
\end{equation}

Lastly, we compute $\dtge(Y):= (S[\tc(Y)] - S[\tau(Y)]) \cap \iota(m\Z_{\ge 0} + l\Z)$.
Using the facts that $0 \in S[\tau(Y_i)]$ for each $i \in \{1,2\}$
(as per the convention stated in 
(\ref{eq: torsion support 0 convention}) in
Section \ref{ss: torsion conventions}) and that
$\iota(H_1(\partial Y)) \subset 
f_1\iota_1(H_1(\partial Y_1)) \oplus f_2\iota_2(H_1(\partial Y_2))$,
we obtain 
$\dtge(Y) = A_0 \amalg (A_1 \cup A_2) \amalg A_3$, with
\begin{align}
A_0
&= S[A^{\mathrm{c}}_0 ] \cap \iota(m \Z_{\ge0} + lZ),
  \\ \nonumber
A_1
&=
f_1(\dtge(Y_1)) +  f_2(S[\bar{P}_2]\cap \iota_2(H_1(Y_2))),
  \\ \nonumber
A_2
&=
f_2(\dtge(Y_2)) +  f_1(S[\bar{P}_1]\cap \iota_1(H_1(Y_1))) ,
  \\ \nonumber
A_3
&=
\iota(\mu_{\textsc{l}}) 
+  f_1(\dtge(Y_1))
+  f_2(\dtge(Y_2)),
\end{align}
where property 
(\ref{eq: trap for extra stuff})
has made any remaining subsets
of $S[\tc(Y)] - S[\tau(Y)]$---such as, for example,
$f_1(S[\tc(Y_1)] - S[\tau(Y_1)]) \cap (m \Z_{<0} + T)$---land in $S[A^{\mathrm{c}}_0 ]$.
It is straightforward to show that the above $A_i$ are equal to those
enumerated in the statement of the proposition.
\end{proof}

\subsection{Computation of L-space interval for $\boldsymbol{Y}$}
Having determined $\dt(Y)$, we can apply
Theorem 
\ref{thm:  L-space interval in terms of beta/n}
to compute the L-space interval for $Y$.

\begin{prop}
\label{prop: L-space condition for Y1 cup Y2 in terms of b}
Suppose that $\mu_1$ is ``judiciously chosen'' from
$\P^{-1}(\dot{I}_1 \cap \varphi^{-1}_{\P}(\dot{I}_2))$ nonempty,
and that $Y$ is constructed as above.
For each $i \in \{1,2\}$, set $\bar{q}_i := \left[q_i^*\right]_{p_i}$ and let
$B_i$ denote the set
$B_i := \left\{ \left. [p_i \gamma_i - q_i \delta_i ]_{p_ig_i}\!\right| 
{\boldsymbol{\delta}_i} = \delta_i \iota_i(m_i) + \gamma_i \iota_i(l_i) 
\in \dtge(Y_i)  \right\}$.
Then $Y_1 \cup_{\varphi} Y_2$ is an L-space if and only if
condition $({\textsc{l}}.i)$ holds for each $b_1 \in B_1$,
$({\textsc{l}}.ii)$ holds for each $b_2 \in B_2$, and
$({\textsc{l}}.iii)$ holds for each $(b_1, b_2) \in B_1 \times B_2$ with
$b_1 \equiv b_2 \, (\mod g_0)$:
\begin{align*}
({\textsc{l}}.i)\;\;\;\;\;
 \frac{1}{b}\!\left\lfloor \! \frac{b \bar{q}_1}{p_1}\! \right\rfloor
+\frac{1}{b}\!\left\lfloor \! \frac{b \bar{q}_2}{p_2}\! \right\rfloor \ge 1
\;\;\forall\;b \equiv b_1\,(\mod p_1g_1),\; 0<b< pg,
\;\;\;\;\;\;\;\;\;\;\;\;\;\;\;\;\;\;\;\;\;\;\;\;\;\;\;\;\;\;
          \\
({\textsc{l}}.ii)\;\;\;\;
 \frac{1}{b}\!\left\lfloor \! \frac{b \bar{q}_1}{p_1}\! \right\rfloor
+\frac{1}{b}\!\left\lfloor \! \frac{b \bar{q}_2}{p_2}\! \right\rfloor \ge 1
\;\;\forall\;b \equiv b_2\,(\mod p_2g_2),\; 0<b< pg,
\;\;\;\;\;\;\;\;\;\;\;\;\;\;\;\;\;\;\;\;\;\;\;\;\;\;\;\;\;\;
          \\
({\textsc{l}}.iii)\;\;\;
 \frac{1}{b}\!\left\lfloor \! \frac{b \bar{q}_1}{p_1}\! \right\rfloor
+\frac{1}{b}\!\left\lfloor \! \frac{b \bar{q}_2}{p_2}\! \right\rfloor > 1
\;\;\forall\; 
b \equiv b_1\,(\mod p_1g_1),\;b \equiv b_2\,(\mod p_2g_2),
\; 0<b< pg,
\end{align*}
where $p:= p_1 p_2$ and $g := g_1 g_2 / g_0$, with $g_0 = \gcd(g_1, g_2)$.
\end{prop}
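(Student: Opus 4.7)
The strategy is to apply Theorem~\ref{thm:  L-space interval in terms of beta/n} directly to the Floer simple manifold $Y$, with $\mu_{\textsc{l}}$ as the interior L-space slope and $\lambda_{\textsc{l}}$ as the test slope, since $Y(\lambda_{\textsc{l}}) = Y_1 \cup_{\varphi}\! Y_2$. The surgery label is
\begin{equation*}
\pi(\lambda_{\textsc{l}}) \;=\; \frac{\mu_{\textsc{l}} \cdot \lambda_{\textsc{l}}}{\lambda_{\textsc{l}} \cdot l} \;=\; \frac{1}{q^*}.
\end{equation*}
Since the judicious choice of $\mu_1$ forces $q^* > 0$, and since every $\boldsymbol{\delta} = \delta \iota(m) + \gamma\iota(l) \in \dtgz(Y)$ has $\delta > 0$ while $b_-^{\boldsymbol{\delta}} < 0$, the lower inequality $b_-^{\boldsymbol{\delta}}/\delta \le 1/q^*$ is automatic, and the entire L-space criterion collapses to
\begin{equation*}
q^* b_+^{\boldsymbol{\delta}} \;\ge\; \delta \qquad\text{for all } \boldsymbol{\delta} \in \dtgz(Y),
\end{equation*}
where $b_+^{\boldsymbol{\delta}} = [p\gamma - q\delta]_{pg} \in \{1, \dots, pg-1\}$.

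The next step is to translate this into a condition on residues $b \in \{1, \ldots, pg-1\}$ by using Proposition~\ref{prop:computation of dtge} to decompose $\dtgz(Y)$ into the non-torsion parts of $A_0, A_1, A_2, A_3$. Using the explicit formulas (\ref{eq: defs of f1 and f2 for splicing}) for $f_1$ and $f_2$, together with (\ref{eq: equations for p,q q* and p*}), I compute $(\delta, b_+^{\boldsymbol{\delta}})$ piece by piece. For $\boldsymbol{\delta} = f_1(\boldsymbol{\delta}_1) + f_2(k_2 \iota_2(m_2) + \tau) \in A_1$ with $\boldsymbol{\delta}_1 = \delta_1\iota_1(m_1) + \gamma_1 \iota_1(l_1) \in \dtge(Y_1)$, $k_2 \in \{0,\ldots, p_2 - 1\}$, and $\tau \in T_2^\partial$, one finds $\delta = p_2 \delta_1 + p_1 k_2$ and $b_+^{\boldsymbol{\delta}} \equiv p_1 b_1 \cdot (\text{unit}) \equiv b_1 \pmod{p_1 g_1}$ after choosing the appropriate parametrization of torsion by $\bar{q}_1 = [q_1^*]_{p_1}$. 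Maximizing over $k_2$ and $\tau$ (which together freely parameterize $\delta$ in the residue class mod $p_1g_1$ while holding $b_+$ fixed in its residue class mod $p_1 g_1$), the inequality $q^* b \ge \delta$ reduces, via $q^* = q_1^* p_2 + q_2^* p_1$ and the floor identity $b\bar{q}_i = p_i \lfloor b\bar{q}_i/p_i \rfloor + [b\bar{q}_i]_{p_i}$, precisely to condition $(\textsc{l}.i)$; the analogous computation for $A_2$ gives $(\textsc{l}.ii)$.

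For $\boldsymbol{\delta} \in A_3$ of the form $\iota(\mu_{\textsc{l}}) + f_1(\boldsymbol{\delta}_1) + f_2(\boldsymbol{\delta}_2)$, the lift $\tilde{\boldsymbol{\delta}}_+$ must simultaneously reduce to lifts of $\boldsymbol{\delta}_1$ and $\boldsymbol{\delta}_2$ under $\iota_1, \iota_2$, which constrains $b$ modulo both $p_1 g_1$ and $p_2 g_2$; a generalized CRT argument shows the pair $(b_1, b_2) \in B_1 \times B_2$ yields a solvable congruence precisely when $b_1 \equiv b_2 \pmod{\gcd(p_1g_1, p_2 g_2)} = g_0$, using $\gcd(p_1, p_2) = 1$. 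The extra summand $\iota(\mu_{\textsc{l}})$ in $A_3$ enlarges $\delta$ by exactly $p = p_1 p_2$, converting the weak inequality into the strict inequality of $(\textsc{l}.iii)$. Finally, for the $A_0$ contribution, the explicit support calculation of $S[\tfrac{1}{1-t_\partial} - \tfrac{1-t_\partial^{p_1 p_2}}{(1-t_\partial^{p_1})(1-t_\partial^{p_2})}]$ gives gaps of the numerical semigroup $\langle p_1, p_2\rangle$, whose sizes $\delta < p$ are dominated by $q^* b_+$ thanks to the size condition $p_i > (1+\deg \tc(Y_1))(1+\deg \tc(Y_2))$ in the judicious choice; hence $A_0$ imposes no new constraint beyond those already captured.

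The main obstacle is the tight bookkeeping needed to identify $b_+^{\boldsymbol{\delta}}$ up to the correct modulus in each piece $A_j$ and to repackage the linear inequality $q^* b \ge \delta$ into the floor-function form $\tfrac{1}{b}\lfloor b\bar{q}_1/p_1 \rfloor + \tfrac{1}{b}\lfloor b\bar{q}_2/p_2 \rfloor \ge 1$. This boils down to repeatedly exploiting the identities $q^* = q_1^* p_2 + q_2^* p_1$ and $p_i \lfloor b\bar{q}_i/p_i\rfloor = b\bar{q}_i - [b\bar{q}_i]_{p_i}$, and tracking the freedom in the torsion parameters $T_i^\partial$ that governs the supremum of $\delta$ over a fixed residue class of $b_+^{\boldsymbol{\delta}}$. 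Once this correspondence is set up, the three conditions $(\textsc{l}.i)$, $(\textsc{l}.ii)$, $(\textsc{l}.iii)$ exactly mirror the three non-trivial strata of $\dtgz(Y)$, completing the proof.
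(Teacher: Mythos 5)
Your overall strategy is exactly the paper's: apply the surgery-label criterion of Theorem~\ref{thm:  L-space interval in terms of beta/n} to $Y$ with test slope $\lambda_{\textsc{l}}$ of label $1/q^*$, discard the vacuous lower bound, and convert the surviving inequality $q^*b_+^{\boldsymbol{\delta}} \ge \delta$ (equivalently $a_{\boldsymbol{\delta}} \le 0$, where $\boldsymbol{\delta} = \iota(a_{\boldsymbol{\delta}}\mu_{\textsc{l}} + b_{\boldsymbol{\delta}}\lambda_{\textsc{l}})$ and $\delta = a_{\boldsymbol{\delta}}p + b_{\boldsymbol{\delta}}q^*$) stratum by stratum through $\dtge(Y) = A_0 \amalg (A_1\cup A_2)\amalg A_3$, with the CRT matching of residues modulo $p_1g_1$ and $p_2g_2$ (valid because $\gcd(p_1g_1,p_2g_2)=g_0$ under the judicious choice) and the extra $\iota(\mu_{\textsc{l}})$ in $A_3$ producing the strict inequality in $({\textsc{l}}.iii)$. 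Your computations for $A_1$, $A_2$, $A_3$ agree with the paper's, although the phrase ``maximizing over $k_2$ and $\tau$'' is loose: $\delta$ and $b$ vary together, so one gets a separate condition for each residue $b \equiv b_1 \,(\mod p_1g_1)$, $0<b<pg$, rather than a single extremal one.

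Two steps need repair. First, your dismissal of $A_0$ rests on the wrong mechanism: the size condition $p_i > (1+\deg\tc(Y_1))(1+\deg\tc(Y_2))$ does not by itself give $q^*b_+^{\boldsymbol{\delta}} \ge \delta$ for the semigroup gaps, since $q^*b_+^{\boldsymbol{\delta}}$ can a priori be much smaller than $p_1p_2 - p_1 - p_2$. What actually makes $A_0$ (and the torsion part of $\dtge(Y)$) harmless is integrality: since $\delta - b_{\boldsymbol{\delta}}q^* = a_{\boldsymbol{\delta}}p$ with $a_{\boldsymbol{\delta}} \in \Z$, the bounds $\delta < p$ and $b_{\boldsymbol{\delta}}q^* > 0$ force $a_{\boldsymbol{\delta}} \le 0$, i.e. $\delta \le q^*b_{\boldsymbol{\delta}}$; this one-line divisibility argument is how the paper handles it, and it uses nothing about the magnitude of the $p_i$. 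Second, before Theorem~\ref{thm:  L-space interval in terms of beta/n} can be applied in the form you use, one must verify its hypothesis that $b_+^{\boldsymbol{\delta}} = [p\gamma - q\delta]_{pg} \neq 0$ for every $\boldsymbol{\delta} \in \dtgz(Y)$, i.e. that no element of $\dtgz(Y)$ lies in $\left<\iota(\mu_{\textsc{l}})\right>$ (and that $\dtgz(Y)\neq\emptyset$, which follows from $A_0 \not\subset T$); you simply assert $b_+^{\boldsymbol{\delta}} \in \{1,\ldots,pg-1\}$. For $A_0$, $A_1$, $A_2$ this follows from the stated bounds on the $\iota(m)$-coefficients, but for $A_3$ it genuinely uses the judicious-choice bound $\deg_{[\bar m_i]}\!\tc(Y_i) < p_ig_i$ together with the fact that $\ker(f_1\oplus f_2)$ is generated by $(\iota_1(\mu_1), -\iota_2(\mu_2))$; the paper devotes a paragraph to exactly this check. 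With these two points supplied, your argument coincides with the paper's proof.
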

\begin{proof}

We begin by ensuring that $\dtge(Y)$ meets the conditions of 
Theorem 
\ref{thm:  L-space interval in terms of beta/n}
Since $A_0 \not\subset T$ implies $\dtgz(Y) \neq \emptyset$,
it remains to verify, for each 
$\boldsymbol{\delta} = \delta \iota(m) + \gamma \iota(l) \in \dtge(Y)$,
that 
$b_{\boldsymbol{\delta}} := [p\gamma - q\delta]_{pg} 
(\equiv \mu_{\textsc{l}} \cdot \iota^{-1}({\boldsymbol{\delta}})\, (\mod pg))$ is nonzero,
or equivalently, that $\boldsymbol{\delta} \notin \left<\iota(\mu_{\textsc{l}})\right>$.
Now, the definition of $\dtge$ already  implies $0 \notin \dtge(Y)$.
Recalling the result of
Proposition \ref{prop:computation of dtge},
and that
$\iota(\mu_{\textsc{l}}) = p\iota(m) + q\iota(l)$ with $p := p_1 p_2$, 
we know that the inclusions 
\begin{align}
A_0 
&\subset \{1, \ldots, p_1p_2 - p_1 - p_2\}\iota(m) + T^{\partial},
      \\
A_1 \cup A_2 
&\subset
f_1\!\left(\{0, \ldots, p_1 - 1\} \iota_1 (m_1) + T_1^{\partial}\right)
+ f_2\!\left(\{0, \ldots, p_2 - 1\} \iota_2 (m_2) + T_2^{\partial}\right)
        \\ \nonumber
&=(\{0, \ldots, p_1 - 1\} p_2 + \{0, \ldots p_2 - 1\} p_1 ) \iota(m) + T^{\partial}
\end{align}
imply that $\left<\iota(\mu_{\textsc{l}})\right> \cap ( A_0 \cup A_1 \cup A_2) = \emptyset$.
Lastly, since 
our ``judiciously chosen'' hypothesis makes 
$\deg_{[\bar{m}_i]} \tc(Y_i) <  p_ig_i = \deg_{[\bar{m}_i]}[\iota_i(\mu_i)]$,
and since
the kernel of $f_1 \oplus f_2$ is generated by
$(\iota_1(\mu_1), -\iota_2(\mu_2))$, we know that
$\left<\iota(\mu_{\textsc{l}})\right> \cap A_3 = \emptyset$.
Thus, 
Theorem 
\ref{thm:  L-space interval in terms of beta/n}
applies.

Since we can regard $Y_1 \cup_{\varphi} Y_2 = Y(\lambda_{\textsc{l}})$
as surgery with label 
$1/q^*$
along $K_{\mu_1} \# K_{\mu_2} \subset Y(\mu_{\textsc{l}})$,
Theorem 
\ref{thm:  L-space interval in terms of beta/n}
tells us that $Y_1 \cup_{\varphi} Y_2$ is an L-space if and only if
\begin{equation}
\label{eq: splicing constraint inequality for 1/q*}
\frac{b_{\boldsymbol{\delta}}-p}{\delta}
\le
\frac{1}{q^*}
\le
\frac{b_{\boldsymbol{\delta}}}{\delta}
\end{equation}
for all ${\boldsymbol{\delta}}  = \delta \iota(m) + \gamma \iota(l) 
\in \dtgz(Y)\; (= \dtge(Y) \setminus T)$.
Now, since
$b_{\boldsymbol{\delta}} \equiv \mu_{\textsc{l}} \cdot \tilde{\boldsymbol{\delta}} \, (\mod pg)$
for any lift $\tilde{\boldsymbol{\delta}} \in \iota^{-1}({\boldsymbol{\delta}})$, 
there always exists a unique $a_{\boldsymbol{\delta}} \in \Z$ for which
${\boldsymbol{\delta}} = \iota(
a_{\boldsymbol{\delta}} \mu_{\textsc{l}}
+ b_{\boldsymbol{\delta}}  \lambda_{\textsc{l}})$.
Such 
$a_{\boldsymbol{\delta}} \in \Z$
satisfies $\delta = a_{\boldsymbol{\delta}} p + b_{\boldsymbol{\delta}} q^*$.
Taking this as a definition for $a_{\boldsymbol{\delta}} \in \Z$,
we note that,
since $b_{\boldsymbol{\delta}}-p < 0$ and $q^* >0$, the left-hand inequality
in (\ref{eq: splicing constraint inequality for 1/q*}) is vacuous, whereas
the right-hand inequality is equivalent to the condition
$a_{\boldsymbol{\delta}}  \le 0$.

Since $b_{\boldsymbol{\delta}} q^* >0$ for all 
${\boldsymbol{\delta}} = \delta \iota(m) + \gamma \iota(l)  \in \dtge(Y)$,
we obtain $a_{\boldsymbol{\delta}}  \le 0$ automatically whenever
$\delta < p$.  In particular, $a_{\boldsymbol{\delta}}  \le 0$ for all
${\boldsymbol{\delta}} \in A_0$ and for any
${\boldsymbol{\delta}} \in \dtge(Y) \cap (0\iota(m) + T^{\partial})$.
Now, the latter case is, strictly speaking, irrelevant to the question of
whether $Y_1 \cup_{\varphi} Y_2$ is an L-space,
but the fact that the condition $a_{\boldsymbol{\delta}}  \le 0$
is vacuous on torsion elements of $\dtge(Y)$ allows us to apply the condition to
all of $\dtge(Y)$, thereby simplifying our bookkeeping.

It remains to apply the condition $a_{\boldsymbol{\delta}}  \le 0$
to each of  $A_1$, $A_2$, and $A_3$, from which we shall obtain the
respective conditions $({\textsc{l}}.i)$, $({\textsc{l}}.ii)$, and $({\textsc{l}}.iii)$.
To do this, we first, for each $i \in \{1,2\}$, consider the bijection,
\begin{align}
\{0, \ldots, p_i g_i-1\} 
&\longrightarrow
\{0, \ldots, p_i - 1\} \iota_i(m_i) + T^{\partial}_i \;\;\subset\; \iota_i(H_1(\partial Y_i)),
         \\ \nonumber
b_i
&\longmapsto
\iota_i(-\!\left\lfloor\!\frac{b_iq_i^*\!}{p_i}\!\right\rfloor\! \mu_i 
\,+\, b_i \lambda_i) 
\;\;\in\;
[b_i \bar{q}_i]_{p_i} \iota_i(m_i) + T^{\partial}_i  \mkern-4mu,
\end{align}
recalling that $\mu_i = p_i m_i + q_i l_i$, $\lambda_i = q^*_i m_i + p^*_i l_i$,
and $g_i := |T^{\partial}_i|$ with $T^{\partial}_i = \!\left<\iota_i(l_i)\right>\!$.
The inverse map sends
\begin{equation}
\mathbf{x}_i := x_i \iota_i(m_i) + y_i \iota_i(l_i)
\longmapsto
b_i^{\mathbf{x}_i} := [\mu_i \cdot (x_i m_i + y_i l_i)]_{p_i g_i} = [p_i y_i - q_i x_i]_{p_i g_i}.
\end{equation}
Thus, if we define 
$a_i^{\mathbf{x}_i} := -(b_i^{\mathbf{x}_i} q^*_i - \left[b_i^{\mathbf{x}_i} q^*_i\right]_{p_i})/p_i$,
then for any 
$\mathbf{x}_i := x_i \iota_i(m_i) + y_i \iota_i(l_i)$ with 
$x_i \in \{0, \ldots, p_i - 1\}$, and for any $s_i \in \Z$, we have
\begin{equation}
\label{eq: s lifts of xi in terms of a and b}
\mathbf{x}_i = \iota_i(a_i^{\mathbf{x}_i} \mu_i + b_i^{\mathbf{x}_i} \lambda_i)
= \iota_i( (a_i^{\mathbf{x}_i} - q^*_i g_i s_i) \mu_i + 
(b_i^{\mathbf{x}_i} + p_i g_i s_i) \lambda_i),
\end{equation}
with $s_i \in \Z$ parametrizing the lifts $\iota_i^{-1}(\mathbf{x}_i )$
of $\mathbf{x}_i $.

Since $f_1\iota_1(\mu_1) =  f_2\iota_2(\mu_2) = \iota(\mu_{\textsc{l}})$
and $f_1\iota_1(\lambda_1) + f_2\iota_2(\lambda_2) = \iota(\lambda_{\textsc{l}})$,
we deduce from (\ref{eq: s lifts of xi in terms of a and b}) that
$f_1(\mathbf{x}_1) + f_2(\mathbf{x}_2) \in \iota(H_1(\partial Y))$ if and only if
there exist $s_1, s_2 \in \Z$ such that
$b_1^{\mathbf{x}_1} + p_1 g_1 s_1 = b_2^{\mathbf{x}_2} + p_2 g_2 s_2$,
which, in turn, occurs if and only if
$b_1^{\mathbf{x}_1}  \equiv b_2^{\mathbf{x}_2} (\mod g_0)$, since
$g_0 = \gcd(p_1 g_1, p_2 g_2) = \gcd(g_1, g_2)$.
In such case, if we write
$f_1(\mathbf{x}_1) + f_2(\mathbf{x}_2) = \iota(a \mu_{\textsc{l}} + b \lambda_{\textsc{l}} )$
with $b\in \{0, \ldots, pg - 1\}$,
then $b$ is
the unique solution in $\{0, \ldots, pg - 1\}$ to the equivalences
$b \equiv b_1^{\mathbf{x}_1} \,(\mod p_1 g_1)$,
$b \equiv b_2^{\mathbf{x}_2} \,(\mod p_2 g_2)$.
Setting $b = b_i^{\mathbf{x}_i} + p_i g_i s_i$ makes $g_i s_i = (b - b_i^{\mathbf{x}_i})/p_i$
for each $i \in \{1,2\}$, so that we obtain
\begin{align}
\label{eq: computing a for spliced Dt}
a
&= {\textstyle{\sum_{i\in\{1,2\}}}} (a_i^{\mathbf{x}_i} - q^*_i g_i s_i)
           \\ \nonumber
&= {\textstyle{\sum_{i\in\{1,2\}}}}
\!\left(
-(b_i^{\mathbf{x}_i}  q^*_i - \left[b_i^{\mathbf{x}_i}  q^*_i\right]_{p_i})/p_i
\;-\; q_i^*(b - b_i^{\mathbf{x}_i})/p_i
\right)
           \\ \nonumber
&= 
-b( \bar{q}_1 p_2 + \bar{q}_2 p_1 - p_1 p_2)/p_1p_2
\,+\, \left[b_1^{\mathbf{x}_1}  q^*_1\right]_{p_1}\!/{p_1}
\,+\, \left[b_2^{\mathbf{x}_2}  q^*_2\right]_{p_2}\!/{p_2}
           \\ \nonumber
&= -b 
- \left\lfloor\frac{b\bar{q}_1}{p_1}\right\rfloor
- \left\lfloor\frac{b\bar{q}_2}{p_2}\right\rfloor,
\end{align}
where the third line uses the identity 
$q^* := q_1^* p_2 + q_2^* p_1 =  \bar{q}_1 p_2 + \bar{q}_2 p_1 - p_1 p_2$,
implied by the hypothesis $p_1p_2 > q^* >0$ and the definitions
$\bar{q}_i := [q_i^*]_{p_i}$.

Since we may write any ${\boldsymbol{\delta}} \in A_1$ as
\begin{equation}
{\boldsymbol{\delta}} 
\;=\; f_1({\boldsymbol{\delta}_1}) + f_2(\mathbf{x}_2)
\;=\; \iota(a \mu_{\textsc{l}} + b \lambda_{\textsc{l}})
\end{equation}
with ${\boldsymbol{\delta}_1} \in \dtge(Y_1)$,
$\mathbf{x}_2 \in \{0, \ldots, p_2-1\}\iota_2(m_2) + \{0, \ldots, g_2-1\}\iota_2(l_2)$
satisfying $b_2^{\mathbf{x}_2} \equiv b_1^{\boldsymbol{\delta}_1}\, (\mod g_0)$,
$0 < b < pg$, and $a$ as determined in (\ref{eq: computing a for spliced Dt}),
we have $a_{\boldsymbol{\delta}} = a$, and demanding $a_{\boldsymbol{\delta}}  \le 0$
yields condition $(i)_{\textsc{l}}$.
Likewise, applying $a_{\boldsymbol{\delta}} \leq 0$ for all
${\boldsymbol{\delta}} \in A_2$ yields condition $(ii)_{\textsc{l}}$.
The case of $A_3$ is similar, except that since
${\boldsymbol{\delta}} = 
\iota(\mu_{\textsc{l}}) + f_1({\boldsymbol{\delta}_1}) + f_2({\boldsymbol{\delta}_2})$,
we need 
$a_{\boldsymbol{\delta}} = 1 + a \le 0$
and $b_1^{\boldsymbol{\delta}_1} \equiv b_2^{\boldsymbol{\delta}_2}\, (\mod g_0)$,
yielding condition ($\mathrm{iii}_{\textsc{l}}$).

\end{proof}

\subsection{Determining when gluing hypothesis is met}

We next turn our attention to the L-space filling slope intervals
$I_i \subset \P(H_1(\partial Y_i))$, to determine when they
combine according to the hypotheses of the theorem.
\begin{prop}
\label{prop: interval covering condition in terms of b1 and b2}
Suppose that $\mu_1$ is ``judiciously chosen'' from
$\P^{-1}(\dot{I}_1 \cap \varphi^{-1}_{\P}(\dot{I}_2))$ nonempty,
and that $Y$ is constructed as above.
For each $i \in \{1,2\}$, set $\bar{q}_i := \left[q_i^*\right]_{p_i}$ and let
$B_i$ denote the set
$B_i := \left\{ \left. [p_i \gamma_i - q_i \delta_i ]_{p_ig_i}\!\right| 
{\boldsymbol{\delta}_i} = \delta_i \iota_i(m_i) + \gamma_i \iota_i(l_i) 
\in \dtge(Y_i)  \right\}$.
Then $\varphi_{\P}(\dot{I}_1) \cup \dot{I}_2 = \P(H_1(\partial Y_2))$
when both $\dtge(Y_i)$ are nonempty---and 
$\varphi_{\P}(I_1) \cup I_2 = \P(H_1(\partial Y_2))$ when
one or both $\dtge(Y_i)$ are empty---if and only if
the following three conditions hold:
\begin{align*}
(\textsc{i}.i)\;\;\;\;
 \frac{1}{b_1}\!\left\lfloor \! \frac{b_1 \bar{q}_1}{p_1}\! \right\rfloor
+\frac{1}{b_1}\!\left\lfloor \! \frac{b_1 \bar{q}_2}{p_2}\! \right\rfloor \ge 1
\;\;\mathrm{for}\;\mathrm{all}\; b_1 \in B_1,
\;\;\;\;\;\;\;\;\;\;\;\;\;\;\;\;\,
          \\
(\textsc{i}.ii)\;\;\;
 \frac{1}{b_2}\!\left\lfloor \! \frac{b_2 \bar{q}_1}{p_1}\! \right\rfloor
+\frac{1}{b_2}\!\left\lfloor \! \frac{b_2 \bar{q}_2}{p_2}\! \right\rfloor \ge 1
\;\;\mathrm{for}\;\mathrm{all}\; b_2 \in B_2,
\;\;\;\;\;\;\;\;\;\;\;\;\;\;\;\;\,
          \\
\;\;\;\;\,(\textsc{i}.iii)\;\;\,
 \frac{1}{b_1}\!\left\lfloor \! \frac{b_1 \bar{q}_1}{p_1}\! \right\rfloor
+\frac{1}{b_2}\!\left\lfloor \! \frac{b_2 \bar{q}_2}{p_2}\! \right\rfloor > 1
\;\;\mathrm{for}\;\mathrm{all}\; (b_1, b_2) \in B_1 \times B_2.
\end{align*}
\end{prop}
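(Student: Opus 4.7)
The plan is to use Theorem \ref{thm:  L-space interval in terms of beta/n} to describe each interval $I_i$ explicitly in terms of the set $B_i$, and then translate the projective covering condition $\varphi_{\P}(\dot{I}_1) \cup \dot{I}_2 = \P(H_1(\partial Y_2))$ into the arithmetic inequalities (I.i)--(I.iii).

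First, applying Theorem \ref{thm:  L-space interval in terms of beta/n} to each $Y_i$ with the interior L-space slope $\mu_i$ tells us that for every $\boldsymbol{\delta}_i = \delta_i \iota_i(m_i) + \gamma_i \iota_i(l_i) \in \dtge(Y_i)$, the two lifts of $\boldsymbol{\delta}_i$ in $\iota_i^{-1}(\boldsymbol{\delta}_i)$ bracketing $\mu_i$ in surgery-label order are $a_\pm^{\boldsymbol{\delta}_i}\mu_i + b_\pm^{\boldsymbol{\delta}_i}\lambda_i$, with $b_+^{\boldsymbol{\delta}_i} = [p_i\gamma_i - q_i\delta_i]_{p_ig_i} \in B_i$ and $b_-^{\boldsymbol{\delta}_i} = b_+^{\boldsymbol{\delta}_i} - p_i g_i$. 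Consequently, $\dot{I}_i$ is cut out by the collection of strict inequalities
\[
\frac{b_-^{\boldsymbol{\delta}_i}}{\delta_i} \,<\, \frac{\mu_i \cdot \mu}{\mu \cdot l_i} \,<\, \frac{b_+^{\boldsymbol{\delta}_i}}{\delta_i}
\]
indexed over $\boldsymbol{\delta}_i \in \dtgz(Y_i)$.

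Next I would invoke the elementary topological observation that two open arcs on a projective circle sharing a common interior point cover the whole circle if and only if every obstruction lift for one interval lies in the interior of the other. Using $\varphi(\mu_1) = \mu_2$ and $\varphi(\lambda_1) = -\lambda_2$, the surgery $\mu_2$-label of $\varphi(a_+^{\boldsymbol{\delta}_1}\mu_1 + b_1\lambda_1)$ with $b_1 = b_+^{\boldsymbol{\delta}_1}$ computes to $-b_1/(a_+^{\boldsymbol{\delta}_1} - b_1 q_2^*)$. Requiring that this land in $\dot{I}_2$---after clearing denominators and replacing $q_i^*$ by $\bar{q}_i = [q_i^*]_{p_i}$ to eliminate the $\lambda_i$-ambiguity---collapses to the inequality (I.i). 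The symmetric calculation swapping the roles of $Y_1$ and $Y_2$ yields (I.ii).

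The third condition (I.iii) captures the ``crossed'' obstructions: an element of $f_1(\dtge(Y_1))$ combined with an element of $f_2(\dtge(Y_2))$ via a common residue $b_1 \equiv b_2 \pmod{g_0}$ (with $g_0 = \gcd(g_1, g_2)$) produces a joint obstruction in the summand $A_3 = \iota(\mu_{\textsc{l}}) + f_1(\dtge(Y_1)) + f_2(\dtge(Y_2))$ of $\dtge(Y)$ appearing in Proposition \ref{prop:computation of dtge}; the requirement that this joint obstruction lie strictly outside the non-covering region forces the strict inequality in (I.iii), with the $\iota(\mu_{\textsc{l}})$-shift accounting for the difference between $\ge$ in (I.i)--(I.ii) and $>$ in (I.iii). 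The main obstacle will be cleanly distinguishing these strict-versus-weak patterns while correctly tracking the $g_0$-consistency on residues. The case split between the open and closed versions of the covering is then handled by observing that $\dtge(Y_i) = \emptyset$ forces $B_i = \emptyset$, rendering the associated conditions vacuous, with $I_i = \P(H_1(\partial Y_i)) \setminus \{l_i\}$ by Theorem \ref{Thm:LY}.
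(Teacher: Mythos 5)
Your opening step is the same as the paper's: use Theorem~\ref{thm:  L-space interval in terms of beta/n} to write $\tilde{I}_i$ as the intersection over $\boldsymbol{\delta}_i \in \dtge(Y_i)$ of intervals whose endpoints are the surgery labels of the two lifts $\tilde{\boldsymbol{\delta}}_{i\pm}$ with $b^{\boldsymbol{\delta}_i}_{i+} = [p_i\gamma_i - q_i\delta_i]_{p_ig_i} \in B_i$, and then translate the covering of $\P(H_1(\partial Y_2))$ into endpoint conditions. But your derivation of $(\textsc{i}.iii)$ contains a genuine error. The proposition is a statement purely about the two intervals $I_1$, $I_2$ and the gluing map; condition $(\textsc{i}.iii)$ must hold for \emph{all} pairs $(b_1,b_2) \in B_1 \times B_2$, with no congruence restriction. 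You instead derive it from the summand $A_3 = \iota(\mu_{\textsc{l}}) + f_1(\dtge(Y_1)) + f_2(\dtge(Y_2))$ of $\dtge(Y)$ and impose $b_1 \equiv b_2 \,(\mod g_0)$ --- but that is the content of condition $(\textsc{l}.iii)$ in Proposition~\ref{prop: L-space condition for Y1 cup Y2 in terms of b}, i.e.\ the L-space criterion for the glued manifold, not the interval-covering criterion. Arguing via $A_3$ (a) only yields the inequalities for congruent pairs, which is strictly weaker than what the proposition asserts, and (b) establishes a statement about L-space-ness of $Y(\lambda_{\textsc{l}})$ rather than about $\varphi_{\P}(\dot{I}_1) \cup \dot{I}_2$, so it cannot close the claimed equivalence. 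Indeed, the paper keeps these two statements deliberately separate, and bridging ``congruent pairs'' versus ``all pairs'' is exactly the nontrivial content of Proposition~\ref{prop: comparison of property iii}, which needs the semigroup property of Proposition~\ref{prop: The complement of D is additively closed.} and a Euclidean-algorithm descent; your proposal silently assumes that bridge. In the paper, $(\textsc{i}.iii)$ for a pair $(\boldsymbol{\delta}_1,\boldsymbol{\delta}_2)$ is obtained, given $(\textsc{i}.i)$ and $(\textsc{i}.ii)$, directly from the inequality $\pi_2\circ\varphi(\tilde{\boldsymbol{\delta}}_{1+}) < \pi_2(\tilde{\boldsymbol{\delta}}_{2+})$, i.e.\ from $\varphi_{\P}(\dot{I}^{\boldsymbol{\delta}_1}_1)\cup\dot{I}^{\boldsymbol{\delta}_2}_2 = \P(H_1(\partial Y_2))$, with no reference to $Y$ at all.

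A secondary inaccuracy: $(\textsc{i}.i)$ does not come from requiring the image of the endpoint lift of $I_1$ to ``land in $\dot{I}_2$'' --- that requirement would involve the elements of $B_2$ and is part of the pairwise condition. In the paper, $(\textsc{i}.i)$ is equivalent to the condition $\infty \in \pi_2\circ\varphi(\tilde{I}_1)$, i.e.\ $a^{\boldsymbol{\delta}_1}_{1+}p_2 - b^{\boldsymbol{\delta}_1}_{1+}q_2^* \le 0$ for all $\boldsymbol{\delta}_1$, which depends only on $B_1$ and the gluing data; likewise the weak-versus-strict distinction between $(\textsc{i}.i)$--$(\textsc{i}.ii)$ and $(\textsc{i}.iii)$ is tracked through the open/closed endpoint analysis (including the $\delta_2 = 0$ case and whether $\infty$ is an endpoint of $\pi_2\circ\varphi(\tilde{I}^{\boldsymbol{\delta}_1}_1)$), not through the $\iota(\mu_{\textsc{l}})$-shift, which again belongs to the L-space criterion. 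Your handling of the empty-$\dtge$ cases is fine in spirit, but the core of the proof --- deriving $(\textsc{i}.iii)$ for all pairs from the interval geometry --- is missing as proposed.
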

\begin{proof}

For $i \in \{1,2\}$, let $\pi_i$ denote the ``surgery label'' map,
$\pi_i :
H_1(\partial Y_i) \setminus \{0\} 
\longrightarrow \Q \cup \infty$,
\begin{equation}
\label{eq: def of pi i surgery label}
\pi_i
\, : \,
\alpha_i \mu_i + \beta_i \lambda_i
\,\longmapsto\,
\frac{\mu_i \cdot(\alpha_i \mu_i + \beta_i \lambda_i)}
{(\alpha_i \mu_i + \beta_i \lambda_i) \cdot l_i}
= \frac{\beta_i}{\alpha_i p_i + \beta_i q^*_i},
\end{equation}
and for each ${\boldsymbol{\delta}_i} = \delta_i \iota_i(m_i) + \gamma_i \iota_i(l_i)
\in \dtge(Y_i)$, let 
$\tilde{\boldsymbol{{\delta}}}_{i+},
\tilde{\boldsymbol{{\delta}}}_{i-} \in \iota_i^{-1}({\boldsymbol{\delta}_i})$
denote the two lifts of ${\boldsymbol{\delta}_i}$ closest to $\mu_i$ with
respect to surgery label, {\it{i.e.}}, 
\begin{align}
\tilde{\boldsymbol{{\delta}}}_{i+}
&= a_{i+}^{\boldsymbol{\delta}_i} \mu_i +  b_{i+}^{\boldsymbol{\delta}_i} \lambda_i,
\;\;\;b_{i+}^{\boldsymbol{\delta}_i} := [p_i\gamma_i -q_i \delta_i]_{p_ig_i}
   \\
\tilde{\boldsymbol{{\delta}}}_{i-}
&= a_{i-}^{\boldsymbol{\delta}_i} \mu_i +   b_{i-}^{\boldsymbol{\delta}_i} \lambda_i
:= \left(a_{i+}^{\boldsymbol{\delta}_i}\!+q^*_ig_i\right) \mu_i +  
\left(b_{i+}^{\boldsymbol{\delta}_i}\!-p_ig_i\right) \lambda_i.
\end{align}
Note that since $p_i > \deg_{[\bar{m}_i]}\!\tc(Y_i)$ implies $\delta_i < p_i$, we have
\begin{equation}
\delta_i 
= a_{i+}^{\boldsymbol{\delta}_i} p_i +  b_{i+}^{\boldsymbol{\delta}_i} q^*_i
= [b_{i+}^{\boldsymbol{\delta}_i} q^*_i]_{p_i}
= a_{i-}^{\boldsymbol{\delta}_i} p_i +  b_{i-}^{\boldsymbol{\delta}_i} q^*_i
= [b_{i-}^{\boldsymbol{\delta}_i} q^*_i]_{p_i}
\ge 0.
\end{equation}
Note also that
$\pi_i(\tilde{\boldsymbol{{\delta}}}_{i-}) < 0 <  
\pi_i(\tilde{\boldsymbol{{\delta}}}_{i+})$ unless $\delta_i = 0$,
in which case $\pi_i(\tilde{\boldsymbol{{\delta}}}_{i-}) 
= \pi_i(\tilde{\boldsymbol{{\delta}}}_{i+}) = \infty$.

Corollary 
\ref{cor: L-space criterion in Dehn filling basis}
then implies that, for $\dtge(Y_i)$ nonempty,
$\tilde{I}_i := \P^{-1}(I_i) \subset H_1(\partial Y_i)$ takes the form
$\tilde{I}_i = \bigcap_{{\boldsymbol{\delta}}_{\boldsymbol{i}} \in \dtge(Y_i)} 
\tilde{I}_i^{\boldsymbol{\delta}_i}$, where
\begin{equation}
\tilde{I}_i^{\boldsymbol{\delta}_i}
:=
\left\{\mu \in H_1(\partial Y_i) \setminus \{0\} \left|
\begin{array}{lr}
\pi_i(\tilde{\boldsymbol{{\delta}}}_{i-}) \le
\pi_i(\mu) \le
\pi_i(\tilde{\boldsymbol{{\delta}}}_{i+})
&   \mathrm{if}\; \delta_i >0,
      \\
\pi_i(\mu) \neq \infty \;(=\pi_i(\tilde{\boldsymbol{{\delta}}}_{i-}\mkern-.6mu)
\!=\! \pi_i(\tilde{\boldsymbol{{\delta}}}_{i+}))
&    \mathrm{if}\; \delta_i = 0\hphantom{,}
\end{array}\!
\right.\right\}.
\end{equation}
If $\dtge(Y_i) = \emptyset$, then, similarly to the case in which $\delta_i = 0$,
$\tilde{I}_i$ is the 
complement of $\pi_i^{-1}(\infty)$.

Note that we always have $\infty \notin \pi_i(\tilde{I}_i)$.
Thus, a necessary condition to achieve $\varphi_{\P}(I_1) \cup I_2 = \P(H_1(\partial Y_2))$
or $\varphi_{\P}(\dot{I}_1) \cup \dot{I}_2 = \P(H_1(\partial Y_2))$ is to have
\begin{equation}
\label{eq: infty included}
(\si.i)\;\;\; \infty \in \pi_2\!\circ\!\varphi(\tilde{I}_1),
\;\;\;\;\;\;\;\;
(\si.ii)\;\;\; \infty \in \pi_1\!\circ\!\varphi^{-1}(\tilde{I}_2).
\end{equation}
We claim that conditions $(\si.i)$ and $(\si.ii)$ are respectively equivalent to
$(\textsc{i}.i)$ and $(\textsc{i}.ii)$.
First note that it is sufficient to prove the equivalence of 
$(\si.i)$ and $(\textsc{i}.i)$, since the maps
\begin{equation}
\varphi :
\alpha \mu_1 + \beta \lambda_1 \mapsto
\alpha \mu_2 - \beta \lambda_2,\;\;\;
\varphi^{-1} :
\alpha \mu_2 + \beta \lambda_2 \mapsto
\alpha \mu_1 - \beta \lambda_1
\end{equation}
are exchanged by swapping $i=1$ with $i=2$.
Also, when $\dtge(Y_1) = \emptyset$, in which case
$(\textsc{i}.i)$ holds vacuously, our hypothesis that $q^* \neq 0$,
ensuring that $\pi_2\varphi\pi_1^{-1}(\infty) \neq \infty$,
implies ($\si.i$) holds automatically.
Thus, we henceforth assume that $\dtge(Y_1)$ is nonempty.

For any $a_1 \mu_1 + b_1 \lambda_1 \in H_1(\partial Y_1) \setminus \{0\}$,
it is straightforward to show that the map
\begin{equation}
\label{eq: pi2 phi map}
\pi_2 \!\circ\! \varphi:  a_1 \mu_1 + b_1 \lambda_1
\mapsto  \frac{-b_1}{a_1p_2 - b_1q_2^*}
\end{equation}
has denominator satisfying
\begin{equation}
a_1p_2 - b_1q_2^*
=\frac{p_2}{p_1} (a_1 p_1 + b_1 q_1^*) - b_1 \frac{q^*}{p_1}.
\end{equation}
In particular, since $q^* > 0$, and since
$\delta_1 = a_{1-}^{\boldsymbol{\delta}_1} p_1 +  b_{1-}^{\boldsymbol{\delta}_1} q^*_1
\ge 0$
and
$b_{1-}^{\boldsymbol{\delta}_1} < 0$
for any ${\boldsymbol{\delta}_1} \in \dtge(Y_1)$,
we have
\begin{equation}
\label{eq: -lift pos, with pos denom}
a_{1-}^{\boldsymbol{\delta}_1} p_2 - b_{1-}^{\boldsymbol{\delta}_1} q^*_2 >0,\;\;
 \pi_2 \!\circ\!\varphi(\tilde{\boldsymbol{{\delta}}}_{1-})  > 0
\;\;\;\;\mathrm{for}\;\mathrm{all}\;
{\boldsymbol{\delta}_1} \in \dtge(Y_1).
\end{equation}

Now, there are two ways in which 
$\pi_2\circ\varphi(\tilde{I}^{\boldsymbol{\delta}_1}_1)$
could contain $\infty$.  One is if $\infty$ is  contained as an endpoint
of $\pi_2\circ\varphi(\tilde{I}^{\boldsymbol{\delta}_1}_1)$,
in which case, since $\pi_2\circ\varphi(\tilde{\boldsymbol{{\delta}}}_{1-}) \neq \infty$,
we must have $\pi_2\circ\varphi(\tilde{\boldsymbol{{\delta}}}_{1+}) = \infty$,
or equivalently,
$a_{1+}^{\boldsymbol{\delta}_1} p_2 - b_{1+}^{\boldsymbol{\delta}_1} q^*_2 = 0$.
Conveniently, the condition
$\pi_2\circ\varphi(\tilde{\boldsymbol{{\delta}}}_{1-})
 \neq
 \pi_2\circ\varphi(\tilde{\boldsymbol{{\delta}}}_{1+})$
also implies that 
$\pi_2\!\circ\!\varphi(\tilde{I}^{\boldsymbol{\delta}_1}_1)$ is closed in this case.
The other possibility is that $\infty$ lies in the interior of
$\pi_2\!\circ\!\varphi(\tilde{I}^{\boldsymbol{\delta}_1}_1)$.
Since $\pi_1^{-1}$, $\varphi$, and $\pi_2$ are each orientation
reversing, this is equivalent to the condition that
$\pi_2\circ\varphi(\tilde{\boldsymbol{{\delta}}}_{1-})
 \le
 \pi_2\circ\varphi(\tilde{\boldsymbol{{\delta}}}_{1+})$,
 which, since $\pi_2\circ\varphi(\tilde{\boldsymbol{{\delta}}}_{1-}) >0$,
 implies $\pi_2\circ\varphi(\tilde{\boldsymbol{{\delta}}}_{1+}) > 0$
 and hence 
$a_{1+}^{\boldsymbol{\delta}_1} p_2 - b_{1+}^{\boldsymbol{\delta}_1} q^*_2 < 0$.
In fact, the converse is also true:
using the substitutions 
$a_{1-}^{\boldsymbol{\delta}_1} = a_{1+}^{\boldsymbol{\delta}_1}\!+q^*_1g_1$
and
$b_{1-}^{\boldsymbol{\delta}_1} = b_{1+}^{\boldsymbol{\delta}_1}\!-p_1g_1$,
and the fact that
$a_{1+}^{\boldsymbol{\delta}_1} p_1 + b_{1+}^{\boldsymbol{\delta}_1} q^*_1 \ge 0$,
it is straightforward to show that the inequalities
$a_{1-}^{\boldsymbol{\delta}_1} p_2 - b_{1-}^{\boldsymbol{\delta}_1} q^*_2 >0$
(from (\ref{eq: -lift pos, with pos denom}))
and $a_{1+}^{\boldsymbol{\delta}_1} p_2 - b_{1+}^{\boldsymbol{\delta}_1} q^*_2 < 0$
imply that 
$\pi_2\circ\varphi(\tilde{\boldsymbol{{\delta}}}_{1-})
 \le
 \pi_2\circ\varphi(\tilde{\boldsymbol{{\delta}}}_{1+})$,
Thus, in summary, ($\si.i$) holds if and only if
$a_{1+}^{\boldsymbol{\delta}_1} p_2 - b_{1+}^{\boldsymbol{\delta}_1} q^*_2 \le 0$
for all ${\boldsymbol{\delta}_1} \in \dtge(Y_1)$,
or equivalently, if and only if 
\begin{equation}
\label{eq: denom+ is less than bq*}
          a_{1+}^{\boldsymbol{{\delta}}_1} p_2 - b_{1+}^{\boldsymbol{{\delta}}_1} q^*_2 
\leq -[b_{1+}^{\boldsymbol{{\delta}}_1} q^*_2]_{p_2}
\;\;\mathrm{for}\;\mathrm{all}\;
{\boldsymbol{\delta}_1} \in \dtge(Y_1),
\end{equation}
which, after substituting
$a_{1+}^{\boldsymbol{{\delta}}_1} 
= ([b_{1+}^{\boldsymbol{{\delta}}_1}q_1^*]_{p_1} 
- b_{1+}^{\boldsymbol{{\delta}}_1}q_1^*)/p_1$
and $q_1^*p_2 + q_2^* p_1 = \bar{q}_1 p_2 + \bar{q}_2 p_1 - p_1 p_2$,
becomes condition ($\textsc{i}.i$).

Thus, conditions ($\si.i$) and ($\si.ii$) are respectively
equivalent to conditions  ($\textsc{i}.i$) and ($\textsc{i}.ii$).  When one or both of
$\dtge(Y_i)$ are empty, ($\textsc{i}.iii$) holds vacuously, and
($\si.i$) and ($\si.ii$) are jointly equivalent to the condition
that  $\varphi_{\P}(I_1) \cup I_2 = \P(H_1(\partial Y_2))$.
We henceforth assume that each $\dtge(Y_i) \neq \emptyset$,
and that conditions ($\textsc{i}.i$) and ($\textsc{i}.ii$),
hence ($\si.i$) and ($\si.ii$), hold.

For each
$({\boldsymbol{\delta}_1}, {\boldsymbol{\delta}_1}) \in \dtge(Y_1) \times \dtge(Y_2)$,
the substitutions
$a_{1+}^{\boldsymbol{\delta}_1} 
= ([b_{1+}^{\boldsymbol{\delta}_1}q_1^*]_{p_1} - b_{1+}^{\boldsymbol{\delta}_1}q_1^*)/p_1$,
$a_{2+}^{\boldsymbol{\delta}_2} = 
([b_{2+}^{\boldsymbol{\delta}_2}q_2^*]_{p_2} - b_{2+}^{\boldsymbol{\delta}_2}q_2^*)/p_2$,
and $q_1^*p_2 + q_2^* p_1 = \bar{q}_1 p_2 + \bar{q}_2 p_1 - p_1 p_2$
make the condition
\begin{equation}
\label{eq: (iii)I in + notation}
\frac{1}{b_{1+}^{\boldsymbol{\delta}_1}}
\!\!\left\lfloor \! 
\frac{b_{1+}^{\boldsymbol{\delta}_1} \bar{q}_1}{p_1}\! \right\rfloor
+ 
\frac{1}{b_{2+}^{\boldsymbol{\delta}_2}}
\!\!\left\lfloor \! 
\frac{b_{2+}^{\boldsymbol{\delta}_2} \bar{q}_2}{p_2}\! \right\rfloor 
\,>\;
1
\end{equation}
equivalent to the inequality
\begin{equation}
a_{1+}^{\boldsymbol{\delta}_1}  b_{2+}^{\boldsymbol{\delta}_2}
+
a_{2+}^{\boldsymbol{\delta}_2} b_{1+}^{\boldsymbol{\delta}_1} 
< 0,
\end{equation}
which, after we multiply by $-p_2$ and add
$b_{2+}^{\boldsymbol{{\delta}}_2}
 (a_{1+}^{\boldsymbol{{\delta}}_1} p_2 - b_{1+}^{\boldsymbol{{\delta}}_1} q^*_2)$
to both sides, becomes
\begin{equation}
\label{eq: almost the inequality between endpoints}
-b_{1+}^{\boldsymbol{{\delta}}_1}
(a_{2+}^{\boldsymbol{{\delta}}_2} p_2 + b_{2+}^{\boldsymbol{{\delta}}_2} q^*_2)
 >
b_{2+}^{\boldsymbol{{\delta}}_2}
 (a_{1+}^{\boldsymbol{{\delta}}_1} p_2 - b_{1+}^{\boldsymbol{{\delta}}_1} q^*_2),
\end{equation}
which, since
$-b_{1+}^{\boldsymbol{{\delta}}_1}
(a_{2+}^{\boldsymbol{{\delta}}_2} p_2 + b_{2+}^{\boldsymbol{{\delta}}_2} q^*_2)
\le 0$,
implies 
$a_{1+}^{\boldsymbol{{\delta}}_1} p_2 - b_{1+}^{\boldsymbol{{\delta}}_1} q^*_2 \neq 0$,
and hence
$\pi_2\circ\varphi(\tilde{\boldsymbol{{\delta}}}_{1+}) \neq \infty$.
Note that when
$\pi_2\circ\varphi(\tilde{\boldsymbol{{\delta}}}_{1+}) \neq \infty$,
condition ($\si.i$) is equivalent to the condition
\begin{equation}
\label{eq: infty in I1, but infty not an endpoint}
(0\, <)\;\; 
\pi_2\circ\varphi(\tilde{\boldsymbol{{\delta}}}_{1-})
\le
\pi_2\circ\varphi(\tilde{\boldsymbol{{\delta}}}_{1+}).
\end{equation}

If $\delta_2 = 0$, then $\tilde{I}^{\boldsymbol{\delta}_2}_2$ is
the complement of $\pi_2^{-1}(\infty)$, and so
(\ref{eq: infty in I1, but infty not an endpoint})
is equivalent to the condition that
$\varphi_{\P}(\dot{I}^{\boldsymbol{\delta}_1}_1) \cup
\dot{I}^{\boldsymbol{\delta}_2}_2 = \P(H_1(\partial Y_2))$,
where 
$\dot{I}^{\boldsymbol{\delta}_i}_i$
denotes the interior of 
$\P(\tilde{I}^{\boldsymbol{\delta}_i}_i)$ for each $i \in \{1,2\}$.
If $\delta_2 > 0$, so that
$\pi_2(\tilde{\boldsymbol{{\delta}}}_{2-})
<
0
<
\pi_2(\tilde{\boldsymbol{{\delta}}}_{2+})$,
then dividing 
(\ref{eq: almost the inequality between endpoints})
by $\delta_2 (a_{1+}^{\boldsymbol{{\delta}}_1} p_2 - b_{1+}^{\boldsymbol{{\delta}}_1} q^*_2)$
makes 
(\ref{eq: almost the inequality between endpoints})
equivalent to the inequality
$\pi_2\circ\varphi(\tilde{\boldsymbol{{\delta}}}_{1+})
<\pi_2(\tilde{\boldsymbol{{\delta}}}_{2+})$, which, combined with
(\ref{eq: infty in I1, but infty not an endpoint}), becomes
\begin{equation}
\pi_2(\tilde{\boldsymbol{{\delta}}}_{2-})< 0 < 
\pi_2\circ\varphi(\tilde{\boldsymbol{{\delta}}}_{1-})
\le
\pi_2\circ\varphi(\tilde{\boldsymbol{{\delta}}}_{1+})
<
\pi_2(\tilde{\boldsymbol{{\delta}}}_{2+}),
\end{equation}
which again is equivalent to the condition that
$\varphi_{\P}(\dot{I}^{\boldsymbol{\delta}_1}_1) \cup
\dot{I}^{\boldsymbol{\delta}_2}_2 = \P(H_1(\partial Y_2))$.
Thus condition ($\textsc{i}.iii$), which takes
(\ref{eq: (iii)I in + notation}) over all
$({\boldsymbol{\delta}_1}, {\boldsymbol{\delta}_1}) \in \dtge(Y_1) \times \dtge(Y_2)$,
is equivalent to the condition that
$\varphi_{\P}(\dot{I}_1) \cup
\dot{I}_2 = \P(H_1(\partial Y_2))$.

\end{proof}

\subsection{Comparison of L-space classification with gluing hypothesis}
Now that we have both classified when $Y_1 \cup_{\varphi} Y_2$
is an L-space, and classified when it satisfies the gluing hypothesis
in terms of the union of the L-space intervals of $Y_1$ and $Y_2$,
it remains to show that these two classifications are equivalent.

\begin{prop}
\label{prop: equivalence of conditions iiL and iiI}
Suppose that $\mu_1$ is ``judiciously chosen'' from
$\P^{-1}(\dot{I}_1 \cap \varphi^{-1}_{\P}(\dot{I}_2))$ nonempty,
and that $Y$ is constructed as above.
For each $i \in \{1,2\}$, set $\bar{q}_i := \left[q_i^*\right]_{p_i}$ and let
$B_i$ denote the set
$B_i := \left\{ \left. [p_i \gamma_i - q_i \delta_i ]_{p_ig_i}\!\right| 
{\boldsymbol{\delta}_i} = \delta_i \iota_i(m_i) + \gamma_i \iota_i(l_i) 
\in \dtge(Y_i)  \right\}$.
Then condition $(\textsc{i}.i)$ (respectively $(\textsc{i}.ii)$) 
from Proposition
\ref{prop: interval covering condition in terms of b1 and b2}
holds if and only if 
condition $(\textsc{l}.i)$ (respectively $(\textsc{l}.ii)$)
from Proposition \ref{prop: L-space condition for Y1 cup Y2 in terms of b}
holds for all $b_1 \in B_1$ (respectively $b_2 \in B_2$).
\end{prop}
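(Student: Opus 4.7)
The argument is symmetric under interchange of the indices $1$ and $2$, so it suffices to establish that $(\textsc{i}.i)$ holds if and only if $(\textsc{l}.i)$ holds for every $b_1 \in B_1$; since both conditions are universally quantified over $B_1$, I will verify the equivalence one $b_1 \in B_1$ at a time.

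The easy direction is to take $b = b_1$ in $(\textsc{l}.i)$: this recovers $(\textsc{i}.i)$ at $b_1$, provided $b_1$ falls in the range $(0, pg)$.  Since $b_1 \in \{0, \ldots, p_1g_1-1\}$, I need only rule out $b_1 = 0$.  If $b_1 = [p_1 \gamma_1 - q_1 \delta_1]_{p_1g_1}$ vanishes, then $q_1 \delta_1 \equiv 0 \pmod{p_1}$; the judicious choice gives $\gcd(p_1, q_1) = 1$, so $p_1 \mid \delta_1$, and the bound $\delta_1 \leq \deg_{[\bar{m}_1]}\!\tc(Y_1) < p_1$ forces $\delta_1 = 0$. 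Then $p_1 \gamma_1 \equiv 0 \pmod{p_1g_1}$ gives $\gamma_1 \equiv 0 \pmod{g_1}$, so $\boldsymbol{\delta}_1 = 0$, contradicting $\boldsymbol{\delta}_1 \in \dtge(Y_1)$.

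For the hard direction, fix $b_1 \in B_1$ and write any $b$ in its congruence class as $b = b_1 + kp_1 g_1$ with $0 \le k < p_2 \hat{g}_2$. Clearing denominators in $(\textsc{l}.i)$, and using the fact that $[b \bar{q}_1]_{p_1} = [b_1 \bar{q}_1]_{p_1}$ (since $p_1 g_1 \bar{q}_1$ is a multiple of $p_1$), the inequality in $(\textsc{l}.i)$ becomes $\phi(b) \geq 0$, where
\begin{equation*}
\phi(b) \;:=\; b\, q^* \;-\; [b_1 \bar{q}_1]_{p_1}\, p_2 \;-\; [b\, \bar{q}_2]_{p_2}\, p_1.
\end{equation*}
I will then show that $\phi$ is non-decreasing as $b$ runs through the class, so that $\phi(b_1) \ge 0$, i.e., $(\textsc{i}.i)$ at $b_1$, implies $\phi(b) \ge 0$ for every such $b$. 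The central identity is
\begin{equation*}
\frac{p_1 g_1 \bar{q}_2}{p_2} \;=\; g_1(p_1 - \bar{q}_1) \;+\; \frac{g_1 q^*}{p_2},
\end{equation*}
which follows from rearranging $q^* = \bar{q}_1 p_2 + \bar{q}_2 p_1 - p_1 p_2$; this latter identity is valid because the judicious choice gives $0 < q^* < p_1 p_2$.  Separating integer and fractional parts yields $\lfloor p_1 g_1 \bar{q}_2/p_2 \rfloor = g_1(p_1 - \bar{q}_1) + \lfloor g_1 q^*/p_2 \rfloor$ and $d := [p_1 g_1 \bar{q}_2]_{p_2} = [g_1 q^*]_{p_2}$, so the step $b \mapsto b + p_1 g_1$ contributes
\begin{equation*}
\phi(b + p_1 g_1) - \phi(b)
\;=\; p_1 g_1 q^* \,-\, p_1 \!\left( d \,-\, p_2\, \epsilon_b \right)
\;=\; p_1 p_2 \!\left( \left\lfloor g_1 q^*/p_2 \right\rfloor + \epsilon_b \right)
\;\geq\; 0,
\end{equation*}
where $\epsilon_b \in \{0,1\}$ detects whether $[b \bar{q}_2]_{p_2} + d \geq p_2$ (a wraparound). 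Iterating in $k$ gives $\phi(b_1 + k p_1 g_1) \geq \phi(b_1)$ throughout the range, completing the argument.

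The principal obstacle is the identity $\lfloor p_1 g_1 \bar{q}_2/p_2 \rfloor = g_1(p_1 - \bar{q}_1) + \lfloor g_1 q^*/p_2 \rfloor$ with its fractional counterpart $d = [g_1 q^*]_{p_2}$; both rest on the judicious-choice hypothesis $0 < q^* < p_1 p_2$ (and the resulting exact formula for $q^*$). Once these are in place, the monotonicity of $\phi$, and hence the implication $(\textsc{i}.i) \Rightarrow (\textsc{l}.i)$, is an immediate calculation.
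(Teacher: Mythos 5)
Your proof is correct and takes essentially the same route as the paper: after disposing of the easy direction by taking $b=b_1$ (your explicit check that $b_1\neq 0$ is a nice refinement the paper leaves implicit), both arguments rewrite the inequalities of $(\textsc{i}.i)$ and $(\textsc{l}.i)$ via the identity $q^* = \bar{q}_1 p_2 + \bar{q}_2 p_1 - p_1 p_2$ and show the resulting condition only improves as $b$ increases through the class $b_1 + p_1 g_1 \Z_{\ge 0}$ — the paper by a short chain of inequalities using subadditivity of $[\,\cdot\,]_{p_2}$, you by telescoping the single-step increment of $\phi$ with carry bookkeeping, which is the same computation. One minor point: the identity $q^* = \bar{q}_1 p_2 + \bar{q}_2 p_1 - p_1 p_2$ is not forced by $0<q^*<p_1p_2$ alone (the paper is equally loose here); it needs $q^* < \bar{q}_1 p_2 + \bar{q}_2 p_1$, which does follow from the judicious-choice bound $p_1, p_2 > q^* > 0$ together with $\bar{q}_1,\bar{q}_2 \ge 1$.
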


\begin{proof}

If $B_1 = \emptyset$, then conditions $(\textsc{i}.i)$ and $(\textsc{i}.ii)$
hold vacuously, hence are equivalent.
We therefore assume $B_1$ is nonempty and fix some $b_1 \in B_1$.
Clearly $(\textsc{l}.i)$ implies the statement of $(\textsc{i}.i)$
for that particular $b_1$, since
$b_1 \in \{b \in \Z \left|\; b \equiv b_1\, (\mod p_1), 
0 < b < p_1g_1 p_2 g_2/\mkern-2mug_0 \right.\}$.

Conversely, suppose $(\textsc{i}.i)$ holds for that $b_1$.
Substituting $q^* = \bar{q}_1 p_2 + \bar{q}_2 p_1 - p_1 p_2$ gives
\begin{equation}
\frac{b_1 q^*}{p_1 p_2}
\ge \frac{[b_1 \bar{q}_1]_{p_1}}{p_1} + \frac{[b_1 \bar{q}_2]_{p_2}}{ p_2}.
\end{equation}
Thus, for any $b := b_1 + y\mkern.5mu p_1\mkern-1mu g_1$
with $y \in \{0, \dots, p_2g_2/\mkern-2mug_0 - 1\}$, we have
\begin{align}
\frac{b q^*}{p_1 p_2}
&\ge
\left(\frac{[b_1 \bar{q}_1]_{p_1}}{p_1} + \frac{[b_1 \bar{q}_2]_{p_2}}{ p_2}\right)
+ \frac{y\mkern.5mu p_1\mkern-1mu g_1 (p_1 \bar{q}_2  -  (p_1 - \bar{q}_1)p_2)}{p_1p_2}
           \\ \nonumber
&\ge
\frac{[b \bar{q}_1]_{p_1}}{p_1} + \left(\frac{[b_1 \bar{q}_2]_{p_2}}{ p_2}
+ \frac{y g_1[ p_1 \bar{q}_2 ]_{p_2} }{p_2} \right)
           \\ \nonumber
&\ge
\frac{[b \bar{q}_1]_{p_1}}{p_1} + \frac{[b \bar{q}_2]_{p_2}}{ p_2},
\end{align}
which is equivalent to the inequality in condition $(\textsc{l}.i)$.
An analogous argument proves the equivalence of
conditions $(\textsc{l}.ii)$ and $(\textsc{i}.ii)$ for any $b_2 \in B_2$.

\end{proof}

\begin{prop}
\label{prop: comparison of property iii}
Suppose that $\mu_1$ is ``judiciously chosen'' from
$\P^{-1}(\dot{I}_1 \cap \varphi^{-1}_{\P}(\dot{I}_2))$ nonempty,
and that $Y$ is constructed as above.
For each $i \in \{1,2\}$, set $\bar{q}_i := \left[q_i^*\right]_{p_i}$ and let
$B_i$ denote the set
$B_i := \left\{ \left. [p_i \gamma_i - q_i \delta_i ]_{p_ig_i}\!\right| 
{\boldsymbol{\delta}_i} = \delta_i \iota_i(m_i) + \gamma_i \iota_i(l_i) 
\in \dtge(Y_i)  \right\}$.
Suppose conditions $(\textsc{i}.i)$ and $(\textsc{i}.ii)$ 
from Proposition
\ref{prop: interval covering condition in terms of b1 and b2}
hold.
Then
condition $(\textsc{i}.iii)$
from Proposition
\ref{prop: interval covering condition in terms of b1 and b2}
holds if and only if
condition $(\textsc{l}.iii)$
from Proposition \ref{prop: L-space condition for Y1 cup Y2 in terms of b}
holds for all $(b_1, b_2) \in B_1 \times B_2$ with $b_1 \equiv b_2\;(\mod g_0)$.
\end{prop}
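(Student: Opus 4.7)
The plan is to exploit the factorization
$$f_i(b) := \frac{1}{b}\left\lfloor \frac{b\bar{q}_i}{p_i}\right\rfloor = \frac{\bar{q}_i}{p_i} - \frac{[b\bar{q}_i]_{p_i}}{b p_i},$$
which makes $f_i$ monotonically increasing in $b$ along any arithmetic progression $b_i + p_ig_i\Z$, on which $[b\bar{q}_i]_{p_i}$ is constant. The forward direction $(\textsc{i}.iii)\Rightarrow (\textsc{l}.iii)$ then follows quickly: given a compatible pair $(b_1,b_2) \in B_1 \times B_2$ with $b_1 \equiv b_2 \pmod{g_0}$, the judicious-choice conditions $\gcd(p_1,p_2) = \gcd(p_1,g_2) = \gcd(p_2,g_1) = 1$ imply $\gcd(p_1g_1, p_2g_2) = g_0$, so CRT produces a unique $b \in \{0,\ldots,pg-1\}$ satisfying $b \equiv b_i \pmod{p_ig_i}$. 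The constraints $0 < b_i < p_ig_i$ force $b \ge \max(b_1,b_2) > 0$, monotonicity gives $f_i(b) \ge f_i(b_i)$ for each $i$, and summing yields $f_1(b)+f_2(b) \ge f_1(b_1)+f_2(b_2) > 1$, which is $(\textsc{l}.iii)$.

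For the converse, I would argue by contrapositive, using that $(\textsc{l}.i), (\textsc{l}.ii)$ are at our disposal via the hypothesis and Proposition \ref{prop: equivalence of conditions iiL and iiI}. Suppose some pair $(b_1^*, b_2^*) \in B_1 \times B_2$ satisfies $f_1(b_1^*)+f_2(b_2^*) \le 1$; the goal is to exhibit a compatible violator of $(\textsc{l}.iii)$. When $b_1^* \equiv b_2^* \pmod{g_0}$, I would recast $(\textsc{l}.iii)$ at the CRT $b^*$ as the integer inequality $b^* q^* > p_2 [b_1^*\bar{q}_1]_{p_1} + p_1 [b_2^*\bar{q}_2]_{p_2}$ (using $\bar{q}_1/p_1 + \bar{q}_2/p_2 - 1 = q^*/(p_1p_2)$), combine this with the parallel rewrites of $f_1(b_1^*)+f_2(b_2^*) \le 1$ and of $(\textsc{i}.i), (\textsc{i}.ii)$ at $b_1^*$ and $b_2^*$, and extract the required contradiction by manipulating the resulting inequalities on the cross-residues $[b_1^*\bar{q}_2]_{p_2}$ and $[b_2^*\bar{q}_1]_{p_1}$. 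When $b_1^* \not\equiv b_2^* \pmod{g_0}$, the plan is to reduce to the compatible case by producing an alternative $b_2' \in B_2$ with $b_2' \equiv b_1^* \pmod{g_0}$ and $f_2(b_2') \le f_2(b_2^*)$, then applying the compatible-case argument to $(b_1^*, b_2')$.

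The hardest step I anticipate is the incompatible-case reduction, since it requires showing that $B_2$ contains a representative in the residue class of $b_1^*$ modulo $g_0$ whose $f_2$-value does not exceed that of $b_2^*$. I would attempt this by analyzing the action of $T_2^{\partial}$ on $\dtge(Y_2)$ via the explicit description of the support of the torsion complement from Section \ref{ss: torsion conventions}, combined with the semigroup property of the complement of $\dtge(Y_2)$ supplied by Proposition \ref{prop: The complement of D is additively closed.}; these together should produce the required $b_2'$ as a torsion-shifted representative of the relevant element of $\dtge(Y_2)$. The compatible-case calculation should then be a direct manipulation paralleling that of Proposition \ref{prop: equivalence of conditions iiL and iiI}, with the added feature that one side is strict.
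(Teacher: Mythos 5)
Your forward direction coincides with the paper's (CRT lift plus monotonicity of $f_i(b)=\frac{1}{b}\bigl\lfloor b\bar q_i/p_i\bigr\rfloor=\bar q_i/p_i-[b\bar q_i]_{p_i}/(bp_i)$ along $b_i+p_ig_i\Z$), but your converse has a genuine gap, and it sits in the case you expect to be routine rather than the one you flag as hard. If a compatible pair $(b_1^*,b_2^*)$ violates $(\textsc{i}.iii)$, the violation does \emph{not} transfer to the CRT lift $b^*$: since $b^*\ge b_1^*,b_2^*$, the same monotonicity gives $f_1(b^*)+f_2(b^*)\ge f_1(b_1^*)+f_2(b_2^*)$, so the assumed $f_1(b_1^*)+f_2(b_2^*)\le 1$ yields no upper bound at $b^*$. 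Worse, under $(\textsc{i}.i)$ and $(\textsc{i}.ii)$ the paper's ``Claim'' (which needs the judicious bound $p_i>(1+\deg\tc(Y_1))(1+\deg\tc(Y_2))$ from Proposition~\ref{prop: judiciously chosen def} together with an integrality argument in $\tfrac{1}{b_1^*}\Z$) shows that failure of $(\textsc{i}.iii)$ forces the exact equalities $\frac{1}{b_i^*}\bigl\lfloor b_i^*\bar q_1/p_1\bigr\rfloor+\frac{1}{b_i^*}\bigl\lfloor b_i^*\bar q_2/p_2\bigr\rfloor=1$ for $i=1,2$; the sum at the larger value $b^*$ then typically exceeds $1$, so no amount of manipulating the cross-residues $[b_1^*\bar q_2]_{p_2}$, $[b_2^*\bar q_1]_{p_1}$ will produce a contradiction with $(\textsc{l}.iii)$ \emph{at} $b^*$. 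The contradiction must instead be relocated to a single smaller value $b\in B_1\cap B_2$ (self-compatible mod $g_0$), and producing it is the heart of the paper's proof: if the smaller of the failing indices lies in the other $B_I$ one is done; otherwise one runs a Euclidean-type descent, using $[b_i\bar q_I]_{p_I}<[b_I\bar q_I]_{p_I}$ and the semigroup property of Proposition~\ref{prop: The complement of D is additively closed.} to conclude $\boldsymbol{\delta}_I^{b_I}-\boldsymbol{\delta}_I^{b_i}=\boldsymbol{\delta}_I^{b_I-b_i}\in\dtge(Y_I)$, i.e.\ $b_I-b_i\in B_I$, with the failure of $(\textsc{i}.iii)$ persisting for the new pair; the iteration terminates at $b=\gcd(b_1^*,b_2^*)\in B_1\cap B_2$, where the forced equality ``$=1$'' contradicts the strict ``$>1$'' of $(\textsc{l}.iii)$. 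Your proposal contains neither the equality-forcing step nor this descent, and without them the converse does not close.

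Your treatment of the incompatible case is also off target: there is no reason $B_2$ should contain any element in the residue class of $b_1^*$ modulo $g_0$, let alone one with smaller $f_2$-value, and neither the explicit support of $\tc(Y_2)$ nor Proposition~\ref{prop: The complement of D is additively closed.} gives control over which classes mod $g_0$ are represented in $B_2$. The paper never repairs congruences: because its terminal witness lies in $B_1\cap B_2$, one may take $b_1=b_2=b$, so compatible and incompatible failing pairs are handled uniformly by the descent. You are right that the semigroup property is the key extra input, but it is used to keep the Euclidean differences inside $B_I$, not to adjust residues mod $g_0$.
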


\begin{proof}

We henceforth assume that $\dtge(Y_1)$ and $\dtge(Y_2)$ are nonempty,
since otherwise conditions $(\textsc{i}.iii)$
and $(\textsc{l}.iii)$ hold vacuously in all cases.

If condition $(\textsc{i}.iii)$ holds, then it holds for any
$(b_1, b_2) \in B_1 \times B_2$ with $b_1 \equiv b_2\;(\mod g_0)$.
In this case, the
unique $b \in \{0, \ldots, p_1p_2 g - 1\}$ satisfying $b \equiv b_1 \,(\mod p_1g_1)$ and $b \equiv b_2 \,(\mod p_2g_2)$
also satisfies $[b\bar{q}_1]_{p_1} = [b_1\bar{q}_1]_{p_1}$ and 
$[b\bar{q}_2]_{p_2} = [b_2\bar{q}_2]_{p_2}$, so that we have
\begin{align}
   \frac{1}{b}\!\left\lfloor \! \frac{b \bar{q}_1}{p_1}\! \right\rfloor
  +\frac{1}{b}\!\left\lfloor \! \frac{b \bar{q}_2}{p_2}\! \right\rfloor
&=
   \frac{1}{b_1\!}\!\left\lfloor \! \frac{b_1 \bar{q}_1}{p_1}\! \right\rfloor
  +\frac{1}{b_2\!}\!\left\lfloor \! \frac{b_2 \bar{q}_2}{p_2}\! \right\rfloor
+ \left(\frac{1}{b_1\!} - \frac{1}{b} \right)\!
\frac{[b_1\bar{q}_1]_{p_1}\!}{p_1}
+ \left(\frac{1}{b_2\!} - \frac{1}{b} \right)\!
\frac{[b_2\bar{q}_2]_{p_2}\!}{p_2}
          \\ \nonumber
&> 1.
\end{align}
Thus $(\textsc{i}.iii)$ implies $(\textsc{l}.iii)$
for all $(b_1, b_2) \in B_1 \times B_2$ with $b_1 \equiv b_2\;(\mod g_0)$,
and it remains to prove the converse.
\smallskip

{{{\bf{Claim.}}  {\it{Suppose that conditions $(\textsc{i}.i)$ and $(\textsc{i}.ii)$
hold, and that there exists some $(b_1, b_2) \in B_1 \times B_2$
for which the statement of $(\textsc{i}.iii)$ fails, or equivalently
(using the substitution $q^* = \bar{q}_1 p_2 + \bar{q}_2 p_1 - p_1 p_2$,
for which
\begin{equation}
\label{eq: counter example for iii}
\frac{q^*}{p_1p_2}
\le 
\frac{[b_1\bar{q}_1]_{p_1\!}}{b_1p_1}
+\frac{[b_2\bar{q}_2]_{p_2\!}}{b_2p_2}.
\end{equation}
Then we have the inequalities
\begin{equation}
\label{eq: claim, b1q1 /b1 > b2q1/b2}
(i)\;\; \;
\frac{[b_1\bar{q}_1]_{p_1\!}}{b_1}
\,\ge\,
\frac{[b_2\bar{q}_1]_{p_1\!}}{b_2},
\;\;\;\;\;\;\;\;\;\;\;\;\;\;\;\;\;\;\;
(ii)\;\; \;
\frac{[b_2\bar{q}_2]_{p_2\!}}{b_2}
\,\ge\,
\frac{[b_1\bar{q}_2]_{p_2\!}}{b_1},\;\;\;\;\;\;\;\;\;\;\;
\end{equation}
and conditions  $(\textsc{i}.i)$ and $(\textsc{i}.ii)$ for this particular 
$(b_1, b_2) \in B_1 \times B_2$
become the equalities
\begin{equation}
\label{eq: (i) and (ii) are equalities}
(i)\;\;\;
\frac{q^*}{p_1p_2}
\,=\,
\frac{[b_1\bar{q}_1]_{p_1\!}}{b_1p_1}
+\frac{[b_1\bar{q}_2]_{p_2\!}}{b_1p_2},
\;\;\;\;\;\;\;\;\;
(ii)\;\;\;
\frac{q^*}{p_1p_2}
\,=\,
\frac{[b_2\bar{q}_1]_{p_1\!}}{b_2p_1}
+\frac{[b_2\bar{q}_2]_{p_2\!}}{b_2p_2}.
\end{equation}
}}}}

{\it{Proof of Claim.}}  Using the substitution
$q^* = \bar{q}_1 p_2 + \bar{q}_2 p_1 - p_1 p_2$,
we can re-express conditions $(\textsc{i}.i)$ and $(\textsc{i}.ii)$ as
\begin{equation}
\label{eq: q* form of conditions i and ii}
(i)\;\; \;
\frac{q^*}{p_1p_2}
\,\ge\,
\frac{[b_1\bar{q}_1]_{p_1\!}}{b_1p_1}
+\frac{[b_1\bar{q}_2]_{p_2\!}}{b_1p_2},
\;\;\;\;\;\;\;\;
(ii)\;\;\;
\frac{q^*}{p_1p_2}
\,\ge\,
\frac{[b_2\bar{q}_1]_{p_1\!}}{b_2p_1}
+\frac{[b_2\bar{q}_2]_{p_2\!}}{b_2p_2}.
\end{equation}
Concatenating
(\ref{eq: counter example for iii})
with (\ref{eq: q* form of conditions i and ii}$.i$)
(respectively, (\ref{eq: q* form of conditions i and ii}$.ii$))
then yields inequality
(\ref{eq: claim, b1q1 /b1 > b2q1/b2}$.ii$)
(respectively, (\ref{eq: claim, b1q1 /b1 > b2q1/b2}$.i$)).
Setting $\delta_1 := [b_1 \bar{q}_1]_{p_1\!} \in \dtge(Y_1)$ and
$\delta_2 := [b_2 \bar{q}_2]_{p_2\!} \in \dtge(Y_2)$, we note that
(\ref{eq: claim, b1q1 /b1 > b2q1/b2}$.i$)
implies
\begin{equation}
\label{eq: lower bound on 1/b1}
\frac{\delta_2}{b_2 p_2} < \frac{1}{b_1},
\end{equation}
since otherwise, applying
(\ref{eq: claim, b1q1 /b1 > b2q1/b2}$.i$) and 
$1/ b_1 \le \delta_2/(b_2 p_2)$ in succession would yield
\begin{equation}
\nonumber
\frac{[b_2 \bar{q}_1]_{p_1\!}}{b_2p_1} 
\,\le\,
\frac{\delta_1}{p_1} \!\cdot\!
\frac{1}{b_1}
\,\le\,
\frac{\delta_1}{p_1} \!\cdot\!
\frac{\delta_2 }{b_2 p_2}
\,=\,
\frac{\delta_1 \delta_2 /p_2}{b_2 p_1}
\,<\,
\frac{(1\mkern-1mu+\mkern-1mu \deg_{t_1} \mkern-5mu\tc(Y_1))
(1 \mkern-1mu+\mkern-1mu \deg_{t_2} \mkern-5mu\tc(Y_2)/p_2}{b_2 p_1}
\,<\, 
\frac{1}{b_2 p_1},
\end{equation}
making $[b_2 \bar{q}_1]_{p_1\!} < 1$, a contradiction.
Thus (\ref{eq: lower bound on 1/b1}) must hold.

Applying 
(\ref{eq: q* form of conditions i and ii}$.i$),
(\ref{eq: counter example for iii}), and
(\ref{eq: lower bound on 1/b1}) in succession, we obtain
\begin{align}
\label{eq: first line squeeze}
\frac{[b_1\bar{q}_1]_{p_1\!}}{b_1p_1}
+\frac{[b_1\bar{q}_2]_{p_2\!}}{b_1p_2}
&\le
\frac{q^*}{p_1 p_2}
    \\ \nonumber
&\le
\frac{[b_1\bar{q}_1]_{p_1\!}}{b_1p_1}
+\frac{[b_2\bar{q}_2]_{p_2\!}}{b_2p_2}
    \\
&<
\frac{[b_1\bar{q}_1]_{p_1\!}}{b_1p_1}
+\frac{1}{b_1}.
\label{eq: last line squeeze}
\end{align}
Subtracting $\frac{[b_1\bar{q}_1]_{p_1\!}}{b_1p_1}
+\frac{[b_1\bar{q}_2]_{p_2\!}}{b_1p_2}$ from
lines
(\ref{eq: first line squeeze}) and
(\ref{eq: last line squeeze}) then yields
\begin{equation}
0
\;\le\;
\frac{q^*}{p_1 p_2}
-\left(\frac{[b_1\bar{q}_1]_{p_1\!}}{b_1p_1}
+\frac{[b_1\bar{q}_2]_{p_2\!}}{b_1p_2}\right)
    \\
\;<\;
\frac{1}{b_1} - \frac{[b_1\bar{q}_2]_{p_2\!}}{b_1p_2},
\end{equation}
but we also know that
\begin{equation}
\frac{q^*}{p_1 p_2}
-\left(\frac{[b_1\bar{q}_1]_{p_1\!}}{b_1p_1}
+\frac{[b_1\bar{q}_2]_{p_2\!}}{b_1p_2}\right)
\; \in\; \frac{1}{b_1}\Z.
\end{equation}
Thus, (\ref{eq: (i) and (ii) are equalities}$.i$) must hold,
and (\ref{eq: (i) and (ii) are equalities}$.ii$)
follows from symmetry, proving our Claim.

Having proven our Claim, we pause to introduce the notation 
$b_i \mapsto \boldsymbol{\delta}_{\mkern-1.5mu i}^{b_i}$ for the bijection
\begin{align}
\{0, \ldots, p_ig_i-1\} 
&\;\to\;
\{0, \ldots, p_i - 1\} \iota_i(m_i) + T^{\partial}_i,
    \\ \nonumber
b_i 
&\;\mapsto\; \boldsymbol{\delta}_{\mkern-1.5mu i}^{b_i}
:= 
\iota_i(-\!\left\lfloor\!\frac{b_iq_i^*\!}{p_i}\!\right\rfloor\! \mu_i 
\,+\, b_i \lambda_i) 
\;\;\in\;
[b_i \bar{q}_i]_{p_i} \iota_i(m_i) + T^{\partial}_i,
\end{align}
whose inverse we used to define each $B_i$ as a set of integers indexing the elements of
$\dtge(Y_i)$.
\smallskip

We now proceed with an inductive argument.
Suppose that
$(\textsc{l}.iii)$ holds for all $(b_1, b_2) \in B_1 \times B_2$ satisfying
$b_1 \equiv b_2\, (\mod g_0)$,
and that
$(\textsc{i}.i)$ and $(\textsc{i}.ii)$ hold, but that there exist
$b_i \in B_i$ and $b_I \in B_I$, with $\{i, I\} = \{1,2\}$ and $b_i \le b_I$,
for which $(\textsc{i}.iii)$ fails, {\it{i.e.}}, for which
\begin{equation}
\label{eq: counter example for iii, but with i and I}
\frac{q^*}{p_1p_2}
\le 
\frac{[b_i\bar{q}_i]_{p_i\!}}{b_ip_i}
+\frac{[b_I\bar{q}_I]_{p_I\!}}{b_Ip_I}.
\end{equation}
Equation
(\ref{eq: (i) and (ii) are equalities}) from our Claim then tells us that
\begin{equation}
\label{eq: cond iI= as counter example to cond iiiL}
 \frac{1}{b_i}\!\left\lfloor \! \frac{b_i \bar{q}_i}{p_i}\! \right\rfloor
+\frac{1}{b_i}\!\left\lfloor \! \frac{b_i \bar{q}_I}{p_I}\! \right\rfloor = 1.
\end{equation}
This means that $b_i \notin B_I$, since otherwise,
setting $b:= b_i \in B_i \cap B_I =  B_1 \cap B_2$ would make
(\ref{eq: cond iI= as counter example to cond iiiL}) contradict
condition (${\textsc{l}}.iii$).  
Thus, 
$\boldsymbol{\delta}_{\mkern-2.5mu I}^{b_i}
\notin 
\dtge(Y_I)$
and $b_i < b_I$.

We next apply 
(\ref{eq: claim, b1q1 /b1 > b2q1/b2}) from our Claim, to obtain
\begin{equation}
\label{eq: deltaI > deltai}
[b_i \bar{q}_I]_{p_I} \le \frac{b_i}{b_I}[b_I \bar{q}_I]_{p_I} < [b_I \bar{q}_I]_{p_I}.
\end{equation}
Since 
$\boldsymbol{\delta}_{\mkern-1.5mu I}^{b_I} - \boldsymbol{\delta}_{\mkern-1.5mu I}^{b_i}
\in \left([b_I \bar{q}_I]_{p_I} - [b_i \bar{q}_I]_{p_I}\right) \iota_I(m_I)
+ T^{\partial}_I$,
the above inequality implies
$\boldsymbol{\delta}_{\mkern-1.5mu I}^{b_I} - \boldsymbol{\delta}_{\mkern-1.5mu I}^{b_i}
\in \iota_I(m_I \Z_{\ge 0} + l_I\Z)$. 
Thus, since 
$\boldsymbol{\delta}_{\mkern-1.5mu I}^{b_i}
\notin 
\dtge(Y_I)$ and
$\boldsymbol{\delta}_{\mkern-1.5mu I}^{b_I}
\in 
\dtge(Y_I)$,
the additive closure of 
$\iota_I(m_I \Z_{\ge 0} + l_I\Z) \setminus \dtge(Y_I)$ from
Proposition \ref{prop: The complement of D is additively closed.}
tells us that
$\boldsymbol{\delta}_{\mkern-1.5mu I}^{b_I} - \boldsymbol{\delta}_{\mkern-1.5mu I}^{b_i}
\in \dtge(Y_I)$.
Since (\ref{eq: deltaI > deltai}) implies
$\left([b_I \bar{q}_I]_{p_I} - [b_i \bar{q}_I]_{p_I}\right) = [(b_I - b_i) \bar{q}_I]_{p_I}$,
we actually have 
$\boldsymbol{\delta}_{\mkern-1.5mu I}^{b_I} - \boldsymbol{\delta}_{\mkern-1.5mu I}^{b_i}
= \boldsymbol{\delta}_{\mkern-1.5mu I}^{b_I - b_i} \in \dtge(Y_I)$,
implying $b_I - b_i \in B_i$.
We furthermore have
\begin{equation}
\frac{[b_i \bar{q}_I]_{p_I}}{b_i}
\le \frac{[b_I \bar{q}_I]_{p_I}}{b_I}
 \;\;\implies\;\;
\frac{[b_I \bar{q}_I]_{p_I}}{b_I}
\le
\frac{[(b_I - b_i) \bar{q}_I]_{p_I}}{b_I-b_i},
 \end{equation}
so that (\ref{eq: counter example for iii, but with i and I}) implies
\begin{equation}
\frac{q^*}{p_1p_2}
\le 
\frac{[b_i\bar{q}_i]_{p_i\!}}{b_ip_i}
+\frac{[(b_I-b_i)\bar{q}_I]_{p_I\!}}{(b_I-b_i)p_I},
\end{equation}
with $b_i \in B_i$ and $b_I - b_i \in B_I$, mimicking our initial conditions.

We then iterate the process, at each iteration redefining $i, I \in \{1,2\}$, $b_i$, and $b_I$ so that
\begin{equation}
b^{\mathrm{new}}_i := \min\{b_i^{\mathrm{old}},\, b_I^{\mathrm{old}}-b_i^{\mathrm{old}}\},
\;\;\;\;\;\;\;
b^{\mathrm{new}}_I := \max\{b_i^{\mathrm{old}},\, b_I^{\mathrm{old}}-b_i^{\mathrm{old}}\}.
\end{equation}
Like any Euclidean Algorithm, this strictly decreasing sequence bounded 
by zero must terminate at zero, with its last two nonzero entries equal to
\begin{equation}
b^{\mathrm{final}}_i = b^{\mathrm{final}}_I = \gcd(b_i^{\mathrm{original}}, b_I^{\mathrm{original}}).
\end{equation}
Setting $b := b^{\mathrm{final}}_i = b^{\mathrm{final}}_I  \in B_1 \cap B_2$ then
makes
(\ref{eq: cond iI= as counter example to cond iiiL})
contradict condition $({\textsc{l}}.iii)$,

This completes the proof of
the proposition,
thereby
completing the proof of
Theorem~\ref{thm: torus gluing and complementary intervals for L-spaces}

\end{proof}

\section{Generalized solid tori and NLS Detection}
\label{Sec:dtgz}
\
In this section, we study manifolds with \(\dtgz= \emptyset\). Unless otherwise specified, we assume that \(Y\) is a rational homology \(S^1 \times D^2\) with \(H_1(Y) =\Z\oplus T\), and that \(\phi:H_1(Y)\to H_1(Y)/T\simeq \Z\) is the projection. We define \(g_Y>0\) by the relation \(\im \phi = g_Y \Z \subset \Z\). 
 The number \(g_Y\) is the minimal intersection number of a curve on \(\partial Y\) with a surface generating \(H_2(Y, \partial Y)\). Equivalently, it is the minimal number of boundary components of such a surface, or the order of the homological longitude \(l\) in \(H_1(Y)\). Finally, we define 
\(k_Y\) to be the order of the group \(T/(T\cap \im \iota)\), so that \(|T|=k_Yg_Y\).
 
 \subsection{Generalized solid tori}
The Seifert fibred spaces \(N_g=M(\emptyset;1/g,-1/g)\)  provide a motivating example of a class of manifolds with \(\lL(N_g) = Sl(N_g)/[l]\). They were studied in \cite{BGW} (for \(g=2\)) and subsequently by Watson \cite{WatsonFST} for arbitrary values of \(g\). We briefly describe them  here. 
First, we have 
 $$H_1(N_g) = \langle f, h_1, h_2 \, | \, f+ gh_1 = f-gh_2 = 0 \rangle \simeq \Z \oplus \Z/g.$$
 The \(\Z\) summand is generated by \(h_1\), and the \(\Z/g\) summand is generated by \(\sigma = h_1+h_2\). 
 \(H_1(\partial N_g) = \langle f,\sigma \rangle\), so \(\iota (H_1(\partial N_g)) = g \Z \oplus \Z/g \subset H_1(Y_g)\). 
 The Turaev torsion is 
 $$ \tau(N_g) \sim \frac{1-[f]}{(1-[h_1])(1-[h_2])} = \frac{1-t^g}{(1-t)(1-t\sigma)}$$
 so the Milnor torsion is 
$$\taubar(Y) = \tau(N_g)|_{\sigma=1} = \frac{1-t^g}{(1-t)^2} = 1 + 2t + 3t^2+\ldots +(g-1)t^{g-1}+gt^g+gt^{g+1}+\ldots$$ 
 It is easy to see that if \(x \not \in S[\tau(N_g)]\), \(y \in S[\tau(N_g)]\) with \(\phi(x) > \phi(y)\), then 
 \(\phi(x-y) <g\). If \(x-y \in \im \iota\), we must have \(\phi(x-y)=0\), so \(\dtgz(N_g) = \emptyset\). 
 More generally, the same argument shows that 
 
 \begin{prop} 
 \label{Prop:dtgz}
 If \(Y\) is a Floer simple and \(\deg \Deltas(Y) < g_Y\), then \(\dtgz(Y) = \emptyset\). 
 \end{prop}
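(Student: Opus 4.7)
\smallskip

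The strategy is to derive a contradiction from the assumption that some $h \in \dtgz(Y)$ exists. Writing $h = x - y$ with $x \notin S[\tau(Y)]$, $y \in S[\tau(Y)]$, and $\phi(x) \geq \phi(y)$, the plan is to squeeze $\phi(h)$ into the gap $(0, g_Y)$ and then use $h \in \mathrm{im}\,\iota$ to contradict $\phi(h) \in g_Y \Z$. The normalization $a_h = 0$ for $\phi(h) < 0$ gives $\phi(y) \geq 0$ for free; the meat of the argument is an upper bound on $\phi(x)$ in terms of $\deg \Deltas(Y)$.

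The key step is to show that for a Floer simple $Y$, every $z \in H_1(Y)$ with $\phi(z) \geq \deg \Deltas(Y)$ lies in $S[\tau(Y)]$. I would argue this by combining three ingredients that are already in hand. Write $\taubar(Y) = \sum_{n \geq 0} b_n t^n$ with $b_n := \sum_{\phi(z) = n} a_z$. By Proposition~\ref{Prop:TorsionCoeff}, each $a_z \in \{0,1\}$, so $0 \leq b_n \leq |T|$. By Proposition~\ref{Prop:AlexG}, $a_z = 1$ whenever $\phi(z) > \|Y\|$, so $b_n = |T|$ for all sufficiently large $n$. Finally, $\Deltas(Y) = (1-t)\taubar(Y)$ is a polynomial of degree $d := \deg \Deltas(Y)$, so its $t^n$-coefficient $b_n - b_{n-1}$ vanishes for $n > d$. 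Thus $b_n$ is constant for $n \geq d$, and its eventual value is $|T|$; hence $b_n = |T|$ for every $n \geq d$, which is to say that every $z$ with $\phi(z) \geq d$ appears in $S[\tau(Y)]$.

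Given this, the proof concludes quickly: $x \notin S[\tau(Y)]$ forces $\phi(x) \leq d - 1$, so
\begin{equation*}
0 \;<\; \phi(h) \;=\; \phi(x) - \phi(y) \;\leq\; d - 1 \;<\; g_Y
\end{equation*}
by the standing hypothesis $\deg \Deltas(Y) < g_Y$. But $h \in \mathrm{im}\,\iota$ forces $\phi(h) \in g_Y \Z$, and no integer in $(0, g_Y)$ is a multiple of $g_Y$. This contradiction shows $\dtgz(Y) = \emptyset$.

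I do not expect any serious obstacle: the argument is essentially the same as the one sketched in the preceding paragraph for $N_g$, once one observes that the Floer simple hypothesis plus the degree of $\Deltas$ pins down exactly where the coefficient sequence $b_n$ stabilizes. The only minor subtlety is verifying that the stabilization value is $|T|$ and not something smaller, which is where Proposition~\ref{Prop:AlexG} enters decisively.
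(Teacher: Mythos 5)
Your proof is correct and is essentially the paper's argument: the paper proves the $N_g$ case by noting that non-support elements have $\phi$-value below the stabilization degree of $\taubar$ while $\phi(\mathrm{im}\,\iota)\subset g_Y\Z$, and then simply asserts that "the same argument" gives the general statement. Your write-up just makes the implicit stabilization step explicit (via Propositions~\ref{Prop:TorsionCoeff} and~\ref{Prop:AlexG}, plus $\deg\Deltas(Y)$ controlling where the coefficients of $\taubar(Y)$ become constant equal to $|T|$), which is exactly the intended generalization.
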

 Motivated by this, we make the following 
 \begin{definition}
 A {\em generalized solid torus} is a Floer simple manifold \(Y\) with \(\deg \Deltas(Y) < g_Y\). 
  \end{definition}
 
 If \(Y\) is such a manifold, Corollary~\ref{Cor:TNorm} implies that \(\| Y\| \leq g_Y-2\). On the other hand, an embedded surface which generates \(H_2(Y,\partial Y)\) has at least \(g_Y\) boundary components, so a norm-minimizing surface must have genus \(0\). 
 
 The Milnor torsion of a generalized solid torus is  determined by \(g_Y\) and \(k_Y\). 
 \begin{lemma}
 \label{Lem:RedAlex}
 Suppose that \(Y\) is a rational homology \(S^1\times D^2\). If \(p:\Z[t]\to \Z[t]/(t^{g_Y}-1)\) is the projection, then
 $p(\Deltas(Y)) = k_Y(1-t^{g_Y})/(1-t).$
 \end{lemma}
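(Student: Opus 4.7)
The plan is to verify the identity by evaluating both sides at every $g_Y$-th root of unity, exploiting the injection
$\Z[t]/(t^{g_Y}-1) \hookrightarrow \Q[t]/(t^{g_Y}-1) \cong \prod_{\zeta^{g_Y}=1}\Q(\zeta)$.
I will show that $\Deltas(Y)(1) = k_Y g_Y = |T|$ and $\Deltas(Y)(\zeta) = 0$ for every nontrivial $g_Y$-th root of unity $\zeta$; the right-hand side $k_Y(1+t+\cdots+t^{g_Y-1})$ evidently has the same values.

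For the value at $1$, I will write $\taubar(Y) = \sum_{n\ge 0} c_n t^n$ with $c_n = \sum_{\phi(h)=n}a_h$, noting that Proposition~\ref{Prop:AlexG} gives $c_n = |T|$ for $n > \|Y\|$ while the normalization of $\tau(Y)$ forces $c_n = 0$ for $n<0$. The coefficients $d_n := c_n - c_{n-1}$ of $\Deltas(Y)=(1-t)\taubar(Y)$ then vanish outside a finite range, and telescoping gives $\Deltas(Y)(1) = \sum_n d_n = |T|$.

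The main work is the vanishing at nontrivial $\zeta$. Letting $d$ be the order of $\zeta$, I will pass to the $d$-fold cyclic cover $\pi\colon Y_d\to Y$ classified by the composition $H_1(Y)\xrightarrow{\phi}\Z\to\Z/d$. Since $\phi\circ\iota(H_1(\partial Y)) = g_Y\Z \subset d\Z$, the induced map $H_1(\partial Y)\to\Z/d$ is zero, so $\partial Y$ lifts to $d$ disjoint tori in $Y_d$ that the deck group $\Z/d$ permutes cyclically. Thus $H_1(\partial Y_d;\C) \cong \C[\Z/d]\otimes_\C\C^2$ as a $\Z/d$-representation, with every character appearing with multiplicity $2$. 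An equivariant form of ``half-lives-half-dies'' then applies: the kernel $K$ of $\iota_*\colon H_1(\partial Y_d;\C)\to H_1(Y_d;\C)$ is a Lagrangian subspace for the $\Z/d$-equivariant intersection form, which pairs the $\chi$- and $\bar\chi$-isotypic components perfectly. Combined with the reality of $K$ (giving $\dim K_\chi = \dim K_{\bar\chi}$) and the dimension count $\dim K = d$, the isotropy constraint forces $\dim K_\chi = 1$ for every character $\chi$. Hence $\dim H_1(Y_d;\C)_\chi \ge 1$ for every $\chi$, and Milnor's exact sequence (equivalently, the Cartan--Leray spectral sequence for the $\Z$-cover $\widetilde Y \to Y_d$) identifies $H_1(Y_d;\C)_\chi$ for $\chi\neq 1$ with $H_1(\widetilde Y;\C)/(t-\chi(t))$, whose nontriviality is equivalent to $\chi(t)$ being a root of the order $\Deltas(Y)$ of $H_1(\widetilde Y;\C)$ as a $\C[t^{\pm}]$-module.

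The main technical step will be the equivariant half-lives-half-dies argument, where the combined use of isotropy, reality, and the dimension count will be needed to rule out any asymmetric distribution of $\dim K_\chi$. Once both evaluations are established, $p(\Deltas(Y))$ and $k_Y(1-t^{g_Y})/(1-t)$ will agree at every $g_Y$-th root of unity, so they coincide in $\Q[t]/(t^{g_Y}-1)$ and hence in $\Z[t]/(t^{g_Y}-1)$.
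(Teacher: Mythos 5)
Your reduction to evaluating at $g_Y$-th roots of unity is sound as a framework, and two of your three steps are fine: the computation $\Deltas(Y)(1)=|T|=k_Yg_Y$ via Proposition~\ref{Prop:AlexG} and telescoping is correct, and the equivariant half-lives-half-dies argument (isotropy pairing $\chi$ with $\bar\chi$, reality, and the dimension count) does legitimately give $\dim_\C H_1(Y_d;\C)_\chi\ge 1$ for every character, hence the nonvanishing of $H_1(\widetilde Y;\C)/(t-\zeta)$ for each $\zeta\neq 1$ with $\zeta^{g_Y}=1$. The genuine gap is the very last identification: you treat $\Deltas(Y)$ as ``the order of $H_1(\widetilde Y;\C)$ as a $\C[t^{\pm}]$-module.'' That is false in general, and the paper itself flags it: $\Deltas(Y)$ is defined through the Turaev torsion over $\Z[H_1(Y)]$ and retains integral information about the torsion subgroup $T$ that the complex Alexander module of the $\phi$-cover cannot see. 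The example in Section~2, $Y=Z\#(S^1\times D^2)$, has $H_1(\widetilde Y;\C)=0$ (so the complex order is a unit) while $\Deltas(Y)=|H_1(Z)|$; so knowing that $H_1(\widetilde Y;\C)$ ``has something at $\zeta$'' does not, by itself, force $\Deltas(Y)(\zeta)=0$.

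What your argument actually needs is a bridge of the form: $H_1(\widetilde Y;\Q)$ is $\Q[t^{\pm}]$-torsion and its order divides $\Deltas(Y)$ (equivalently, a Milnor--Turaev-type statement relating $\taubar(Y)=\Phi(\tau(Y))$ to the homology of the infinite cyclic cover, with the correct integral normalization). Establishing that bridge is not a routine omission: it is essentially the same depth of Turaev theory that the paper's own proof invokes, and it is delicate precisely because the naive pushforward of the $Q(\Z[H_1])$-valued torsion to $\Q(t)$ does not compute $\taubar$ when $T\neq 0$ (the chain complex can fail to be acyclic over the nontrivial-character components, which is why $\Deltas$ and the rational Alexander polynomial diverge). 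By contrast, the paper's proof sidesteps cyclic covers entirely: it applies the multiplicativity of torsion to the filling $Y(l)=Y\cup(S^1\times D^2)$ to get $\taubar(Y(l))=\taubar(Y)/(1-t^{g_Y})$, then quotes Turaev's structure lemma $\taubar(Y(l))=t^c\,|\mathrm{Tors}\,H_1(Y(l))|/(1-t)^2+P(t)$ with $|\mathrm{Tors}\,H_1(Y(l))|=k_Y$, and reads off the congruence modulo $t^{g_Y}-1$. If you want to salvage your route, you would need to prove (or cite) the divisibility of the rational Alexander polynomial of the $\phi$-cover into $\Deltas(Y)$, together with the torsion-ness of $H_1(\widetilde Y;\Q)$; as written, the final step does not follow.
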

 
\begin{proof}
The usual product formula for the torsion implies that 
$$ \tau(Y(l)) = j_{1*}(\tau(S^1\times D^2)j_{2*}(\tau(Y))$$
where \(j_1:S^1 \times D^2 \to Y(l)\) and \(j_2:Y \to Y(l)\) are the inclusions. 
It follows that 
$$\taubar(Y(l)) = \frac{\taubar(Y)}{1-t^{g_Y}}.$$
By \cite{Turaev}, Lemma 3.2, we have 
$$\taubar(Y(l)) = \frac{t^c|H_1(Y(l)|}{(1-t)^2} + P(t)$$
where \(c \in \Z\) and \( p(t) \in \Z[t^{\pm1}]\). \(|H_1(Y(l))| = k_Y\). 
Combining the two formulas, we see that 
$$ \Deltas(Y) = \frac{k_Y t^c(1-t^{g_Y})}{1-t} + (1-t^{g_Y})(1-t)P(t).$$
\end{proof}
 
 \noindent Combining the lemma with the requirement that  \(\deg \Deltas(Y) < g_Y\) gives
 
\begin{cor}
  If \(Y\) is a generalized solid torus, \(\Deltas(Y) \sim k_Y(1-t^{g_Y})/(1-t)\).
  \end{cor}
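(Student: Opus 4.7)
\smallskip
\noindent\textbf{Proof proposal.} The plan is to combine the definition of a generalized solid torus with Lemma~\ref{Lem:RedAlex} and an elementary uniqueness statement for polynomial representatives modulo $t^{g_Y}-1$.

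First I would verify that $\Deltas(Y)$ really lives in $\Z[t]$ as a polynomial of degree at most $g_Y-1$. Since $\tau(Y)$ has been normalized so that $a_h=0$ for all $h$ with $\phi(h)<0$, the projection $\taubar(Y)=\Phi(\tau(Y))$ lies in $\Z[[t]]$ (no negative powers of $t$), hence so does $\Deltas(Y)=(1-t)\taubar(Y)$. By hypothesis $Y$ is a generalized solid torus, i.e.\ $Y$ is Floer simple with $\deg\Deltas(Y)<g_Y$, so $\Deltas(Y)$ is in fact a polynomial of degree at most $g_Y-1$.

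Next I would invoke the standard observation that the additive map
\begin{equation*}
\left\{\, f\in\Z[t] \,\big|\, \deg f < g_Y \,\right\} \;\longrightarrow\; \Z[t]/(t^{g_Y}-1),
\end{equation*}
induced by the projection $p$, is a bijection: every coset has a unique representative of degree strictly less than $g_Y$. Since $\Deltas(Y)$ belongs to the source of this bijection by the previous paragraph, it is determined by its image $p(\Deltas(Y))$. But Lemma~\ref{Lem:RedAlex} computes this image explicitly: $p(\Deltas(Y))=k_Y(1-t^{g_Y})/(1-t)=k_Y(1+t+\cdots+t^{g_Y-1})$, which is itself a polynomial of degree $g_Y-1<g_Y$. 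Hence $\Deltas(Y)$ equals this polynomial on the nose, which gives the desired relation $\Deltas(Y)\sim k_Y(1-t^{g_Y})/(1-t)$.

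There is no real obstacle here; the only point that requires a moment of care is the verification that $\Deltas(Y)$ is an honest polynomial with nonnegative powers of $t$ and degree $<g_Y$, so that one may legitimately invoke uniqueness of representatives mod $t^{g_Y}-1$. Once Lemma~\ref{Lem:RedAlex} is in hand, everything else is bookkeeping.
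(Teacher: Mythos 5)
Your proof is correct and is essentially the paper's argument: the paper deduces the corollary in one line by combining Lemma~\ref{Lem:RedAlex} with the constraint $\deg \Deltas(Y) < g_Y$, which is exactly your observation that $p$ is injective on polynomials of degree less than $g_Y$, so the lemma pins down $\Deltas(Y)$. Your extra bookkeeping (nonnegative powers from the normalization of $\tau(Y)$, polynomiality) just makes explicit what the paper leaves implicit.
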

  
 In contrast, \(\tau(Y)\) is not determined by the fact that \(Y\) is a generalized solid torus, as can be seen by considering the Seifert-fibred spaces \(M(\emptyset; a/g,-a/g)\).

 \begin{prop}
 \label{Prop:FakeFloerFull}
 A generalized solid torus is a  Floer homology solid torus in the sense of Watson \cite{WatsonFST}. 
\end{prop}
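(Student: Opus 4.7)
My plan is to reduce Proposition~\ref{Prop:FakeFloerFull} to Proposition~\ref{Prop:FloerSimpleCFD} by reinterpreting the Floer-homology-solid-torus condition as triviality of a Dehn-twist bimodule action, and then to verify that triviality by exploiting the highly uniform support of $\tau(Y)$ at high levels.

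First I would observe that since $Y$ is a generalized solid torus, Proposition~\ref{Prop:dtgz} gives $\dtgz(Y)=\emptyset$, and Theorem~\ref{Thm:LY} then yields $\lL(Y)=Sl(Y)\setminus[l]$.  In particular both $[m]$ and $[m+l]$ lie in the interior of $\lL(Y)$, so both are Floer simple filling slopes for $Y$.  Applying Proposition~\ref{Prop:FloerSimpleCFD} in each parametrization, $\widehat{CFD}(Y,m,l)$ and $\widehat{CFD}(Y,m+l,l)$ are both determined by $\tau(Y)$.

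Next, I would translate the desired isomorphism into a statement about the Dehn-twist bimodule along $l$. Following the discussion in the proof of Proposition~\ref{Prop:FloerSimpleCFD}, the basis change $m\mapsto m+l$ is implemented by $\widehat{CFDA}(\tau_l^{-1})$, so that
\[
 \widehat{CFD}(Y,m+l,l)\;\simeq\;\widehat{CFDA}(\tau_l^{-1})\boxtimes\widehat{CFD}(Y,m,l).
\]
The proposition therefore reduces to showing that this bimodule acts as the identity on $\widehat{CFD}(Y,m,l)$ up to isomorphism.

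Finally, I would carry out the verification using the explicit model of $\widehat{CFD}(Y,m,l)$ developed in the proof of Proposition~\ref{Prop:FloerSimpleCFD}, applied to the specific form of $\tau(Y)$ dictated by the generalized-solid-torus condition. By the corollary to Lemma~\ref{Lem:RedAlex}, $\Deltas(Y)\sim k_Y(1-t^{g_Y})/(1-t)$, so $\tau(Y)$ has full support $\sum_{h\in T}[h]$ at every level $k\bar m+T$ with $k\ge g_Y-1$. Combined with the bound $\|Y\|\le g_Y-2$ established via Corollary~\ref{Cor:TNorm}, this uniformity ensures that the graph describing $\widehat{CFD}(Y,m,l)$ admits a translation symmetry by $\iota(l)\in T$ in its upper range, while the remaining finite lower range can be matched directly. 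The principal obstacle will be a careful bookkeeping of Spin$^c$ gradings (via Lemma~\ref{Lem:CFDAlex}), Maslov parities (via Lemma~\ref{Lem:CFDMaslov}), the valence-two property (Lemma~\ref{Lem:2Arrows}), and arrow labels, so as to produce an explicit isomorphism realizing the invariance.
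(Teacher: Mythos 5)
Your opening reductions match the paper's: $\dtgz(Y)=\emptyset$ by Proposition~\ref{Prop:dtgz}, every non-longitudinal slope is a Floer simple filling slope, and Watson's condition is equivalent to saying that $\widehat{CFDA}(\tau_l^{-1})\boxtimes\cfd(Y,m,l)\simeq \cfd(Y,m,l)$. The gap is in your final step: there is no ``explicit model of $\cfd(Y,m,l)$ developed in the proof of Proposition~\ref{Prop:FloerSimpleCFD}.'' That model is established only for parametrizations $(\mu,\lambda)$ in which \emph{both} slopes are Floer simple filling slopes, with $\phi(\mu)>\|Y\|$ and $\lambda=\lambda_0-N\mu$ for $N\gg 0$; those hypotheses are what force $D_2=D_{123}=0$, rank one in each supported $\spinc$ structure, and the valence-two graph structure via Lemmas~\ref{Lem:CFDMaslov} and~\ref{Lem:2Arrows}. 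In the frame $(m,l)$ the second curve is the homological longitude: $Y(l)$ is not a rational homology sphere, $V^1\simeq SFH(Y,\gamma_l)\simeq\hfk(K_l)$ is not concentrated in one $\Z/2$ grading nor of rank one per $\spinc$ structure, and none of those structural conclusions are available. So the graph on which you propose to exhibit a ``translation symmetry by $\iota(l)$'' is not something your argument can write down, and checking directly that the twist bimodule acts as the identity on a module you have not computed is circular. (Also, the relevant symmetry is not a single translation by $\iota(l)$: the correct map shifts a generator supported at $x$ by $\lfloor \phi(x)/g_Y\rfloor\, l$, a band-dependent shift.)

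The repair is essentially the paper's proof: since change-of-framing bimodules are invertible, $\cfd(Y,m,l)\simeq\cfd(Y,m+l,l)$ is equivalent to $\cfd(Y,\mu,\lambda)\simeq\cfd(Y,\tau_l(\mu),\tau_l(\lambda))$ for \emph{any} basis with $\mu\cdot\lambda=1$; one then takes $\mu=m$ and $\lambda=l-Nm$ with $N\gg0$, so that Proposition~\ref{Prop:FloerSimpleCFD} computes both sides explicitly from the supports of $\hfk(K_\mu)$, $\hfk(K_\lambda)$ (and their twisted counterparts), and defines the isomorphism on generators by $x\mapsto x+\lfloor\phi(x)/g_Y\rfloor\, l$. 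The generalized solid torus condition enters exactly where you expect it to — $\|Y\|\le g_Y-2$, equivalently full support of $\tau(Y)$ at all levels $\ge g_Y-1$ — since this is what makes the shift map a bijection between the supports and compatible with the $D_1$, $D_3$, and $D_{23}$ arrows. With your verification step relocated to such a frame, your outline becomes the paper's argument; as written, it would fail at the point indicated.
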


\begin{proof}
Let \(g=g_Y\). 
Recall that \(Y\) is a Floer homology solid torus if 
\(\cfd(Y,m,l) \simeq \cfd(Y,m+l,l)\), where \(l\) is the canonical longitude and 
\(m \cdot l = 1\). By composing with an appropriate change of basis bimodule, we see that this is equivalent to saying that for some \(\mu, \lambda\) with \(\mu \cdot \lambda = 1\), we have 
\(\cfd(Y,\mu, \lambda) \simeq \cfd(Y,\tau_{l}(\mu), \tau_{l}(\lambda))\), where
\(\tau_l\) is the Dehn twist along \(l\). 

Suppose that \(Y\) is a generalized solid torus.  By Proposition~\ref{Prop:FloerSimpleCFD}, we can explicitly compute \(\cfd(Y, \mu,\lambda)\) for an appropriate choice of \(\mu\) and \(\lambda\). In fact, \(\cfd(\mu,\lambda)\) is determined by the polynomials \(\chi(\hfk(K_\mu))\) and \(\chi(\hfk(K_\lambda))\), which are in turn determined by \(\Delta(Y)\), \(\iota(\mu)\), and \(\iota(\lambda)\). Since \(\|Y\| = g-2\), the criteria of Proposition~\ref{Prop:FloerSimpleCFD} will be satisfied if we take  
\(\mu = m\) and \(\lambda = l - N m\), where \(N\gg 0\). 

Let \(S_\mu \subset H_1(Y)\) be the support of \(\hfk(K_\mu)\), normalized so that  if \(x \in S_\mu\), then 
\(0 \leq \phi(\mu)\leq 2g-2\). \(S_\mu\) is determined by the conditions that for
\(0 \leq \phi(\mu)\leq g-1\), \(x \in S_\mu\) if and only if \(x \in S[\tau(Y)]\), and for \(g-1 \leq \phi(\mu) \leq 2g-2\), \(x \in S[\mu]\) if and only if \(x- \mu \not \in S[\tau(Y)]\). 

Similarly, let \(S_\lambda \subset H_1(Y)\) be the support of \(\hfk(K_\lambda)\), normalized so that  if \(x \in S_\lambda\), then 
\(0 \leq \phi(\lambda)\leq (N+1)g-2\). \(S_\lambda\) is determined by the conditions that for
\(0 \leq \phi(x)\leq g-1\), \(x \in S_\lambda\) if and only if \(x \in S[\tau(Y)]\), and for \(g-1 \leq \phi(x) \leq (N+1)g-2\), \(x \in S_\lambda\) if and only if \(x+ \lambda \not \in S[\tau(Y)]\). (Note that \(\phi(\lambda)<0\), so we need \(x+ \lambda\) here rather than \(x-\lambda\)).

Now let \(\mu' = \tau_l (m) = \mu+l\) and \(\lambda' = \tau_l(\lambda) = \lambda - N l\).
The supports \(S_{\mu'}\) and \(S_{\lambda'}\) can be described similarly.
 
 We define an isomorphism  
 \(f:\cfd(Y\mkern-5mu,\mu,\lambda) 
 \to \cfd(Y\mkern-5mu,\mu'\!, \lambda')\). The map 
 \(f:\hfk(K_\mu) \to \hfk(K_{\mu'})\) is given as follows. If \(x \in S_\mu\), then \(f\) takes the unique nonzero element of \(\hfk(K_\mu)\)  supported at \(x\) to the unique nonzero element of \(\hfk(K_{\mu'})\) supported at \(x + \lfloor\phi(x)/g\rfloor l\). Using the description of the sets \(S_\mu\) and \(S_\mu'\) given above, together with the fact that \(\phi(\mu) = g\), it is easy to see that \(f\) is a bijection. Similarly, if \(x \in S_\lambda\), we define \(f\) to take the unique nonzero element supported at \(x\) to the unique nonzero element of \(\hfk(K_{\lambda'})\) supported at \(x + \lfloor\phi(x)/g\rfloor l\).
 
 It remains to check that \(f\) carries the arrows in the diagram for \(C=\cfd(Y,\mu, \lambda)\) to the arrows in the diagram for \(C'=\cfd(Y,\mu',\lambda')\). Suppose \(x\) and \(y\) are the initial and terminal ends of an arrow of type \(D_{23}\) in \(C\), so that \(y-x = \mu\). Then \(\phi(y) - \phi(x) = g\), so \(f(y)-f(x) = \mu + l = \mu'\), so \(f(y)\) and \(f(x)\) are the endpoints of an arrow of type \(D_{23}\) in \(C'\). A very similar argument shows that arrows of types \(D_1\) and \(D_3\) are preserved as well. 
\end{proof}

We can prove a partial converse to Proposition~\ref{Prop:dtgz}. Recall that \(Y\) is said to be semi-primitive if \(T \subset \im \iota\). Equivalently, \(Y\) is semi-primitive if \(k_Y=1\).  
 
\begin{prop}
\label{Prop:LSemi}
Suppose that \(Y\) is semi-primitive and Floer simple.
If \(\dtgz(Y)=\emptyset\), then  \(Y\) is a generalized solid torus. 
\end{prop}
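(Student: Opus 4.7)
The plan is to show that under the stated hypotheses, $\Deltas(Y)$ must equal exactly $1 + t + t^2 + \cdots + t^{g_Y-1}$, giving $\deg \Deltas(Y) = g_Y - 1 < g_Y$ and hence that $Y$ is a generalized solid torus. Throughout, set $t = [\bar m]$ and, for each $n \ge 0$, let $b_n := \#\bigl(S[\tau(Y)] \cap \phi^{-1}(n)\bigr)$, so that $\taubar(Y) = \sum_{n \ge 0} b_n\,t^n$ and $\Deltas(Y) = \sum_n d_n\,t^n$ with $d_n = b_n - b_{n-1}$ (using $b_{-1}=0$). Since $Y$ is semi-primitive, $T = \langle \iota(l)\rangle$ is cyclic of order $g_Y$, each fibre $\phi^{-1}(n)$ has exactly $g_Y$ elements, so $0 \le b_n \le g_Y$, and $\iota(H_1(\partial Y)) = g_Y\bar m\,\Z + T$.

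First I would translate $\dtgz(Y) = \emptyset$ into a closure property on $S[\tau(Y)]$: for every $y \in S[\tau(Y)]$ and every $z \in \iota(H_1(\partial Y))$ with $\phi(z) > 0$, we have $y + z \in S[\tau(Y)]$. Since as $t$ ranges over $T$ the elements $k g_Y \bar m + t$ exhaust $\phi^{-1}(k g_Y)$, this immediately gives the following key consequence: if $b_n \ge 1$ for some $n$, then $\phi^{-1}(n + k g_Y) \subset S[\tau(Y)]$, i.e.\ $b_{n+k g_Y} = g_Y$ for every $k \ge 1$. Starting from $0 \in S[\tau(Y)]$ (so $b_0 \ge 1$), I conclude $b_{k g_Y} = g_Y$ for all $k \ge 1$.

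The main step is an induction fed by Lemma~\ref{Lem:RedAlex} which, in the semi-primitive case ($k_Y = 1$), says that $\Deltas(Y) \equiv 1 + t + \cdots + t^{g_Y - 1} \pmod{t^{g_Y}-1}$. Equating coefficients yields the arithmetic identity
\begin{equation*}
\sum_{k \ge 0} d_{r + k g_Y} \;=\; 1 \qquad \text{for each } r \in \{0, 1, \ldots, g_Y - 1\}.
\end{equation*}
I will prove by induction on $r$ that $b_r = r+1$ and $b_{r + k g_Y} = g_Y$ for every $k \ge 1$. For the base case $r = 0$, the identity reads $b_0 + \sum_{k \ge 1}(g_Y - b_{k g_Y - 1}) = 1$, and since all summands are non-negative with $b_0 \ge 1$, this forces $b_0 = 1$ and $b_{k g_Y - 1} = g_Y$ for all $k \ge 1$. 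For the inductive step, using $b_{r-1+k g_Y} = g_Y$ for $k \ge 1$ from the hypothesis, the $r$-th identity becomes $(b_r - r) + \sum_{k \ge 1}(b_{r + k g_Y} - g_Y) = 1$; the tail is non-positive, so $b_r \ge r + 1 \ge 1$, and then the closure property promotes this to $b_{r + k g_Y} = g_Y$ for every $k \ge 1$, which in turn forces $b_r = r+1$. Carrying the induction up to $r = g_Y - 1$ yields $b_n = n+1$ for $0 \le n \le g_Y - 1$ and $b_n = g_Y$ for $n \ge g_Y - 1$, from which $d_n = 1$ for $0 \le n \le g_Y - 1$ and $d_n = 0$ for $n \ge g_Y$, completing the argument.

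The delicate point that the proof has to manage is the interplay between the two ingredients: the closure property coming from $\dtgz(Y) = \emptyset$ only controls $S[\tau(Y)]$ \emph{within} a residue class mod $g_Y$, while the identity from Lemma~\ref{Lem:RedAlex} controls each residue class's \emph{total} contribution to $p(\Deltas(Y))$. Neither ingredient suffices on its own to pin down the $b_n$, but together they force the staircase $b_n = \min(n+1, g_Y)$, and the non-negativity signs in the $r$-th identity are precisely what makes the induction close. Semi-primitivity is essential here: without $k_Y = 1$, the right-hand side of the identity would be $k_Y$ rather than $1$, breaking the forcing of $b_0 = 1$ at the base case.
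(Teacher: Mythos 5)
Your proof is correct, and it rests on the same two pillars as the paper's own argument: (i) semi-primitivity together with $\dtgz(Y)=\emptyset$ forces the ``once a $\phi$-level meets $S[\tau(Y)]$, every level $g_Y$ higher is completely full'' property (the paper states this as: within each residue class mod $g_Y$ one cannot have $q_i(1)<g$ while $q_{i-g}(1)>0$), and (ii) Lemma~\ref{Lem:RedAlex} with $k_Y=1$. Where you genuinely differ is the endgame. The paper only extracts from Lemma~\ref{Lem:RedAlex} a congruence ($\sum_{i\equiv k\,(g)}a_i\equiv k+c \pmod g$, proved via the ``property (*)'' manipulations) and then needs the separate ``elementary shift'' bookkeeping with $F(Q)=p((1-t)Q)$ to show that the per-residue-class step functions are aligned, concluding $\taubar(Y)\sim t(1-t^g)/(1-t)^2$. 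You instead read off from Lemma~\ref{Lem:RedAlex} the exact per-residue identities $\sum_{k\ge 0}d_{r+kg_Y}=1$ for the coefficients of $\Deltas(Y)$, and your induction on $r$, driven by the sign constraints ($b_0\ge 1$, $0\le b_n\le g_Y$) and the closure property, forces $b_n=\min(n+1,g_Y)$ directly; this eliminates the shift-alignment step and the congruence sub-lemma while proving the same sharp conclusion ($\taubar(Y)$ is exactly $(1-t^{g_Y})/(1-t)^2$ in the standard normalization, so $\deg\Deltas(Y)=g_Y-1<g_Y$). Two small points you use silently but legitimately: the identification $\taubar(Y)=\sum_n b_n t^n$ with $b_n=\#\bigl(S[\tau(Y)]\cap\phi^{-1}(n)\bigr)$ invokes Proposition~\ref{Prop:TorsionCoeff} (coefficients $0$ or $1$, which needs the Floer simple hypothesis), and the base case $b_0\ge 1$ uses the normalization $0\in S[\tau(Y)]$.
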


\begin{proof}
Let \(g=g_Y\). 
Since \(Y\) is semiprimitive, we  have \(H_1(Y) = \Z \oplus (\Z/g)\) and also
 \(\im \iota =  g\Z  \oplus \Z/g \subset H_1(Y)\). 
Let \(t,\sigma\) be generators of the \(\Z\) and \(\Z/g\) summands respectively, so that 
\(\tau(Y) = \sum_{i=0}^\infty q_i(\sigma) t^i\), where \(q_i(\sigma)\) is a sum of powers of \(\sigma\). 
Suppose that for some value of \(i\), \(q_i(1)<g\) and \(q_{i-g}(1)>0\). Then we can find \(x \not \in S[\tau(Y)]\) with \(\phi(x)=i\) and  \(y \in S[\tau(Y)]\) with \(\phi(y)=i-g\).  It follows that \(x-y \in  \im \iota\), which contradicts  \(\dtgz(Y) = 0\). We conclude that for a fixed value of \(k\) there is at most one value of \(n\) for which \(q_{k+ng}(1) \neq 0,g\). 

The Milnor torsion of \(Y\) is 
$\taubar(Y)=\displaystyle \Deltas(Y)/(1-t) = \sum_{i=0}^\infty a_i t^i$,
where \(a_i\)=\(q_i(1)\).
\begin{lemma}
There is a constant \(c\) so that \(\displaystyle \sum_{i \equiv k \, (g)} a_i \equiv k + c \, (g)\). 
\end{lemma}
Note that all but finitely many of the \(a_i\) are equal to either \(0\) or \(g\), so the sum is well defined. 
\begin{proof}
We say that \(f(t) \in \Z[t]\) has property (*) if the statement of the corollary holds for \(a_i\) given by 
\(f(t)/(1-t) = \sum_{i=0}^\infty a_i t^i. \)
It is easy to see that \(f(t) = 1+t+\ldots +t^{g-1}\) has property (*), and that if \(f(t)\) has property (*), then so do \(f(t)+t^i - t^{g+i}\) and \(t^c f(t)\). Lemma~\ref{Lem:RedAlex} implies that  \(\Delta(Y)\) can be obtained from \(1+t+\ldots + t^{g-1}\) by a sequence of operations of the first type plus a single operation of the second type, so \(\Delta(Y)\) has property (*). 
\end{proof}

The lemma implies that after an appropriate shift in the indexing of the \(a_i\)'s (so that \(\taubar(Y)\) is no longer constrained to to have \(t^0\) as its lowest order term) the subsequence \((a_{k+ng})\) has the form 
\(\ldots, 0, 0,0, k, g,g,g \ldots\), where \(0 \leq k \leq g\). In other words, each subsequence is determined up to a global shift, and it remains to see how these shifts fit together. 

We claim that the sequence \((a_i)\) has the form \(\ldots 0,0,0,1,2,\ldots,g-1,g,g,g \ldots\).
Equivalently, 
 \begin{equation*}
 \taubar(Y) \sim \taubar_0 = t+2t^2 + \ldots +(g-1)t^{g-1}+gt^{g}+gt^{g+1}+\ldots 
  = \frac{t(1-t^g)}{(1-t)^2}
 \end{equation*}
To see this, let us say that \(Q(t) \in \Z[t^{-1},t]]\) is obtained from \(P(t)\) by an elementary shift if \(Q(t) - P(t) = a t^i + (g-a)t^{i+g}\) for some \(a,i \in \Z\). We have shown above that \(\taubar(Y)\) is obtained from \(\taubar_0\) by a sequence of elementary shifts. 

Next, we consider the effect of an elementary shift on the Alexander polynomial. If \(Q(t) \in \Z[t^{-1},t]]\), let 
\(F(Q(t)) = p((1-t)Q(t))\), where \(p:\Z[t] \to \Z[t]/(t^g-1)\) is the projection, so that \(F(\taubar_0) = 1+\ldots +t^{g-1}\). An easy calculation shows that if 
\(Q(t) - P(t) = a t^i + (g-a)t^{i+g}\), then \(F(Q(t)) - F(P(t)) = gt^i-gt^{i+1}\). It follows that if 
\(Q(t)\) is obtained from \(\taubar_0\) by a sequence of elementary shifts and \(F(Q(t)) = F(\taubar_0)\), then \(Q(t)\) is obtained from \(\taubar_0\) by a global shift; that is, each residue class is shifted by the same number of elementary shifts. To sum up, we have proved that \(\taubar(Y) \sim \taubar_0\), so \(Y\) is a generalized solid torus. \end{proof}

As we observed above, if \(Y\) is a generalized solid torus,  \(H_2(Y,\partial Y)\) is generated by a surface of genus \(0\).  It follows that \(Y(l) = Z \# (S^1 \times S^2)\), where \(Z\) is a rational homology sphere. Conversely, we have 

\begin{prop}
\label{Prop:LS1S2}
Suppose that \(K \subset Z \# (S^1\times S^2)\) has an L-space surgery. Then the complement of \(K\) is a generalized solid torus. 
\end{prop}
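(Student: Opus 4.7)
The plan is to verify both defining properties of a generalized solid torus for $Y$---Floer simplicity and $\deg \Deltas(Y) < g_Y$---by exploiting the reducibility of $\Ybar$.  First I would show that the meridian $\mu$ of $K$ agrees (up to sign) with the homological longitude $l$ of $Y$.  Since some surgery on $K$ is an L-space, $Y$ is a rational homology $S^1 \times D^2$, so $b_1(Y) = 1$; and since $Y(\mu) = \Ybar$ also has $b_1 = 1$, the class $\iota(\mu) \in H_1(Y)$ must be torsion (were it non-torsion, the Mayer--Vietoris computation for $\Ybar = Y \cup \nu(K)$ would force $b_1(\Ybar) \ne b_1(Y)$).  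Primitivity of $\mu$ then forces $\mu = \pm l$, whence $Y(l) = \Ybar = Z \mkern1.5mu\#\mkern1.5mu (S^1 \mkern-2mu\times\mkern-2mu S^2)$.

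Next I would construct a planar generator of $H_2(Y,\partial Y)$ from the non-separating sphere $S' \subset Y(l)$ Poincar\'e dual to the $\Z$-summand of $H_1(Y(l))$ (coming from $\{\text{pt}\} \times S^2 \subset S^1 \mkern-2mu\times\mkern-2mu S^2$).  Making $S'$ transverse to the filling core $K_l = K$ and minimizing intersections, $S'$ meets $K_l$ in exactly $g_Y$ points, matching the algebraic count $\phi(\iota(m)) = g_Y$ coming from the convention $\iota(m) \in g_Y \bar{m} + T$.  Then $\Sigma := S' \cap Y$ is a $g_Y$-punctured sphere with $[\partial \Sigma] = g_Y \cdot l$, which is the primitive generator of $\ker \iota = \im(\partial : H_2(Y, \partial Y) \to H_1(\partial Y))$.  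Since $H_2(Y) = 0$ for a rational homology $S^1 \times D^2$, the map $\partial$ is injective, so $[\Sigma]$ is a generator of $H_2(Y, \partial Y) \cong \Z$, giving $\|Y\| \le g_Y - 2$.  Once Floer simplicity is in hand and using irreducibility of $Y$ (inherited from the irreducible L-space $Y(\beta)$), Corollary~\ref{Cor:TNorm} yields $\deg \Deltas(Y) = \|Y\| + 1 \le g_Y - 1 < g_Y$.

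The remaining step is Floer simplicity.  I would analyze $K_\beta \subset Y(\beta)$, a knot in an L-space whose $l$-surgery is the reducible $Z \mkern1.5mu\#\mkern1.5mu (S^1 \mkern-2mu\times\mkern-2mu S^2)$.  Applying the Ozsv\'ath--Szab\'o mapping cone (Theorem~\ref{Thm:MappingCone}) for $l$-surgery on $K_\beta$, the rank of $\hfhat(Y(l)) \cong \hfhat(Z) \otimes \hfhat(S^1 \mkern-2mu\times\mkern-2mu S^2)$ carries a factor of two from $\hfhat(S^1 \mkern-2mu\times\mkern-2mu S^2)$ beyond the L-space minimum for $|H_1(Y(l))|$.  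Paralleling the Boileau--Boyer--Cebanu--Walsh argument used in the coherent chains lemma of Section~\ref{Sec:FloerSimple}, this structured excess rank---when distributed via the mapping cone's bent complexes---forces each $A_{K_\beta, \spi}$ to have rank exactly $1$, making $\hfk(K_\beta)$ composed of coherent chains.  Combined with the Thurston norm bound and Corollary~\ref{Cor:LargeFS}, a sufficiently large-slope L-space filling contains a Floer simple knot core, and Proposition~\ref{Prop:MultiL} then propagates Floer simplicity to $Y$.

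The hard part is the Floer simplicity step.  Extracting the bent complex rank-$1$ conclusion from the reducibility-induced extra rank of $\hfhat(Y(l))$ requires a careful mapping cone computation; in particular, one must handle the possibility that $Z$ is itself not an L-space, in which case $\hfhat(Z)$ contributes additional rank that must be shown to be ``transparent'' to the mapping cone and compatible with Floer simplicity of $K_\beta$.  An alternative, potentially cleaner route would be to use the surgery exact triangle relating $\hfhat(Y(\beta))$, $\hfhat(Y(l))$, and $\hfhat(Y(\beta + l))$ (or an analogous triple) to directly produce a second L-space filling of $Y$, thereby invoking Proposition~\ref{Prop:MultiL} without needing the mapping cone rank argument.
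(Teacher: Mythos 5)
Your outline identifies the right two targets (Floer simplicity and the degree bound), but the crux --- Floer simplicity --- is not actually established, and the route you sketch for it does not work. Knowing that $\hfhat(Y(l)) \simeq \hfhat(Z)\otimes H_*(S^1)$ has ``excess rank'' places no constraint forcing the bent complexes $A_{K_\beta,\spi}$ to have rank one: the coherent-chains/BBCW input used in Section~\ref{Sec:FloerSimple} requires already knowing that some \emph{nontrivial} surgery on $K_\beta$ is an L-space, which is exactly what is in question (a single L-space filling never suffices; cf.\ the figure-eight complement). Your fallback --- the ordinary surgery exact triangle on $\hfhat(Y(\beta))$, $\hfhat(Y(l))$, $\hfhat(Y(\beta+l))$ --- also fails, because $\hfhat(Z\#(S^1\times S^2))\neq 0$, so the triangle only gives the useless upper bound $\mathrm{rk}\,\hfhat(Y(\beta+l))\le \mathrm{rk}\,\hfhat(Y(\beta))+2|H_1(Z)|$ and cannot certify a second L-space slope. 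The paper's proof gets around precisely this point by using the Ai--Peters exact triangle with \emph{twisted} coefficients: choosing $\eta$ with $\phi(\eta)=1$, the twisted invariant of the reducible filling $Y(l)=Z\#(S^1\times S^2)$ vanishes, so the triangle collapses to $\hfhat(Y(m))\otimes\Lambda\simeq\hfhat(Y(m+l))\otimes\Lambda$, giving $m+nl\in\lL(Y)$ for all $n>0$ and hence Floer simplicity (with the non-integral slope case reduced to this one by replacing $K$ with $K_\mu\# K_{-q/p}\subset Y(\mu)\#L(q,-p)$ --- a reduction your sketch does not address). Without some substitute for this vanishing input, your argument has a genuine hole at its central step.

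The degree bound also has a gap as written. Tubing/isotopy minimizes the \emph{geometric} intersection of the nonseparating sphere $S'$ with $K$, but this minimum is a wrapping-number-type invariant that in general exceeds the algebraic count $g_Y$; asserting ``exactly $g_Y$ points'' is essentially equivalent to the planarity of a norm-minimizing generator, i.e.\ to a consequence of the proposition rather than an a priori fact. In addition, Corollary~\ref{Cor:TNorm} needs $Y$ boundary incompressible (not merely ``irreducibility inherited from an L-space filling''; L-spaces need not be irreducible, and boundary-compressible examples such as the core of the $S^1\times S^2$ summand do occur here), so that case would need separate treatment. The paper sidesteps all of this by computing the torsion directly from the gluing formula for $Y(l)=Z\#(S^1\times S^2)$, obtaining $\taubar(Y)\sim |H_1(Z)|\,(1-t^{g_Y})/(1-t)^2$, whence $\deg\Deltas(Y)=g_Y-1<g_Y$ with no appeal to the Thurston norm; I would recommend adopting that computation in place of your sphere/norm argument.
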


\begin{proof}

 We use the exact triangle with twisted coefficients, as formulated by Ai and Peters in \cite{AiPeters}. We briefly recall their statement. Given a class \(\eta \in H_1(Y)\) and \(\mu \in Sl(Y)\),  we can form \(\omega_\mu = PD(j_*(\eta)) \in H^2(Y(\mu))\), where \(j:Y \to Y(\mu)\) is the inclusion. The twisted Floer homology  \(\hfhat(Y(\mu); \Lambda_{\omega_\mu})\) is a module over the universal Novikov ring
 \begin{equation*}
 \Lambda = \left\{\sum a_r t^r \, | \,  r\in \R, a_r \in \Z, \#\{r <C\, | \, a_r \neq 0\} < \infty \ \text{for all} \ C \in \R\right\}.
 \end{equation*}
 If the image of \(\omega_\mu\) in \(H^2(Y(\mu), \R)\) is \(0\), then \(\hfhat(Y(\mu); \Lambda_{\omega_\mu}) = \hfhat(Y(\mu)) \otimes \Lambda\).
 Ai and Peters show that if \(\mu \cdot \lambda = 1\), there is a long exact sequence 
 \begin{equation*}
\to  \hfhat(Y(\mu); \Lambda_{\omega_\mu}) \to  \hfhat(Y(\lambda); \Lambda_{\omega_\lambda}) \to  \hfhat(Y(\mu+\lambda); \Lambda_{\omega_{\mu+ \lambda}}) \to \hfhat(Y(\mu); \Lambda_{\omega_\mu}) \to .
 \end{equation*}
 
 Let \(Y\) be the complement of \(K\), so \(Y(l) = Z \# (S^1 \times S^2)\).
 Choose \(\eta \in H_1(Y)\) with \(\phi(\eta) = 1\), so that \(\omega_l\) generates \(H_2(Y(l)) = \Z\). By \cite{AiPeters} Proposition 2.2,  \(\hfhat(Y(l); \Lambda_{\omega_\lambda})=0\). 
 
 Now suppose there is some 
 \(m\) with \(m \cdot l = 1\) and \(m \in \lL(Y)\). In this case  \(H^2(Y(m); \R) \simeq H^2(Y(m+l);\R) = 0\). 
 The exact triangle shows that
 \(\hfhat(Y(m))\otimes \Lambda \simeq \hfhat(Y(m+l))\otimes \Lambda\), which implies that 
  \(\hfhat(Y(m)) \simeq \hfhat(Y(m+l))\). Since \(H_1(Y(m)) \simeq H_1(Y(m+l))\), it follows that 
  \(m+l \in \lL(Y)\). Repeating, we find that \(m+nl \in \lL(Y)\) for all \(n>0\), and thus that \(l\) is a limit point of \(\lL(Y)\). It follows that \(Y\) is Floer simple and \(\dtgz(Y) =  \emptyset\). 
  
  For the general case, suppose that \(\mu \in \lL(Y)\). Then  \(Y(l)\) is obtained by integer surgery on 
  \(K_\mu \#K_{-q/p} \subset Y(\mu)\# L(q,-p)\) for an appropriate choice of \(p\) and \(q\). Let \(Y'\) be the complement of this knot. The argument above shows that every non-longitudinal filling of \(Y'\) is an L-space. An infinite family of these fillings are also obtained by Dehn filling on \(Y\), so \(Y\) is Floer simple. 
  
  To conclude the argument, we compute \(\taubar(Y)\). 
 Let  \(j_1:Y \to Y(l)\) and \(j_2:S^1 \times D^2 \to Y(l)\) be the inclusions. The usual product formula for the  torsion says that
  \begin{equation*}
  \taubar(Y(l))  \sim j_{1*}(\taubar(Y)) j_{2*}(\taubar(S^1 \times D^2)) .
  \end{equation*}
  Here
  \begin{equation*}
  \taubar(Y(l)) = \taubar(Z \# (S^1\times S^2)) \sim \frac{|H_1(Z)|}{(1-t)^2}. 
  \end{equation*}
  It is easy to see that the map \(j_{1*}:H_1(Y)/Tors \to H_1(Y(l))/Tors\) is an isomorphism, while the map 
 \(j_{2*}:H_1(S^1 \times D^2) \to H^1(Y(l))/Tors\) is multiplication by \(g\),
 so 
   \begin{equation*}
  \frac{|H_1(Z)|}{(1-t)^2} \sim \frac{ \taubar(Y)} {1-t^g} .
  \end{equation*}
  Equivalently
  \begin{equation*}
  \taubar(Y)  \sim |H_1(Z)|\frac{1-t^g}{(1-t)^2}.
  \end{equation*}
  It follows that \(Y\) is a generalized solid torus. 
 \end{proof}
 
  Proposition~\ref{Prop:FST} from the introduction is an immediate consequence of Propositions~\ref{Prop:LSemi} and \ref{Prop:LS1S2}, and Proposition~\ref{Prop:Fake=Floer} follows from Proposition~\ref{Prop:FakeFloerFull}.

\subsection{NLS Detection}
\label{SubSec:NLS}

Next, we study  the notion of NLS detection introduced by Boyer and Clay in \cite{BoyerClay}. Suppose that \(Y_1\) is a rational homology solid torus and that \(Y_2\) is a semi-primitive generalized solid torus. Given a primitive class \(\alpha \in H_1(Y_1)\), choose an orientation reversing homeomorphism \(\varphi:\partial Y_1 \to \partial Y_2\) with \(\varphi_*(\alpha) = l\), where \(l \in H_1(\partial Y_2)\) is the homological longitude. Since \(Y_2\) is a Floer homology solid torus, \(\hfhat(Y_\varphi)\) is well defined, in the sense that any \(\phi\) satisfying \(\varphi_*(\alpha) = l\) will give the same result. We say that \(\alpha\)  is  NLS detected by \(Y_2\) if \(Y_\varphi\) is not an L-space. 

If \(Y_1\) is Floer simple, it follows from  Theorem~\ref{Thm:Splice}  that \(\alpha\) is NLS detected by \(Y_2\) if and only if \(\alpha\) is not in the interior of \(\lL(Y)\). In fact, there is a direct proof of this fact for any \(Y_1\). 

\begin{prop}
The slope  \(\alpha\) is  NLS detected by \(Y_2\) if and only if \(\alpha\) 
is not in the interior of \(\lL(Y)\). 
\end{prop}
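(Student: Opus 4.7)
The argument splits according to whether $Y_1$ is Floer simple.

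\emph{Case 1: $Y_1$ is Floer simple.} The plan is to invoke Theorem~\ref{Thm:Splice}. Since $Y_2$ is a semi-primitive generalized solid torus, the discussion in this section shows that $\mathcal{L}^\circ(Y_2) = Sl(Y_2) \setminus \{[l_2]\}$, where $l_2$ denotes the homological longitude of $Y_2$. Provided $g_{Y_2} > 1$, so that $\dt(Y_2) \supseteq T^\partial_{Y_2}$ is nonempty, the fact that a Floer simple $Y_1$ has more than one slope in $\mathcal{L}^\circ(Y_1)$ automatically forces $\varphi_*(\mathcal{L}^\circ(Y_1)) \cap \mathcal{L}^\circ(Y_2) \ne \emptyset$, so Theorem~\ref{Thm:Splice} applies. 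Its conclusion reduces to: $Y_\varphi$ is an L-space iff $\varphi_*(\mathcal{L}^\circ(Y_1)) \cup \mathcal{L}^\circ(Y_2) = Sl(Y_2)$, iff $[l_2] \in \varphi_*(\mathcal{L}^\circ(Y_1))$, iff $\alpha \in \mathcal{L}^\circ(Y_1)$.

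\emph{Case 2: $Y_1$ is not Floer simple.} Then $\mathcal{L}^\circ(Y_1) = \emptyset$, and the plan is to show that $Y_\varphi$ is never an L-space. Arguing by contradiction, suppose $Y_\varphi$ is an L-space; the goal is to produce more than one L-space filling of $Y_1$.

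To that end, I plan to use the explicit model for $\cfd(Y_2, m_2, l_2)$ provided by Proposition~\ref{Prop:FloerSimpleCFD}, coupled with the Floer homology solid torus property (Proposition~\ref{Prop:Fake=Floer}) of $Y_2$. Choosing $(m_2, l_2)$ with $m_2 \cdot l_2 = 1$ and $l_2$ the homological longitude, the form of $\tau(Y_2)$ for a semi-primitive generalized solid torus (Corollary to Lemma~\ref{Lem:RedAlex}) makes the type-$D$ graph of Proposition~\ref{Prop:FloerSimpleCFD} split up to homotopy equivalence as a direct sum of $g_{Y_2}$ copies of the standard solid-torus type-$D$ module, one in each relative $\spinc$ grading indexed by $T^\partial_{Y_2}$. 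Pairing against $\cfa(Y_1, m_1, l_1)$ then yields an identification
\[
\hfhat(Y_\varphi) \;\cong\; \bigoplus_{k=0}^{g_{Y_2}-1} \hfhat(Y_1(\alpha + k l_1)).
\]
If $Y_\varphi$ is an L-space, each summand has L-space rank, so each $Y_1(\alpha + k l_1)$ is an L-space. Since we may assume $\alpha \ne l_1$ (as $\alpha = l_1$ already forces $b_1(Y_\varphi) > 0$), the slopes $\alpha + k l_1$ for $k = 0,\dots,g_{Y_2}-1$ are pairwise distinct, producing at least $g_{Y_2} \ge 2$ L-space fillings of $Y_1$ and contradicting the assumption that $Y_1$ is not Floer simple.

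\emph{Main obstacle.} The crux of the argument is the splitting of $\cfd(Y_2)$ into $g_{Y_2}$ solid-torus summands, which rests on a careful inspection of the type-$D$ graph produced by Proposition~\ref{Prop:FloerSimpleCFD} together with the very special shape of $\tau(Y_2)$ in the semi-primitive generalized solid torus case. An equivalent strategy would replace this bordered Floer step with an iterated application of the Ai-Peters twisted exact triangle (as in Proposition~\ref{Prop:LS1S2}), using the $b_1 > 0$ vanishing coming from $Y_2(l_2) = Z \# (S^1 \times S^2)$ to produce the same collection $\{Y_1(\alpha + k l_1)\}$ of L-space fillings; the bookkeeping of $\spinc$ gradings is the main difficulty in either approach.
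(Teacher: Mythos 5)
Your Case 1 is essentially the paper's own remark preceding the proposition (the Floer simple case follows from Theorem~\ref{Thm:Splice}, with the minor caveat that when $\dt(Y_1)=\emptyset$ one should quote the refined gluing statement, Theorem~\ref{thm: torus gluing and complementary intervals for L-spaces}, rather than the introduction's version; the conclusion is unchanged since $\lL(Y_1)$ is then already open). The gap is in Case 2, and it is fatal as written. The step you lean on --- that $\cfd(Y_2,m_2,l_2)$ splits up to homotopy into $g_{Y_2}$ solid-torus type-$D$ summands, so that pairing gives $\hfhat(Y_\varphi)\cong\bigoplus_{k=0}^{g_{Y_2}-1}\hfhat(Y_1(\alpha+k\mkern1mul_1))$ --- is false. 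Sanity check: take $Y_1=S^1\times D^2$, $Y_2=N_2$, and $\alpha=m_1+b\mkern1mul_1$. Then $Y_\varphi$ is a non-longitudinal Dehn filling of $N_2$, hence an L-space, and a direct computation gives $|H_1(Y_\varphi)|=4$, so $\hfhat(Y_\varphi)$ has rank $4$; your formula would give $\hfhat(S^3)\oplus\hfhat(S^3)$, rank $2$. The same count (rank of an L-space equals $|H_1|$, which grows like $g^2$ rather than $g$ under such fillings) rules out any decomposition of $\cfd(N_g)$ into $g$ solid-torus pieces, whatever framings you assign them. Note also that $\alpha+k\mkern1mul_1$ need not be primitive, so $Y_1(\alpha+k\mkern1mul_1)$ is not even a Dehn filling in general. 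The Ai--Peters alternative is only gestured at and would face the same $\spinc$ bookkeeping you identify as the difficulty, so the non-Floer-simple direction is not established.

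For comparison, the paper's proof needs no case split and uses two ideas absent from your proposal. First, the Floer homology solid torus property makes $\hfhat(Y_\varphi)$ independent of the choice of $\varphi$ with $\varphi_*(\alpha)=l$; so if $\alpha$ is not NLS detected, then \emph{every} such splice is an L-space, and these splices are exactly the Dehn fillings of the single manifold $Y'$ obtained by identifying $\nu(\alpha)\subset\partial Y_1$ with $\nu(l)\subset\partial Y_2$ (as in Lemma~\ref{Lem:SpliceSurgery}); hence $Y'$ is Floer simple, with no hypothesis on $Y_1$. Second, one decomposes the sutured manifold $(Y',\gamma_\mu)$ along the product annulus separating the two pieces to get $SFH(Y',\gamma_\mu)\cong SFH(Y_1,\gamma_\alpha)\otimes SFH(Y_2,\gamma_l)$, and then an outer-$\spinc$ and Turaev torsion computation (using $k_{Y_2}=1$, Lemma~\ref{Lem:RedAlex}, and the generalized solid torus structure of $Y'$) pins down $SFH(Y_1,\gamma_\alpha)\cong\Z^{|H_1(Y_1(\alpha))|}$, i.e.\ $K_\alpha$ is Floer simple, which is precisely the statement that $\alpha$ lies in the interior of $\lL(Y_1)$. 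If you want to salvage your approach, you would need to replace your claimed splitting with this kind of argument (or prove a correct gluing formula for $\cfd$ of a generalized solid torus, which your Proposition~\ref{Prop:FloerSimpleCFD} computation does not by itself provide).
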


\begin{proof}
Suppose that \(\alpha\) is not NLS detected by \(Y_2\). Then \(Y_{\varphi_i}\) is an L-space for every \(\varphi_i\) with \(\varphi_{i*}(\alpha) = l\). The manifolds \(Y_{\varphi_i}\) are all obtained by Dehn filling  a manifold \(Y'\) which is constructed by identifying \(\nu(\alpha) \subset \partial Y_1\) with \(\nu(l) \subset \partial Y_2\), as in the proof of Lemma~\ref{Lem:SpliceSurgery}.  It follows that \(Y'\) is Floer simple. 

Let \(\mu \in H_1(\partial Y')\) be the class which represents the common image of \(\alpha \in H_1(\partial Y_1)\) and \(l \in H_1(\partial Y_2)\). The sutured manifold  \((Y',\gamma_{\mu})\) contains an essential annulus \(A\) which separates \(Y_1\)  from \(Y_2\). The boundary of \(A\) is a pair of curves parallel to \(\mu\). We choose the position of the sutures so that one component of \(\partial A\) lies in \(R_+(\gamma_\mu)\)  and the other component is in \(R_-(\gamma_\mu)\). Decomposing \((Y',\gamma_{\mu})\) along \(A\) gives a new sutured manifold which is the disjoint union of \((Y_1, \gamma_\alpha)\) and \((Y_2,\gamma_l)\). \(A\) is a product annulus, so it follows from Lemma 8.9 of \cite{Juhasz} that 
\begin{equation*}
SFH(Y',\gamma_\mu) = SFH(Y_1, \gamma_\alpha) \otimes SFH(Y_2,\gamma_l).
\end{equation*}

Since \(\partial Y' = T^2\), there is a natural injection \(c:{\mathrm{Spin}^c}(Y',\gamma_\mu) \to H_1(Y')\) given by the formula
 \(j(\spi)=P D(c_1(\spi))\), and similarly for \(Y_1\) and \(Y_2\).
The tensor product respects the decomposition into \({\mathrm{Spin}^c}\) structures in the sense that 
\(c(x \otimes y) = j_{1*}(c(x)) + j_{2*}(c(y))\), where \(j_i:Y_i \to Y'\) is the inclusion.

In the case at hand, $
H_1(Y') = H_1(Y_1)\oplus H_1(Y_2)/\langle \alpha = l\rangle $,
and \(H_1(Y_2) \simeq \Z \oplus \Z/g_{Y_2}\), where the \(\Z/g_{Y_2}\) summand is generated by \(l\).  
Thus \(H_1(Y') \simeq \Z \oplus (H_1(Y_1)/\langle g_{Y_2} \alpha \rangle)\). Now
\(\alpha\) is a nontorsion element of \(H_1(Y_1)\) (otherwise \(Y_\varphi\) is not a rational homology sphere), so the image of \(j_{1*}\) is contained in the torsion subgroup of \(H_1(Y')\). 

If \(Y\) is a rational homology \(S^1\times D^2\) and \(\beta \in Sl(Y)\), then 
\(SFH(Y,\gamma_\beta, \spi) = 0 \) whenever \(\phi(c(\spi)) > \| Y\| + |\phi(\beta)|\). The set
\(O_{(Y,\gamma_\beta)} = \{\spi \in {\mathrm{Spin}^c}(Y, \gamma_\beta) \, | \, \phi(c(\spi)) = \| Y\| + |\phi(\beta)| \}\) is the set of {\em outer} \({\mathrm{Spin}^c}\) structures for \((Y, \gamma_\beta)\) \cite{Juhasz}. We write
$$SFH(Y, \gamma_\beta,O) = \bigoplus_{\spi \in O_{(Y,\gamma_\beta)}} SFH(Y,\gamma_\beta,\spi).$$
Since the image of \(j_{1*}\) is contained in the torsion subgroup, we have 
\begin{equation}
\label{Eq:OuterSpinc}
SFH(Y', \gamma_\mu, O) \simeq SFH(Y_1,\gamma_\alpha) \otimes SFH(Y_2,\gamma_l,O).
\end{equation}
In particular, \(\|Y'\| = \|Y_2\| = g_{Y_2}-2=g_{Y'}-2\), so \(Y'\) is a generalized solid torus.

To conclude the proof we  use the following two lemmas. The first is probably well-known, but we give a proof just in case. 
\begin{lemma}
Suppose \(Y\) is an incompressible rational homology \(S^1 \times D^2\), that \(l \in H_1(\partial Y)\) is the homological longitude, and that \(m \cdot l = 1\). Then $$SFH(Y,\gamma_l,O) \simeq SFH(Y, \gamma_m,O) \otimes H_*(S^1).$$
\end{lemma}

\begin{proof}
Let \(S \subset Y\) be a properly embedded surface generating \(H_2(Y, \partial Y)\). If we decompose \((Y, \gamma_m)\) along \(S\), we get a sutured manifold \((Z, \gamma_Z)\), where \(\partial Z\) is a union of two copies of \(S\) glued together their boundaries, and there is  one suture for each component of \(\partial S\). 
Decomposing \((Y, \gamma_l)\) along \(S\) gives \((Z, \gamma_Z')\), where the suture \(\gamma_Z'\) is the same as \(\gamma_Z\) except that there are three parallel sutures along one component of \(\partial S\) instead of one. By Proposition 9.2 of \cite{Juhasz}, \(SFH(Z, \gamma_Z') \simeq SFH(Z, \gamma_Z)\otimes H_*(S^1)\).
\end{proof}

\begin{lemma} If \(Y\) is a generalized solid torus and \(m \in H_1(\partial Y)\) satisfies \(\phi(m) = g_Y\), 
 then \(SFH(Y,\gamma_m,O) \simeq \Z^{k_Y}\).
\end{lemma}

\begin{proof}
\(SFH(Y,\gamma_m,O) = \hfk(K_m,O)\), where \(K_m \subset Y(m)\) is the dual knot. Since \(Y\) is a generalized solid torus, the latter group is Floer simple, hence determined by its Euler characteristic. By Lemma~\ref{Lem:RedAlex}, 
$$ \phi(\chi(\hfk(K_m))) = \frac{k_Y(1-t^{g_Y})^2}{(1-t)^2}.$$
It follows that \(\hfk(K_m,O) \simeq \Z^{k_Y}\). 
\end{proof}

Applying the lemmas to \(Y_2\), which has \(k_{Y_2}=1\), we see that \(SFH(Y_2,\gamma_l,O) \simeq H_*(S^1)\). For \(Y'\), suppose that 
 \(H_1(Y_1(\alpha)) = H_1(Y_1)/\langle \alpha \rangle\) has order \(d\). The torsion subgroup of \(H_1(Y')\)
 is \( H_1(Y_1)/\langle g_{Y_2}\alpha \rangle\), so it has   order \(g_{Y_2}d\), Since \(g_{Y'}=g_{Y_2}\), we see that \(k_{Y'}=d\). Since \(\mu\) is the homological longitude of \(Y'\), \(SFH(Y',\gamma_\mu,O) \simeq H_*(S^1)\otimes \Z^d\). Comparing with equation~\eqref{Eq:OuterSpinc}, we see that \(SFH(Y_1,\gamma_\alpha) \simeq \Z^d\). Now if \(K_\alpha \subset Y_1(\alpha)\) is the dual knot, then \(\hfk(K_\alpha) = SFH(Y_1,\gamma_\alpha) \simeq \Z^d\), where \(d=|H_1(Y_1(\alpha))|\). So \(K_\alpha\) is Floer simple,  which implies that \(Y_1\) is Floer simple and that \(\alpha \) is in the interior of \(\lL(Y_1)\). 
 
 Conversely, if \(\alpha\) is in the interior of \(\lL(Y)\),  Theorem~\ref{Thm:Splice} implies that \(Y_\varphi\) is an L-space, so \(\alpha\) is not NLS detected by \(Y_2\). 
\end{proof}

 Boyer and Clay define \(\alpha\) to be NLS detected if it is NLS detected by some \(N_g\), where \(N_g = M(1/g,1-/g)\) is the original family of Floer homology solid tori discussed above. The proposition shows that \(\alpha\) is NLS detected by one \(N_g\) if and only if it is NLS detected by all \(N_g\) if and only if \(\alpha\) is not the interior of \(\lL(Y)\). This proves Corollary~\ref{Cor:NLS}.

\subsection{Examples}
\label{Subsec:FSTExamples}
We conclude by constructing some examples of generalized solid tori. Some of these  were previously known to Hanselman and Watson \cite{WatsonFST} and Vafaee \cite{VafaeePC}. We start with the following observation.
\begin{cor}
If \(Y\) is an irreducible, semi-primitive generalized solid torus, then 
 \(Y\) is the complement of a closed \(g_Y\)-strand braid in \(S^1 \times S^2\). 
\end{cor}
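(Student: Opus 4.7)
The plan is to show $Y(l) = S^1 \times S^2$ and then exhibit the core $K_l$ of the filling torus as a closed $g_Y$-strand braid using a norm-minimizing planar surface in $Y$.

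First, combining semi-primitivity ($k_Y = 1$) with the generalized solid torus hypothesis gives $\Deltas(Y) \sim (1-t^{g_Y})/(1-t)$, and the product formula for Turaev torsion used in the proof of Proposition~\ref{Prop:LS1S2} shows $Y(l) = Z \# (S^1 \times S^2)$ for an integer homology sphere $Z$.  I would handle $g_Y = 1$ separately (in which case $Y = S^1 \times D^2$ and $K_l$ is a $1$-braid), and henceforth assume $g_Y \ge 2$.  In this range $\partial Y$ is incompressible:  the only elements of $H_1(\partial Y)$ in $\ker \iota$ are multiples of $g_Y\, l$, which is never primitive for $g_Y \ge 2$.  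Corollary~\ref{Cor:TNorm} then gives $\|Y\| = g_Y - 2$, so there is a Thurston-norm-minimizing planar generator $S \subset Y$ of $H_2(Y, \partial Y)$ with exactly $g_Y$ boundary components, each parallel to $l$ and coherently oriented (since the geometric and algebraic intersection numbers must agree).  Capping $\partial S$ off by the $g_Y$ meridian disks of the filling solid torus $V$ yields a $2$-sphere $\hat{S} \subset Y(l)$ meeting $K_l$ transversely in $g_Y$ points, all of like sign.

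It then suffices to show $Z = S^3$, since once $Y(l) = S^1 \times S^2$ the non-separating sphere $\hat{S}$ is isotopic to a sphere fiber, and its $g_Y$ coherently oriented transverse intersections with $K_l$ exhibit $K_l$ as a closed $g_Y$-strand braid.  For the contrapositive I would assume $Z \ne S^3$, take an essential $2$-sphere $\Sigma \subset Y(l)$, and isotope it transverse to $\partial Y$, minimizing $|\Sigma \cap \partial Y|$ by an innermost-disk argument.  An innermost disk $D \subset \Sigma$ bounded by a circle of $\Sigma \cap \partial Y$ lies either in $V$ (in which case $\partial D$ must be a meridian copy of $l$, allowing a compression of $\Sigma$ that reduces intersections) or in $Y$ (in which case, by boundary incompressibility of $\partial Y$ together with irreducibility of $Y$, the boundary is inessential on $\partial Y$ and the disk can be isotoped away).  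After full reduction $\Sigma \subset Y$ or $\Sigma \subset V$; each is irreducible, so $\Sigma$ bounds a ball, contradicting essentiality.

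The principal obstacle is this intersection-reduction argument:  one must justify that the innermost-disk moves and compressions actually terminate and preserve the homological nontriviality of $\Sigma$.  A potentially cleaner alternative is to work directly with $W := Y(l) \setminus \nu(\hat{S})$, which is obtained from $Y$ cut along $S$ by attaching $g_Y$ $2$-handles along curves parallel to $l$, and to show $W = S^2 \times I$ using irreducibility of $Y$ together with the explicit handle description; capping the two resulting sphere boundary components then gives $Z = S^3$ and simultaneously realizes the braid structure on $K_l$ explicitly.
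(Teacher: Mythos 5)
There is a genuine gap, and it is precisely the ingredient the paper's (three-line) proof leans on: fibredness. The paper argues that $k_Y=1$ forces $\Deltas(Y) \sim (1-t^{g_Y})/(1-t)$, which is monic, so Corollary~\ref{Cor:TNorm} (knot Floer homology detects fibredness, applicable since $Y$ is Floer simple, irreducible and boundary incompressible) shows $Y$ fibres over $S^1$ with fibre a planar surface with $g_Y$ boundary components; capping the fibre then makes $Y(l)$ an $S^2$-bundle over $S^1$, i.e.\ $S^1\times S^2$, with the core $K_l$ of the filling torus transverse to the sphere fibration, hence a closed $g_Y$-strand braid. You use Corollary~\ref{Cor:TNorm} only for the Thurston norm and never invoke fibredness, but fibredness is exactly what ``closed braid'' means here: a knot in $S^1\times S^2$ is a braid closure if and only if it can be made transverse to the \emph{entire} fibration by spheres, equivalently its complement fibres over $S^1$ with planar fibre. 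Your final step --- ``$\hat S$ is isotopic to a sphere fibre, and its $g_Y$ coherently oriented transverse intersections with $K_l$ exhibit $K_l$ as a closed braid'' --- is false as stated: for example, band-summing a local knot into a genuine braid closure inside a ball disjoint from $\hat S$ preserves both the coherent $g_Y$-point intersection with $\hat S$ and (typically) irreducibility of the complement, yet destroys the braid property. So even granting $Y(l)=S^1\times S^2$, your argument only produces a knot meeting one non-separating sphere coherently, which is strictly weaker than the conclusion.

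The intermediate step $Z=S^3$ is also not obtainable the way you propose. Your innermost-disk reduction, if it worked, would show that every essential sphere $\Sigma\subset Y(l)$ can be isotoped into $Y$ or into the filling torus $V$ and hence bounds a ball; that would make $Y(l)$ irreducible, contradicting the fact that $Y(l)=Z\#(S^1\times S^2)$ always contains the non-separating sphere $\hat S$. The failure point is that an innermost disk in $V$ bounded by a meridian cannot be isotoped away, and the ``compression'' you substitute is not an isotopy, so the reduction does not terminate with $\Sigma$ disjoint from $\partial Y$. More fundamentally, irreducibility of $Y$ alone does not force $Z=S^3$: a priori $Z$ could be a nontrivial integer homology sphere (semi-primitivity only gives $|H_1(Z)|=k_Y=1$), and what rules this out in the paper is again the fibration, since an $S^2$-bundle over $S^1$ has no fake summand. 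So both halves of your plan need the fibredness input from Corollary~\ref{Cor:TNorm}; once you add it, the capped-off fibration gives the whole statement at once and the sphere-isotopy and $Z=S^3$ discussions become unnecessary.
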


\begin{proof}
The hypotheses imply that \(\Delta(Y) \sim (1-t^g)/(1-t)\) and that \(H_2(Y, \partial Y)\) is generated by a \(g_Y\)-times punctured sphere. By Corollary~\ref{Cor:TNorm}, it follows that \(Y\) fibres over \(S^1\) with fibre of genus \(0\). 
\end{proof}

By Proposition \ref{Prop:LS1S2}, any knot in \(S^1 \times S^2\) with a lens space surgery is a generalized solid torus.
Cebanu \cite{Cebanu} showed that a knot of this form is a closed braid in \(S^2\). 
 Examples of such knots were studied by Buck, Baker, and Leucona in \cite{BBL}. Many (but not all) of them are derived from knots in the solid torus which have solid torus surgeries. These knots were completely classified by Gabai \cite{GabaiST} and Berge \cite{BergeTorus}. 

To find other examples, we look for braids in \(S^1 \times S^2\) which have L-space surgeries. 
One criterion for finding such examples is given here. Suppose \(\sigma\) is an ordinary \(g\) strand braid in \(D^2\times I\). We can close \(\sigma\) to get a closed braid in \(S^1 \times D^2\). Dehn filling \(S^1 \times D^2\) along \(S^1\times p\) gives the ordinary braid closure \(\overline{\sigma} \subset  S^3\). We can also fill \(S^1 \times D^2\) along \(\partial D^2\) to get a closed braid in \(S^1\times S^2\), which we denote by 
\(\widetilde{\sigma}\). Let \(\Delta \in Br_g\) be the full twist on \(g\)-strands. 

\begin{prop}Suppose that \(\sigma\) is a braid with the property that \(K_n=\overline{\Delta^n\sigma}\) is an  L-space knot in \(S^3\) for all \(n\geq 0\). Then the complement of  \(\widetilde{\sigma}\) is a semi-primitive generalized solid torus. 
\end{prop}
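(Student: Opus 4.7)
The plan is to show that $Y_0 := (S^1\times S^2)\setminus \nu(\widetilde\sigma)$ is a semi-primitive rational homology solid torus of genus zero that admits an L-space Dehn filling, at which point Proposition~\ref{Prop:FST} (together with Proposition~\ref{Prop:LS1S2}) identifies $Y_0$ as a semi-primitive generalized solid torus.

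First I would verify semi-primitivity and the genus-zero condition homologically. A Mayer--Vietoris computation for $S^1\times S^2 = Y_0 \cup \nu(\widetilde\sigma)$, together with the observation that $[\widetilde\sigma] = g\cdot [S^1] \in H_1(S^1\times S^2) = \Z$ (since $\widetilde\sigma$ is a closed $g$-strand braid), yields $H_1(Y_0) \cong \Z \oplus \Z/g$ with the meridian $\mu$ of $\widetilde\sigma$ generating the $\Z/g$ torsion summand. Since $\mu = \iota(\mu)$ manifestly lies in $\iota(H_1(\partial Y_0))$, the manifold $Y_0$ is semi-primitive. The generator of $H_2(Y_0,\partial Y_0)$ is represented by $\{pt\}\times S^2 \setminus \nu(g\ \text{points})$, a genus-zero surface with $g$ boundary components, so $Y_0$ has genus zero.

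To exhibit the required L-space filling, I would view $(S^1\times S^2, \widetilde\sigma)$ as $0$-surgery on the unknotted braid axis $U \subset S^3$ in the presence of $\overline\sigma$; set $L := S^3\setminus\nu(U\cup\overline\sigma)$ and let $L_\alpha$ denote $L$ Dehn filled by integer slope $\alpha$ on $\overline\sigma$. Rolfsen twists identify $L_\alpha$ filled with $1/n$-slope on $U$ as $S^3_{\alpha-ng^2}(K_n)$, which is an L-space whenever $\alpha - ng^2 \geq 2g(K_n) - 1$ by the L-space-knot hypothesis. Applying the Ozsv\'ath--Szab\'o surgery exact triangle to $U \subset L_\alpha$ at slopes $1/n,\,1/(n{+}1),\,0$ produces
\[
\hfhat(S^3_{\alpha_n}(K_n)) \to \hfhat(S^3_{\alpha_{n+1}}(K_{n+1})) \to \hfhat(Y_0(\alpha_K)) \to \cdots,
\]
where $\alpha_n := \alpha - ng^2$ and $\alpha_K$ is the induced slope on $\partial Y_0$. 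A linking-matrix calculation gives $|H_1(Y_0(\alpha_K))| = g^2$, and for $\alpha \in (ng^2,(n{+}1)g^2)$ the signed $|H_1|$-identity reads $g^2 = |\alpha_n| + |\alpha_{n+1}|$. If both outer terms are L-spaces, then the rank inequality for the exact triangle, combined with the universal bound $\mathrm{rk}\,\hfhat \geq |H_1|$, forces $Y_0(\alpha_K)$ to be an L-space.

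The main obstacle is that for $\alpha$ in the $|H_1|$-sum range one has $\alpha_n > 0$ while $\alpha_{n+1} < 0$, and since L-space knots in $S^3$ admit L-space surgeries only on a one-sided ray, it is not automatic that $S^3_{\alpha_{n+1}}(K_{n+1})$ is an L-space. Resolving this is the heart of the proof: the fallback is to apply Theorem~\ref{Thm:LY} directly to $L_\alpha$, which is Floer simple thanks to the infinitely many L-space fillings $L_\alpha(1/n)$ for $0 \leq n \leq N_\alpha$. The L-space interval $\lL(L_\alpha)$ is then closed with boundary controlled by $\dtgz(L_\alpha)$ and contains slopes $1/n$ accumulating at $0$ as $N_\alpha \to \infty$. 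Combined with a direct homological computation identifying the longitude of $L_\alpha$ as the slope $g^2/\alpha \neq 0$, this should place $0 \in \lL(L_\alpha)$ for suitable $\alpha$, yielding the L-space filling $L_\alpha(0) = Y_0(\alpha_K)$ and completing the proof.
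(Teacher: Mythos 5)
Your reduction is sound as far as it goes: the homology computation (semi-primitivity, genus zero, $H_1 \cong \Z\oplus\Z/g$), the identification of the homological longitude of the axis complement $L_\alpha$ with the slope $g^2/\alpha$, the recognition that the surgery exact triangle cannot conclude (since one of the two flanking surgery coefficients lands on the non-L-space side of an L-space knot), and the fallback of showing $L_\alpha$ is Floer simple and that the $0$-slope lies in its closed L-space interval -- this fallback is exactly the paper's argument. But as you have set it up, the fallback does not close. You only know that \emph{finitely many} of the axis fillings of a fixed $L_\alpha$ are L-spaces ($0\le n\le N_\alpha$ in your notation), so for that fixed $\alpha$ Theorem~\ref{Thm:LY} only tells you that $\lL(L_\alpha)$ contains a closed interval whose endpoint may well stop short of $0$; the fact that these endpoints approach $0$ as $\alpha\to\infty$ proves nothing about any single $L_\alpha$, since the interval (and its $\dtgz$-controlled endpoints) changes with $\alpha$. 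There is also a sign slip: $+1/n$ filling of the braid axis inserts $n$ \emph{negative} full twists, producing $\overline{\Delta^{-n}\sigma}$, about which the hypothesis says nothing; you need the $-1/n$ fillings, which yield $(\alpha+ng^2)$-surgery on $K_n=\overline{\Delta^n\sigma}$.

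The missing idea is a quantitative genus bound making \emph{all infinitely many} of these slopes L-space slopes of one fixed manifold. Seifert's algorithm applied to the closed braid diagram gives $g(K_n)\le C(\sigma)+n\,g(g-1)/2$, a bound growing linearly in $n$ with slope $g(g-1)/2 < g^2$. Hence for a single fixed integer $a>2C(\sigma)$ one has $a+ng^2\ge 2g(K_n)-1$ for every $n\ge 0$, so every filling $L_a(-1/n)=S^3_{a+ng^2}(K_n)$ is an L-space. These slopes $-1/n$ accumulate at $0$, so $0$ lies in the closure of $\lL(L_a)$; since $0$ is not the homological longitude $g^2/a$, Theorem~\ref{Thm:LY} (the interval is closed, or omits only the longitude) gives $0\in\lL(L_a)$. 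Then $L_a(0)$ is an L-space realized as an integer surgery on $\widetilde\sigma\subset S^1\times S^2$, and Proposition~\ref{Prop:LS1S2} (with your homological computation supplying semi-primitivity) finishes the proof. Without the linear genus estimate, the accumulation-of-L-space-slopes step -- the heart of the argument -- is not available.
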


\begin{proof}
Let \(L\subset S^3 \) be the link which is the union of \(K=\overline{\sigma}\) and the braid axis \(B\). The braid \(\tilde{\sigma}\) is the image of \(K\) in the \(S^1\times S^2\) obtained by doing \(0\)-surgery on \(B\). 

 Let \(L(a,c)\) be the manifold obtained by doing \(a\) surgery on \(K\) and \(c\) surgery on \(A\), where \(a \in \Z\) and \(c \in \Q\).   Then \(L(a,-1/n)\) is the result of \(a+ng^2\) surgery on \(K_n\). Using Seifert's algorithm, it is easy to see that there is a constant \(C(\sigma)\) with the property  that \(g(K_n) \leq C(\sigma) + ng(g-1)/2\). Thus  if \(a>2C(\sigma)\), then 
\(a+ng^2 \geq 2g(K_n)-1\) for all \(n\geq 0\). By hypothesis, \(K_n\) is a positive L-space knot, so \(L(a,-1/n)\) is an L-space for all \(n>0\). 

Now let \(\Ybar\) be the manifold obtained by doing \(a\) surgery on \(K\), and let \(Y = \Ybar- \nu(B)\). There is a slope \(\alpha_0 \in Sl(Y)\) so that \(Y(\alpha_0)=L(a,0)\), and a sequence of slopes \(\alpha_{-1/n} \in Sl(Y)\) which converge to \(\alpha_0\) such that 
\( Y(\alpha_{-1/n})=L(a,-1/n)\). It follows that \(Y\) is Floer simple and that \(\alpha_0\) is in the closure of \(\lL(Y)\). Since \(\alpha_0\) is not the homological longitude of \(Y\),  \(\alpha_0 \in \lL(Y)\), so \(L(a,0)\) is an L-space. By Proposition~\ref{Prop:LS1S2}, \(Y\) is a generalized solid torus. 
\end{proof}

We call a closed braid in the solid torus which satisfies the criterion a {\em L-space braid}. Examples include:
\begin{itemize}
\item Knots in  the solid torus with solid torus surgeries ({\em aka} Berge-Gabai knots)
\item The twisted torus knots \(T(p,kp\pm1;2,1)\) studied by Vafaee \cite{Vafaee}
\item  Cables of L-space braids \cite{HomHedden}
\item Satellites where the  pattern knot is a  Berge-Gabai knot and the companion is an L-space braid \cite{HLV}
\end{itemize}

We conclude with two remarks. First, 
we conjecture that every positive one-bridge braid (not just the Berge-Gabai knots) is an L-space braid. Since the knot obtained by applying a full twist to a one-bridge braid is again a one-bridge braid, this is equivalent to showing that the closure of any positive one-bridge braid is an L-space knot in \(S^3\). 
Second, in light of the last two items, it would be interesting to know if a satellite where both the pattern and the companion are L-space braids is also an L-space braid.

\bibliography{Lslope}
\bibliographystyle{plain}

\end{document}